\newcommand{\bpf}[1][Proof]{{\noindent {\sc #1: }}}
\newcommand{\epf}{{{\hfill $\Box$ \smallskip}}}
\newcommand{\R}{\mathbb{R}}
 \newcommand{\N}{\mathbb{N}}
 \newcommand{\tbf}{\mathbf{t}}
 \newcommand{\sbf}{\mathbf{s}}
 \newcommand{\ibf}{\mathbf{i}}
 \newcommand{\jbf}{\mathbf{j}}
 \newcommand{\ubf}{\mathbf{u}}
 \newcommand{\vbf}{\mathbf{v}}
\newcommand{\Pp}{\mathsf{P}} 
\newcommand{\Ee}{\mathsf{E}} 
 \newcommand{\E}{\mathbf{E}}
\newcommand{\Bc}{\mathcal{B}} 
 \newcommand{\M}{\mathcal{M}}
\newcommand{\F}{\mathcal{F}} 
\newcommand{\G}{\mathcal{G}}
 \newcommand{\supp}{\mathop{\rm supp}}
\newcommand{\EE}{\mathbb{E}}
\newcommand{\eps}{\epsilon}
\newcommand{\cl}{\text{cl}} 
\newcommand{\PP}{\mathbb{P}}
\newcommand{\id}{\mathbbm{1}}
\newtheorem{theorem}{Theorem} \newtheorem{lemma}{Lemma}
\newtheorem{remark}{Remark} \newtheorem{corollary}{Corollary}
\newtheorem{proposition}{Proposition} \newtheorem{example}{Example}
\newtheorem{definition}{Definition}
\newtheorem{assumption}{Assumption}
\def\1{{\rm 1\mskip-4.4mu l}}
\begin{document}

\title{Ergodicity and regularity properties of ODEs with semi-Markov switching}

\author{Tobias Hurth$^{1}$, Edouard Strickler$^{2}$}

\footnotetext[1]{Universit\'e de Neuch\^atel, Institut de Math\'ematiques, CH-2000 Neuch\^atel, Switzerland}
\footnotetext[2]{Universit\'e de Lorraine, CNRS, Inria, IECL, F-54000 Nancy, France \\
  E-mail:  edouard.strickler@univ-lorraine.fr}
\maketitle

\begin{abstract}
This paper is devoted to the study of a stochastic process obtained by random switching between a finite collection of vector fields. Such processes have recently been the focus of much attention in the case where the switching times are exponentially distributed, i.e., Markovian switching. In this contribution, we admit any distribution on $\mathbb{R}_+$ as a law for the switching times. We show that whenever this law is not singular with respect to the Lebesgue measure, the stochastic process obtained from the random switching is Feller. More importantly, we give conditions on the switching and on the vector fields ensuring that the Lie bracket condition considered in the Markovian case in \cite{BH12} and \cite{BMZIHP} still imply ergodicity of the process.  
\end{abstract}

\tableofcontents

\section{Introduction}

Let $(F^i)_{i \in E}$ be a family of vector fields on $\mathbb{R}^d$, indexed by a finite set $E$. For a c\`adl\`ag signal $I : \mathbb{R}_+ \to E$, we consider the switched system
\begin{equation}
\label{eq:intro1}
\frac{d X_t}{d t} = F^{I_t}(X_t).
\end{equation} 
This kind of system has recently been the focus of much attention in the particular case where $(I_t)_{t \geq 0}$ is a Markov chain on the space $E$. This means in particular that the holding time of $I$ in each state $i \in E$ is exponentially distributed. In other words, if at some time $T$,  $I$ switches to some value $i$, then the next switch will occur at time $T + S$, where $S$ is a random variable exponentially distributed with some parameter $\lambda^i$, and the new value of $I$ will be chosen randomly, and will be equal to some $ j \neq i$ with a probability $q_{i,j}$. Under this assumption, the long-term behaviour of $X_t$ has been studied extensively in the past decade (see e.g. \cite{BH12}, \cite{BMZIHP}, \cite{li17}, \cite{BHS18} and also \cite{CKW21}, where the continuous component $X$ may jump at switching times), and used to model several phenomena from ecology, genetics, epidemiology... (see e.g.  \cite{BL16}, \cite{CDGMMY17}, \cite{H19},   \cite{li17} and references therein)

One particularly important result from \cite{BH12} and \cite{BMZIHP} is that a Doeblin condition for the process $(X_t, I_t)$ can be guaranteed by checking a Hörmander-type condition, related to the Lie algebra generated by the vector fields.  Combined with accessibility and compactness, this yields, by classcial arguments, exponential ergodicity of the process $(X,I)$.  
%This point $x^*$ furthermore has to be accessible, meaning that $X_t$ can come arbitrarily close to $x^*$ with some positive probability (see Section \ref{sec:uni_ac} for details). 
The fact that the exponential law admits a positive density with respect to the Lebesgue measure on $\mathbb{R}_+$ is used in the proofs in \cite{BH12} and \cite{BMZIHP}. In particular, the fact that the process $I$ can switch arbitrarily fast with positive probability is crucial.  
%Of course, the lack of memory of the exponential law is also used, since this makes the process $(X,I)$ a Markov process.

In the aforementioned papers, the choice of the exponential law to govern the times between switches is made essentially to ensure the Markov property of the process $(X,I)$. Nonetheless, from a modelling perspective, this choice is restrictive. In particular, the assumption that the holding time of $I$ in one state $i$ can be arbitrarily short is often not realistic - think for example of epidemiological models, where switching between environments represents a change in health policy (e.g. lockdown versus no restriction), which cannot switch several times per day.

The present paper deals with switched system as the one in~\eqref{eq:intro1}, where the signal $I$ is random, with any possible law for the holding time in each state $i$ of $E$. In other words, if at some time $T$,  $I$ switches to some value $i$, then the next switch will occur at time $T + S$, where $S$ is a random variable with some distribution $\mu^i$. As before, the new value of $I$ will be chosen randomly, and will be equal to some $ j \neq i$ with a probability $q_{i,j}$, so that the sequence of the states visited by $I$ forms a Markov chain. Then, $I$ is a semi - Markov process.  This kind of switching process has been studied for example in \cite{A12}, \cite{LLC18}, \cite{LW21}, \cite{O23}...  Motivated by some applications (see e.g. \cite{H19}), we consider in  the present paper  the more general situation where the switching times and the postjump locations of $I$ depend on the continuous component $X$.

%The present paper deals with switched system as the one in~\eqref{eq:intro1}, where the signal $I$ is random, with any possible law for the holding time in each state $i$ of $E$. In other words, if at some time $T$,  $I$ switches to some value $i$, then the next switch will occur at time $T + S$, where $S$ is a random variable with some distribution depending on $i$ and possibly on $X_T$. As before, the new value of $I$ will be chosen randomly with some probability depending on $i$ and on $X_{T+S}$.

%One can consider the extended process $(X,\tau,I)$, 
If $\tau$ records the time elapsed since the latest switch, the three-component process $(X,\tau,I)$ is  Markov (see Section \ref{sec:setting} for details). We examine under which conditions $(X,\tau,I)$ has the Feller property and, under additional hypotheses, extend key results from \cite{BH12} and \cite{BMZIHP}. In particular, we provide two different hypotheses, each of which, together with the bracket conditions appearing in \cite{BH12} and \cite{BMZIHP}, implies a Doeblin condition for $(X, \tau, I)$. Roughly, these two hypotheses are as follows:
\begin{enumerate}
\item For all $i$,  $\mu^i$ is regular in a neighbourhood of $0$;
\item For all $i$, $\mu^i$ is regular \emph{somewhere}, and $F^i$ is analytic.
\end{enumerate}
Here, by \emph{regular}, we mean that, in the Lebesgue decomposition of $\mu^i$, the density of the absolutely continuous part is locally bounded away from $0$. Note that the first condition
%is really reminiscent from the exponential law, and 
implies  that the process $I$ can switch arbitrarily fast. The second condition, however, does not ask for $0$ to be in the support of the $\mu^i$'s and might be satisfied even if the times between switches are almost surely greater than some positive constant $\tau_D > 0$ (often referred to as \textit{dwell-time}). These two conditions can be explained briefly as follows. The bracket conditions are local and ensure, roughly speaking, that the process can go in any direction in a neighbourhood of the point where the bracket condition holds, provided that all the vector fields can be used in a short time. This is mathematically translated into the fact that a particular composition of the flows associated to the vector fields is a submersion for certain short times. If arbitrarily short switching times are allowed, then such a submersion arises with positive probability, which creates a non-trivial absolutely continuous component for the associated transition probability. Under the second condition, there is still a positive probability for a submersion, but for larger times. We note that in \cite{LLC18}, the authors prove that the bracket condition implies a Doeblin condition for $(X, \tau, I)$ under the assumptions that (i) the switched component $I$ does not depend on $X$ and (ii) $\mu^i$ has a density with respect to the Lebesgue measure, which is bounded away from $0$. One of the major contribution of the present paper is that we get rid of these two assumptions.

The paper is organised as follows. In Section~\ref{sec:setting} we give a rigorous construction of the switched process and prove that it is a strong Markov process. In Section~\ref{sec:Feller}, we provide necessary and sufficient conditions on the laws of the switching times for the process to be Feller. Moreover, when the process is Feller, we prove that whenever there exists a compact forward invariant set for the flows, then the process admits at least one invariant probability measure. Section~\ref{sec:ergodicity}, the main section of the paper, is devoted to the ergodicity and the regularity of the invariant probability measure under the aforementioned bracket condition. Some applications of our results to models in mathematical ecology are given in Section~\ref{sec:applications}, while Sections~\ref{sec:proofFeller} to~\ref{sec:proofaccess} contain the proofs of the results stated in Sections ~\ref{sec:Feller} and~\ref{sec:ergodicity}.

\section{Setting and first properties}
\label{sec:setting}

Let $E = \{1, \ldots, N\}$ be a finite index set. To construct the two-component process $(X,I)$ and, later on, the three-component process $(X,\tau,I)$, we need four ingredients:
\begin{itemize}
\item For all $i \in E$, a vector field $F^i$ on $\mathbb{R}^d$, with associated flow $(\varphi_t^i)_{ t \geq 0}$ describing the evolution of the continuous part $X$ between the jumps of $I$; 
\item For all $i \in E$, a probability measure $\mu^i$ on $\mathbb{R}_+$, related to the law of the time spent by the discrete part $I$ in state $i$;
\item For all $i\in E$, a map $\lambda^i : \mathbb{R}^d \to \mathbb{R}_+$, encoding the dependence of $I$ on $X$;
\item For all $x \in \mathbb{R}^d$, a stochastic matrix $Q(x) = (q_{i,j}(x))_{i,j \in E}$, encoding the switching law of $I$ when $X$ is at position $x$.
\end{itemize}
From these ingredients, the process $(X,I)$ starting from a point $(x,i)$ can be constructed informally as follows. Let $H_1$ be a random variable with law $\mu^i$ and set 
\begin{equation}
\label{eq:S1intro}
S_1 = \inf\left\{ t \geq 0 \: : \int_0^t \lambda^i( \varphi^i_r(x)) dr > H_1 \right\}
\end{equation}
and $T_1 = S_1$. Then, for all $t \geq 0$, 
$$ 
\mathbb{P}(S_1 > t) = G^i \biggl(\int_0^t \lambda^i(\varphi_r^i(x)) dr \biggr), 
$$ 
where $G^i(t) := \mu^i(t, \infty)$ is $1$ minus the cumulative distribution function of $\mu^i$. For all $t \in [0,T_1)$, we let $X_t = \varphi_t^i(x)$ and $I_t = i$. At time $T_1$, the component $X$ is made continuous by letting $X_{T_1} = \varphi_{T_1}^i(x)$, while a jump occurs for the discrete component $I$: the value $I_{T_1}$ is chosen equal to some $j \neq i$ with probability $q_{i,j}(X_{T_1})$. Now, assuming that $I_{T_1} = j$, let $H_2$ be a random variable with law $\mu^j$, independent from anything else, and set $T_2 = T_1 + S_2$, where
\[
S_2= \inf\left\{ t \geq 0 \: : \int_0^t \lambda^j( \varphi^j_r(X_{T_1})) dr > H_2 \right\}.
\]
Then, for all $t \in [T_1, T_2)$, $X_t = \varphi^j_{t - T_1}(X_{T_1})$ and $I_t = j$. At time $t=T_2$, $I_t$ jumps from $j$ to $k$ with probability $q_{j,k}(X_{T_2})$, and so on.

In other words, $X_t$ is solution to 
\begin{equation}
\label{eq:Xintro}
\frac{d X_t}{dt} = F^{I_t} (X_t),
\end{equation}
with continuous path, and $I_t$ is a jump process on $E$ whose jump times $(T_n)_{n \geq 1}$ and postjump locations depend on $X$ as follows:
\begin{equation}
\label{eq:introI}
\begin{cases}
\mathbb{P}\left( T_{n+1} - T_n > t \, | \, I_{T_n} =i, \, X_{T_n}=x \right) = \mu^i \left( \int_0^t \lambda^i(\varphi^i_u(x)) du , +\infty \right), \\
\mathbb{P} \left( I_{T_{n+1}} = j \, | \, I_{T_n} = i, X_{T_{n+1}} = y \right) = q_{i,j}(y).
\end{cases}
\end{equation}
Note that the motion of $(X,I)$ between the jump times of $I$ is deterministic. Hence, $(X,I)$ is a piecewise deterministic process, which is in general not Markov. However, setting 
\begin{equation}
\label{eq:introtau}
\tau_t = t - T_{N_t},
\end{equation} 
with $N_t$ the number of switches up to time $t$, we prove in Section \ref{sec:Markov} that $(X,\tau,I)$ is Markov. Note that $\tau_t$ is the  time elapsed since the latest switch before time $t$. If the probability measures $\mu^i$ have compact support, it is intuitive that $\tau_t$ cannot become larger than some constant. This is why, in the next section, we will introduce for the process $Z = (X, \tau,I)$ a state space $K$ which is typically smaller than $\R^d \times \R_+ \times E$.

\begin{remark} \rm
Adding the variable $\tau_t$ is  common in the study of so-called \emph{semi-Markov processes} (see e.g. \cite{LO01}). In particular, in the case where the jump rates do not depend on the position (i.e. the $\lambda^i$ are constant functions), the process $I$ is a semi-Markov process, $\tau_t$ is called \emph{the backward recurrence time} and the couple $(I, \tau)$ is Markov (see \cite[Theorem 3.2]{LO01}).
\end{remark}

\begin{remark} \rm
The classical case of exponential switching corresponds to the choice of an exponential law of parameter $1$ for all $\mu^i$. In that case, the process $(X,I)$ is Markov and whenever the $\lambda^i$ are constant, $I$ is a continuous-time Markov chain.
\end{remark}
\subsection{Standing assumptions and augmented state space} \label{ssec:assumptions} 

We first give the following standing assumptions on the four ingredients mentioned earlier:
\begin{assumption}
\label{hyp:standingF}
For all $i \in E$, $F^i$ is smooth and forward-complete, that is, for every $x_0 \in \R^d$, the initial-value problem 
\begin{align*}
\frac{dx}{dt} =& F^i(x(t)), \quad \forall t > 0, \\
x(0) =& x_0 
\end{align*}
has a unique solution, defined on a maximal interval $(-s^i(x_0), + \infty)$ with $s^i(x_0) > 0$. We denote by $t \mapsto \varphi^i_t(x_0)$ this maximal solution.
\end{assumption}
\begin{assumption}
\label{standingmu}
For all $i \in E$, $\mu^i$ is a Borel probability measure on $\R_+ := [0,\infty)$ such that $\mu^i(\{0\}) = 0$.
\end{assumption}

\begin{assumption}
\label{standinglambda}
For all $i \in E$, $\lambda^i: \mathbb{R}^d \to \mathbb{R}_+$ is continuous and 
\[
0 < \lambda_{\min}  := \inf_{(x,i) \in \R^d \times E} \lambda^i(x) \leq \sup_{(x,i) \in \R^d \times E} \lambda^i(x) := \lambda_{\max} < + \infty.
\]
\end{assumption}

\begin{assumption}
\label{hyp:standingQ}
The mapping $x \mapsto Q(x)$ from $\mathbb{R}^d$ into the space of $(N \times N)$ matrices with real entries is continuous and for each $x \in \R^d$, $Q(x)$ is a stochastic matrix, irreducible, with zeros on the diagonal. 
\end{assumption}
\smallskip 

\noindent To define the state space $K$ for the three-component process $(X,\tau,I)$, let us introduce for $i \in E$ the time 
$$
 \bar t^i = \inf\{t \geq 0:  G^i(t) = 0\}, 
$$
where $G^i(t) := \mu^i(t,\infty)$ and with the usual convention that $\inf \emptyset = +\infty$. Note that by right continuity of $ G^i$, $ G^i( \bar t^i) = 0$.  Set 
\begin{equation}
\label{eq:defK}
K= \biggl\{z = (x,s,i) \in \R^d \times \R_+ \times E: s < s^i(x), \quad \int_{-s}^0 \lambda^i(\varphi_r^i(x)) \ dr < \bar t^i  \biggr\}
\end{equation}
where we recall from Assumption \ref{hyp:standingF} that for all $x \in \R^d$, $s^i(x)$ is such that $(-s^i(x), \infty)$ is the maximal interval of definition for $t \mapsto \varphi^i_t(x)$. We also set, for all $i \in E$, 
$$ 
K^i = \left\{(x,s) \in \R^d \times \R_+: (x,s,i) \in K\right\}. 
$$
For $i \in E$, the set $K^i$ is open in $\R^d \times \R_+$: Indeed, writing 
$$ 
K^i = \{(x,s) \in \R^d \times \R_+: s < s^i(x)\} \cap \biggl\{(x,s) \in \R^d \times \R_+: \int_{-s}^0 \lambda^i(\varphi^i_r(x)) dr < \bar{t}^i \biggr\} =: K^i_1 \cap K^i_2, 
$$ 
one can conclude by observing that $K^i_2$ is open by joint continuity of 
$$ 
(x,s) \mapsto \int_{-s}^0 \lambda^i(\varphi^i_r(x)) dr 
$$ 
and that $K^i_1$ is open by Theorem I.3.1 in \cite{hale1980ordinary}. Also note that $\R^d \times \{0\} \times E\subset K$ because $\bar t^i > 0$ by virtue of $G^i(0) = 1$ and the fact that $G^i$ is right-continuous. 
For  $z = (x,s,i) \in K$ and $t \geq 0$, we define  
$$
 G_z(t) = G_{x,s}^i(t):=
\frac{G^i \left( \int_{-s}^t \lambda^i(\varphi^i_r(x)) \ dr \right)}{ G^i \left( \int_{-s}^0 \lambda^i(\varphi^i_r(x)) \ dr \right)}.  
$$
Observe that  $ G^i \left(\int_{-s}^0 \lambda^i(\varphi^i_r(x)) \ dr \right) > 0$ if and only if $z \in K$, so the quotient above is well-defined when $z \in K$. Moreover,  as $\lambda^i$ is bounded away from $0$, $1 - G_z$ is the cumulative distribution function of a Borel probability measure $\mu_z = \mu_{x,s}^i$ on $\mathbb{R}_+$. In case when $z = (x,0,i)$, we sometimes write $G^i_x$ and $\mu^i_x$ in lieu of $G^i_{x,0}$ and $\mu^i_{x,0}$. Note that the random variable $S_1$ defined by~\eqref{eq:S1intro} is distributed according to $\mu_x^i$. Informally, the construction of the process $(X,\tau,I)$ starting from a point $(X_0, \tau_0, I_0) = z = (x, s, i)$ with $s \geq 0$ is now as follows. Until the first jump time $T_1$ of $I$, whose law is given by $\mu_z$, $X_t = \varphi^i_t(x)$ while $\tau_t = s + t$. At time $T_1$, $I$ jumps to some $j \neq i$ with probability $q_{i,j}(X_{T_1})$, while $\tau$ jumps to $0$. After that, the process evolves as described at the beginning of Section \ref{sec:setting}, according to Equations~\eqref{eq:Xintro}, \eqref{eq:introI} and \eqref{eq:introtau}. 

We make this construction precise in Section~\ref{sec:PDP} below. There, we also give a rigorous justification for why $K$ can be used as the state space for $(X, \tau, I)$. To conclude Section~\ref{ssec:assumptions}, we motivate the choice of the law of $T_1 = S_1$ when $\tau_0 = s > 0$. By definition, $\tau_t$ is at time $t$ the time elapsed since the latest jump. If at time $0$, this time is equal to $s$, it is as if, when starting, the process had already waited for a time $s$ since some fictitious jump at time $-s$. Since, at time $0$, the process $(X,I)$ is at the point $(x,i)$, it is as if $(X,I)$ had started at the point $(\varphi_{-s}^i(x),i)$ at time $-s$ and had not performed a jump on the time interval $[-s,0]$. Thus, intuitively, $S_1 + s$ should have the law of the first jump time of the process $(X,I)$ starting from $(\varphi_{-s}^i(x),i)$, knowing that this first jump time is greater than $s$. In other words, we want that
\begin{equation}
\label{eq:surivalS1}
\mathbb{P}(S_1 > t) = \mathbb{P}(S_1 + s > t +s) = \frac{G^i(\int_0^{t+s} \lambda^i(\varphi^i_r(\varphi^i_{-s}(x))) dr)}{G^i(\int_0^s \lambda^i(\varphi^i_r(\varphi^i_{-s}(x))) dr)} = G_z(t),
\end{equation}
where the last equality comes from the flow property of $\varphi^i$.

%Finally, observe that if $Z_0$ was not an element of $K$, with the intuition described above, the process $(X,\tau,I)$ would have to jump immediately (and then would immediately enter $K$), a situation we want to avoid. That is why we use $K$ as the state space for the process $Z$. In Proposition \ref{prop:K} at the end of the next section, we give a rigorous justification for why this can indeed be done. 

\subsection{Construction of the process $(X,\tau,I)$} \label{sec:PDP}
We first define a marked point process $(T_k, \bm{Z}_k)_{k \geq 0}$ (see e.g. Definition 2.21 in \cite{J06}) with $(T_k)_{k \geq 0}$ taking values in $(0, \infty)$ and representing the switching times, and $(\bm{Z}_k)_{k \geq 0}= (\bm{X}_k, \bm{\tau}_k, \bm{I}_k)_{k \geq 0}$ taking values in $\R^d \times \R_+ \times E$ and giving the position of the process $(X,\tau,I)$ at switching times. Let $(U_k)_{k \geq 1}$ and $(V_k)_{k \geq 1}$ be independent i.i.d. sequences of random variables, living on some underlying probability space $(\Omega, \F, \mathbb{P})$, that are uniformly distributed on $(0,1]$. Expectation with respect to the probability measure $\mathbb{P}$ will be denoted by $\mathbb{E}$. For further use in the paper, we introduce the following $\sigma$-fields on $\Omega$: for a family of random variables $W  = (W_k)_{k \geq 1}$ (typically $W_k=U_k$, $V_k$ or $(U_k,V_k)$) and $m \in \mathbb{N}$,
\[
\mathcal{F}_m(W) = \sigma( W_k, \, 1 \leq k \leq m), \quad  \mathcal{F}(W) = \sigma( W_k, \,  k \geq 1).
\]
 For $z = (x,s,i) \in K$, let 
$$ 
\psi_z(u) = \psi_{x,s}^{i}(u) := \inf \{r \geq 0:  G_{x,s}^{i}(r) \leq u\}, \quad u \in (0,1].  
$$ 
Note that since $\lim_{r \to \infty} G_{x,s}^{i}(r) = 0$, the set $\{r \geq 0: \  {G}_{x,s}^{i}(r) \leq u\}$ is nonempty for every $u \in (0,1]$, so $\psi_{x,s}^{i} < \infty$. Also notice that $\psi_{x,s}^{i}(u) \leq t$ if and only if $G_{x,s}^{i}(t) \leq u$, which implies that the random variable $\psi_{x,s}^{i}(U_1)$ has distribution $\mu_{x,s}^{i}$. In addition, we define 
$$ 
\theta_x^i(u) = \theta_{x,i}(u)  := \begin{cases}
                                (x,0,1), & \quad u \in (0,q_{i,1}(x)], \\
                                (x,0,2), & \quad u\in (q_{i,1}(x), q_{i,1}(x) + q_{i,2}(x)], \\
                                      \vdots & \quad \vdots \\
                                 (x,0,N), & \quad u \in (1 - q_{i,N}(x), 1]. 
\end{cases}
$$ 
Let $z=(x,s,i)$ be a starting point in $K$. Set $T_0 := 0$ and $\bm{Z}_0 = (\bm{X}_0, \bm{\tau}_0, \bm{I}_0) := (x,s,i)$. Then we set
$$
S_1 := \psi_{\bm{Z}_{0}}(U_1), \quad T_1 := T_{0} + S_1 = S_1, \quad \bm{Z}_1 = (\bm{X}_1, \bm{\tau}_1, \bm{I}_1) := \theta_{\varphi^{\bm{I}_{0}}_{S_1}(\bm{X}_{0})}^{ \bm{I}_{0}}(V_1). 
$$
As noticed before, the law of $S_1$ is $\mu_{x,s}^i$, exactly as for the random variable $S_1$ of Equation~\eqref{eq:S1intro} in the case where $s=0$, or, in the general case, as for $S_1$ in Equation~\eqref{eq:surivalS1}. Thus, $T_1 = S_1$ represents the first switching time. Now, by definition of $\theta$, $\bm{X}_1 = \varphi^i_{T_1}(x)$ is the position of the continuous component $X$ at time $T_1$; while $\bm{I}_1 = j$ with probability $q_{i,j}(\bm{X}_1) = q_{i,j}(\varphi^i_{T_1}(x))$, and hence $\bm{I}_1$ can be used to construct the discrete component $I$ at the first switching time $T_1$. Finally, $\bm{\tau}_1 = 0$, which is consistent with the idea that a jump occurs at time $T_1$. We now  iteratively define for $k \geq 2$ 
$$
S_k := \psi_{\mathbf{Z}_{k-1}}(U_k), \quad T_k := T_{k-1} + S_k, \quad \bm{Z}_k = (\bm{X}_k, \bm{\tau}_k, \bm{I}_k) := \theta_{\varphi^{\bm{I}_{k-1}}_{S_k}(\bm{X}_{k-1})}^{ \bm{I}_{k-1}}(V_k). 
$$
Observe that $\bm{\tau}_k = 0$ for every $k \geq 1$. 
The resulting process $(T_k, \bm{Z}_k)_{k \geq 0}$ is a marked point process from which we now define the process $Z = (X,\tau,I)$.

\begin{definition}
For $k \in \N_0$ and $T_k \leq t < T_{k+1}$, let 
\begin{equation}
\label{eq:PDPfromMPP}
Z_t = (X,\tau,I)_t :=( \varphi_{t-T_k}^{\bm{I}_k}(\bm{X}_k), \bm{\tau}_k + t-T_k, \bm{I}_k).
\end{equation}
\end{definition}
Expressed slightly differently, for $T_k \leq t < T_{k+1}$, $ (X, \tau, I)_t = \phi_{t-T_k}(\bm{Z}_k)$, where $(\phi_t)_{t  \geq 0}$ is the semi-flow on $\R^d  \times \R_+ \times E$ defined by
$$
\phi_t(x,s,i) := ( \varphi_t^i(x), t +s, i).
$$
One sees easily that the component $X$ in \eqref{eq:PDPfromMPP} satisfies Equation \eqref{eq:Xintro}, while
\[
\tau_t = \begin{cases}
\bm{\tau}_0 + t \quad \mbox{if} \quad t < T_1\\
t - T_{N_t} \quad \mbox{otherwise.}
\end{cases}
\]
Here, 
$$
N_t := \sum_{k=1}^{\infty} \mathbbm{1}_{t \geq T_k}
$$
denotes the number of switches that have occurred up to time $t$. 

\begin{remark}
\rm It is clear from its definition that $(Z_t)_{t \geq 0}$ has right-continuous sample paths almost surely.
\end{remark}

\begin{remark} \rm Note that since $\psi_z(u) = 0$ is equivalent to $u \geq G_z(0) = 1$, we have $\mathbb{P}(\bigcup_{k \geq 1} \{S_k = 0\}) = 0$. Hence, with probability one, there is a strictly positive time gap between any two switches.
\end{remark}

We conclude this subsection with two propositions. In Proposition~\ref{prop:finitejumps}, we prove that there cannot be an accumulation of jumps in finite time, i.e. $T_k \to \infty$ almost surely as $k$ goes to infinity. This implies that the process $Z$ is non-explosive and well defined for all $t \geq 0$. In Proposition~\ref{prop:K}, we prove that one has almost surely $Z_t \in K$ for all $t \geq 0$, so $K$ can indeed by taken as the state space for the process $Z$.

\begin{proposition}
We have
$$
\sup_{z \in K} \mathbb{E}_z( N_t) < \infty.
$$
In particular, $T_k \to \infty$ almost surely as $k \to \infty$.
\label{prop:finitejumps}
\end{proposition}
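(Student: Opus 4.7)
The plan is to exploit two facts: (i) after the first switch, $\bm{\tau}_{k-1}=0$ for $k\ge 2$, so the conditional distribution of $S_k$ simplifies nicely, and (ii) because each $\mu^i$ places no mass at $0$ and $\lambda^i$ is bounded above, the tails $G^i_{x,0}$ at small times can be bounded below uniformly in $x$ and $i$. Combining these gives a uniform stochastic minorization of $S_k$ from below, which is enough to dominate $N_t$ by the hitting time of a random walk.

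First I would pin down the uniform estimate. Since $G^i(0)=\mu^i(0,\infty)=1-\mu^i(\{0\})=1$ and $G^i$ is right-continuous, for any $\epsilon\in(0,1)$ one can choose $\delta_i>0$ with $G^i(\delta_i)\ge 1-\epsilon$. With $\delta:=\min_i \delta_i/\lambda_{\max}$ (finite since $E$ is finite and $\lambda_{\max}<\infty$ by Assumption~\ref{standinglambda}), for every $k\ge 2$ and every $\bm{Z}_{k-1}=(x,0,i)$,
\[
\mathbb{P}\bigl(S_k>\delta\,\big|\,\mathcal{F}_{k-1}\bigr)
= G^i\!\left(\int_0^{\delta}\lambda^i(\varphi_r^i(x))\,dr\right)
\ge G^i(\lambda_{\max}\delta)\ge 1-\epsilon.
\]
I would use the explicit construction $S_k=\psi_{\bm{Z}_{k-1}}(U_k)$ to rewrite $\{S_k>\delta\}=\{U_k<G_{\bm{Z}_{k-1}}(\delta)\}$; since $G_{\bm{Z}_{k-1}}(\delta)\ge 1-\epsilon$ a.s. for $k\ge 2$, the indicators $\xi_k:=\mathbbm{1}_{\{S_k>\delta\}}$ satisfy $\xi_k\ge B_k:=\mathbbm{1}_{\{U_k<1-\epsilon\}}$ with $(B_k)_{k\ge 2}$ i.i.d.\ Bernoulli$(1-\epsilon)$, independent of the starting point $z$.

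Second, I would convert this into a bound on $N_t$. On $\{T_k\le t\}$ one has $\sum_{j=2}^k \delta\,\xi_j\le T_k-S_1\le t$, hence $\sum_{j=2}^k B_j\le t/\delta$. Letting $\sigma:=\inf\{k\ge 2:\sum_{j=2}^k B_j>t/\delta\}$, this yields $N_t\le \sigma$ pointwise, for every starting point $z\in K$. Since $\sigma-1$ is a negative binomial random variable (the waiting time for $\lfloor t/\delta\rfloor+1$ successes in i.i.d.\ Bernoulli$(1-\epsilon)$ trials), $\mathbb{E}[\sigma]=1+(\lfloor t/\delta\rfloor+1)/(1-\epsilon)<\infty$, giving
\[
\sup_{z\in K}\mathbb{E}_z[N_t]\le 1+\frac{\lfloor t/\delta\rfloor+1}{1-\epsilon}<\infty.
\]
For the second assertion, note that $\{T_k\not\to\infty\}=\bigcup_{m\in\mathbb{N}}\{N_m=\infty\}$, and each $\{N_m=\infty\}$ is a $\mathbb{P}_z$-null set since $\mathbb{E}_z[N_m]<\infty$, so $T_k\to\infty$ a.s.

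The only delicate point I expect is the treatment of the first holding time $S_1$: when $\bm{\tau}_0=s$ is close to the upper edge of the support of $\mu^i$, the ratio defining $G_{x,s}^i$ may give $S_1$ arbitrarily small in distribution, so no uniform lower-tail bound holds for $k=1$. The resolution is simply that the above argument only uses $S_2,S_3,\ldots$, which is why discarding the first term $S_1$ in the inequality $T_k-S_1\le t$ is crucial; the index shift in the minorization by $(B_k)_{k\ge 2}$ keeps everything uniform over $z\in K$.
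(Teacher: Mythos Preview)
Your proof is correct and rests on the same key observation as the paper's: for $k\ge 2$ one has $\bm\tau_{k-1}=0$, hence $G_{\bm Z_{k-1}}(t)\ge \min_i G^i(\lambda_{\max}t)$ uniformly, while the first holding time $S_1$ must be set aside. The paper carries this out by stochastically dominating $(S_k)_{k\ge 2}$ from below by an i.i.d.\ sequence $(\tilde S_k)$ with survival function $H(t)=\min_i G^i(\lambda_{\max}t)$ and then invoking classical renewal theory (the renewal function is finite because $1-H$ has no atom at $0$). You instead discretise: you fix one threshold $\delta$, couple $\mathbbm{1}_{\{S_k>\delta\}}$ with i.i.d.\ Bernoulli$(1-\epsilon)$ variables via the common uniforms $U_k$, and bound $N_t$ by a negative binomial waiting time. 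This is a strictly more elementary implementation of the same idea, with the advantage of being fully self-contained (no external reference), at the cost of a slightly cruder bound; the paper's version is a line shorter once the renewal result is granted.
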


\bpf
%Let $\lambda > 0$ such that
%$$
%\sup_{(x,i) \in \R^d \times E} \lambda_i(x) \leq \lambda.
%$$
By Assumption \ref{standinglambda}, for all $(x,i) \in \R^d \times E$ and $t \geq 0$, $G_{x,0,i}(t) \geq G^i( \lambda_{\max} t) \geq H(t)$, where 
$$
H(t) := \min_{i \in E} G^i( \lambda_{\max} t).
$$
Since for all $i$, $1-G^i$ is a cumulative distribution function, so is $1-H$. If we let $(\tilde S_k)_{k \geq 1}$ be an i.i.d. sequence of random variables with cumulative distribution function $1-H$, $\tilde{T}_0 = 0$ and $\tilde{T}_k = \tilde{T}_{k-1} + \tilde{S}_k$ for $k \geq 1$, then for all $z \in K$ 
\begin{equation}   \label{eq:N_t_ineq}
\EE_{z}( N_t ) \leq 1 + \sum_{k=1}^{\infty} \mathbb{P}(t \geq \tilde{T}_k).
\end{equation}
Note that since the $\mu^i$ do not put mass on $0$, neither does the distribution associated to $1-H$. This implies that the right-hand side of \eqref{eq:N_t_ineq} is finite and thus the result (see e.g. section 1.4 in \cite{LO01}).
\epf

%\bpf 
%With the notations of the proof of Lemma \ref{lem:finitejumps}, let $\hat{T}_0 = 0$, $\hat{T}_1 = T_1$ and  $\hat{T}_{k+1} =  \hat{T}_k + \tilde{S}_{k}$ and denote by $\hat{N}$ the counting process associated to $(\hat{T}_k)_{k \in \mathbb{N}}$. Then, we have 
%\[
%\mathbb{E}_z(N_t) \leq \mathbb{E}_z( \hat N_t).
%\]
%For all $k \geq 1$, we have $\hat{T}_k = T_1 + \tilde{T}_{k-1}$, which implies that for $k \geq 2$, 
%\begin{equation}
%\mathbb{P}_z( t \geq \hat{T}_k) \leq \mathbb{P} ( t \geq \tilde{T}_{k - 1})
%\end{equation}
%and thus 
%\[
%\mathbb{E}_z(N_t) \leq \mathbb{P}_z(t \geq T_1) + \mathbb{E}( \tilde{N}_t),
%\]
%which concludes the proof.
%\epf

\begin{proposition}    \label{prop:K}
For every $t \geq 0$, one has $Z_t \in K$.      
\end{proposition}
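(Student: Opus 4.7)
The plan is to fix an arbitrary $t \geq 0$, let $k \geq 0$ be the unique index with $T_k \leq t < T_{k+1}$ (which exists by Proposition~\ref{prop:finitejumps}), and verify separately the two inequalities that define $K$ for the point $Z_t = (\varphi_{t-T_k}^{\bm{I}_k}(\bm{X}_k),\, \bm{\tau}_k + t - T_k,\, \bm{I}_k)$. It is convenient to split into the cases $k = 0$ (where the starting $\bm{\tau}_0 = s$ may be positive) and $k \geq 1$ (where $\bm{\tau}_k = 0$); the argument in each case is essentially the same, so I describe the general strategy. Writing $z_k := \bm{Z}_k$, $i := \bm{I}_k$, $x := \bm{X}_k$, $s := \bm{\tau}_k$, and $h := t - T_k \in [0, S_{k+1})$, I need to show that
\[
 s + h < s^i(\varphi_h^i(x)) \quad \text{and} \quad \int_{-(s+h)}^0 \lambda^i(\varphi_r^i(\varphi_h^i(x)))\, dr < \bar t^i.
\]

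For the first inequality, I would use the flow property together with forward-completeness. Since $z_k \in K$ (which holds for $k = 0$ by assumption and for $k \geq 1$ because $\bm{\tau}_k = 0$ and $\R^d \times \{0\} \times E \subset K$), we know $s < s^i(x)$. Starting from $y := \varphi_h^i(x)$ and flowing backward, we reach $x = \varphi_{-h}^i(y)$ in finite time, after which we may continue backward for a further time strictly less than $s^i(x)$. Hence $s^i(y) > h + s^i(x) > h + s$, giving the strict inequality. A single sentence invoking Assumption~\ref{hyp:standingF} and the semigroup property suffices here.

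For the second inequality, the substitution $u = r + h$ converts the integral into
\[
 \int_{-s}^{h} \lambda^i(\varphi_u^i(x))\, du,
\]
so I must check this quantity is strictly smaller than $\bar t^i$. This is the content that really uses the definition of $S_{k+1} = \psi_{z_k}(U_{k+1})$. By construction of $G_{z_k}$, if the displayed integral were $\geq \bar t^i$ then the numerator $G^i\bigl(\int_{-s}^{h} \lambda^i(\varphi_u^i(x))\, du\bigr)$ would vanish (by the definition of $\bar t^i$ and right-continuity of $G^i$), forcing $G_{z_k}(h) = 0 \leq U_{k+1}$ and hence $h \geq \psi_{z_k}(U_{k+1}) = S_{k+1}$, contradicting $h < S_{k+1}$. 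This uses only that $U_{k+1} > 0$ almost surely, which is the reason we took the $U_k$ uniform on $(0,1]$.

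The argument is essentially bookkeeping; the only subtle point (which I would flag as the main obstacle) is keeping the strictness of all inequalities correct, in particular the passage from $G^i(\cdot) = 0$ to $\psi_{z_k}(U_{k+1}) \leq h$, and the fact that the maximal interval of existence is open on the left so that $s^i(\varphi_h^i(x)) > h + s^i(x)$ rather than only $\geq$. Once these are handled carefully, the two cases $k=0$ and $k \geq 1$ are dispatched uniformly, and since $t \geq 0$ was arbitrary the conclusion $Z_t \in K$ for every $t \geq 0$ follows pathwise (not merely almost surely, aside from the null event on which some $U_{k+1} = 0$, which was already excluded in the construction).
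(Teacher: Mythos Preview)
Your proposal is correct and essentially identical to the paper's proof: the paper also splits into $k=0$ and $k\geq 1$, uses $\bm{Z}_k\in K$ (trivially for $k\geq 1$ since $\bm{\tau}_k=0$), and for the second defining inequality runs the same contradiction through $G_{z_k}(h)=0\Rightarrow \psi_{z_k}(U_{k+1})\leq h$. One small correction on the point you flagged as subtle: by Cauchy--Lipschitz one has the \emph{equality} $s^i(\varphi_h^i(x)) = s^i(x) + h$, not a strict inequality; the strictness in $s+h < s^i(\varphi_h^i(x))$ comes entirely from the hypothesis $s < s^i(x)$ (i.e., from $z_k\in K$), not from openness of the maximal interval.
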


\begin{proof}
We recall the notation
$$ 
K^i = \{(x,s) \in \R^d \times \R_+: s < s^i(x)\} \cap \biggl\{(x,s) \in \R^d \times \R_+: \int_{-s}^0 \lambda^i(\varphi^i_r(x)) dr < \bar{t}^i \biggr\} =: K^i_1 \cap K^i_2
$$ 
and for $h = 1, 2$ we set
\[
K_h = \bigcup_{i \in E} K^i_h \times \{i\}.
\]
We also note that by the Cauchy - Lipschitz theorem, for all $x \in \R^d$, $i \in E$ and $t \geq 0$, $s^i(\varphi_t^i(x)) = s^i(x) + t$.

Let $Z_0 = (x,s,i) \in K$.    First, since $\mathbb{R}^d \times \{0\} \times E \subset K$, we have that $\bm{Z}_n \in K$ for all $n \geq 0$. Now, we prove that for all $n \geq 0$, one has $Z_t \in K \ \forall t \in [T_n, T_{n+1})$. First, consider the case where $n = 0$. If $t \in [0, T_1)$, then $Z_t = ( \varphi_t^i(x), t+s, i)$. Since $Z_0 \in K$, we have $s < s^i(x)$ and therefore $t + s < t + s^i(x) = s^i(\varphi^i_t(x))$ which entails that for all $t \in [0, T_1)$, $Z_t \in K_1$. Now, we claim that for all $t < T_1$, $\int_{-s}^t \lambda^i( \varphi^i_r(x)) dr < \bar t^i$, which implies that $Z_t \in K_2$. Indeed, if one had $\int_{-s}^t \lambda^i( \varphi^i_r(x)) dr \geq \bar t^i$, then, by definition of $\bar t^i$, $G^i(\int_{-s}^t \lambda^i( \varphi^i_r(x)) dr) = 0$, hence $G^i_{x,s}(t) = 0$. But since 
    \[
T_1 = S_1 = \psi_z(U_1) = \inf\{ r \geq 0: \: G^i_{x,s}(r) \leq U_1 \},
    \]
 $G^i_{x,s}(t) = 0$ implies that $T_1 \leq t$. As a result,  $Z_t \in K$ for all $t \in [0, T_1)$.  Now, let $n \geq 1$. Then, for all $t \in [T_n, T_{n+1})$, we have $Z_t = (\varphi_{t - T_n}^{\bm{I}_n}(\bm{X_n}), t - T_n, \bm{I}_n)$. In addition, for all $t \in [T_n, T_{n+1})$, one has $t - T_n < t - T_n + s^{\bm{I}_n}( \bm{X}_n) = s^{\bm{I}_n}( \varphi^{\bm{I}_n}_{t - T_n}(\bm{X}_n))$. Therefore, for all $t \in [T_n, T_{n+1})$, $Z_t \in K_1$. Now, as for the case $n = 0$, using that 
 \[
 S_{n+1} = \psi_{\bm{Z}_n}(U_{n+1}) = \inf \{r \geq 0 : \: G^{\bm{I}_n}_{\bm{X}_n}(r) \leq U_{n+1} \},
 \]
 we have that for all $t \in [T_n, T_{n+1})$, $\int_0^{t - T_n} \lambda^{\bm{I}_n} ( \varphi_r^{\bm{I}_n}(\bm{X}_n)) dr < \bar t^{\bm{I}_n} $, which proves that for all $t \in [T_n, T_{n+1})$, $Z_t \in K_2$. Hence, we have proven that, for all $t \geq 0 $, $Z_t \in K$.
\end{proof}

\medskip

%We also prove that  $K$ can indeed be taken as the state space for the process $Z$:

\subsection{The Markov property and link with Davis' Piecewise Deterministic Markov Process} \label{sec:Markov}

We now state and prove the main result of Section \ref{sec:setting}.

\begin{proposition}      \label{prop:strong_Markov} 
The stochastic process $Z = (X,\tau, I)$ is strong Markov. 
\end{proposition}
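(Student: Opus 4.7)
The plan is to verify the simple Markov property by a direct computation from the marked-point-process construction of $Z$, and then to upgrade to the strong Markov property via a time-shift argument exploiting the independence structure of the driving uniforms.

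\textbf{Simple Markov property.} Fix $t \geq 0$ and let $\mathcal{F}_t^Z := \sigma(Z_s : s \leq t)$. For a bounded measurable $f : K \to \mathbb{R}$ and $h \geq 0$, I would prove
\[
\mathbb{E}[f(Z_{t+h}) \mid \mathcal{F}_t^Z] = (P_h f)(Z_t), \qquad P_h f(z) := \mathbb{E}_z[f(Z_h)],
\]
by decomposing along the almost-sure partition $\bigcup_{k \geq 0} \{T_k \leq t < T_{k+1}\}$. On each such event, $Z_t = \phi_{t-T_k}(\bm{Z}_k)$, and the post-$t$ trajectory of $Z$ is a measurable functional of $Z_t$, the residual waiting time $T_{k+1} - t$, and the independent uniforms $(V_{k+1}, U_{k+2}, V_{k+2}, \ldots)$. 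The core identity is that, conditionally on $\mathcal{F}_t^Z$ and on $\{T_k \leq t < T_{k+1}\}$, the residual time has distribution $\mu_{Z_t}$. Since $S_{k+1} = \psi_{\bm{Z}_k}(U_{k+1})$, the event $\{T_{k+1} > t\}$ is equivalent to $\{U_{k+1} < G_{\bm{Z}_k}(t - T_k)\}$ on $\{T_k \leq t\}$; conditionally on $\mathcal{F}_t^Z$ (which determines $\bm{Z}_k$ and $T_k$ fully but constrains $U_{k+1}$ only through this truncation), $U_{k+1}$ is uniform on $[0, G_{\bm{Z}_k}(t-T_k)]$, and
\[
\mathbb{P}\bigl(S_{k+1} - (t - T_k) > u \bigm| \mathcal{F}_t^Z,\, T_k \leq t < T_{k+1}\bigr) = \frac{G_{\bm{Z}_k}((t - T_k) + u)}{G_{\bm{Z}_k}(t - T_k)}.
\]
A change of variable and the flow identity $\varphi^{\bm{I}_k}_{r + (t - T_k)}(\bm{X}_k) = \varphi^{\bm{I}_k}_r(\varphi^{\bm{I}_k}_{t - T_k}(\bm{X}_k))$ identify this ratio with $G_{Z_t}(u)$, as already noted informally in~\eqref{eq:surivalS1}.

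\textbf{Strong Markov property.} I would establish the strong Markov property directly from the marked-point-process construction rather than via a discretization-and-limit scheme (continuity of $P_h$ is only proved in Section~\ref{sec:Feller}). Let $T$ be a finite $(\mathcal{F}_t^Z)$-stopping time. On each event $\{T_k \leq T < T_{k+1}\}$, the same conditional-truncation argument shows that $T_{k+1} - T$ has distribution $\mu_{Z_T}$ given $\mathcal{F}_T^Z$ and that the subsequent marks $(V_{k+1}, U_{k+2}, V_{k+2}, \ldots)$ remain independent of $\mathcal{F}_T^Z$. Consequently the shifted marked point process $(T_{N_T + k} - T, \bm{Z}_{N_T + k})_{k \geq 1}$ is a copy of the original construction started from $Z_T$ and driven by uniforms independent of $\mathcal{F}_T^Z$, which yields the strong Markov property.

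\textbf{Main obstacle.} The delicate step is the conditional survival computation on the event $\{T_k \leq t < T_{k+1}\}$: one must check that the only information $\mathcal{F}_t^Z$ carries about $U_{k+1}$ is precisely the truncation $\{U_{k+1} < G_{\bm{Z}_k}(t - T_k)\}$, and that nothing else about $(U_j, V_j)_{j \geq k+1}$ leaks in. This requires careful bookkeeping of measurability: one has to show that $\mathcal{F}_t^Z$, restricted to this event, is contained in the sigma-field generated by $\bm{Z}_k$, $T_k$, and $\{U_{k+1} < G_{\bm{Z}_k}(t - T_k)\}$, which reduces to the fact that the trajectory of $Z$ over $[T_k, t]$ is a deterministic function of $\bm{Z}_k$ and $t - T_k$ through the semi-flow $\phi$. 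Once this is in place, both parts follow without further complication. Alternatively, one may sidestep the bookkeeping by identifying $Z$ as a piecewise deterministic Markov process in the sense of Davis and invoking the standard strong Markov theorem for this class (cf.~\cite{J06}).
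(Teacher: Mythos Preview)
Your direct approach is correct and would work, but it differs from the paper's proof, which is considerably shorter. The paper does not verify the Markov property by hand; instead it invokes Theorem~7.5.1 in~\cite{J06} for piecewise deterministic Markov processes in Jacobsen's sense. The only thing that needs to be checked is the functional equation
\[
G_z(t+r) = G_z(r)\, G_{\phi_r(z)}(t),
\]
which is exactly the ratio identity you wrote down for the residual waiting time, and which the paper verifies in a few lines by a change of variable in the integral defining $G_z$. To fit Jacobsen's framework the paper must first enlarge the state space from $K$ to $\widetilde{K} = \{(x,s,i) : s < s^i(x)\}$, since $K$ is not invariant under the semi-flow $\phi$; on $\widetilde{K}\setminus K$ one sets $G_z \equiv 1$ (no jump ever occurs) and checks the functional equation separately in that region.

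Your alternative remark at the end is therefore close to what the paper actually does, with two caveats: it is Jacobsen's framework rather than Davis's (the latter requires absolute continuity of the $G^i$, which the paper treats as a separate, optional proposition), and the state-space extension to $\widetilde{K}$ is a necessary technical step you did not anticipate. Your primary route---the explicit filtration bookkeeping---buys self-containment at the cost of the measurability issue you flagged; the paper's route buys brevity by delegating that bookkeeping to~\cite{J06} and isolating the single substantive computation (the functional equation for $G_z$).
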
 

\bpf We will derive the strong Markov property from Theorem~7.5.1 in~\cite{J06}. In order for $Z$ to fit the framework in~\cite{J06}, we need to define the process on a larger state space $\widetilde{K} \supset K$ which, unlike $K$, is invariant under the semi-flow $(\phi_t)_{t \geq 0}$. 
Define  
    $$
    \widetilde{K} = \left\{z = (x,s,i) \in \R^d \times \R_+ \times E: s < s^i(x) \right\}. 
    $$
If the starting point $z = (x,s,i)$ of $Z$ is contained in $K$, we define $Z$ exactly as before. And if $z \in \widetilde{K} \setminus K$, we simply set $Z_t = (\varphi^i_t(x), s+t, i) = \phi_t(z)$ for every $t \geq 0$. By Theorem 7.5.1 in~\cite{J06}, it is enough to show that the process $Z$, considered on the extended state space $\widetilde{K}$, is a Piecewise Deterministic Markov Process (PDMP) in the sense of Jacobsen. That is, it is a Piecewise Deterministic Process (in the sense of Section 3.3 in \cite{J06}) with the Markov property. For $z \in \widetilde{K} \setminus K$, define $G_z(t) = 1$ for every $t \geq 0$. Then $1 - G_z$ can be interpreted as the cumulative distribution function of the probability measure on $[0, +\infty]$ which gives full mass to $+\infty$. By our construction and by Theorem 7.3.2 in \cite{J06}, we only need to check that for all $t,r \geq 0$ and for all $z \in \widetilde{K}$,
\begin{equation}
\label{eq:Gfunctional}
G_z(t+r) = G_z(r)  G_{\phi_r(z)}(t).
\end{equation}
Assume first that $\phi_r(z) \in K$. Then one also has $z \in K$ and, with $z = (x,s,i)$, 
\begin{align*}
 G_{\phi_r(z)}(t) & = 
\frac{ G^i \left( \int_{- (s+r)}^t \lambda^i(\varphi^i_u(\varphi^i_r(x))) \ du \right)}{ G^i \left( \int_{- (s+r)}^0 \lambda^i(\varphi^i_u(\varphi^i_r(x))) \ du \right)} \\
& = \frac{G^i \left( \int_{- s}^{t+r} \lambda^i(\varphi^i_u(x)) \ du \right)}{ G^i \left( \int_{-s}^r \lambda^i(\varphi^i_u(x)) \ du \right)} = \frac{ G_z(t+r)}{ G_z(r)}.
\end{align*}
Now assume that $\phi_r(z) \in \widetilde{K} \setminus K$. If $z \in \widetilde{K} \setminus K$, then $G_z(t+r) = G_z(r) = 1$ and~\eqref{eq:Gfunctional} clearly holds. If $z \in K$, we argue as follows: Since $\phi_r(z) \in \widetilde{K} \setminus K$, one has 
$$
\bar{t}^i \leq \int_{-(r+s)}^0 \lambda^i(\varphi^i_u(\varphi^i_r(x))) \ du = \int_{-s}^r \lambda^i(\varphi^i_u(x)) \ du, 
$$
so 
$$
G^i \left(\int_{-s}^r \lambda^i(\varphi^i_u(x)) \ du \right) = 0 = G^i \left(\int_{-s}^{t+r} \lambda^i(\varphi^i_u(x)) \ du \right).  
$$
This implies $G_z(r) = G_z(t+r) = 0$ and hence~\eqref{eq:Gfunctional}.

%Now assume that $z = (x,s,i) \in \widetilde{K} \setminus K$. Then, either $ G_i \left( \int_{-s}^{0} \lambda_i(\varphi^i_u(x)) \ du \right) = 0$, in which case $  G_{x,s,i}(r) =0$, or $ G_i \left( \int_{-s}^{0} \lambda_i(\varphi^i_u(x)) \ du \right) > 0$  and $ G_i \left( \int_{-s}^{t+r} \lambda_i(\varphi^i_u(x)) \ du \right)=0$, in which case $ G_{\phi_r(x,s,i)}(t) = 0$. It is clear that when $G_{x,s,i}(r) =0$, then $e G_{x,s,i}(t+r) =0$. Finally, assume that $ G_{\phi_r(x,s,i)}(t) = 0$. Then, either $G_i \left( \int_{-s}^r \lambda_i(\varphi^i_u(x)) \ du \right)=0$  or $ G_i \left( \int_{-s}^r \lambda_i(\varphi^i_u(x)) \ du \right) >0$ and $ G_i \left( \int_{-s}^{t+r} \lambda_i(\varphi^i_u(x)) \ du \right) = 0$. In both cases, it is readily seen that $ G_{x,s,i} = 0$. Thus \eqref{eq:Gfunctional} is satisfied, and the proposition is proved by Theorems 7.3.2 and 7.5.1 in \cite{J06}.
\epf

\begin{remark}   \rm 
  Note that when $z=(x,0,i)$, it is easily proven that the stochastic process $(\bm{X}_k, \bm{I}_k)_{k \geq 0}$ is a Markov chain on $\R^d \times E$, with Markov kernel 
$$ 
P_{x,i}(A \times E') := \sum_{j \in E'} \int_0^{\infty} \mathbbm{1}_A(\varphi^i_t(x)) q_{i,j}(\varphi^i_t(x)) \mu_{x,i}(dt). 
$$ 
\end{remark}
According to Proposition~\ref{prop:strong_Markov}, the process $Z$ is a Markov process. It is also clear that $Z$ is piecewise deterministic. A natural question is then whether $Z$ is a Piecewise Deterministic Markov Process (PDMP) in the sense of Davis \cite{Dav93}. According to Davis' definition of PDMP, the law of the first jump time starting from a point $z \in K$ should be given by 
$$
\mathbb{P}_z(T_1 > t) = \exp \biggl( - \int_0^t \alpha(\phi_r(z)) \ dr \biggr) \1_{t < t^*(z)}, 
$$
where $\alpha : \R^d  \times \R_+ \times E \to \R_+$ is a measurable function and $t^*(z)$ denotes the hitting time of the boundary of $K$ by the flow $t \mapsto \phi_t(z)$. 

\begin{proposition}
 Assume that each $ G^i$ is  absolutely continuous on $[0, \bar{t}^i)$. Then the process $Z$ is a PDMP in the sense of Davis.
\end{proposition}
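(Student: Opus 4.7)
The plan is to identify a measurable hazard rate $\alpha : \widetilde{K} \to \mathbb{R}_+$ such that $\mathbb{P}_z(T_1 > t) = \exp\bigl(-\int_0^t \alpha(\phi_r(z))\, dr\bigr)\, \mathbf{1}_{t < t^*(z)}$. The post-jump distribution already fits Davis' framework: from $\phi_{T_1}(z) = (\varphi^i_{T_1}(x), s + T_1, i)$ the process jumps to $(\varphi^i_{T_1}(x), 0, j)$ with probability $q_{i,j}(\varphi^i_{T_1}(x))$, and $(x',s',i') \mapsto \sum_j q_{i',j}(x')\, \delta_{(x',0,j)}$ is a measurable stochastic kernel by Assumption~\ref{hyp:standingQ}. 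So everything reduces to recasting $G_z$ in exponential form.

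The absolute continuity of $G^i$ on $[0, \bar t^i)$, together with $G^i > 0$ on that interval (by the very definition of $\bar t^i$), produces the hazard rate
\begin{equation*}
h^i(u) := -\frac{d}{du} \log G^i(u) = \frac{g^i(u)}{G^i(u)} \quad \text{for a.e.~} u \in [0, \bar t^i),
\end{equation*}
where $g^i = -(G^i)'$ denotes the density of the absolutely continuous part of $\mu^i$ on $[0, \bar t^i)$. Integration recovers $G^i(u) = \exp\bigl(-\int_0^u h^i(v)\, dv\bigr)$ on $[0, \bar t^i)$. I then set
\begin{equation*}
\alpha(x,s,i) := \lambda^i(x)\, h^i\!\left(\int_{-s}^0 \lambda^i(\varphi^i_r(x))\, dr\right),
\end{equation*}
extended arbitrarily on $\widetilde K \setminus K$; measurability is immediate since $h^i$ is Borel and all other building blocks are continuous.

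To verify the exponential representation, I would use the flow property $\varphi^i_u(\varphi^i_r(x)) = \varphi^i_{u+r}(x)$ to rewrite $\alpha(\phi_r(z)) = \lambda^i(\varphi^i_r(x))\, h^i(\Lambda(r))$ with $\Lambda(r) := \int_{-s}^r \lambda^i(\varphi^i_u(x))\, du$, then apply the substitution $v = \Lambda(r)$ — legitimate because $\Lambda$ is $C^1$ with $\Lambda' \geq \lambda_{\min} > 0$ — to obtain $\int_0^t \alpha(\phi_r(z))\, dr = \int_{\Lambda(0)}^{\Lambda(t)} h^i(v)\, dv$. Exponentiating gives $\exp\bigl(-\int_0^t \alpha(\phi_r(z))\, dr\bigr) = G^i(\Lambda(t))/G^i(\Lambda(0)) = G_z(t)$ for $t < t^*(z)$. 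For $t \geq t^*(z)$ the definition of $\bar t^i$ forces $G^i(\Lambda(t)) = 0$, hence $G_z(t) = 0$, matching the indicator in Davis' formula. The only nontrivial step is the passage from $G^i$ to its logarithmic derivative, which is precisely what absolute continuity grants us; the rest is bookkeeping with the flow.
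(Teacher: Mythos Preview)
Your proof is correct and follows essentially the same route as the paper: both define the hazard rate $\alpha(x,s,i) = \lambda^i(x)\, h^i\bigl(\int_{-s}^0 \lambda^i(\varphi^i_r(x))\, dr\bigr)$ via the logarithmic derivative of $G^i$, then verify $G_z(t) = \exp\bigl(-\int_0^t \alpha(\phi_r(z))\, dr\bigr)$ on $\{t < t^*(z)\}$. You give a bit more detail (the explicit change of variables $v = \Lambda(r)$ and the remark on the post-jump kernel), but the core argument is identical.
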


\bpf We first observe that for all $z = (x,s,i) \in K$,
$$
t^*(z) = \inf\{ t \geq 0 \: : \phi_t(z) \in \partial K \} = \inf \{ t \geq 0 \: : \int_{-s}^t \lambda^i( \varphi_r^i(x)) \ dr = \bar t^i \}.
$$
For all $z \in K$, we have by definition
$$
\mathbb{P}_z(T_1 > t) =  G_z(t) =  \frac{ G^i \left( \int_{-s}^t \lambda^i(\varphi^i_r(x)) \ dr \right)}{ G^i \left( \int_{-s}^0 \lambda^i(\varphi^i_r(x)) \ dr \right)},
$$
which is strictly positive if and only if $t < t^*(z)$. 
%If $\mu_i$ is a Dirac mass at $\bar t_i$, then the above quantity is equal to $1$ for all $t < t^*(z)$, hence choosing $\alpha(x,s,i) =0$ gives the result. 
Since  the function $G^i$ is absolutely continuous on $[0, \bar t^i)$,  there exists an integrable nonpositive function $g^i$ such that for all $t \in [0, \bar t^i)$, $ G^i(t) = 1 + \int_0^{t} g^i(r) \ dr.$ Set for all $z=(x,s,i) \in K$
$$
\alpha(z) = \frac{- \lambda^i(x) g^i \left( \int_{-s}^0 \lambda^i(\varphi^i_r(x)) \ dr \right)}{ G^i \left( \int_{-s}^0 \lambda^i(\varphi^i_r(x)) \ dr \right)}.
$$
Note that, for all $t < t^*(z)$, $\alpha(\phi_t(z)) = - {G}'_z(t)/ G_z(t)$. 
Thus,
$$
\mathbb{P}_z(T_1 > t) = \exp \biggl( - \int_0^t \alpha(\phi_s(z)) \ ds \biggr) \1_{t < t^*(z)},
$$
which concludes the proof.
\epf

\bigskip

\begin{remark}  \rm 
As an immediate corollary of the previous proposition and of Theorem 26.14 in \cite{Dav93}, we can write down a formula for the extended generator $L$ of $Z$ whenever the $G^i$'s are absolutely continuous. In that case, $L$ acts on regular functions $f : K \to \mathbb{R}$ according to 
 
\begin{align*}
Lf(x,s,i) = & \langle F^i(x), \nabla_x f(x,s,i) \rangle + \frac{\partial f}{\partial s}(x,s,i)  \\
& \quad + \lambda^i(x) \frac{g^i \left( \int_{-s}^0 \lambda^i(\varphi^i_r(x)) \ dr \right)}{ G^i \left( \int_{-s}^0 \lambda^i(\varphi^i_r(x)) \ dr \right)} \sum_{j \in E} q_{i,j}(x) \left( f(x,0,j) - f(x,s,i) \right).
\end{align*}
\end{remark}

%The statement below is an immediate corollary of the previous lemma and of Theorem 26.14 in \cite{Dav93}.
%
%\begin{corollary}
% Assume  the $G_i$'s are absolutely continuous. Then the extended generator $L$ of $Z$ acts on functions $f : K \to \mathbb{R}$ according to 
% 
%\begin{align*}
%Lf(x,i,s) = & \langle F^i(x), \nabla_x f(x,i,s) \rangle + \frac{\partial f}{\partial s}(x,i,s)  \\
%& \quad + \lambda_i(x) \frac{g_i \left( \int_{-s}^0 \lambda_i(\varphi^i_r(x)) \ dr \right)}{\overline G_i \left( \int_{-s}^0 \lambda_i(\varphi^i_r(x)) \ dr \right)} \sum_{j \in E} q_{i,j}(x) \left( f(x,j,0) - f(x,i,s) \right).
%\end{align*}
%
%\end{corollary}

\bigskip 

\section{Feller Property and invariant measure}
\label{sec:Feller}
 In this section we shall investigate the Feller property of the Markov process $Z$, as well as existence of invariant measures under compactness assumption on the flows.
 \subsection{Feller Property} In order to avoid confusion, we give our definition of Feller here, which is weaker than the usual one.
 
 \begin{definition}
 A Markov semigroup $(Q_t)_{t \geq 0}$ on a metric space $(M,d)$ is \emph{Feller} if for all continuous bounded functions $f : M \to \R$ and all $t \geq 0$, the function $(t,x) \mapsto Q_t f(x)$ is also continuous and bounded. 
 \end{definition}
 
From now on, we let $(P_t)_{t \geq 0}$ be the Markov semigroup on $K$ associated to $Z$, and let $C_b(K)$ denote the set of continuous bounded functions $f : K \to \mathbb{R}$. The following example shows that in general, this semigroup is not Feller.

\begin{example}    \rm
We consider $d = 1$, $E = \{0,1 \}$ and $\mu_0 = \delta_1$, $\lambda_0 = \lambda_1 = 1$ and two globally integrable vector fields $F^0 \neq F^1$. In particular, we have $G_0(t) = \1_{t \geq 1}$. Let $x \in \R$ and $s \in (0,1)$. Then the first jump time $T_1$ starting from $(x,0,s)$ is deterministic and occurs at time $1 - s$. At time $t - s$, the component $I$ in $E$ jumps to $1$ and $\tau$ jumps to $0$.  In particular, 
$$P_{1 - s} f(x,0,s) = f( \varphi^0_{1 - s}, 1, 0).
$$
 Now for $\varepsilon < s$, the process $Z$ started from $(x,0, s - \varepsilon)$ performs its first jump at the deterministic time $1  - s + \varepsilon $. In particular, it has not jumped at time $1 - s$, and thus 
 $$
 P_{1 - s} f(x,s,i - \varepsilon) = f( \varphi_{1 - s}^0(x), 0, 1 - \varepsilon).
 $$
 Thus $P_{1 - s} f$ is not continuous in general; for example if $f(x,s,i) = \frac{1}{1 + s}$. 
\end{example} 

The following proposition shows that the continuity of the $ G^i$ is a necessary and sufficient condition for the Feller property. Its proof is given in Section~\ref{sec:proofFeller}

\begin{proposition}
\label{prop:Feller}
The process $Z$ is Feller if and only if $ G^i$ is continuous for all $i$. 
\end{proposition}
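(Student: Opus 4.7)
The proof will have two parts: necessity (contrapositive) and sufficiency.

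\smallskip

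\noindent\textbf{Necessity.} The plan is to generalize the discontinuity in the preceding example. Suppose some $G^{i_0}$ has a jump of size $\alpha := \mu^{i_0}(\{t_0\}) > 0$ at some $t_0 > 0$. I would pick $x_0 \in \R^d$ and $s_0 > 0$ small enough that $z_0 := (x_0, s_0, i_0) \in K$ and such that there is some $t^* > 0$ with $\int_{-s_0}^{t^*} \lambda^{i_0}(\varphi^{i_0}_r(x_0))\,dr = t_0$; strict positivity of $\lambda^{i_0}$ and smallness of $s_0$ guarantee this. Then $G_{z_0}$ has a jump at $t^*$ of size $p = \alpha/G^{i_0}(\int_{-s_0}^{0} \lambda^{i_0}(\varphi^{i_0}_r(x_0))\,dr) > 0$, so $\mathbb{P}_{z_0}(T_1 = t^*) = p$. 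For the perturbed starting points $z_\eps := (x_0, s_0 - \eps, i_0)$ the corresponding atom sits at $t^*_\eps > t^*$ (with $t^*_\eps \to t^*$), hence $\mathbb{P}_{z_\eps}(T_1 = t^*) = 0$ and $G_{z_\eps}(t^*) \to G_{z_0}(t^{*-}) = G_{z_0}(t^*) + p$. Choosing $f(x,\tau,i) = g(\tau)$ with $g \in C_b(\R_+)$ and $g(0) \neq g(s_0+t^*)$, I would decompose
\[
P_{t^*}f(z) = \mathbb{P}_z(T_1 > t^*)\,g(s+t^*) + \mathbb{P}_z(T_1 = t^*)\,g(0) + R(z),
\]
where the remainder $R$, corresponding to the event $\{T_1 < t^*\}$, is governed by laws that vary continuously with $z$ since only absolutely continuous portions of $\mu_z$ come into play. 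Passing to the limit $\eps \to 0$ then yields $P_{t^*}f(z_\eps) \to P_{t^*}f(z_0) + p\bigl(g(s_0+t^*) - g(0)\bigr) \neq P_{t^*}f(z_0)$.

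\smallskip

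\noindent\textbf{Sufficiency.} Assume all $G^i$ are continuous; fix $f \in C_b(K)$ and $(t_0, z_0) \in \R_+ \times K$. Boundedness of $P_t f$ is immediate. For continuity, I would prove that almost surely (on $(\Omega, \F, \mathbb{P})$ underlying the $U_k, V_k$), the map $(t,z) \mapsto Z_t(z)$ is continuous at $(t_0, z_0)$ and conclude by bounded convergence. This rests on two claims:

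\emph{Claim A: for every $k \geq 1$, $z \mapsto (T_k(z), \bm{Z}_k(z))$ is continuous at $z_0$ almost surely.} Continuity of $G^i$ implies joint continuity of $(z,t) \mapsto G_z(t)$. For fixed $u \in (0,1]$, $z \mapsto \psi_z(u)$ is continuous at $z_0$ provided $u$ is not a value attained on a plateau of $G_{z_0}$; since $G_{z_0}$ is continuous, the plateau values form an at most countable set, so for Lebesgue-almost every $u$ this holds, and hence for a.e.\ $U_1$ the map $z \mapsto S_1(z)$ is continuous at $z_0$. Joint continuity of the flows and continuity of $Q$ similarly make $\theta^i_{\varphi^i_{S_1}(\cdot)}(V_1)$ continuous in $z$ at $z_0$ for a.e.\ $V_1$ (avoiding the finite set of bin boundaries). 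An induction on $k$, combined with a Fubini argument to handle the history-dependent null sets, establishes Claim A.

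\emph{Claim B: $\mathbb{P}(T_k(z_0) = t_0 \text{ for some } k) = 0$.} Since $\mu_{z_0}$ is atomless (because $G^{i_0}$ is continuous), so is the law of $T_1(z_0)$; an induction using the strong Markov property (Proposition~\ref{prop:strong_Markov}) and continuity of the successive $\mu_{\bm{Z}_{k-1}}$ gives that each $T_k(z_0)$ has a continuous distribution.

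Combining A and B, on a full-measure event set $N = N_{t_0}(z_0)$. Then $T_N(z_0) < t_0 < T_{N+1}(z_0)$, and by Claim A, for $(t,z)$ sufficiently close to $(t_0, z_0)$ we still have $T_N(z) < t < T_{N+1}(z)$, so $N_t(z) = N$ and $Z_t(z) = \phi_{t - T_N(z)}(\bm{Z}_N(z))$; continuity of the semi-flow yields $Z_t(z) \to Z_{t_0}(z_0)$. Bounded convergence gives $P_t f(z) \to P_{t_0} f(z_0)$.

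\smallskip

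\noindent The main obstacle I anticipate is Claim A: carefully tracking, inductively in $k$, the fact that the ``bad'' sets of $(U_k, V_k)$ for which continuity at $\bm{Z}_{k-1}(z_0)$ fails have measure zero, and assembling these into a single null set in the product probability space despite their dependence on prior randomness.
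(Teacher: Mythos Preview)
Your sufficiency argument is correct and takes a genuinely different route from the paper. The paper iterates an integral operator $H$ (\`a la Davis), showing that $H\Psi\in C_b(K\times\R_+)$ whenever $\Psi$ is, and that $H^k\Psi\to (t,z)\mapsto P_tf(z)$ uniformly on compacts; the key step is that continuity of the $G^i$ makes $z\mapsto\mu_z$ weakly continuous. Your approach instead exploits the explicit construction of $Z$ from the i.i.d.\ uniforms $(U_k,V_k)$ to get almost-sure pathwise continuity of $(t,z)\mapsto Z_t(z)$, then applies bounded convergence. This coupling argument is more elementary; the delicate point you correctly flag is Claim~A, where the null set of $(U_{k+1},V_{k+1})$ on which continuity at $\bm{Z}_k(z_0)$ fails depends on $(U_1,V_1,\ldots,U_k,V_k)$, but a Fubini argument handles this inductively. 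The paper's route yields uniform-on-compacts convergence as a byproduct; yours is shorter once the construction is in place.

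Your necessity argument has a gap. The claim that the remainder $R(z)=\mathbb{E}_z[f(Z_{t^*})\1_{T_1<t^*}]$ varies continuously ``since only absolutely continuous portions of $\mu_z$ come into play'' is not justified: $\mu^{i_0}$ may have other atoms or a singular continuous part on $(0,t_0)$, and even if $\mu_{z_\eps}\vert_{(0,t^*)}$ converged weakly you would still need the integrand $u\mapsto P_{t^*-u}f(\varphi^{i_0}_u(x_0),0,j)$ to be continuous---which is exactly the Feller property you are disproving. There is an easy repair within your framework: take $g\in C_b(\R_+)$ vanishing on $[0,t^*]$ with $g(s_0+t^*)\neq 0$ (possible since $s_0>0$). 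On $\{T_1\le t^*\}$ one has $N_{t^*}\ge 1$, hence $\tau_{t^*}=t^*-T_{N_{t^*}}<t^*$ and $f(Z_{t^*})=0$; thus $R\equiv 0$ and $P_{t^*}f(z)=G_z(t^*)\,g(s+t^*)$, so your computation $G_{z_\eps}(t^*)\to G_{z_0}(t^{*-})=G_{z_0}(t^*)+p$ immediately gives the discontinuity. The paper avoids the remainder differently: it sends the evaluation time to $0$ (choosing $s_n\nearrow s^*$ with $\int_{-s^*}^0\lambda^{i_0}=t_0$, so $t_n\to 0$), which forces $\mathbb{P}(T_1<t_n)\to 0$ and kills $R$ without any regularity input---at the cost of a separate, longer treatment of the boundary case $t_0=\bar t^{i_0}$, which your fixed-time approach (once repaired) handles uniformly.
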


\begin{remark}
A widespread definition of the Feller property for a Markov semigroup $(Q_t)_{t \geq 0}$ on a locally compact space $M$ is that $Q_t$ preserves $C_0(M)$, the set of continuous function that vanishes at infinity. Let us call such a semigroup $C_0$ - Feller. Then, PDMPs defined by the switching between vector fields are, in general, not $C_0$ - Feller. Here is simple example. Consider  two vector fields on $\R_+$, given by  $F^0(x) = - x^2$ and $F^1(x) = x$, randomly switched with rate $\lambda_0$ and $\lambda_1$ (ie, $  G_i(t) = e^{-  t}$). This is a  PDMP in the classical sense, whose state space is $\R_+ \times E$, with $ E = \{0, 1 \}$. The choice of $F^1$ implies that there is no positively invariant compact set for the flow generated by the $F^i$. 
The flow $\varphi^0$ is  given by $\varphi^0_t(x) = 1 / ( 1/x + t)$, for all $x \in \R_+$ and $t \geq 0$. Let  $f(x,i) = 1/( 1 + x)$, for $i= 0, 1.$ Then obviously, $f$ belongs to $C_0$. Let us show that for all $t > 0$,  $P_t f$ is not in $C_0$.
Since $f$ is positive, one has
\[
P_t f(x,0) \geq f( \varphi^0_t (x) ) \PP_{x,i} ( T_1 > t)  = \frac{1}{1 + 1 / ( 1 / x + t) } e^{ - \lambda_0 t}\]

Yet when $x$ goes to infinity,  $1 / ( 1 + 1 / ( 1 / x + t) )$  is equivalent to  $1 / ( 1 + 1 / t)$, which proves that  $P_t f (x,0)$ does not converges to $0$ as $x$ goes to infinity, thus that $P_t$ is not $C_0$ - Feller. 
\end{remark}

\subsection{Existence of an invariant measure under compactness assumption}
\label{ssec:existence_im_comp}
In this section, we make the following assumption:
\begin{assumption}
\label{hyp:compact}
There exists a compact set $M \subset \mathbb{R}^d$ which is positively invariant for the flow $\varphi^i$, for all $i \in E$. That is, for all $t \geq 0$, $\varphi_t^i(M) \subset M$.
\end{assumption}
Under Assumption \ref{hyp:compact}, it is easily seen that the set $\{(x,s,i) \: : (x,\varphi_{-s}^i(x)) \in M^2 \}$ is positively invariant for the flow $\phi$. Thus, when Assumption \ref{hyp:compact} holds, we introduce the  space 
\begin{equation}
\label{eq:KM}
K_M = \{ (x,s,i) \in K \: : (x,\varphi_{-s}^i(x)) \in M^2 \},
\end{equation}
which is positively invariant for the process $Z$ : if $Z_0 \in K_M$, then $Z_t \in K_M$ for all $t \geq 0$. Therefore, in that case, we use $K_M$ as the natural state space for $Z$. 
\begin{example}
We compute $K_M$ on a simple example. On $\R$, consider the two vector fields $F^0(x) = x$ and $F^1(x) = 1 - x$. Let $\mu_0 = \mu_1$ be the uniform law over $[0,2]$ and set $\lambda_0 = \lambda_1 \equiv 1$. In that case, $K = \R \times [0,2) \times \{0,1\}$. Furthermore, the set $M = [0,1]$ is positively invariant for both $\varphi^0$ and $\varphi^1$, which are given, for all $t \in \R$ by
\[
\varphi^i_t(x) = i + (x- i)e^{-t}.
\]
Thus, it is easy to compute that for all $x \in M$, $\varphi^i_{-s}(x)$ is also in $M$ if and only if $s \leq -\log | i - x|$. Hence :
\[
K_M = \{ (x,s,i) \in K \: : x \in [0,1], \quad s \in [0, - \log|i-x|] \}.
\] 
\end{example}
\begin{proposition}
Assume that the semigroup is Feller and that Assumption \ref{hyp:compact} holds. Then, there exists at least one invariant probability measure $\mu$ for $(P_t)_{t \geq 0}$ on $K_M$.
\end{proposition}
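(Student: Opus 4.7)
My plan is the classical Krylov-Bogolyubov construction, adapted to the Polish state space $K_M$. Fix a starting point $z_0 \in K_M$---for instance $z_0 = (x_0, 0, i_0)$ with $x_0 \in M$, which lies in $K_M$ since $M \times \{0\} \times E \subset K_M$---and form the Cesàro means $\nu_T := T^{-1}\int_0^T P_s(z_0, \cdot)\, ds$ for $T \geq 1$. The goal is to extract a weak limit point of $\{\nu_T\}$ on $K_M$ and verify invariance under $(P_t)_{t \geq 0}$.

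The main technical step is tightness of $\{\nu_T\}_{T \geq 1}$. The $X$-marginal of $\nu_T$ is supported on the compact set $M$ (forward invariance under every $\varphi^i$) and the $I$-marginal lives on the finite set $E$, so it suffices to control the backward recurrence variable $\tau$. For any $R > 0$,
$$
\nu_T(\tau > R) = \frac{1}{T}\,\mathbb{E}_{z_0}\!\int_0^T \mathbf{1}_{\{\tau_s > R\}}\, ds \leq \frac{1}{T}\,\mathbb{E}_{z_0}\!\sum_{k \geq 0}(S_{k+1} - R)_+ \mathbf{1}_{\{T_k \leq T\}}.
$$
Conditioning on the embedded chain $(\bm{X}_k, \bm{I}_k)$---which lives in $M \times E$---the conditional tails $\mathbb{E}[(S_{k+1} - R)_+ \mid \bm{X}_k = x, \bm{I}_k = i]$ are controlled uniformly in $(x,i) \in M \times E$ via the domination $G^i_{x,0}(t) \leq G^i(\lambda_{\min} t)$ inherited from $\lambda^i \geq \lambda_{\min}$. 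Combined with the linear-in-$T$ bound on $\mathbb{E}_{z_0}[N_T]$ produced by the stochastic domination $\tilde{T}_k \leq T_k$ used in the proof of Proposition~\ref{prop:finitejumps}, this yields $\sup_T \nu_T(\tau > R) \to 0$ as $R \to \infty$. Once tightness is in hand, Prokhorov supplies a subsequence $T_n \to \infty$ and a probability measure $\mu$ on $K_M$ with $\nu_{T_n} \Rightarrow \mu$, and invariance follows by the standard computation: for $f \in C_b(K_M)$ and $t \geq 0$, Proposition~\ref{prop:Feller} gives $P_t f \in C_b(K_M)$, hence
$$
\mu(P_t f) = \lim_{n \to \infty}\nu_{T_n}(P_t f) = \lim_{n \to \infty}\frac{1}{T_n}\int_t^{T_n + t} P_u f(z_0)\, du = \mu(f),
$$
the two endpoint contributions being $O(t\|f\|_\infty/T_n)$.

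The only genuinely delicate step is the tightness of the $\tau$-marginal: the $X$- and $I$-marginals are automatic from Assumption~\ref{hyp:compact}, and once tightness is secured the Feller property takes care of invariance via the routine semigroup manipulation described above. Controlling $\tau$ requires the fine structure of the marked point process $(T_k, \bm{Z}_k)$---in particular the uniform sojourn-time tail bound $G^i_{x,0}(t) \leq G^i(\lambda_{\min} t)$ on the compact set $M \times E$ together with the sublinear growth of $N_T$---to prevent mass from escaping to $\tau = +\infty$ in the empirical averages.
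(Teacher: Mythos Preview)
Your Krylov--Bogoliubov scheme and the Feller passage to the limit match the paper's. The difference, and the gap, lies in the tightness argument for $\tau$. Your estimate
\[
\nu_T(\tau>R)\le \frac{1}{T}\,\mathbb{E}_{z_0}\!\sum_{k\ge 0}(S_{k+1}-R)_+\,\mathbf{1}_{\{T_k\le T\}}
\]
is correct, as is the conditional bound $\mathbb{E}\bigl[(S_{k+1}-R)_+\mid \mathcal{F}_k\bigr]\le \lambda_{\min}^{-1}\max_i\int_{\lambda_{\min}R}^\infty G^i(s)\,ds=:\varepsilon(R)$ coming from $G^i_{x,0}(t)\le G^i(\lambda_{\min}t)$. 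Combined with $\mathbb{E}_{z_0}[N_T]=O(T)$ this yields $\sup_T\nu_T(\tau>R)\le C\,\varepsilon(R)$. But $\varepsilon(R)\to 0$ is \emph{equivalent} to $\int_0^\infty G^i<\infty$ for every $i$, i.e.\ each $\mu^i$ has finite mean---and that is not among the standing hypotheses (Assumption~\ref{standingmu} only asks $\mu^i(\{0\})=0$). If some $\mu^i$ has infinite first moment your bound is vacuous and the argument collapses.

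The paper does not integrate tails at all. Starting from $z_0=(x,0,i)$ it uses the pointwise inequality $\tau_u\le T_{N_u+1}-T_{N_u}$ together with the uniform-in-$n$ bound $\mathbb{P}_{z_0}(T_{n+1}-T_n>T)=\mathbb{E}_{z_0}\bigl[G_{\bm{X}_n,0,\bm{I}_n}(T)\bigr]\le\max_i G^i(\lambda_{\min}T)$ to conclude $\mathbb{P}_{z_0}(\tau_u>T)\le\max_i G^i(\lambda_{\min}T)$ for \emph{every} $u\ge 0$, which feeds directly into $\mu_t^{z_0}(M\times E\times[0,T])\ge 1-\varepsilon$. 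That route needs only $G^i(t)\to 0$, automatic for a probability measure, and no moment condition. Your integrated-tail approach trades this pointwise control for a first-moment hypothesis the paper neither states nor uses.
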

\bpf
By the Krylov-Bogoliubov procedure and the Feller property, it is sufficient to prove that there exists $z_0 \in K_M$ such that the sequence of empirical probability measures $(\mu_t^{z_0})_{t > 0}$ is tight, where for all $t > 0$, 
\[
\mu_t^{z_0} = \frac{1}{t}\int_0^t P_u(z_0, \cdot) du.
\]
We prove that  $(\mu_t^{z_0})_{t > 0}$ is tight for every $z_0 = (x,i,0) \in K_M$. First, we claim that for all $\varepsilon > 0$, there exists $T > 0$ such that, for all $n \geq 0$,
\[
\mathbb{P}_{z_0} ( T_{n+1} - T_n > T) \leq \varepsilon.
\]
Indeed, let $n \geq 0$. Then, with the notations introduced in Section \ref{sec:setting},
\begin{align*}
\mathbb{P}_{z_0} ( T_{n+1} - T_n > T) & =\mathbb{P}_{z_0}( S_{n+1} > T)\\
& = \mathbb{E}_{z_0} \left[   G_{Y_n} (T) \right].
\end{align*}
Thus, the claim is proven if we show that for all $(x,i) \in M \times E$, there exists $T > 0$ such that $  G_{x,i}(T) \leq \varepsilon$. 
%Since $M$ is compact and $\lambda_i$ is continuous and positive, there exists $\lambda_{\min} > 0$ such that for all $x' \in M$, $\lambda_i(x') \geq \lambda_{\min}$. In particular, 
Assumption~\ref{standinglambda} yields that for all $t \geq 0$ and $(x,i) \in M$,  
\[
  G_{x,i}(t) =   G_i \left( \int_0^t \lambda_i( \varphi^i_u(x) ) du \right) \leq   G_i( \lambda_{\min} t) \leq \max_i   G_i(\lambda_{\min} t).
\]
This proves the claim, since for all $i \in E$, $  G_i(\lambda_{\min} t)$ converges to $0$ when $t$ goes to infinity. Now let $z_0 =(x,i,0) \in K_M$. Then, since $\tau_0 = 0$,  for all $u \geq 0$, $\tau_u = u - T_{N_u}$. In particular, $\tau_u \leq T_{N_u + 1}-T_{N_u}$ and thus
\[
\mathbb{P}_{z_0}( \tau_u > T) \leq \mathbb{P}_{z_0}( T_{N_u + 1} - T_{N_u} > T)\leq \varepsilon.
\]
This implies that for all $u \geq 0$, $P_u(z_0, M \times E \times [0,T] ) \geq 1 - \varepsilon$ and therefore that $\mu_t^{z_0}( M \times E \times [0,T]) \geq 1 - \varepsilon$. This concludes the proof.
\epf

\subsection{Construction of a Lyapunov function}
Here we have proven the existence of an invariant probability measure by proving by hand that the sequence of empirical measure is tight. Another way to obtain the tightness of this sequence, if the existence of a \textit{Lyapunov function}. We shall say that a function $f : K_M \mapsto \R_+$ is a Lyapunov function if $f$ is a proper map (ie, $\{ f \leq R\}$ is compact for all $R \geq 0$) and if there exists positive constants $C, \gamma$ and nonnegative $C'$ such that, for all $t \geq 0$, 
\[
P_t f \leq C e^{- \gamma t} f + C'.
\]
The existence of a Lyapunov function implies that the process comes back exponentially fast in compact sets. There is no hope to find such a function if one of the laws $\mu^i$ of the jump times have a heavy tail. Indeed, in that case, the return time to compact sets of the form $\{(x,s,i) : \: s \in [0,s^*] \}$ for some $s^* >0$, is given when the process starts at a point $(x,s,i)$ with $s > s^*$  by $T_1$, which does not have exponential moments.   The next proposition gives existence of a Lyapunov function under the assumption that the survival function $  G_i$ have an exponential decay. Its proof is given in Section~\ref{sec:proofLyap}
\begin{proposition}
\label{prop:lyap}
Grant Assumption \ref{hyp:compact} and assume that:
\begin{enumerate}
\item For all $i \in E$, $  G_i$ is continuous;
\item There exists $C, \beta > 0$ such that, for all $i \in E$, for all $t \geq 0$,
\[
  G_i(t) \leq C e^{- \beta t}.
\]
\end{enumerate}  
Let $\lambda_{\min} = \min_{(x,i) \in M \times E} \lambda_i(x)  > 0$, and set $\gamma = \delta \beta \lambda_{\min}$, for some $\delta \in (0, 1)$. Then, the function
\[
f(x,s,i) = \frac{e^{-\gamma s}}{  G_i\left(  \int_{-s}^0 \lambda_i (\varphi^i_u(x)) du \right) }\left( \mathbb{E}_{\varphi_{-s}^i(x), 0,i}[ e^{\gamma S_1} \1_{S_1 \leq s}] - 1 \right) + 1
\]
is a Lyapunov function. More precisely, for all $t \geq 0$ 
\[
P_t f\leq e^{- \gamma t} f + 1. 
\]
\end{proposition}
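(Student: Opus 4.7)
The first step is to rewrite $f$ in a cleaner form. With $D(x,s,i) := \int_{-s}^0 \lambda_i(\varphi^i_u(x))\,du$ and $D_v(x,s,i) := \int_{-s}^v \lambda_i(\varphi^i_u(x))\,du$, I would apply integration by parts to $\mathbb{E}_{\varphi^i_{-s}(x),0,i}[e^{\gamma S_1}\mathbbm{1}_{S_1 \leq s}]$ (using $\mathbb{P}_{\varphi^i_{-s}(x),0,i}(S_1 > r) = G_i(D_{r-s}(x,s,i))$) to collapse the definition to
\[
f(x,s,i) = \frac{\gamma}{G_i(D(x,s,i))}\int_{-s}^0 e^{\gamma v}\,G_i(D_v(x,s,i))\,dv.
\]
In particular $f \geq 0$ and $f(x,0,i) = 0$ for every $(x,i)$. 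Continuity of $f$ follows from continuity of the $G_i$'s. Properness on $K_M$ is a consequence of the exponential bound $G_i(t) \leq Ce^{-\beta t}$: as $s$ grows in $K_M$, $G_i(D(x,s,i))$ decays at least like $e^{-\beta\lambda_{\min} s}$ while the numerator is bounded below by a positive constant times $e^{-\gamma s}$, so $f(x,s,i) \to \infty$ because $\gamma = \delta\beta\lambda_{\min}$ with $\delta < 1$.

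The Lyapunov inequality itself is proved by induction on the number of jumps. Define
\[
u^n(t,z) := \mathbb{E}_z\bigl[f(Z_t)\,\mathbbm{1}_{N_t \leq n}\bigr],
\]
and aim to show $u^n(t,z) \leq e^{-\gamma t}f(z) + 1 - e^{-\gamma t}$ for every $n \geq 0$, $t \geq 0$, $z \in K_M$. For $n = 0$, on $\{N_t = 0\} = \{t < T_1\}$ the process flows deterministically, $Z_t = \phi_t(z)$, so $u^0(t,z) = f(\phi_t(z))\,G_z(t)$. Inserting the closed form of $f(\phi_t(z))$, changing variables $w = v + t$, and using $G_i(D_w(z))/G_i(D(z)) = \mathbb{P}_z(T_1 > w)$ yields
\[
u^0(t,z) = e^{-\gamma t}f(z) + \gamma e^{-\gamma t}\int_0^t e^{\gamma w}\,\mathbb{P}_z(T_1 > w)\,dw.
\]
A Fubini argument identifies the last integral with $(\mathbb{E}_z[e^{\gamma(T_1 \wedge t)}] - 1)/\gamma$, so
\[
u^0(t,z) = e^{-\gamma t}f(z) + \mathbb{E}_z\bigl[e^{-\gamma(t-T_1)^+}\bigr] - e^{-\gamma t} \leq e^{-\gamma t}f(z) + 1 - e^{-\gamma t},
\]
since $(t-T_1)^+ \geq 0$.

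For the inductive step, the strong Markov property at $T_1$ (Proposition~\ref{prop:strong_Markov}) gives
\[
u^{n+1}(t,z) = u^0(t,z) + \mathbb{E}_z\bigl[\mathbbm{1}_{T_1 \leq t}\,u^n(t-T_1, Z_{T_1})\bigr].
\]
The design of $f$ now pays off: because $\bm{\tau}_k = 0$ at every jump, $f(Z_{T_1}) = 0$, so the inductive hypothesis reduces to $u^n(t-T_1, Z_{T_1}) \leq 1 - e^{-\gamma(t-T_1)}$. Splitting $\mathbb{E}_z[e^{-\gamma(t-T_1)^+}] = \mathbb{E}_z[e^{-\gamma(t-T_1)}\mathbbm{1}_{T_1 \leq t}] + \mathbb{P}_z(T_1 > t)$ delivers the identity
\[
\bigl(u^0(t,z) - e^{-\gamma t}f(z)\bigr) + \mathbb{E}_z\bigl[\mathbbm{1}_{T_1 \leq t}(1 - e^{-\gamma(t-T_1)})\bigr] = 1 - e^{-\gamma t},
\]
which closes the induction. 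Monotone convergence, justified by $f \geq 0$ and $N_t < \infty$ almost surely (Proposition~\ref{prop:finitejumps}), then yields $P_t f(z) = \lim_n u^n(t,z) \leq e^{-\gamma t}f(z) + 1 - e^{-\gamma t} \leq e^{-\gamma t}f(z) + 1$.

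The main obstacle is spotting the right closed form for $f$; once the integral representation is in hand, the vanishing of $f$ immediately after each jump together with the clean telescoping identity makes the induction nearly mechanical. Notice that the exponential decay assumption is used only to establish properness of $f$ on $K_M$; the Lyapunov inequality itself holds under the weaker assumption that the $G_i$'s are continuous.
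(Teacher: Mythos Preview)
Your proof is correct. The overall strategy coincides with the paper's: both decompose $P_t f$ according to the number of jumps $N_t$ and exploit a telescoping structure. The organization differs. You first derive the integral representation
\[
f(x,s,i)=\frac{\gamma}{G_i(D(x,s,i))}\int_{-s}^{0}e^{\gamma v}G_i(D_v(x,s,i))\,dv
\]
via integration by parts, which makes nonnegativity and the identity $f(Z_{T_1})=0$ transparent, and then run a strong-Markov induction on $u^n=\mathbb{E}_z[f(Z_t)\mathbbm{1}_{N_t\le n}]$; the vanishing of $f$ at jump times collapses the inductive step. The paper instead works with $\tilde f=f-1$, expands $P_t\tilde f=\sum_{n\ge0}\mathbb{E}_z[\tilde f(Z_t)\mathbbm{1}_{N_t=n}]$, and shows that the $n$th term equals $a_n-b_n$ with $a_n=b_{n+1}$, so the series telescopes to $-b_1$, which cancels the extra piece coming from the ``no jump'' term. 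Your induction and their telescoping $a_n=b_{n+1}$ encode the same cancellation; the advantage of the paper's bookkeeping is that it actually yields the \emph{equality} $P_t f = e^{-\gamma t}f + 1 - e^{-\gamma t}$, whereas your induction only gives the inequality (which is all that is needed for the Lyapunov property). Your closing remark that the exponential tail assumption is used only for properness, not for the inequality $P_tf\le e^{-\gamma t}f+1$, is correct and worth noting.
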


\section{Uniqueness, exponential ergodicity, and absolute continuity of the invariant probability measure}   \label{sec:uni_ac} 
\label{sec:ergodicity}
In the previous section, we have given sufficient conditions for the existence of an invariant probability measure for $Z$. The purpose of this section is to give conditions ensuring that the process $Z$ admits at most one invariant probability measure, and, in case there does exist a unique invariant probability measure $\pi$, ensuring exponential convergence of the law of $Z_t$ to $\pi$ as $t \to \infty$. We shall also prove that under these conditions, $\pi$ is absolutely continuous with respect to $\mathbf{L}$, the product of $(d+1)$-dimensional Lebesgue measure $\bm{\lambda}_{d+1}$ and the counting measure on $E$, restricted to $K$.  

Our results rely on the one hand on classical results on Markov processes that we recall in Section~\ref{ssec:background_Markov} and on the other hand on the strategy adopted in \cite{BH12} and \cite{BMZIHP} for the particular case of PDMP with exponential switching times.

\subsection{Some background on Markov processes} \label{ssec:background_Markov} 

Below we collect the definitions of several classical concepts from the theory of Markov processes and recall a crucial result on uniqueness and exponential ergodicity for the invariant probability measure. Throughout Section~\ref{ssec:background_Markov}, let $M$ be a separable metric space and let $(Q_t)_{t \geq 0}$ be a Markov semigroup on $M$ equipped with the Borel $\sigma$-field $\Bc(M)$. 

\begin{definition}
We say that $z^* \in M$ is a \emph{Doeblin point} if there exist a neighbourhood $U$ of $z^*$, a nonzero measure $\nu$ on $(M, \Bc(M))$ and $t^* > 0$ such that 
$$
Q_{t^*}(z, \cdot) \geq \nu(\cdot), \quad \forall z \in U. 
$$
In this case, we also call $z^*$ a Doeblin point with respect to the Markov kernel $Q_{t^*}$. The set $U$ is called a \emph{small set} with respect to $Q_{t^*}$. 
\end{definition} 

\begin{definition}     \label{def:resolvent} 
The \emph{resolvent} associated with $(Q_t)$ is the Markov kernel $R$ on $M$ defined by 
$$
R(z, \cdot) := \int_{\R_+} e^{-t} Q_t(z, \cdot) \ dt, \quad z \in M. 
$$
We call a set $U \in \Bc(M)$ a \emph{petite set} if there exists a nonzero measure $\nu$ on $(M, \Bc(M))$ such that 
$$
R(z, \cdot) \geq \nu(\cdot), \quad \forall z \in U. 
$$
\end{definition}

The existence of a petite (or small) set yields results on the uniqueness and absolute continuity of the invariant probability measure if the set is accessible, in the following sense:

\begin{definition}
A point $z^* \in M$ is called $(Q_t)$-\emph{accessible} from $z \in M$ if for every neighbourhood $U$ of $z^*$, there is $t > 0$ such that $Q_t(z, U) > 0$. It is called $(Q_t)$-accessible from a set $A \in \Bc(M)$ if it is $(Q_t)$-accessible from every point $z \in A$. Finally, $z^*$ is simply called $(Q_t)$-accessible if it is $(Q_t)$-accessible from $M$. 
\end{definition} 

\begin{remark}    \rm \label{rm:support_res} 
Suppose that for every $z \in M$ and for every open $U \subset M$, the mapping $t \mapsto Q_t(z,U)$ is lower semicontinuous from the right, i.e., $\liminf_{u \downarrow t} Q_u(z,U) \geq Q_t(z,U)$ for every $t$. By the Portmanteau theorem (see, e.g., Theorem 2.1 in \cite{Bil}), this is for instance the case if $(Q_t)$ is Feller. It also holds for the semigroup $(P_t)$ of our switching process $Z$, even if $(P_t)$ does not have the Feller property. Then a point $z^* \in M$ is $(Q_t)$-accessible from $z \in M$ if and only if for every neighbourhood $U$ of $z^*$, one has $R(z, U) > 0$ (i.e., $z^*\in \supp R(z, \cdot)$). Notice that the ``if'' direction of this statement does not require lower semicontinuity. 
\end{remark} 

In the following theorem, we recall some classical results on Markov processes that will be useful in our investigation. 

\begin{theorem}     \label{thm:uniqueness}
The following statements hold. 
\begin{enumerate}
\item Suppose that for every $z \in M$ and for every open $U \subset M$, $t \mapsto Q_t(z,U)$ is lower semicontinuous from the right. If there exists an open and petite set $U$ which contains a $(Q_t)$-accessible point $z^*$, then $(Q_t)$ admits at most one invariant probability measure. 
\item Suppose that $(Q_t)$ is Feller. Let $z^* \in M$ be a $(Q_t)$-accessible Doeblin point and assume that $(Q_t)$ admits a Lyapunov function $f$, i.e., $f:M \to \R$ is nonnegative, proper and $Q_t f(\cdot) \leq C e^{-\alpha t} f(\cdot) + C' \ \forall t \geq 0$ for some $C,\alpha, C' > 0$. Then there exist a unique invariant probability measure $\pi$ and $c, \gamma > 0$ such that, for all $t \geq 0$ and $z \in M$,
\[
\| Q_t(z, \cdot)  - \pi \|_{TV} \leq c (1+f(z)) e^{- \gamma t}, 
\]
\end{enumerate} 
where $\| \mu - \nu \|_{TV}$ denotes the total variation distance between two probability measures $\mu, \nu$ on $(M, \Bc(M))$.  
\end{theorem}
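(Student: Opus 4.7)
The plan is to derive both statements from classical Harris--Meyn--Tweedie theory, with the two assertions having fairly different characters: (1) is a pure uniqueness result based on the resolvent and the petite-set minorization, while (2) is a geometric ergodicity statement obtained by upgrading a sublevel set of $f$ to a small set.

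For (1), let $R$ denote the resolvent of Definition~\ref{def:resolvent} and let $\pi$ be any invariant probability measure. First I would observe, by Fubini and the $Q_t$-invariance of $\pi$, that $\pi R = \pi$. By Remark~\ref{rm:support_res}, right lower semicontinuity of $t \mapsto Q_t(z, U)$ together with accessibility of $z^*$ yields $R(z, U) > 0$ for every $z \in M$, since $U$ is an open neighborhood of $z^*$. Integrating against $\pi$ then gives $\pi(U) > 0$. Combining with the petite-set bound $R(z, \cdot) \geq \nu(\cdot)$ on $U$, I would deduce, for every Borel $A$,
\begin{equation*}
\pi(A) = \int R(z, A) \, \pi(dz) \geq \int_U R(z, A) \, \pi(dz) \geq \pi(U) \, \nu(A).
\end{equation*}
Thus every invariant probability measure dominates $\pi(U)\,\nu$, a positive multiple of the same nonzero measure $\nu$. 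In particular any two invariant probability measures fail to be mutually singular; since distinct ergodic invariant probability measures must be mutually singular, and every invariant probability measure is a convex combination of ergodic ones via the ergodic decomposition, uniqueness follows.

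For (2), the plan is to show that every large enough sublevel set of $f$ is a small set for some $Q_T$, and then invoke the continuous-time Harris theorem. Choose $R$ large enough that $K_R := \{f \leq R\}$ is compact (by properness of $f$) and the Lyapunov drift $Q_t f \leq C e^{-\alpha t} f + C'$ forces exponentially fast return to $K_R$. Let $U$ be the Doeblin neighborhood of $z^*$ with $Q_{t^*}(z, \cdot) \geq \nu(\cdot)$ on $U$. For each $z_0 \in K_R$, accessibility yields $s(z_0) > 0$ with $Q_{s(z_0)}(z_0, U) > 0$; by the Feller property and the Portmanteau theorem, $z \mapsto Q_{s(z_0)}(z, U)$ is lower semicontinuous, so there is an open neighborhood $V_{z_0}$ of $z_0$ and $\delta_{z_0} > 0$ with $Q_{s(z_0)}(\cdot, U) \geq \delta_{z_0}$ on $V_{z_0}$. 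Extract a finite subcover of $K_R$ and synchronize the associated times into a common $T$ via the semigroup property, producing $\delta > 0$ with
\begin{equation*}
Q_T(z, U) \geq \delta, \quad z \in K_R.
\end{equation*}
Chaining with the Doeblin minorization at time $t^*$ yields $Q_{T+t^*}(z, \cdot) \geq \delta \, \nu(\cdot)$ for every $z \in K_R$, i.e., $K_R$ is a small set for $Q_{T+t^*}$. The classical continuous-time Harris theorem, fed with this small-set property and the Lyapunov drift condition, then delivers the unique invariant probability measure $\pi$ together with the claimed exponential bound $\|Q_t(z,\cdot) - \pi\|_{TV} \leq c(1+f(z)) e^{-\gamma t}$.

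The main technical obstacle is the synchronization step in (2), namely producing a \emph{uniform} time $T$ and a uniform minorization $\delta$ valid on all of $K_R$. The finite subcover provides different times $s_1, \ldots, s_N$ on the corresponding neighborhoods, and one must homogenize them into a single $T$ while preserving a uniform lower bound on the probability of being in $U$. The cleanest way is to pass through the resolvent $R$, which integrates out the time variable and makes compactness-plus-Feller arguments immediate; an alternative is to extend each bound $Q_{s_i}(\cdot, U) \geq \delta_i$ to a small time interval around $s_i$ using Feller continuity, and then exploit the semigroup property to align all contributions at the common time $T = \max_i s_i$.
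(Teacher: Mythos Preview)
Your treatment of part~(1) is correct and essentially identical to the paper's: both show that the petite minorizing measure $\nu$ satisfies $\nu\ll\pi$ for every invariant $\pi$ (your inequality $\pi(A)\geq\pi(U)\,\nu(A)$ with $\pi(U)>0$ says exactly this), and then conclude via mutual singularity of distinct ergodic measures. The paper packages the last step as a reference to Proposition~6.1.9 in~\cite{Duf97}.

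For part~(2) the strategy matches the paper's---make a compact sublevel set of $f$ small for some $Q_T$, then apply Harris (the paper cites~\cite{HM08})---but your synchronization step has a gap. Your second proposed fix, aligning at $T=\max_i s_i$ via the semigroup property, does not work: from $Q_{s_i}(z,U)\geq\delta_i$ one obtains $Q_T(z,\cdot)=\int Q_{T-s_i}(y,\cdot)\,Q_{s_i}(z,dy)$, but there is no lower bound on $Q_{T-s_i}(y,U)$ for $y\in U$, and extending $s_i$ to a short time interval cannot bridge a gap of size $\max_j s_j-s_i$. The paper avoids this by inserting an intermediate step: it first shows that one can choose $\tilde z$ and $\tilde t$ so that $\tilde z$ is a Doeblin point for $Q_t$ for \emph{every} $t\geq\tilde t$, with a fixed small neighborhood $\tilde U$. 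Once this is in place, after your finite cover produces $Q_{s_i}(\cdot,\tilde U)\geq\delta_i$ on $V_i$, any $T$ with $T-s_i\geq\tilde t$ for all $i$ lets you compose each $Q_{s_i}$ with the Doeblin minorization at the remaining time $T-s_i$, which is now guaranteed by construction. Your resolvent route can also be completed, but note that it only yields $K_R$ petite (small for an averaged kernel), not small for a single $Q_T$; the passage from petite to small at a fixed time is standard but requires an extra aperiodicity-type argument and is not as immediate as you suggest.
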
 

\begin{remark}    \rm 
In the setting of part (1), since $z^*$ is $(Q_t)$-accessible, the point $z^*$ is contained in $\supp R(z, \cdot)$ for every $z \in M$ (see Remark~\ref{rm:support_res}). This fact and the assumption that $U$ is a petite open neighbourhood of $z^*$ can be linked by a simple argument to prove that $\nu$, the minorizing measure from Definition~\ref{def:resolvent}, is absolutely continuous with respect to every invariant probability measure for $R$ and hence also with respect to every invariant probability measure for $(Q_t)$. As shown for instance in the proof of Proposition 6.1.9 in \cite{Duf97}, this implies that $(Q_t)$ has at most one invariant probability measure.  

As far as part (2) is concerned, if, for a Feller semigroup, $z^*$ is $(Q_t)$-accessible and a Doeblin point with respect to $Q_{t^*}$ for some $t^* > 0$, one can show the existence of $\tilde z \in M$ and $\tilde t \geq t^*$ such that, for every $t \geq \tilde t$, $\tilde z$ is $(Q_t)$-accessible and a Doeblin point with respect to $Q_t$. Let $\hat t \geq \tilde t$ be so large that $C e^{-\alpha \hat t} < 1$, where $C$ and $\alpha$ are the constants pertaining to the Lyapunov function $f$. Fix $R > 2 C'/(1- C\exp(-\alpha \hat t))$ and let $\M := \{z \in M: f(z) \leq R\}$. Since $f$ is proper, $\M$ is compact. Then one can show that there exists a positive integer $m$ such that $\M$ is a small set with respect to $Q_{m \hat t}$. By Harris's ergodic theorem (see, e.g., Theorem 1.2 in \cite{HM08}), $Q_{m \hat t}$ admits a unique invariant probability measure $\pi$, which is then also the unique invariant probability measure for $(Q_t)$. Furthermore, there exist $c_0, \gamma_0 > 0$ such that, for every bounded measurable function $g$ on $M$ and for every positive integer $n$, 
$$
\sup_{z \in M} \frac{\lvert Q_{mn \hat t} g(z) - \pi g \rvert}{1 + f(z)} \leq c_0 e^{-\gamma_0 n} \sup_{z \in M} \frac{\lvert g(z) - \pi g \rvert}{1+f(z)}. 
$$
From this estimate, one infers 
$$
\| Q_t(z, \cdot) - \pi \|_{TV} \leq c (1+f(z)) e^{-\gamma t}, \quad \forall t \geq 0, \ z \in M,    
$$
where 
$$
c := 2c_0 e^{\gamma_0}, \quad \gamma := \frac{\gamma_0}{m \hat t}. 
$$
\end{remark} 

\bigskip 

\subsection{Bracket conditions; some motivating examples}     \label{ssec:counterexamples} 

%From now on, we will write $(P_t)$-accessible in lieu of $(Q_t)$-accessible.
 In light of Theorem~\ref{thm:uniqueness}, a first goal is to provide conditions ensuring the existence of petite or small sets. For exponential switching times, the existence of such sets is granted by Lie bracket conditions, as proved in \cite{BH12} and \cite{BMZIHP}. After recalling these conditions, we give several examples showing they are not sufficient in the non-exponential general setting.
%  as well as to characterize the set of $(P_t)$-accessible points.
% As announced, we shall use the strategy developed in \cite{BH12} and \cite{BMZIHP}, which relies on ., these conditions imply the existence of petite or small sets. 
%After recalling precisely these conditions, we give several examples enlightening why they might not be sufficient in the non-exponential setting.

Recall that the \emph{Lie bracket} of two smooth vector fields $V$ and $W$ on $\R^d$ with differentials $DV$ and $DW$ is the vector field $[V,W]$ defined by 
$$
[V,W](x) := DW(x) V(x) - DV(x) W(x), \quad x \in \R^d. 
$$

\begin{definition}
We recursively define the families of vector fields 
$$
\F_0 := \left\{F^i: i \in E \right\}, \quad \F_{k+1} := \F_k \cup \left\{[F^i, V]: i \in E, V \in \F_k \right\}, \ k \in \N_0, 
$$
and
$$
\G_0 := \left\{G^i - G^j: i, j \in E \right\}, \quad \G_{k+1} := \G_k \cup \left\{[G^i, V]: i \in E, V \in \G_k \right\}, \ k \in \N_0. 
$$
We say that the \emph{weak} or \emph{strong bracket condition} holds at $x \in \R^d$ if $\R^d$ is generated by the set of vectors 
$$
\biggl\{V(x): V \in \bigcup_{k \in \N_0} \F_k \biggr\}
$$ 
or
$$
\biggl\{V(x): V \in \bigcup_{k \in \N_0} \G_k \biggr\}, 
$$ 
respectively.
We say that the weak or strong bracket condition holds at a point $(x,s,i) \in K$ if the weak or strong bracket condition holds at $x$.
\end{definition}
One of the main results in \cite{BH12} and \cite{BMZIHP} (see also \cite{BHS18}, based on \cite{li17}) states that if the weak or strong bracket condition holds at a point $x$, then $(P_t)_{t \geq 0}$ admits a petite or a small set, respectively. The proofs rely on the fact that the strong bracket condition at $x \in \R^d$ implies that for some $m \geq d$, $T > 0$ and $(i_1, \ldots, i_m, i_{m+1}) \in E^{m+1}$, the map $\Psi: (v_1, \ldots, v_m) \mapsto \varphi^{i_{m+1}}_{T - (v_1 + \ldots + v_m)} \circ \varphi^{i_m}_{v_m} \circ \ldots \circ \varphi^{i_1}_{v_1}(x)$ is a submersion at some point $(t_1, \ldots, t_m)$ (i.e., the matrix $D \Psi(t_1, \ldots, t_m)$ has full rank $d$). Then, for exponential switching times, the process $X_t$ has a positive probability to ``follow'' the submersion $\Psi$, i.e., to follow the vector fields indexed by $i_1, \ldots, i_{m+1}$ for times close to $t_1, \ldots, t_m, T - (t_1 + \ldots + t_m)$. This implies that the law of $X_t$ has a nonzero absolutely continuous component with respect to the Lebesgue measure on $\R^d$, which yields the existence of a small set. The reasoning above is made possible by the fact that the exponential law has a density which is bounded away from 0 on every compact interval. In particular, the strong bracket condition being a local condition, the point $(t_1, \ldots, t_m)$ at which $\Psi$ is a submersion consists in general of small times $t_i$, which means that switches between the flows have to be fast in order for $X_T$ to be close to $\Psi(t_1, \ldots, t_m)$. This is why, if $0$ is not included in the support of the law of switching times, the strong bracket condition at a point $x^*$ may fail to result in a nontrivial absolutely continuous component for $X_T$ and thus $(x^*, s, i)$ may not be a Doeblin point. This is illustrated in the following example. 

\begin{example} \label{ex:non-analytic}  \rm 
On $\R$, consider random switching between the vector fields $F^0 \equiv 1$ and 
$$
F^1(x) := \begin{cases}
                1 + e^{-1/x^2}, & \quad x < 0, \\
                1, & \quad x \geq 0. 
                \end{cases}
$$
Since $F^0$ and $F^1$ are bounded and $C^{\infty}$ smooth, they are also globally integrable. Let 
$$
Q \equiv \begin{pmatrix}
                0 & 1 \\
                1 & 0
                \end{pmatrix}, 
$$
let $\lambda^i \equiv 1$ for $i \in \{0,1\}$, and let $\mu^0$ and $\mu^1$ be probability distributions whose support is contained in $[1,\infty)$. For $x^* = -\tfrac{1}{2}$, we have 
$$
F^1(x^*) - F^0(x^*) = e^{-4} > 0,
$$
so the strong bracket condition holds at $x^*$. Let $i \in \{0,1\}$, let $U \subset K$ be a neighbourhood of $z_1 := (x^*,0,i)$ and let $t \geq 0$. Pick $z_2 \in U$ such that $z_2 = (x^* + \eps,0,i)$ for some $\eps > 0$. For $j \in \{1,2\}$, let $Z^j$ denote the process $Z$ starting at $z_j$. Since the second component of $Z^j$ does not switch before the first component has reached $\R_+$ and since $F^0$ and $F^1$ coincide on $\R_+$, the evolution of the first component of $Z^j$ is entirely deterministic. As a result, for every $t > 0$, the probability measures $P_t(z_1,\cdot)$ and $P_t(z_2,\cdot)$ are mutually singular, which implies that there is no nontrivial measure $\nu$ minorizing both $P_t(z_1, \cdot)$ and $P_t(z_2, \cdot)$. Thus, $(x^*,0,i)$ is not a Doeblin point.  
\end{example}

Another reason why the law of $X_T$ and Lebesgue measure may be mutually singular despite the strong bracket condition is insufficient regularity for the law of the switching times. This can be seen in the following example.   

\begin{example} \rm    \label{ex:rational_supp} 
On $\R$, let $F^+ \equiv 1$ and $F^- \equiv - 1$, $\mu^+ = \mu^- = \sum_{i=1}^{\infty} 2^{-i} \delta_{q_i}$, where $(q_i)_{i \geq 1}$ is an enumeration of the rational numbers in $(0,1]$, and $\lambda^+ = \lambda^- \equiv 1$. Note that $0$ belongs to the support of $\mu^+ = \mu^-$. Obviously, the strong bracket condition holds everywhere. However, no point in $K = \R \times [0,1) \times \{+,-\} $ is a Doeblin point. Indeed, let $z =(x,0,i) \in K$ and let $U$ be a neighbourhood of $z$ in $K$. Let $z' = (y,0,i) \in U$. We let $Z^z$ and $Z^{z'}$ be the switching processes starting from $z$ and $z'$, respectively. Then, for all $t \geq 0$, one has $X^z_t \in (x+t + \mathbb{Q}) \cup (x-t + \mathbb{Q})$ and $X^{z'}_t \in (y + t + \mathbb{Q}) \cup (y-t+\mathbb{Q})$. If we choose $y$ in such a way that $x - y \notin \mathbb{Q} \cup (2t + \mathbb{Q}) \cup (-2t + \mathbb{Q})$, the laws of  $X_t^z$ and $X_t^{z'}$ are mutually singular and $z$ is not a Doeblin point.
\end{example}

In order to circumvent the issues raised by Examples~\ref{ex:non-analytic} and~\ref{ex:rational_supp}, we put forward two ideas. The first one is to impose that $\mu^i$ has  a density which is bounded away from $0$ in a neighbourhood of $0$.  
The second one is to impose analyticity on the vector fields $F^i$. Analyticity allows us to ``transform'' the submersion with small switching times - whose existence is guaranteed by the bracket condition - into a submersion with possibly larger switching times and thus in agreement with the law of the jump times (note that in Example \ref{ex:non-analytic}, $F^1$ is not analytic). 
However, even if $(F^i)$ are analytic, $0$ not being in the support of the $(\mu^i)$ can lead to a situation where there are no Doeblin points in spite of the strong bracket condition being satisfied. This may happen if the jump process $I$ is not allowed to switch to certain vectors fields at certain times due to the structure of $Q$, even if $Q(x)$ is irreducible for every $x \in \R^d$. The kind of problems that can arise are developed in the next example.

\begin{example} \label{ex:non-irreducible} \rm
On $\R^2$, consider the vector fields 
\begin{align*} 
F^1(x,y) =& (-x,0), & F^2(x,y) =& (-(x-1), 0), \\
F^3(x,y) =& (0, -y), & F^4(x,y) =& (0, -(y-1)). 
\end{align*} 
Note that the compact set $M=[0,1]^2$ is positively invariant under all the flows $(\varphi^i)$ and that the strong bracket condition holds everywhere. Let $\mu^1=\mu^2=\mu^3=\mu^4$ be a law on $\mathbb{R}_+$ whose support is included in $[\log(4), + \infty)$ and set $\lambda^i \equiv 1$ for every $i \in E$. Finally, consider a family of $4 \times 4$ matrices $(Q(x,y))_{(x,y)\in M}$ with entries denoted by $q_{i,j}(x,y)$ such that
\begin{enumerate}
\item For all $(x,y) \in M$, $Q(x,y)$ is irreducible;
\item If $x \leq 1/4$, $q_{1,3}(x,y) = q_{1,4}(x,y) = 0$, while if $x \geq 3/4$, $q_{2,3}(x,y) = q_{2,4}(x,y) = 0$;
\item If $y \leq 1/4$, $q_{3,1}(x,y) = q_{3,2}(x,y) = 0$, while if $y \geq 3/4$, $q_{4,1}(x,y) = q_{4,2}(x,y) = 0$.
\end{enumerate}
Note that if $x \leq 1/4$ (resp. $x \geq 3/4$) it is not possible to switch directly from $1$ (resp. $2$) to $\{3,4\}$, and if $y \leq 1/4$ (resp. $y \geq 3/4$) it is not possible to switch directly from $3$ (resp. $4$) to $\{1,2\}$.

Let us show that no point $(x,y,s,i) \in K_M$  is a Doeblin point, where $K_M$ is the set introduced in Section~\ref{ssec:existence_im_comp}. We claim that when $I_0 \in \{1,2\}$, then $I_t$ stays in $\{1,2\}$ for all $t \geq 0$, and if $I_0 \in \{3,4\}$, $I_t$ remains in $\{3,4\}$. This means in particular that if $I_0 \in \{1,2\}$, the vector fields $F^3$ and $F^4$ are not used, and thus the process $(X_t, Y_t)_{t \geq 0}$ cannot move vertically. Granting this claim, fix $z=(x,y,s,i) \in K_M$ with $i \in \{1,2\}$, let $U$ be a neighbourhood of $z$ and let $z'=(x,y',s,i) \in U$ for some $y' \neq y$. Then the process $Z^z$ stays almost surely in $[0,1] \times \{y\} \times \R_+ \times \{1,2\}$, while $Z^{z'}$ stays in  $[0,1] \times \{y'\} \times \R_+ \times \{1,2\}$. Therefore, for all $t \geq 0$, $P_t(z, \cdot)$ and $P_t(z',  \cdot)$ are mutually singular and $z$ is not a Doeblin point. A similar reasoning holds if $i \in \{3,4\}$. 

It remains to prove the claim. Let $Z_0 =(x,y,s,i) \in K_M$ with $i = 1$. Let $S_1$ be the time of the first switch and let us prove that $X_{S_1} \leq 1/4$. Since $\varphi_t^1(x,y) = (xe^{-t},y)$, this is obvious if $x \leq 1/4$. Assume now that $x > 1/4$. Since $z \in K_M$, one has $s \leq \log(1/x) < \log (4)$. Moreover, due to the fact that $\mu^1$ has its support included in $[\log(4), +\infty)$, we have almost surely $S_1 \geq \log(4) - s$. Hence, $X_{S_1} = xe^{-S_1} \leq 1/4$. 
Note also that $Y_t = y$ for all $t \leq S_1$. At time $S_1$, a switch occurs and, due to item (2) above, $I_{S_1} = 2$ almost surely. Now, $S_2 \geq \log(4)$ almost surely and therefore, 
\[
X_{S_1+S_2} = 1 + (X_{S_1} - 1)e^{-S_2} \geq 1 - 1/4 = 3/4.
\]
A second switch occurs at time $S_1 + S_2$ and item (2) implies that $I$ switches from $2$ to $1$. Alternately repeating theses two steps, one sees that $I_t \in \{1,2\}$ for all $t \geq 0$, which proves the claim for $i=1$.
The proof for $i = 2$ is analogous, as is the proof for $I_0 \in \{3,4\}$. 
\end{example}

In the previous example, the issue is that, even though $Q(x,y)$ is irreducible for all $(x,y)$, the structure of irreducibility changes with $(x,y)$: if $X_t \leq 1/4$ and $I_t = 1$, the process $I$ cannot jump directly to $\{3,4\}$; it has to switch first to $2$. But then, $I$ has to remain equal to $2$ for a time at least $\log(4)$ during which $X_t$ becomes larger than $3/4$. Here, the structure of irreducibility has changed and it is no longer possible to switch from $2$ to $\{3,4\}$; therefore $I$ goes back to $1$ and so on. To exclude this phenomenon, we will  introduce in the next section a stronger notion of irreducibility (see Definiton \ref{def:globalirreducible}).

\bigskip 

\subsection{Unique ergodicity} 

The examples of the previous section show that it is necessary to add assumptions on the support of the $\mu^i$ (see Example~\ref{ex:non-analytic}), the regularity of $\mu^i$ (see Example~\ref{ex:rational_supp}) or the irreducibility structure of $Q$ (see Example~\ref{ex:non-irreducible}). In order to state these assumptions, we first give two definitions. The first one is motivated by Example~\ref{ex:rational_supp}. Recall that $\bm{\lambda}_m$ denotes the Lebesgue measure on $\R^m$.

\begin{definition}
Let $m \geq 1$ and let $\mu$ be a finite Borel measure on $\R^m$. Let $\mu^{\textrm{ac}}$ be the absolutely continuous component of $\mu$ with respect to $\bm{\lambda}_m$ and set $h := d \mu^{\textrm{ac}}/d\bm{\lambda}_m$. We say that a point $\tbf_0 \in \R^m$ is $\mu$-\emph{regular} if, on a neighbourhood of $\tbf_0$, $h$ is bounded below by a positive constant. 
\end{definition} 

Clearly, if $\tbf_0$ is $\mu$-regular, then $\tbf_0$ is an element of the support of $\mu$. It is also clear that being $\mu$-regular is stronger than simply being contained in the support of $\mu$ (e.g., if $\mu$ and $\bm{\lambda}_m$ are mutually singular, then there are no $\mu$-regular points). The following definition is motivated by  Example \ref{ex:non-irreducible}.

\begin{definition}
  \label{def:globalirreducible}
We say that $Q = (Q(x))_{x \in \R^d}$ is \emph{globally irreducible} if for all $i,j \in E$, there exist a positive integer $n$ and indices $i_0 = i, i_1, \ldots, i_{n-1}, i_n =j$ such that $q_{i_k, i_{k+1}}(x) > 0$ for all $x \in \R^d$ and $k=0, \ldots, n-1$.
\end{definition}

Obviously, $Q$ given in Example \ref{ex:non-irreducible} is not globally irreducible. The issues raised in the examples of the previous section justify the following two assumptions:

\begin{assumption}    \label{cond:regular_point} 
There exists $\varepsilon_0 > 0$ such that for all $i \in E$ and all $t \in (0, \varepsilon_0)$, $t$ is $\mu^i$-regular. 
\end{assumption} 

\begin{assumption}    \label{cond:analyticity} 
$Q$ is globally irreducible and for every $i \in E$, the vector field $F^i$ is analytic and there exists a $\mu^i$-regular point $t^r_i > 0$. 
\end{assumption}

We are now ready to provide sufficient conditions for the existence of petite and small sets in the context of semi-Markov switching. By virtue of Theorem~\ref{thm:uniqueness}, these can be linked to the uniqueness of the invariant probability measure as well as to exponential ergodicity, which is done in Corollary~\ref{cor:uniqueandergodic}. Recall our standing assumption that $Q(x)$ is irreducible for every $x \in \R^d$. We let $R$ denote the resolvent associated with $(P_t)$ (see Definition~\ref{def:resolvent}).

\begin{theorem}   \label{thm:Doeblin}
Assume that at least one of the Assumptions~\ref{cond:regular_point} and~\ref{cond:analyticity} holds. 
Then
\begin{enumerate}
\item If the weak bracket condition holds at a point $x^* \in \R^d$, then for every $i \in E$ there exist a neighbourhood $U \subset K$ of $(x^*,0,i)$, a constant $c > 0$, $i' \in E$ and a nonempty open set $V \subset K^{i'}$ such that, for every $z' \in U$, 
\[
R(z', A \times \{i'\}) \geq c \ \bm{\lambda}_{d+1}(A \cap V), \quad \forall A \in \Bc(K^{i'}). 
\]
In particular, $U$ is a petite set.
\item If the strong bracket condition holds at a point $x^* \in \R^d$,  then for every $i \in E$ there exist a neighbourhood $U \subset K$ of $z = (x^*,0,i)$, a constant $c > 0$, $i' \in E$, $T > 0$ and a  nonempty open set $V \subset K^{i'}$ such that, for every $z' \in U$, 
\[
P_{T} (z', A \times \{i'\}) \geq c \ \bm{\lambda}_{d+1}(A \cap V), \quad \forall A \in \Bc(K^{i'}). 
\]
In particular, $z$ is a Doeblin point and $U$ is a small set.
\end{enumerate} 
\end{theorem}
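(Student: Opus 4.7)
The plan is to adapt to the semi-Markov setting the strategy developed in \cite{BH12,BMZIHP} for the Markovian case. The starting point is the classical consequence of the bracket conditions: under the strong bracket condition at $x^*$, there exist $m \geq d$, $T > 0$, indices $(i_1,\ldots,i_{m+1}) \in E^{m+1}$, and times $(t_1,\ldots,t_m)$ with $t_1 + \cdots + t_m < T$ such that the endpoint map
$$
\Psi(v_1,\ldots,v_m) := \varphi^{i_{m+1}}_{T - (v_1 + \cdots + v_m)} \circ \varphi^{i_m}_{v_m} \circ \cdots \circ \varphi^{i_1}_{v_1}(x^*)
$$
has surjective derivative at $(t_1,\ldots,t_m)$; the weak bracket condition yields an analogous statement in which $T$ is itself a free parameter, making the resolvent $R$ the natural object. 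Using the construction of Section~\ref{sec:PDP}, for $z' = (x',s',i)$ close to $(x^*,0,i)$ I would write $P_T(z',\cdot)$ as a series over the number of switches and retain the term corresponding to the process performing exactly the sequence $i \to i_1 \to \cdots \to i_{m+1} = i'$. In that term the waiting times $(S_1,\ldots,S_m)$ are integrated against the conditional densities associated with $\mu^{i_k}$ (via the maps $\psi_z$), while the factors $q_{i_k,i_{k+1}}$ contribute continuous positive weights along the trajectory.

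Under Assumption~\ref{cond:regular_point}, locality of the bracket condition allows $T$ to be taken arbitrarily small, so that $t_k$ and $T-\sum_k t_k$ all lie in $(0,\varepsilon_0)$, where the densities of the absolutely continuous parts of $\mu^{i_k}$ are bounded below by a positive constant. Since the trajectory remains close to $x^*$, irreducibility of $Q(x^*)$ combined with the continuity of $Q$ suffices to connect the starting state $i$ to $i_1$ through a short chain of transitions each of which has positive probability in a neighborhood of the trajectory. A change of variables via $\Psi$, using the submersion to select $d$ of the $m$ waiting times as free coordinates on $\R^d$, then yields the absolutely continuous minorization on a nonempty open set $V \subset K^{i'}$. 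The remaining $\tau$-direction is handled by letting a further waiting time vary in a transverse slice, thereby upgrading the bound from $\bm{\lambda}_d$ to $\bm{\lambda}_{d+1}$ on $K^{i'}$.

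Under Assumption~\ref{cond:analyticity} the bracket construction typically delivers small times $t_k$ lying outside the support of $\mu^{i_k}$, and an additional analytic-continuation step is required. For a fixed index sequence both $\Psi$ and every $d \times d$ minor of $D\Psi$ are real-analytic in $(v_1,\ldots,v_m,T)$ on their domain of definition, so the set on which $D\Psi$ fails to have full rank is either the entire parameter space or a proper real-analytic subvariety, hence Lebesgue-null with empty interior. Since the bracket condition provides at least one submersion point, one can choose $(t_1',\ldots,t_m',T')$ arbitrarily close to $(t^r_{i_1},\ldots,t^r_{i_m}, t^r_{i_1} + \cdots + t^r_{i_m} + t^r_{i_{m+1}})$ at which $\Psi$ is still a submersion. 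Global irreducibility of $Q$ then guarantees that, after interpolating the index sequence with globally-valid intermediate transitions whose waiting times are themselves chosen close to the corresponding regular points, the entire trajectory is followed by $I$ with strictly positive probability; by openness of the submersion set and continuity, the submersion property survives this modification. The rest of the argument proceeds as in the first case, with the $\mu^{i_k}$-regularity at $t^r_{i_k}$ providing the lower bound on the integrand. Part (1) is obtained analogously from the weak-bracket submersion, with $T$ treated as an additional free parameter and integrated against $e^{-T}\,dT$ to produce the required minorization of the resolvent.

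The main obstacle is the second case: one must simultaneously propagate the submersion from the small bracket-times to the prescribed regular times $t^r_{i_k}$ via analytic continuation, accommodate the globally-irreducible connecting paths without destroying the submersion property, and verify that every waiting time ultimately appearing in the integral falls within a $\mu^{i_k}$-regularity window. The bookkeeping that reconciles these three constraints, together with the verification that the resulting index sequence has positive probability of being realized by the chain $I$ along the whole trajectory, is the technical heart of the proof.
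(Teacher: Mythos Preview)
Your outline is essentially the paper's approach: reduce to the existence of an ``admissible regular submersion'' (the bracket-induced submersion at times lying in regularity windows of the $\mu^{i_k}$, with positive transition probabilities along the index chain), then obtain the minorization via a change of variables as in \cite{BH12,BMZIHP}, with the extra $\tau$-coordinate handled by one additional free waiting time.

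There is one ordering issue in your treatment of Assumption~\ref{cond:analyticity}. You propose to first analytically continue the submersion of $\Psi$ (with the original $m$ indices) to times near $(t^r_{i_1},\ldots,t^r_{i_m})$, and only then interpolate the index sequence using global irreducibility, claiming the submersion ``survives by openness and continuity''. But the interpolating intermediate flows must themselves be run for times near their own regular points, which are in general bounded away from zero; inserting them is therefore not a small perturbation of the map, and openness of the submersion set does not carry the conclusion across. The paper's remedy is to reverse the order: first enlarge the index sequence to $(\tilde i_1,\ldots,\tilde i_{n+1})$ using global irreducibility, define the extended map $\tilde\Psi$ in all $n$ time variables, observe that $\tilde\Psi$ coincides (up to a permutation of coordinates) with $\Psi$ when the interpolating times are set to zero, perturb these zeros slightly by continuity to obtain one genuine submersion point for $\tilde\Psi$, and only then invoke analyticity of a fixed $d\times d$ minor of $D\tilde\Psi$ to find a submersion point inside the product box around the regular times $\tilde t_k$ (constructed via Lemma~\ref{lem:regularmuye}). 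A single analytic step on the \emph{extended} map thus moves both the original and the interpolating times into their regularity windows simultaneously. With this correction your sketch matches the paper.
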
 

The proof of Theorem~\ref{thm:Doeblin} is given in Section~\ref{ssec:proof_Doeblin}. 
Thanks to Theorem \ref{thm:uniqueness} and Remark~\ref{rm:support_res}, we can deduce from it the following corollary. 

\begin{corollary}
\label{cor:uniqueandergodic}
Assume that at least one of the Assumptions~\ref{cond:regular_point} and~\ref{cond:analyticity} holds. Then
\begin{enumerate}
\item If the weak bracket condition holds at a $(P_t)$-accessible point $(x^*, 0, i^*) \in K$, then $Z$ admits at most one invariant probability measure;
\item Under Assumption~\ref{hyp:compact} from Section~\ref{ssec:existence_im_comp}, suppose that the strong bracket condition holds at a $(P_t)$-accessible point $(x^*, 0, i^*) \in K_M$. Here, $K_M$ replaces $K$ as the state space for $Z$, as explained in Section~\ref{ssec:existence_im_comp}; the notion of $(P_t)$-accessibility is then also defined with respect to $K_M$. Assume further that conditions (1) and (2) of Proposition \ref{prop:lyap} hold. Then $Z$ admits a unique invariant probability measure $\pi$ on $K_M$ and there exist $\gamma, c > 0$ such that for all $t \geq 0$ and $z \in K_M$,
\[
\| P_t(z, \cdot)  - \pi \|_{TV} \leq c (1+f(z)) e^{- \gamma t},
\]
where $f$ is the Lyapunov function constructed in Proposition \ref{prop:lyap}.
\end{enumerate}
\end{corollary}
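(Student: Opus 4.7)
The plan is to derive both assertions as immediate corollaries of Theorem~\ref{thm:Doeblin} combined with the abstract Markov-process criteria of Theorem~\ref{thm:uniqueness}; set $z^* := (x^*, 0, i^*)$ throughout.

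For part (1), I would first apply Theorem~\ref{thm:Doeblin}(1) at $x^*$ with the choice $i = i^*$, producing an open neighbourhood $U \subset K$ of $z^*$ together with a minorisation of the resolvent $R$ on $U$ by a multiple of $\bm{\lambda}_{d+1}$ on $V \times \{i'\}$. In particular $U$ is a petite open set containing the $(P_t)$-accessible point $z^*$. Remark~\ref{rm:support_res} guarantees that our semigroup $(P_t)$ satisfies the right-lower-semicontinuity requirement on $t \mapsto P_t(z, W)$ for $W$ open. Thus every hypothesis of Theorem~\ref{thm:uniqueness}(1) is met, yielding at most one invariant probability measure.

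For part (2), I would first use condition (1) of Proposition~\ref{prop:lyap} (continuity of each $G^i$) together with Proposition~\ref{prop:Feller} to conclude that $(P_t)$ is Feller on $K$, and hence also on the forward-invariant subset $K_M$ used as state space here. Next I would invoke Theorem~\ref{thm:Doeblin}(2) at $x^*$ with $i = i^*$: this produces a time $T > 0$, a small open neighbourhood $U$ of $z^*$ and an open set $V \subset K^{i'}$ on which $P_T(z', \cdot)$ is lower-bounded by $c \, \bm{\lambda}_{d+1}(\cdot \cap V) \otimes \delta_{i'}$ for every $z' \in U$, showing that $z^*$ is a Doeblin point. Assumption~\ref{hyp:compact} together with conditions (1) and (2) of Proposition~\ref{prop:lyap} then supply the Lyapunov function $f$ from that proposition, satisfying $P_t f \leq e^{-\gamma t} f + 1$. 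With Feller property, $(P_t)$-accessibility of $z^*$ in $K_M$, Doeblin property at $z^*$, and existence of the Lyapunov function $f$ all in place, Theorem~\ref{thm:uniqueness}(2) delivers both the unique invariant probability measure $\pi$ on $K_M$ and the total-variation estimate $\|P_t(z, \cdot) - \pi\|_{TV} \leq c (1 + f(z)) e^{-\gamma t}$.

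Since the corollary is essentially an assembly of already-established building blocks, there is no genuine obstacle. The only point meriting a short verification is that the minorising set $V \times \{i'\}$ furnished by Theorem~\ref{thm:Doeblin}(2) has nontrivial intersection with the reduced state space $K_M$, so that the Doeblin inequality survives the restriction from $K$ to $K_M$. This holds because $V$ is constructed as a neighbourhood of a flow composition starting at $x^* \in M$, and forward invariance of $M$ under every $\varphi^i$ keeps all such compositions inside $M$; hence after intersecting $U$ with $K_M$ the minorising measure still puts positive mass on $K_M$, providing a bona fide Doeblin structure on the correct state space.
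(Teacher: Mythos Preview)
Your proposal is correct and follows exactly the approach the paper takes: the paper states only that the corollary is deduced from Theorem~\ref{thm:Doeblin} via Theorem~\ref{thm:uniqueness} and Remark~\ref{rm:support_res}, and you have simply unpacked these references, correctly invoking Proposition~\ref{prop:Feller} (with condition~(1) of Proposition~\ref{prop:lyap}) for the Feller property in part~(2) and Proposition~\ref{prop:lyap} for the Lyapunov function. Your closing verification that the Doeblin minorisation survives the restriction to $K_M$ is a useful addition that the paper leaves implicit.
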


\begin{remark}   \rm 
    Assume that the vector fields $F^i$ are analytic and assume that 
    \begin{enumerate}
        \item[\textbf{(i)}] There exist a point $x^* \in \mathbb{R}^d$ and $\alpha^1, \ldots, \alpha^N \in \mathbb{R}$ with $\sum_i \alpha^i = 1$ such that $\sum_i \alpha^i F^i(x^*) = 0$;
        \item[\textbf{(ii)}] There exists a point $x \in \R^d$ which is $\{F^i\}$-accessible from $x^*$ (see Definition~\ref{def:attainable} in Section~\ref{ssec:accessibility}) and at which the weak bracket condition holds.
    \end{enumerate}
    Then Proposition 2.11 in \cite{BHS18} implies that the strong bracket condition holds at $x^*$. In particular, under Assumption~\ref{cond:analyticity}, point 2 of Theorem~\ref{thm:Doeblin} and point 2 of Corollary~\ref{cor:uniqueandergodic} remain valid if the strong bracket condition at $x^*$ is replaced with conditions \textbf{(i)} and \textbf{(ii)}. In the case where the $F^i$ are not analytic, conditions \textbf{(i)} and \textbf{(ii)} imply that for some $m \geq d$, $T > 0$ and $(i_1, \ldots, i_{m+1}) \in E^{m+1}$, the map $\Psi: (v_1, \ldots, v_m) \mapsto \varphi_{T - (v_1 + \ldots + v_m)}^{i_{m+1}} \circ \varphi^{i_m}_{v_m} \circ \ldots \circ \varphi^{i_1}_{v_1}(x^*)$ is a submersion at some point $(t_1, \ldots, t_m)$ (see Proposition~2.9 in~\cite{BHS18}). Under Assumption~\ref{cond:regular_point} and reasoning as in the proof of Theorem~\ref{thm:Doeblin}, one can also show that point 2 of Theorem~\ref{thm:Doeblin} and point 2 of Corollary~\ref{cor:uniqueandergodic} remain valid if the strong bracket condition at $x^*$ is replaced with conditions \textbf{(i)} and \textbf{(ii)}.
\end{remark}

\subsection{Absolute continuity} 
In \cite{BH12} and \cite{BMZIHP}, it is proven that, for exponential switching times, the weak bracket condition at a $(P_t)$-accessible point yields absolute continuity of the invariant probability measure -- provided that it exists -- with respect to the product of the Lebesgue measure on $\R^d$ and counting measure on $E$. In this section, we give conditions under which this result can be extended to our more general setting. 

\begin{proposition}       \label{thm:ac_component} 
Assume that at least one of the Assumptions~\ref{cond:regular_point} and~\ref{cond:analyticity} holds. Assume further that the weak bracket condition is satisfied at a $(P_t)$-accessible point $(x^*, 0, i^*) \in K$. If $(P_t)$ has an invariant probability measure, then it is unique and has a nontrivial absolutely continuous component with respect to $\mathbf{L}$, the product of ${\bm \lambda}_{d+1}$ and counting measure on $E$, restricted to $K$.
\end{proposition}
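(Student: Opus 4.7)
The plan is to combine the explicit minorization from Theorem~\ref{thm:Doeblin}(1) with the invariance of $\pi$ and the accessibility of $(x^*, 0, i^*)$. Uniqueness of the invariant probability measure is already a direct consequence of point (1) of Corollary~\ref{cor:uniqueandergodic}, so the main content is the existence of a nontrivial absolutely continuous component. First I would apply Theorem~\ref{thm:Doeblin}(1) at the point $x^*$ with $i = i^*$ to produce an open neighbourhood $U \subset K$ of $z^* := (x^*, 0, i^*)$, a constant $c > 0$, an index $i' \in E$ and a nonempty open set $V \subset K^{i'}$ such that
$$
R(z', A \times \{i'\}) \geq c \, \bm{\lambda}_{d+1}(A \cap V), \quad \forall z' \in U, \ \forall A \in \Bc(K^{i'}).
$$

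Next I need that $\pi(U) > 0$. Since $\pi$ is $(P_t)$-invariant, Fubini gives $\pi R = \pi$. By assumption $z^*$ is $(P_t)$-accessible; moreover, as noted in Remark~\ref{rm:support_res}, the semigroup $(P_t)$ satisfies the right lower semicontinuity of $t \mapsto P_t(z, W)$ on open sets $W$, so accessibility of $z^*$ is equivalent to $R(z, W) > 0$ for every $z \in K$ and every open neighbourhood $W$ of $z^*$. Applied to $W = U$, this yields $R(z, U) > 0$ for every $z \in K$, and hence
$$
\pi(U) = \int_K R(z, U) \, \pi(dz) > 0.
$$
Integrating the above minorization against $\pi$, I obtain for every Borel $A \subset K^{i'}$
$$
\pi(A \times \{i'\}) = \int_K R(z, A \times \{i'\}) \, \pi(dz) \geq \int_U R(z, A \times \{i'\}) \, \pi(dz) \geq c \, \pi(U) \, \bm{\lambda}_{d+1}(A \cap V).
$$
The measure $A \mapsto c \, \pi(U) \, \bm{\lambda}_{d+1}(A \cap V)$ on $K^{i'}$, extended by zero on the other fibers, is a nonzero measure on $K$ which is absolutely continuous with respect to $\mathbf{L}$. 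Hence the Lebesgue decomposition of $\pi$ with respect to $\mathbf{L}$ has a nontrivial absolutely continuous part.

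The substantive work is already embedded in Theorem~\ref{thm:Doeblin}(1): once the resolvent admits a Lebesgue-type minorization on a neighbourhood of an accessible point, invariance does the rest and there is no real obstacle. The only point that deserves care is the passage from accessibility in the $(P_t)$-sense to strict positivity of the resolvent on open neighbourhoods of $z^*$, which is precisely handled by the right lower semicontinuity of $t \mapsto P_t(z, \cdot)$ on open sets alluded to in Remark~\ref{rm:support_res}; this property should be verified (or quoted) for our switching semigroup, using the explicit construction of Section~\ref{sec:PDP}.
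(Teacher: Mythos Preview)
Your proposal is correct and follows essentially the same route as the paper: invoke Theorem~\ref{thm:Doeblin}(1) to get the resolvent minorization on a neighbourhood $U$ of $(x^*,0,i^*)$, use accessibility together with Remark~\ref{rm:support_res} to get $\pi(U)>0$, and integrate against $\pi = \pi R$. The only cosmetic difference is that the paper argues via complements (showing $\pi(A^c)>0$ for every $A$ with $\mathbf{L}(A)=0$), whereas you directly exhibit a nonzero $\mathbf{L}$-absolutely continuous measure dominated by $\pi$; these are equivalent formulations of the same computation.
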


\bpf By point 1 of Theorem~\ref{thm:Doeblin}, there exist a neighbourhood $U \subset K$ of $(x^*, 0, i^*)$, a constant $c > 0$, $i' \in E$ and a nonempty open set $V \subset \bigcup_{k \in E} K^k$ such that 
$$
R(z, \cdot \times \{j\}) \geq c \ \bm{\lambda}_{d+1}(\cdot \cap V) \delta_{i'}(j), \quad \forall z \in U, \ j \in E. 
$$
Let $A \in \Bc(K)$ such that $\bm{L}(A) = 0$ and let $(A_j)_{j \in E}$ be a family of Borel subsets of $\R^d \times \R_+$ such that 
$$
A = \bigcup_{j \in E} A_j \times \{j\}. 
$$
Suppose that $(P_t)$ admits an invariant probability measure $\pi$, which is then also invariant with respect to $R$. Let $A^c = K \setminus A$ and let $A_j^c = K^j \setminus A_j$, $j \in E$. In order to establish that $\pi$ has a nontrivial absolutely continuous component with respect to $\bm{L}$, we need to show that $\pi(A^c) > 0$. One has  
\begin{align*}
\pi(A^c) =& \pi R(A^c) = \int_K R(z, A^c) \ \pi(dz) \geq \int_U R(z,A^c) \ \pi(dz) \\
=& \sum_{j \in E} \int_U R(z, A_j^c \times \{j\}) \ \pi(dz) \geq c \pi(U) \bm{\lambda}_{d+1}(A_{i'}^c \cap V).
\end{align*}
As will be shown in Section~\ref{ssec:proof_Doeblin}, there exists $j^* \in E$ such that $V \subset K_{j^*}$ (see Proposition~\ref{prop:weaksubmersion}). Together with $\bm{L}(A) = 0$ and the fact that $V$ is a nonempty open set, this yields 
$$ 
\sum_{j \in E} \bm{\lambda}_{d+1}(A^c_j \cap V) \geq \bm{\lambda}_{d+1}(A^c_{j^*} \cap V) > 0. 
$$ 
Finally,  
$$
\pi(U) = \int_K R(z,U) \ \pi(dz) > 0 
$$
because the interior of $U$ contains a $(P_t)$-accessible point (see Remark~\ref{rm:support_res}). 
\epf 

\bigskip 

Observe that Proposition~\ref{thm:ac_component} only guarantees an absolutely continuous component for the invariant probability measure, not absolute continuity itself. In order to obtain the latter, we make an additional assumption. 

\begin{assumption}
\label{hyp:barGC1}
For all $i \in E$, there exist sequences $(a_n^i)_{n \in N^i}$ and $(b_n^i)_{n \in N^i}$ with $N^i = \{1, \ldots, n^i\}$ or $N^i = \mathbb{N}^*$, $a_n^i < b_n^i < a_{n+1}^i$, and such that $\mathrm{supp}(\mu^i) = \cup_{n \in N^i} [a_n^i, b_n^i]$. Moreover, for all $n \in N^i$, there exists a continuous function $  g^i_n : [a_n^i, b_n^i] \to [0,1]$ which, on $(a_n^i, b_n^i)$,  is $C^1$ with strictly negative derivative, and such that $  G^i$ restricted to $[a_n^i, b_n^i]$ is equal to $  g_n^i$. 
\end{assumption}

\begin{example}
Assume that $\mu = \frac{1}{2} \mathrm{U}_1 + \frac{1}{2} \mathrm{U}_2$, where $\mathrm{U}_1$ is the uniform law on $[1,2]$ and  $\mathrm{U}_2$ is the uniform law on $[3,4]$. Then, setting $N = \{1,2\}$,  $a_1 = 1$, $a_2 = 3$, $b_1 = 2$ and $b_2 = 4$, one has $\mathrm{supp}(\mu) = \cup_{n \in N} [a_n, b_n]$. Moreover, $  G$ is $C^1$, and equal on $(1,2)$ to $t \mapsto   g_1(t) = \frac{ 3 - t}{2}$ and equal on $(3,4)$ to $t \mapsto g_2(t) = \frac{4-t}{2}$. 
\end{example}

The proof of the following proposition is postponed to Section~\ref{ssec:proof_ac}. 

\begin{proposition}      \label{prop:ac_preserve} 
Grant Assumption \ref{hyp:barGC1} and assume that $(\lambda_i)_{i \in E}$ are $C^1$ on $\R^d$. Then, if $\mu$ is a finite measure on $(K, \Bc(K))$ such that $\mu \ll \bm{L}$, we also have $\mu P_t \ll \bm{L}$ for every $t > 0$.   
\end{proposition}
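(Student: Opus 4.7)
The plan is to analyze $\mu P_t$ by conditioning on the number $N_t$ of switches up to time $t$, writing each conditional contribution explicitly using the marked-point construction of Section~\ref{sec:PDP}, and performing a change of variables that produces a Lebesgue density on each slice $K^{i_n} \times \{i_n\}$. A preliminary observation from Assumption~\ref{hyp:barGC1}: each $G^i$ is continuous globally, constant on the complement of $\mathrm{supp}(\mu^i)$ and $C^1$ with strictly negative derivative on the interiors of the components $[a_n^i, b_n^i]$, hence absolutely continuous, with $\mu^i(dt) = -(G^i)'(t)\,dt$. Combined with $\lambda^i \in C^1$, this yields that for every $z_0 = (x_0, s_0, i_0) \in K$, the law of the first jump $T_1$ under $\mathbb{P}_{z_0}$ has a density $t_1 \mapsto -G'_{z_0}(t_1)$ with respect to Lebesgue measure on $(0, \infty)$.

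I would decompose $\mu P_t = \sum_{n \geq 0} M_n$ where $M_n(A) := \int_K \mathbb{P}_{z_0}(Z_t \in A, N_t = n)\,\mu(dz_0)$, well-defined by Proposition~\ref{prop:finitejumps}. The $n = 0$ piece $M_0$ is the pushforward of the finite measure $G_{z_0}(t)\,\mu(dz_0)$ under $\phi_t$, which acts as a diffeomorphism on each slice $K^i$ preserving the discrete coordinate, so $M_0 \ll \mathbf{L}$. For each $n \geq 1$ and each sequence $(i_0, \ldots, i_n) \in E^{n+1}$, writing $\rho_{i_0}$ for the density of $\mu$ on $K^{i_0}$, the construction of $Z$ from Section~\ref{sec:PDP} expresses the contribution to $M_n$ from trajectories with $I_0 = i_0$ and $I_{T_k} = i_k$ for $k = 1, \ldots, n$ as
\[
\int \rho_{i_0}(x_0, s_0)\,dx_0\,ds_0 \int_{\{0 < t_1 < \cdots < t_n < t\}} \mathbbm{1}_A(Y_t, t - t_n, i_n)\, W\, dt_1 \cdots dt_n,
\]
where $Y_t$ is the endpoint obtained by applying $\varphi^{i_0}, \ldots, \varphi^{i_n}$ over the intervals $(0, t_1), \ldots, (t_n, t)$ starting from $x_0$, and $W$ is the product of the jump-time densities $-G'$ at each switching time, the transition weights $q_{i_{k-1}, i_k}$ evaluated at the path points, and the survival factor $G_{(Y_{t_n}, 0, i_n)}(t - t_n)$.

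The decisive step is the change of variables $(x_0, t_n) \mapsto (y, u)$ with $u := t - t_n$ (unit Jacobian) and $y := Y_t$. For fixed $(s_0, t_1, \ldots, t_{n-1}, u)$, the map $x_0 \mapsto y$ is a composition of the flow diffeomorphisms $\varphi^{i_k}_{\sigma_k}$ and hence a diffeomorphism of $\R^d$ with smooth, nowhere-vanishing Jacobian. Substituting and invoking Fubini, one integrates out $(s_0, t_1, \ldots, t_{n-1})$---exactly $n$ parameters, matching the gap between the $(d+1+n)$-dimensional input and the $(d+1)$-dimensional output $(y, u)$---to produce a nonnegative density $p^{(n), i_0, \ldots, i_n}(y, u)$ with respect to $dy\,du$ on the relevant open subset of $K^{i_n}$. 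Summing over $(i_0, \ldots, i_n) \in E^{n+1}$ gives $M_n \ll \mathbf{L}$, and since $\sum_n M_n(K) = \mu(K) < \infty$, the countable sum $\mu P_t = \sum_n M_n$ is itself absolutely continuous with respect to $\mathbf{L}$.

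The main hurdle is the bookkeeping in the change of variables for $M_n$: one must track the Jacobians of the nested flow compositions and verify the measurability and integrability of the resulting density $p^{(n), i_0, \ldots, i_n}$, which follow from Fubini once one notices that the integrand is locally bounded while $\rho_{i_0} \in L^1(\mathbf{L})$. A minor subtlety is that $-G'_{z_0}$ only exists Lebesgue-almost everywhere on $(0, \infty)$ under Assumption~\ref{hyp:barGC1}, but this is harmless when integrating against Lebesgue measure.
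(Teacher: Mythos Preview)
Your approach is correct and shares the same skeleton as the paper's proof: the decomposition $\mu P_t = \sum_{n\ge 0} M_n$ by jump count, further split over index sequences $(i_0,\ldots,i_n)$, with the $n=0$ term handled via the diffeomorphism $\phi_t$. The execution for $n\ge 1$, however, is organized differently. The paper does not write out explicit densities and a direct change of variables as you do. Instead it first establishes (Lemma~\ref{lem:psiC1}, which is where the $C^1$ hypothesis on $\lambda^i$ is used) that $(x,u)\mapsto \psi_x^i(u)$ is $C^1$ off a countable set with nonvanishing $\partial_u$-derivative; from this it deduces (Lemma~\ref{lm:ac_nu_z}) that the joint law $\nu_z$ of the first $m-1$ interjump times is $\ll \bm\lambda_{m-1}$ via a pushforward of $\bm\lambda_{m-1}$ on $(0,1]^{m-1}$ through the map $H=(\tilde\sigma_1,\ldots,\tilde\sigma_{m-1})$. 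It then shows the ``bad'' set $J_{x,i}\subset\R_+^{m-1}$ has zero $\bm\lambda_{m-1}$-measure for $\bm\lambda_d$-a.e.\ $x$ by applying Lemma~\ref{lm:Davydov} to a map $g$ that mixes the initial position $x$ with the switching times. Your route is more elementary: you never invoke the $\psi$-representation and work directly with the interjump densities guaranteed by Assumption~\ref{hyp:barGC1}, collapsing everything into a single change of variables $(x_0,t_n)\mapsto (y,u)$. A mild bonus of your argument is that it does not actually use $\lambda^i\in C^1$---continuity (Assumption~\ref{standinglambda}) suffices for the densities to exist and your Jacobian involves only the flow differentials---whereas the paper's Lemma~\ref{lem:psiC1} genuinely uses the implicit function theorem on $\gamma(x,t)=\int_0^t\lambda^i(\varphi_u^i(x))\,du$. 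The price you pay is heavier explicit bookkeeping of Jacobians and integrability, which the paper sidesteps via the abstract pushforward Lemma~\ref{lm:Davydov}.
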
 

This leads to the following result on absolute continuity for the invariant probability measure.

\begin{theorem}    \label{thm:result_ac} 
Let $(\lambda_i)_{i \in E}$ be $C^1$ on $\R^d$ and let $(x^*, 0, i^*) \in K$ be a $(P_t)$-accessible point at which the weak bracket condition holds. In addition, assume that at least one of the following conditions is satisfied: 
\begin{enumerate}
\item Assumption~\ref{hyp:barGC1} holds with $a^i_1 = 0$ for every $i \in E$. 
\item Assumptions~\ref{cond:analyticity} and~\ref{hyp:barGC1} hold. 
\end{enumerate}
Then, if $(P_t)$ has an invariant probability measure, it is unique and absolutely continuous with respect to $\mathbf{L}$.
\end{theorem}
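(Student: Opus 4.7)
\smallskip
\noindent\textbf{Plan of proof.} The strategy is to take the Lebesgue decomposition $\pi = \pi^{ac} + \pi^s$ with respect to $\mathbf{L}$ and use Proposition~\ref{prop:ac_preserve} to prove that $\pi^{ac}$ is itself $(P_t)$-invariant. Combined with the uniqueness of the invariant probability measure and the fact that $\pi^{ac} \neq 0$, this will force $\pi^s = 0$.

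I would first check that the hypotheses of Corollary~\ref{cor:uniqueandergodic}(1) and Proposition~\ref{thm:ac_component} are in force under either of the two alternatives. Under condition (1), Assumption~\ref{hyp:barGC1} with $a_1^i = 0$ gives that on $(0, b_1^i)$ the absolutely continuous part of $\mu^i$ has density $-(g_1^i)'$; by Assumption~\ref{hyp:barGC1} this is the restriction of a continuous, strictly negative function to $(0, b_1^i)$, hence bounded away from $0$ on each compact subinterval. This is precisely Assumption~\ref{cond:regular_point}. Under condition (2), Assumption~\ref{cond:analyticity} is directly assumed. In either case, combined with the weak bracket condition at the $(P_t)$-accessible point $(x^*, 0, i^*)$, Corollary~\ref{cor:uniqueandergodic}(1) yields uniqueness, while Proposition~\ref{thm:ac_component} yields that any invariant probability measure $\pi$ has a nontrivial absolutely continuous component $\pi^{ac}$.

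The key step is then to establish $\pi^{ac} P_t = \pi^{ac}$. Fix a Borel set $N_0 \subset K$ with $\mathbf{L}(N_0) = 0$ and $\pi^s(N_0^c) = 0$. Since $(\lambda_i)$ is $C^1$ and Assumption~\ref{hyp:barGC1} holds, Proposition~\ref{prop:ac_preserve} gives $\pi^{ac} P_t \ll \mathbf{L}$, so in particular $\pi^{ac} P_t(\cdot \cap N_0) \equiv 0$. On the other hand, $\pi^{ac} \leq \pi$ as measures implies $\pi^{ac} P_t \leq \pi P_t = \pi$; hence, for any Borel $A \subset K$,
\[
\pi^{ac} P_t(A) = \pi^{ac} P_t(A \cap N_0^c) \leq \pi(A \cap N_0^c) = \pi^{ac}(A \cap N_0^c) = \pi^{ac}(A),
\]
where the second-to-last equality uses $\pi^s(N_0^c) = 0$ and the last uses $\pi^{ac}(N_0) = 0$. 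Thus $\pi^{ac} P_t \leq \pi^{ac}$, and equality of total masses $\pi^{ac} P_t(K) = \pi^{ac}(K)$ forces $\pi^{ac} P_t = \pi^{ac}$; by linearity, $\pi^s = \pi - \pi^{ac}$ is also $(P_t)$-invariant.

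To conclude, suppose for contradiction that $\pi^s \neq 0$. Since $\pi^{ac} \neq 0$ as well, the normalised measures $\pi^{ac}/\pi^{ac}(K)$ and $\pi^s/\pi^s(K)$ are two distinct invariant probability measures (one is absolutely continuous, the other singular with respect to $\mathbf{L}$), contradicting the uniqueness established in the first step. Hence $\pi^s = 0$ and $\pi = \pi^{ac} \ll \mathbf{L}$. The only delicate point is the invariance argument for $\pi^{ac}$, where the heavy lifting is done by Proposition~\ref{prop:ac_preserve}; once that result is granted, the remainder is bookkeeping with the Lebesgue decomposition and an elementary total-mass argument.
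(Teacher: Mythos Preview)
Your proof is correct and follows essentially the same route as the paper's: both rest on Proposition~\ref{prop:ac_preserve} together with the uniqueness and nontrivial absolutely continuous component furnished by Proposition~\ref{thm:ac_component}. The only difference is that the paper invokes a cited dichotomy (Lemma~1.1 in~\cite{CKW21}) to conclude that an ergodic invariant measure must be either absolutely continuous or singular with respect to $\mathbf{L}$, whereas you unpack that dichotomy by showing directly that $\pi^{ac}$ (and hence $\pi^{s}$) is $(P_t)$-invariant; your argument is thus a self-contained proof of the very lemma the paper quotes.
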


\bpf By Proposition~\ref{prop:ac_preserve}, we have $\mu P_t \ll \mathbf{L}$ for every finite measure $\mu$ on $(K, \Bc(K))$ with $\mu \ll \mathbf{L}$ and for every $t > 0$. Let $\pi$ be an ergodic measure with respect to $(P_t)$. A well-known result from the ergodic theory of Markov chains on metric spaces (see, e.g., Lemma 1.1 in~\cite{CKW21}) then implies that $\pi$ is either absolutely continuous or singular with respect to $\mathbf{L}$. Note that under condition (1) of Theorem~\ref{thm:result_ac}, Assumption~\ref{cond:regular_point} holds as well.  
Then, by Proposition~\ref{thm:ac_component}, $\pi$ is unique and has a nontrivial absolutely continuous component, hence it must be absolutely continuous.  
\epf 

\begin{remark}
In case where the jump rates $\lambda^i$ are constant, our process can be viewed as a special case of the family of Piecewise Deterministic Process as studied in \cite{CKW21}: the process $(X_t, \tau_t)$ follows a semiflow between the switching times, and if a switch occurs when $(X, \tau) = (x,s)$, then the process jumps to $(x,0)$. Hence, with the notation of \cite{CKW21}, the transformation at jump times is deterministic and equal to $w : (x,s) \mapsto (x,0)$. However, this transformation is not non-singular with respect to ${\bm \lambda}_{d+1}$, as required in the main results of \cite{CKW21} on absolute continuity of the invariant measures.
\end{remark}

\subsection{Accessibility}     \label{ssec:accessibility} 

Corollary \ref{cor:uniqueandergodic} underscores the importance of $(P_t)$-accessible points in the ergodic theory of switching processes. In particular, it is crucial to be able to check whether a given point is $(P_t)$-accessible. In this section, we characterise the set of $(P_t)$-accessible points in terms of the building blocks of $Z$, i.e., in terms of the vector fields $(F^i)$, the measures $(\mu_i)$, and the family of transition matrices $Q$.

\subsubsection{When $0$ is in the support of the $\mu^i$: $\{F^i\}$-accessible points}

For PDMPs with exponential switching times and irreducible transition matrices, the set of $(P_t)$-accessible points has been described in \cite{BMZIHP} (see also \cite{BCL17}). Roughly speaking, for the semigroup $(P_t)$ of a PDMP on $\R^d \times E$, a point $z^* = (x^*, i^*)$ is $(P_t)$-accessible from $z = (x,i)$ if and only if there exists, for every neighbourhood $U$ of $x^*$, a deterministic path, obtained by switching between the flows, that goes from $x$ to $U$. 

More precisely, for $\ibf = (i_1, \ldots, i_m) \in E^m$ and $\sbf = (s_1, \ldots, s_m) \in \R^m_+$, we call the pair $(\sbf, \ibf)$ a \emph{control sequence} and introduce the composite flow 
$$
\Phi_{\sbf}^{\ibf} = \varphi_{s_m}^{i_m} \circ \ldots \circ \varphi_{s_1}^{i_1}.
$$
We then make the following definition.

\begin{definition}           \label{def:attainable} 
A point $x^* \in \R^d$ is called \emph{attainable} from $x \in \R^d$ if there is a  control sequence $(\sbf, \ibf)$ such that  
$x^* = \Phi_{\sbf}^{\ibf}(x)$.
We denote the set of points $x^* \in \R^d$ that are attainable from $x$ by $\mathcal{O}^+(x)$. A point $x^* \in  \R^d$ is called \emph{$\{F^i\}$-accessible} from $x \in  \R^d$ if $x^*$ is contained in the  closure of $\mathcal{O}^+(x)$.
\end{definition}

It is proven in \cite[Lemma 3.2]{BMZIHP} that for PDMPs with exponential switching times and irreducible transition matrices, a point $(x^*,i^*)$ is $(P_t)$-accessible from $(x,i)$ if and only if $x^*$ is $\{F^i\}$-accessible from $x$. In our setting, the situation is not as clear-cut. In Example \ref{ex:non-irreducible}, every $(x',y') \in [0,1]^2$ is $\{F^i\}$-accessible from every $(x,y) \in [0,1]^2$, but as we proved, starting from $(x,y,s,i) \in K_M$ such that $i \in \{1,2\}$, every point of the form $(x',y',s',i')$ with $y ' \neq y$ is not $(P_t)$-accessible. The next example is similar, but this time the family of matrices $Q$ is globally irreducible. 

\begin{example} \label{ex:F^iacc-nonPt-acc}   \rm 
For $\alpha, \beta > 0$, consider the vector fields $F^1(x) = -\alpha x$ and $F^2(x) = \beta x$ on $\R^d$. Clearly, the origin is a globally asymptotically stable equilibrium point for $F^1$ and is a repulsive equilibrium for $F^2$. In particular, $0$ is $\{F^i\}$-accessible. For simplicity, let us assume that $\lambda_1$ and $\lambda_2$ are constantly equal to $1$, so $\mu_{x,i} = \mu_i$ for every $x \in \R^d, i \in\{1,2\}$. Suppose there are positive real numbers $r_1$ and $r_2$ with $r_1/r_2 \leq \beta / \alpha$ such that the support of $\mu_1$ is contained in $(0, r_1]$ and the support of $\mu_2$ is contained in $[r_2, \infty)$. Then, for every $(x,s,i) \in K$ such that $x \neq 0$ and for every $(s^*, i^*)$ such that $(0,s^*, i^*) \in K$, the point $(0,s^*,i^*)$ is not  $(P_t)$-accessible from $(x,s,i)$. Indeed, fix $(x,s,i) \in K$ such that $x \neq 0$. We claim that, for all $t \geq 0$, $\|X_t \| \geq e^{ - 3 \beta r_2} \|x\|$ almost surely. Hence, with $Z$ starting from $(x,s,i)$ and for $t > 0$ fixed, $X_t$ almost surely lies outside of the open ball centred at the origin with radius $e^{ - 3 \beta r_2} \|x\|$, which means that $(0,s^*,i^*)$ is not $(P_t)$-accessible from $(x,s,i)$. We prove the claim. For all $t$, there exists a (random) control sequence $(\sbf, \ibf)=(s_1, \ldots, s_m, i_1, \ldots, i_m)$ with $s_1 + \ldots + s_m = t$ and with the components of $\ibf$ alternating between $1$ and $2$, such that $X_t = \Phi_{\sbf}^{\ibf}(x)$. For $j \in \{1,2\}$, let $\mathcal{K}_j$ be the set of $k \geq 1$ such that the index $i_k$ in $\ibf$ equals $j$. Let $m_j$ denote the cardinality of $\mathcal{K}_j$. By construction and the choice of supports for $\mu_1$ and $\mu_2$, for every $k \in \mathcal{K}_1$, we have almost surely $s_k \leq r_1$, while if $k \in \mathcal{K}_2 \setminus \{1,m\}$, $s_k \geq r_2$ almost surely. Moreover,  since $1$ and $2$ alternate in $\ibf$, we have $\lvert m_1 - m_2 \rvert \leq 1$. As a result, 
\begin{align*}
\|X_t\|  =& \|x\|  \exp \biggl(\beta \sum_{k \in \mathcal{K}_2} s_k - \alpha \sum_{k \in \mathcal{K}_1} s_k \biggr) \\
\geq& \|x\| \exp \left(\beta (m_2 - 2) r_2 - \alpha m_1 r_1 \right) \\
\geq& e^{- 3 \beta r_2} \|x\| \exp(m_1 (\beta r_2 - \alpha r_1)) \geq e^{-3 \beta r_2} \|x\|.  
\end{align*}
Therefore, in this example, the origin is $\{F^i\}$-accessible, but $(0,s^*, i^*)$ is not $(P_t)$-accessible.
\end{example} 

The previous example shows that the existence of a deterministic path from a point $x$ to a point $x^*$ does not imply that $(x^*, s^*, i^*)$ is $(P_t)$-accessible from $(x,s,i)$. However, we show that this implication does hold when $0$ is in the support of every $\mu_i$. Recall the assumption for this section that $Q(x)$ is irreducible for every $x \in \R^d$.  

\begin{proposition}
\label{prop:Fiaccessible}
Assume that $0$ is in the support of $\mu_i$ for every $i \in E$. Let $z=(x,0,i) \in K$, $x^* \in \R^d$, and $i^* \in E$. Then  $x^*$ is $\{F^i\}$-accessible from $x$ if and only if there exists $s^* > 0$ such that $(x^*,s^*,i^*)$ is $(P_t)$-accessible from $z$.
\end{proposition}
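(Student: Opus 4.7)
The piecewise-deterministic structure of $Z$ forces any positive-probability event to be realized by concrete control sequences. Specifically, if $(x^*, s^*, i^*)$ is $(P_t)$-accessible from $z=(x,0,i)$, fix $\eps>0$ and choose $t_\eps>0$ with $P_{t_\eps}(z, B(x^*,\eps)\times\R_+\times\{i^*\})>0$. Decompose this event according to $N=N_{t_\eps}$ and the sequence of visited indices $(i_0,i_1,\ldots,i_N)$ (with $i_0=i$, $i_{k+1}\neq i_k$); the partition is countable, so some term has positive probability. Under that term, $\{X_{t_\eps}\in B(x^*,\eps)\}$ is an open condition in the switching times (by continuity of the composite flow), hence positive conditional measure forces the condition to be satisfiable. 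This yields a control sequence $(\mathbf{s},\mathbf{i})$ with $\Phi^{\mathbf{i}}_{\mathbf{s}}(x)\in B(x^*,\eps)$; letting $\eps\downarrow 0$ we conclude $x^*\in\overline{\mathcal{O}^+(x)}$.

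\textbf{Reverse direction ($\Rightarrow$): construction of a realizable path.} Assume $x^*\in\overline{\mathcal{O}^+(x)}$. Choose $s^*>0$ small enough that $\int_0^{s^*}\lambda^{i^*}(\varphi^{i^*}_r(y))\,dr$ lies in $\supp(\mu^{i^*})$ for the relevant $y$'s arising below; this is possible because $0\in\supp(\mu^{i^*})$. Fix a neighbourhood $V=B(x^*,\eps)\times(s^*-\delta,s^*+\delta)\times\{i^*\}$. By $\{F^i\}$-accessibility, pick a control sequence $(\mathbf{s}',\mathbf{i}')$ with $\Phi^{\mathbf{i}'}_{\mathbf{s}'}(x)$ within $\eps/3$ of $x^*$, and modify it into a sequence $(\mathbf{s},\mathbf{i})$ that the process can realize with positive probability, as follows: (i) merge consecutive identical indices; (ii) prepend a short-time index path from $i$ to $i_1'$ obtained from an irreducibility path of $Q(x)$, which remains valid on a neighbourhood of $x$ by continuity of $Q$; (iii) append a similar short path from $i_m'$ to $i^*$, then the final step $(s^*,i^*)$; (iv) subdivide any step whose duration is not in $\supp(\mu^{i_k}_{\cdot,0})$ into many very short $F^{i_k}$-pieces separated by brief excursions through auxiliary indices, which is possible since $0\in\supp(\mu^j)$ for every $j$. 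Continuity of the flows ensures that the terminal position remains within $\eps$ of $x^*$ and that the final $\tau$-value lies in $(s^*-\delta, s^*+\delta)$.

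\textbf{Positivity and the main obstacle.} At each step of the modified sequence, the probability that the inter-jump time falls in a small neighbourhood of $s_k$ and that the process then jumps to $i_{k+1}$ is bounded below by $\mu^{i_k}_{\bm{X}_{k-1},0}((s_k-\eta,s_k+\eta))\cdot q_{i_k,i_{k+1}}(\bm{X}_k)$, both factors positive by the support condition and the local-irreducibility construction. Taking the product over the finitely many steps and using continuity of the composite flow in the switching times, we obtain $P_t(z,V)>0$ for $t\approx\sum s_k$, so $(x^*,s^*,i^*)$ is $(P_t)$-accessible from $z$. The main obstacle is step (iv): one must carefully coordinate the very short auxiliary excursions --- whose individual durations must lie in the relevant supports --- with the flow approximation, while ensuring that every intermediate $q_{\cdot,\cdot}$ stays bounded below along the way. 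The combination of flow continuity, continuity/local-irreducibility of $Q$, and the density of $\supp(\mu^j)$ near $0$ makes this possible, but the detailed bookkeeping is the delicate part of the argument.
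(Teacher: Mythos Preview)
Your proposal is correct and follows essentially the same strategy as the paper. The paper routes the argument through Proposition~\ref{prop:acces} (equivalence of $(P_t)$-accessibility and $\{F^i,\mu^i,Q\}$-accessibility), and then carries out precisely your step (iv) via an explicit subdivision Algorithm together with a Gr\"onwall-type Lipschitz estimate for composite flows (Lemma~\ref{lem:distancecomposite}); these two ingredients are exactly the ``detailed bookkeeping'' you flag as the main obstacle, and your outline of how to resolve it (short $F^{i_k}$-pieces interspersed with tiny irreducibility excursions, controlled by flow continuity and continuity of $Q$) matches the paper's construction.
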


The proof of Proposition~\ref{prop:Fiaccessible} can be found in Section~\ref{sec:proofaccess}.

\subsubsection{The general case: $\{F^i, \mu^i, Q\}$-accessible points}

We now characterize the set of accessible points in the general case, where $0$ is not necessarily in the support of every $\mu_i$. As exhibited in Example \ref{ex:F^iacc-nonPt-acc}, the presence of a deterministic path from a point $x$ to a point $x^*$ does not imply in general the accessibility of points of the form $(x^*,s^*,i^*)$ from points $(x,s,i)$. The reason is that the times of switching for the discrete component $I$ have to be in the supports of the respective measures $(\mu_i)$, and these supports are in general not equal to the whole $\R_+$. Moreover, if the deterministic path requires to switch from a state $i$ to a state $j$, this jump has to be allowed for the process $I$. This is why we introduce the concept of an admissible control sequence.

\begin{definition}        \label{def:admissible_cs} 
For $m \geq 1$, $\sbf = (s_1, \ldots, s_m) \in (0,\infty)^m$, and $\ibf = (i_1, \ldots, i_m) \in E^m$, we call the control sequence $(\sbf, \ibf)$ \emph{admissible} with respect to a starting point $(x,s,i) \in K$ if the following holds:
\begin{enumerate}
\item If $m=1$: 
\begin{enumerate}
\item We have $i_1 = i$; 
\item The intersection of $(s + s_1, \infty)$ and the support of $\mu_{\varphi^i_{-s}(x),i}$ is nonempty. 
\end{enumerate}
\item If $m \geq 2$: 
\begin{enumerate}
\item We have $i_1 = i$ and $s + s_1$ lies in the support of $\mu_{\varphi^i_{-s}(x), i}$; 
\item For $2 \leq k \leq m$, let  
\begin{equation}
\label{eq:xk}
x_k = \Phi_{(s_1, \ldots, s_{k-1})}^{(i_1, \ldots, i_{k-1})}(x) = \varphi_{s_{k-1}}^{i_{k-1}} \circ \ldots \circ \varphi_{s_1}^{i_1}(x). 
\end{equation}
For $2 \leq k \leq m$ we have $q_{i_{k-1} i_k}(x_k) > 0$, and for $2 \leq k \leq m-1$ the time $s_k$ lies in the support of $\mu_{x_k, i_k}$;
\item The intersection of $(s_m, \infty)$ and the support of $\mu_{x_m, i_m}$ is nonempty. 
\end{enumerate} 
\end{enumerate}
\end{definition}

\begin{remark}  \rm 
If $\mu_i$ is supported on $\R_+$ for every $i \in E$ (as is the case for classical PDMPs with exponential switching times), then a control sequence $(\sbf, \ibf)$ is admissible with respect to $(x,s,i) \in K$ if and only if $i_1 = i$ and $q_{i_{k-1} i_k}(x_k) > 0$ for $2 \leq k \leq m$. 
\end{remark}   

With the definition of an admissible control sequence in hand, we can now define the set of points that are \emph{reachable}, a notion which is stronger than merely being attainable (see Definition~\ref{def:attainable}). 

\begin{definition}
A point $(x^*, s^*, i^*) \in K$ is called \emph{reachable} from $(x,s,i) \in K$ if there is an admissible control sequence $(\sbf, \ibf)= (s_1, \ldots, s_m, i_1, \ldots, i_m)$ with respect to $(x,s,i)$ such that  
$$
i^* = i_m, \quad x^* = \Phi_{\sbf}^{\ibf}(x), \quad s^* = \begin{cases}
                                     s + s_1, & \quad m = 1, \\
                                     s_m, & \quad m > 1.
                                     \end{cases} 
$$
We denote the set of points $z^* \in K$ that are reachable from $z$ by $\gamma^+(z)$. A point $z^* \in K$ is called \emph{$\{F^i,\mu_i,Q \}$-accessible} from $z \in K$ if $z^*$ is contained in the relative closure of $\gamma^+(z)$ in $K$. It is called $\{F^i,\mu_i,Q\}$-accessible from a set $Z \subset K$ if it is $\{F^i,\mu_i,Q\}$-accessible from every point $z \in Z$. We simply call $z^*$ \emph{$\{F^i,\mu_i,Q\}$-accessible} if it is $\{F^i,\mu_i,Q\}$-accessible from $K$. 
\end{definition}

\begin{remark}
\label{rem:F-mu-Q-acess-implies-F-acces}
    Note that if $(x^*, s^*, i^*)$ is $\{F^i, \mu^i, Q\}$ - accessible from $(x,s,i)$, then $x^*$ is necessarily $\{F^i\}$ - accessible from $x$.
\end{remark}

The following proposition fully characterizes the set of $(P_t)$-accessible points by asserting the equivalence of $(P_t)$-accessibility and $\{F^i, \mu_i, Q\}$-accessibility. It is proved in Section~\ref{sec:proofaccess}. 

\begin{proposition}
\label{prop:acces}
Let $z, z^* \in K$. Then $z^*$ is $(P_t)$-accessible from $z$ if and only if it is $\{F^i,\mu_i,Q\}$-accessible from $z$. In particular, $z^*$ is $(P_t)$-accessible if and only if it is $\{F^i,\mu_i,Q\}$-accessible.  
\end{proposition}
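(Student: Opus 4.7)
The equivalence would be established by proving each implication separately. The direction ``$(P_t)$-accessibility $\Rightarrow \{F^i, \mu_i, Q\}$-accessibility'' is the easier one, obtained by reading off an admissible control sequence from a typical realization of $Z$. The converse direction is more delicate and would require, given an admissible sequence $(\sbf, \ibf)$ reaching a target point $\tilde z \in U$, the construction of a positive-probability event on which $Z_t$ ends up close to $\tilde z$, hence in $U$.

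\textbf{Forward direction.} Let $U$ be an open neighborhood of $z^*$ and fix $t > 0$ with $P_t(z, U) > 0$. On the event $\{Z_t \in U\}$, set $n := N_t$, $m := n + 1$, and define
$$
(\sbf, \ibf) := \bigl(S_1, \ldots, S_n, t - T_n;\ \bm{I}_0, \ldots, \bm{I}_n \bigr).
$$
From the construction of $Z$ in Section~\ref{sec:PDP} one has $x_k = \bm{X}_{k-1}$ for $2 \leq k \leq m$. Each jump of $I$ necessarily happens with positive $q$-probability at position $\bm{X}_{k-1}$, each holding time lies almost surely in the support of its conditional law, and the terminal inequality $S_{n+1} > t - T_n$ together with $S_{n+1} \in \mathrm{supp}(\mu^{\bm{I}_n}_{\bm{X}_n, 0})$ completes admissibility (the $k = 1$ case uses that $S_1 + s$ has law $\mu^i_{\varphi^i_{-s}(x), 0}$ conditioned on $> s$). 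A short verification shows that the reachable point of $(\sbf, \ibf)$ is $(\varphi^{\bm{I}_n}_{t - T_n}(\bm{X}_n), t - T_n, \bm{I}_n) = Z_t$, so on the positive-probability event $\{Z_t \in U\}$, $Z_t \in \gamma^+(z) \cap U$.

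\textbf{Reverse direction.} Suppose $z^* \in \overline{\gamma^+(z)}$ and let $U$ be an open neighborhood of $z^*$. Pick $\tilde z \in U \cap \gamma^+(z)$ with admissible control sequence $(\sbf, \ibf) = (s_1, \ldots, s_m; i_1, \ldots, i_m)$. If $m = 1$, admissibility translates directly into $G_z(s_1) > 0$, so $\mathbb{P}_z(Z_{s_1} = \tilde z) = \mathbb{P}_z(T_1 > s_1) > 0$. For $m \geq 2$, let $t := s_1 + \ldots + s_m$ and, for small $\delta > 0$, consider
$$
A_\delta := \bigcap_{k=1}^{m-1} \bigl\{ S_k \in (s_k - \delta, s_k + \delta),\ \bm{I}_k = i_{k+1} \bigr\} \cap \bigl\{ T_m > t \bigr\}.
$$
By continuous dependence of the flows on initial conditions, $\bm{X}_k$ is close to $x_{k+1}$ on $A_\delta$, whence $Z_t$ is close to $z^*$ and $A_\delta \subset \{Z_t \in U\}$ for $\delta$ small. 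The inductive step $\mathbb{P}(A_\delta) > 0$ would be proved as follows: continuity of $Q$ and $q_{i_k, i_{k+1}}(x_{k+1}) > 0$ give a positive lower bound on $q_{i_k, i_{k+1}}(\bm{X}_k)$ whenever $\bm{X}_k$ is near $x_{k+1}$; and for the holding-time events, admissibility yields $s_k \in \mathrm{supp}(\mu^{i_k}_{x_k, 0})$ (analogously for the terminal event), which is transferred to a uniform lower bound on $\mu^{i_k}_{\bm{X}_{k-1}, 0}((s_k - \delta, s_k + \delta))$ for $\bm{X}_{k-1}$ near $x_k$ by choosing $a < s_k < b$ in $(s_k - \delta, s_k + \delta)$ such that $\int_0^a \lambda^{i_k}(\varphi^{i_k}_r(x_k))\,dr$ and $\int_0^b \lambda^{i_k}(\varphi^{i_k}_r(x_k))\,dr$ are continuity points of $G^{i_k}$ --- possible since $G^{i_k}$ has only countably many discontinuities and the integrated rate is continuous and strictly increasing in its upper limit.

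\textbf{Main obstacle.} The hard part will be justifying $\mathbb{P}(A_\delta) > 0$: the survival functions $G^{i_k}$ may have jump discontinuities, so direct continuity arguments on $\mu^{i_k}_{\bm{X}_{k-1}, 0}$ in $\bm{X}_{k-1}$ are unavailable, and one cannot transfer the support condition $s_k \in \mathrm{supp}(\mu^{i_k}_{x_k, 0})$ to a perturbed measure without care. The continuity-point selection sketched above is what unlocks the inductive estimate and reduces the rest of the argument to standard continuous-dependence considerations.
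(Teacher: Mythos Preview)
Your proof is correct and follows the same overall two-part strategy as the paper, with notable technical variations in each direction. For the forward implication, you read off an admissible control sequence from a typical sample path (using that $\psi_z(u) \in \mathrm{supp}(\mu_z)$ pointwise for every $u \in (0,1)$, so the random holding times automatically lie in the right supports); the paper instead extracts a deterministic $(s_1, \ldots, s_m)$ from the integral representation $\mathbb{E}[F(\tilde S_1, \ldots, \tilde S_m)] > 0$ by iteratively choosing each $s_k$ in the support of its conditional law --- more explicit, but longer. For the reverse implication, both arguments rest on the same stability fact: if $s_k \in \mathrm{supp}(\mu^{i_k}_{x_k})$ then $\mu^{i_k}_{y}((s_k - \delta, s_k + \delta])$ is uniformly bounded below for $y$ near $x_k$. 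You obtain this by selecting endpoints $a, b$ whose integrated rates are continuity points of $G^{i_k}$, so that $y \mapsto \mu^{i_k}_y((a,b])$ is continuous at $x_k$; the paper (Lemma~\ref{lem:supportmuz}) instead shows that the integrated-rate map sends the $\delta$-window onto an interval containing a fixed neighbourhood of the corresponding support point of $\mu^{i_k}$, which bypasses any continuity requirement on $G^{i_k}$ entirely. Structurally, the paper packages the reverse direction as an induction on the length of the control sequence, proving the stronger statement that $P_t(z', U) > 0$ for all $z'$ in a neighbourhood of $z$; your single event $A_\delta$ unrolls that induction into one direct estimate, which is cleaner for the proposition at hand but does not yield the locally uniform positivity that the paper's version provides.
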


\begin{remark}  \rm 
We will typically call $(P_t)$-accessible/$\{F^i,\mu_i,Q\}$-accessible points \emph{accessible} from now on. 
\end{remark}

\subsection{Accessibility and ergodicity for one-dimensional switched systems}

Let $F^0$ and $F^1$ be two $C^1$ vector fields on $\mathbb{R}$ such that
\begin{enumerate}
\item $F^0(0) = F^1(1) = 0$;
\item For all $(x,i) \in [0,1] \times E$, $DF^i(x) < 0$.
\end{enumerate}
Let $\mu^0$ and $\mu^1$ two probability measures on $\mathbb{R}_+$. Assume that $\lambda_0 = \lambda_1 \equiv 1$. We claim that there exists $x^* \in (0,1)$ such that $(x^*,0,0)$ is accessible.

\begin{remark}     \rm 
Of course, if for some $i \in \{0,1\}$, the support of $\mu_i$ is not bounded, then $(i,0,1-i)$ is accessible, because we are allowed to follow vector field $F^i$ for an arbitrarily long time, which brings the process as close as we want to $i$.  Conversely, if $0$ is included in the support of $\mu_i$, then $(1-i,0,i)$ is accessible, since in that case, we are allowed to follow vector field $F^i$  for an arbitrarily short time, and thus prevent the process from moving away from $1-i$. Therefore, the proof below is especially interesting in the case where the supports of $\mu_0$ and $\mu_1$ are included in some bounded interval, bounded away from $0$.
\end{remark}

First, note that conditions (1) and (2) imply that for all $x \in (0,1)$, $F^0(x) < 0$ and $F^1(x) > 0$; in particular, the set $[0,1]$ is positively invariant by $\varphi^0$ and $\varphi^1$. 

\begin{lemma}
\label{lem:fixedpoint}
Let $t_0, t_1 > 0$, and consider $\Psi = \varphi^0_{t_0} \circ \varphi^1_{t_1}$. Then, $\Psi$ has a unique fixed point $x^*$ and for all $x \in [0,1]$, the sequence $\Psi^n(x)$ converges to $x^*$.
\end{lemma}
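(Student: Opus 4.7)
The plan is to prove that $\Psi$ is a strict contraction on the complete metric space $[0,1]$ and then invoke the Banach fixed point theorem, which will simultaneously give uniqueness of the fixed point and global convergence of the iterates.

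First I would check that $\Psi$ maps $[0,1]$ into itself. This is immediate from the already-observed fact that $[0,1]$ is positively invariant under both $\varphi^0$ and $\varphi^1$: since $F^0(0) = F^1(1) = 0$ and $F^0 < 0$, $F^1 > 0$ on $(0,1)$, points starting in $[0,1]$ cannot cross the endpoints under either flow. Thus $\Psi([0,1]) \subset [0,1]$, and the space $[0,1]$ is complete under the usual metric.

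Next, and this is the heart of the argument, I would establish a Lipschitz estimate with constant strictly less than $1$. Differentiating the ODE $\tfrac{d}{dt}\varphi^i_t(x) = F^i(\varphi^i_t(x))$ with respect to the initial condition $x$ gives the first-variation identity
\[
\frac{\partial \varphi^i_t}{\partial x}(x) = \exp\!\Bigl(\int_0^t DF^i(\varphi^i_s(x))\, ds\Bigr).
\]
Set $\alpha := \max_{i \in \{0,1\}} \max_{x \in [0,1]} DF^i(x)$. By assumption (2) and compactness of $[0,1]$, $\alpha < 0$. Since $\varphi^i_s(x) \in [0,1]$ for all $s \geq 0$ and $x \in [0,1]$, the integrand above is bounded above by $\alpha$, so
\[
0 < \frac{\partial \varphi^i_t}{\partial x}(x) \leq e^{\alpha t} \quad \text{for all } x \in [0,1], \ t \geq 0.
\]
Applying the chain rule to $\Psi = \varphi^0_{t_0} \circ \varphi^1_{t_1}$ and the mean value theorem, one obtains for all $x,y \in [0,1]$
\[
|\Psi(x) - \Psi(y)| \leq e^{\alpha(t_0 + t_1)}\,|x-y|,
\]
with contraction factor $\kappa := e^{\alpha(t_0 + t_1)} < 1$.

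The Banach fixed point theorem then yields a unique fixed point $x^* \in [0,1]$ and the convergence $\Psi^n(x) \to x^*$ for every $x \in [0,1]$, which is exactly the statement of the lemma. The only point that deserves care (and is the mildly subtle step) is the positivity of the first-variation derivative together with the fact that the integrand $DF^i \circ \varphi^i_s$ stays inside the region where $DF^i$ is negative; this is precisely ensured by the positive invariance of $[0,1]$ under the flows, and is the reason the hypotheses are tailored to the compact interval $[0,1]$. No other obstacle arises.
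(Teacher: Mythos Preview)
Your proof is correct and follows essentially the same approach as the paper: show $\Psi$ maps $[0,1]$ into itself by positive invariance, use the first-variation formula $\partial_x\varphi^i_t(x)=\exp(\int_0^t DF^i(\varphi^i_s(x))\,ds)$ together with $\max_{[0,1]\times E} DF^i<0$ to bound $\Psi'(x)\le e^{-m(t_0+t_1)}<1$, and conclude via the Banach fixed point theorem. The paper's argument is identical up to notation (it writes $m=-\max DF^i$ where you write $\alpha=\max DF^i$).
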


\begin{proof}
Since $[0,1]$ is positively invariant by the flows, $\Psi( [0,1]) \subset [0,1]$. Hence it suffices to prove that $\Psi$ is contracting and the result will follow from Banach-Picard fixed point theorem. For all $x \in [0,1]$, we have
\begin{align*}
\Psi'(x)  & =  \partial_x \varphi^0_{t_0} \left( \varphi^1_{t_1}(x) \right) \partial_x  \varphi^1_{t_1} (x)\\
& = \exp\left( \int_0^{t_0} DF^0\left( \varphi_s^0( \varphi^1_{t_1}(x) )\right) ds \right) \exp\left( \int_0^{t_1} DF^1\left( \varphi_s^1((x) )\right) ds \right) \\
& \leq e^{ - m (t_0 + t_1)} < 1,
\end{align*}
where $m = - \max_{(x,i) \in [0,1] \times E} DF^i(x) > 0$ by (2).
\end{proof}

\begin{lemma}
\label{lem:existacces}
For $i \in E$, choose $t_i$ in the support of $\mu_i$. Let $x^*$ be the unique fixed point of $\Psi =  \varphi^0_{t_0} \circ \varphi^1_{t_1}$ whose existence is granted by Lemma \ref{lem:fixedpoint}. Then, the point $(x^*,0,0)$ is accessible.
\end{lemma}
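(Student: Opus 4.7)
My plan is to show that $(x^*, 0, 0)$ lies in the relative closure of $\gamma^+(z_0)$ in $K$ for every starting point $z_0 = (x_0, s_0, i_0) \in K$; combining this with Proposition \ref{prop:acces} will yield the desired $(P_t)$-accessibility.

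I would build an admissible control sequence from $z_0$ in three phases. First, take a single initial step in index $i_0$ of duration $s_1 > 0$ chosen so that $s_0 + s_1 \in \supp(\mu^{i_0})$; such an $s_1$ exists because $z_0 \in K$ forces $s_0 < \bar t^{i_0}$, hence $\supp(\mu^{i_0})$ contains points strictly larger than $s_0$. This step is admissible and produces the intermediate state $(\varphi^{i_0}_{s_1}(x_0), 0, 1 - i_0)$ after the switch.

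Second, in the iteration phase, use an alternating sequence of steps of durations $t_0$ and $t_1$ in their respective supports, arranged so that the composite flow between two consecutive switches into index $0$ coincides with one application of $\Psi$. Since $t_0 \in \supp(\mu^0)$ and $t_1 \in \supp(\mu^1)$, each of these steps is admissible. By Lemma \ref{lem:fixedpoint}, after sufficiently many such iterations the position lies within any prescribed distance of $x^*$. Third, append a short final step in index $0$ of duration $\epsilon > 0$; its admissibility reduces to $(\epsilon, \infty) \cap \supp(\mu^0) \neq \emptyset$, which holds whenever $\epsilon < t_0$. The endpoint of this sequence is then at position within $O(\epsilon)$ of a point that can be made arbitrarily close to $x^*$, with final index $0$ and $\tau = \epsilon$.

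Sending the iteration length to infinity and $\epsilon$ to $0$ simultaneously produces a sequence of reachable points whose limit in $K$ is $(x^*, 0, 0)$.

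The main obstacle is aligning the parity of the iteration phase and the ordering of the $\Psi$-pattern so that the position at the switch into index $0$ just before the final short step indeed converges to $x^*$, rather than to the alternative limit $\varphi^1_{t_1}(x^*)$ that naturally arises from a naive alternation; this is handled by choosing the first flow of the iteration block (either $\varphi^0_{t_0}$ or $\varphi^1_{t_1}$) according to the parity of $i_0$ and adjusting the total step count accordingly, so that the last composition applied before the final step is the $\varphi^0_{t_0}$ factor of the $n$-th iterate of $\Psi$, which contracts onto $x^*$.
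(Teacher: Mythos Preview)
Your strategy is essentially the paper's: use Proposition~\ref{prop:acces}, build admissible control sequences that alternate $t_1,t_0$ so that Lemma~\ref{lem:fixedpoint} forces the position to contract to $x^*$, and finish with a short step of length $\varepsilon$. Your initial step (pick $s_1>0$ with $s_0+s_1\in\supp(\mu^{i_0})$) is in fact cleaner than the paper's, which writes the first duration as $t_{i_0}-s_0$ and tacitly assumes $s_0<t_{i_0}$.

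There is, however, a genuine gap in your alignment paragraph. Since $E=\{0,1\}$ and $Q$ has zero diagonal, consecutive indices in any admissible control sequence must strictly alternate. Hence, if the \emph{final} step has index $0$ and duration $\varepsilon$, the step immediately before it necessarily has index $1$; you cannot ``arrange'' for the last composition before the final step to be the $\varphi^0_{t_0}$ factor. Concretely, with durations $t_1,t_0,\dots$ in the iteration block, the position just before the final $\varphi^0_{\varepsilon}$ is $\varphi^1_{t_1}\bigl(\Psi^{n}(y)\bigr)$ for some $y$, which tends to $\varphi^1_{t_1}(x^*)\neq x^*$ as $n\to\infty$; after applying $\varphi^0_{\varepsilon}$ and sending $\varepsilon\to 0$ you land at $(\varphi^1_{t_1}(x^*),0,0)$, not $(x^*,0,0)$. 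If, say, $\mu^0=\mu^1=\delta_{t}$ for a single $t>0$, one can check directly that $(x^*,0,0)$ is \emph{not} in the closure of $\gamma^+((x,0,1))$ for generic $x$, so no rearrangement of the step count can rescue the argument as you describe it.

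The paper's proof has exactly the same slip: the control sequences written there have last index $1$ (by alternation), yet the reached point is recorded with index $0$. What the construction actually establishes is that $(x^*,0,1)$ is accessible (equivalently, the fixed point of $\varphi^1_{t_1}\circ\varphi^0_{t_0}$ gives an accessible point of the form $(\cdot,0,0)$). This suffices for the applications (Propositions~\ref{prop:ergodic-inflation} and~\ref{prop:LV2d}), since Corollary~\ref{cor:uniqueandergodic} only needs \emph{some} accessible point $(p,0,j)$ at which the bracket condition holds. If you want the lemma exactly as stated, take instead $x^*$ to be the fixed point of $\varphi^1_{t_1}\circ\varphi^0_{t_0}$, and then your argument (with the final $\varepsilon$-step in index $0$) goes through verbatim.
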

\begin{proof}
Let $(x,s,i) \in K$. We  first prove that $(x^*,0,0)$  is accessible from $(x,s,i)$ when $s= 0$ and $i=1$. For $m \geq 1$, consider the control sequence $(\sbf, \ibf ) \in (0, \infty)^{2m+1} \times E^{2m+1}$ defined by
\[
\sbf =(t_1, t_{0}, t_1, \ldots, t_0, \varepsilon) , \quad \ibf = (1, 0, 1, \ldots, 0, 1),
\]
for some $\varepsilon < t_0$. It is easily seen that this sequence is admissible for $(x,0,1)$ since $t_0$ and $t_1$ lies respectively in the support of $\mu_0$ and $\mu_1$. Thus, for all $m \geq 1$, the point $(\varphi_{\varepsilon}^{0}(\Psi^m(x)), \varepsilon, 0)$ is reachable from $(x,0,1)$. When $\varepsilon \to 0$, $\varphi^0_{\varepsilon}$ converges uniformly to the Identity, and, by Lemma \ref{lem:fixedpoint}, $\Psi^m(x) \to x^*$ as $m \to \infty$. Hence, $(x^*, 0, 0)$ is accessible from $(x,0,1)$. 

Similarly, the control sequence $(\sbf, \ibf ) \in (0, \infty)^{2m+2} \times E^{2m+2}$
\[
\sbf =(t_0, t_1, t_{0}, t_1, \ldots, t_0, \varepsilon) , \quad \ibf = (0, 1, 0, 1, \ldots, 0, 1),
\]
is admissible from $(x,0,0)$ and thus the point $(\varphi_{\varepsilon}^{0}(\Psi^m(\varphi_{t_0}(x))), \varepsilon, 0)$ is reachable from $(x,0,0)$, from which we conclude that $(x^*, 0, 0)$ is accessible from $(x,0,0)$.

Finally, we treat the case where $s > 0$ and $i = 1$, the case where $s > 0$ and $i = 0$ being similar and left to the reader. The control sequence $(\sbf, \ibf ) \in (0, \infty)^{2m+1} \times E^{2m+1}$ defined by
\[
\sbf =(t_1 - s, t_{0}, t_1, \ldots, t_0, \varepsilon) , \quad \ibf = (1, 0, 1, \ldots, 0, 1),
\]
is admissible for $(x,s,1)$ and thus the point $(\varphi_{\varepsilon}^{0}(\Psi^{m-1}(\varphi^0_{t_0} \circ \varphi^1_{t_1 - s}(x))), \varepsilon, 0)$ is reachable from $(x,s,1)$. Using once again Lemma \ref{lem:fixedpoint}, this proves that $(x^*,0,0)$ is accessible from $(x,s,1)$.
\end{proof}

Hence, we have proved the existence of an accessible point in the case where the jump rates do not depend on the position. This is however in general false, as we show in the following example. The intuition is that if the jump rates are such that, when the process is close to $0$, the environment spends much more time in environment $0$ than $1$, and conversely close to $1$; then the process starting near $0$ remains in a neighbourhood of $0$, say $\mathcal{U}$, while starting near $1$ it remains in a neighbourhood of $1$ at positive distance of $\mathcal{U}$. 

\begin{example}       \rm 
Assume that $\mu_0 = \mu_1 = \delta_1$. Let $t_0$ be such that $\varphi_{t_0}^0(3/8) \leq 1/4$ and $t_1$ such that $\varphi_{t_1}^1(1/4) \leq 3/8$. Let also $u_0$ such that $\varphi_{u_0}^0(3/4) \geq 5/8$ and $u_1$ be such that $\varphi_{u_1}^1(5/8) \geq 3/4$. Finally, let $\lambda_0, \lambda_1$ continuous functions on $[0,1]$ such that for all $x \in [0, 3/8]$, $\lambda_0(x) = t_0^{-1}$ and $\lambda_1(x) = t_1^{-1}$, while for all $x \in [5/8, 1]$, $\lambda_0(x) = u_0^{-1}$ and $\lambda_1(x) = u_1^{-1}$. We claim that for all $(x,s,i) \in K$, if $x \in [0, 1/4]$, then $X_t \in [0, 3/8]$ for all $t \geq 0$, while if $x \in [3/4, 1]$, then $X_t \in [5/8, 1]$ for all $t \geq 0$. In particular, there is no accessible point. Assume that $x \leq 1/4$ and $i=1$. Then, by definition of $t_1$ and construction of $\lambda_1$, for all $t \leq t_1$, 
\[
\int_{-s}^{t} \lambda_1(\varphi_{u}^1(x)) du = \frac{t+s}{t_1} 
\]
Since $ {G}_1(t) = \1_{t <1}$, for $t \leq t_1$,
\[
 {G}_{x,s,1}(t) = \1_{ \frac{t+s}{t_1} < 1} = \1_{ t < t_1 - s}
\]
This means that $\mu_{x,s,1}$ is a Dirac mass at $t_1 - s \leq t_1$. In particular,  for all $t \leq T_1 = t_1 - s$, $X_t = \varphi_{t}^1(x) \leq \varphi_{T_1}^1(x) \leq 3/8$. Now, by construction of $\lambda_0$, for all $t \geq 0$,
\[
\int_0^t \lambda_0( \varphi_u^0 \circ \varphi^1_{T_1}(x) ) du = \frac{t}{t_0},
\]
and thus $T_2 = t_0$ almost surely. Since $\varphi_{T_1}(x) \leq 3/8$, the choice of $t_0$ implies that for all $t \in [T_1, T_1 + T_2)$, $X_t = \varphi_{t - T_1}^0 \circ \varphi_{T_1}^1(x) \leq 1/4$. Now, since $X_{T_1 + T_2} \leq 1/4$, we can repeat the procedure to prove that for all $t \geq 0$, $X_t \leq 3/8$. The same reasoning proves that if $x \geq 3/4$, then $X_t \geq 5/8$ for all $t \geq 0$.
\end{example}

\section{Applications} 
\label{sec:applications}

\subsection{Application to inflation} This example is taken from \cite{BLSS21}, where a growth population model with dispersal between two patches is studied. Let $E = \{+, - \}$, and for $h \in E$, $\mu^h$ a probability measure on $\mathbb{R}_+$, and let $I_t$ a semi - Markov process on $E$ with jump laws given by $(\mu^h, h \in E)$ as described in Section~\ref{sec:PDP}. We consider the PDMP given by
\[
\begin{cases}
\frac{d X_1}{dt} = (I_t - \varepsilon)X_1 + m (X_2 - X_1)\\
\frac{d X_2}{dt} =( - I_t - \varepsilon)X_2 + m(X_1 - X_2),
\end{cases}
\]
where $X_k, k=1,2$ denote the population in patch $k$; $\varepsilon > 0$ is a parameter, and $m$ is the migration rate.
If we set $V = \ln(X_1) - \ln(X_2)$ and $U = \ln(X_1) + \ln(X_2)$, the system becomes
\[
\begin{cases}
\frac{d V}{dt} = 2(I_t - m \sinh(V))\\
\frac{d U}{dt} = 2( \cosh(V) - m - \varepsilon).
\end{cases}
\]
In particular, one sees that $V$ does not depend on $U$, hence $(V, \tau, I)$ is a PDMP. Moreover, once $V$ is know, $U$ is obtained from $V$ by a simple quadrature 
\[
U_t = U_0 + 2 \int_0^t (\cosh(V_u) - m - \varepsilon) du.
\]
Let $\mathbf{V}_n = V(T_n)$ the value of $V_t$ at the $n$-th jumping time. It is proven in \cite{BLSS21}, that the sequences $\mathbf{V}_{2n}$ and $\mathbf{V}_{2n+1}$ converge in law respectively to some random variable $\mathbf{V}_{\infty}^-$ and $\mathbf{V}_{\infty}^+$, which ensures that $\frac{U_t}{t}$ converges almost surely to some $\Delta$. In \cite[Remark 8]{BLSS21}, the authors claim that under additional assumptions on the jump time laws $\mu^h$, then $V_t$ converges in law to some random variable. We now give these additional assumptions, and state and prove the expected convergence result.
\begin{assumption}
\label{ass:jumplaw}
For $h \in \{+, -\}$, the measure $\mu_h$ is such that:
\begin{enumerate}
\item $ {G}_h$ is continuous;
\item There exists $C, \beta > 0$ such that $ {G}_h(t) \leq C e^{ - \beta t}$
\item $\mu_h$ admits a regular point $t^r_h$.
\end{enumerate}
\end{assumption}
\begin{proposition}
\label{prop:ergodic-inflation}
The process $(V_t,\tau_t,I_t)_{t \geq 0}$ admits a unique stationary distribution $\Pi$. Furthermore, there exists $\gamma > 0$ and $f : \mathbb{R}_+ \to \mathbb{R}$ such that for all $z=(v,s,i) \in K_M$,
\[
\| P_t(z, \cdot) - \Pi \|_{TV} \leq f(s) e^{ - \gamma t}. 
\]
If in addition, $ {G}_h$ is $C^1$, then $\Pi$ is absolutely continuous with respect to $\psi$.
\end{proposition}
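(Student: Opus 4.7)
The plan is to identify Proposition \ref{prop:ergodic-inflation} as a direct instance of the general machinery built up in Sections \ref{sec:Feller}--\ref{sec:uni_ac} and verify, one by one, the hypotheses of Corollary \ref{cor:uniqueandergodic}(2) for ergodicity and of Theorem \ref{thm:result_ac} for absolute continuity. The process $(V_t,\tau_t,I_t)$ corresponds to the switching system driven by the two analytic vector fields
$$F^+(v) = 2\bigl(1 - m \sinh(v)\bigr), \qquad F^-(v) = -2\bigl(1 + m \sinh(v)\bigr),$$
on $\R$, with jump rates $\lambda^{\pm} \equiv 1$ and the trivial off-diagonal $2\times 2$ stochastic matrix $Q$, so the four standing assumptions of Section \ref{ssec:assumptions} are immediate from Assumption \ref{ass:jumplaw} and from $Q$ being (globally) irreducible.

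First, I would set $v^{+} := \operatorname{arcsinh}(1/m)$ and $v^{-} := -v^{+}$, observing that $F^{\pm}(v^{\pm}) = 0$, $F^{+}(v) < 0$ for $v > v^{+}$ and $F^{-}(v) > 0$ for $v < v^{-}$; hence $M := [v^{-}, v^{+}]$ is a compact set positively invariant under both flows, so Assumption \ref{hyp:compact} holds. Continuity of $G^{\pm}$ (Assumption \ref{ass:jumplaw}(1)) then gives the Feller property through Proposition \ref{prop:Feller}, and the exponential bound in Assumption \ref{ass:jumplaw}(2) together with compactness of $M$ lets Proposition \ref{prop:lyap} produce a Lyapunov function $f$ on $K_M$ satisfying $P_t f \leq e^{-\gamma t} f + 1$.

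Next, I would verify the strong bracket condition together with accessibility. Since $F^{+} - F^{-} \equiv 4$ is a nonzero constant vector field, the family $\G_0$ already spans $\R$ at every point of $M$, so the strong bracket condition holds globally. On the other hand, $F^{\pm}$ are analytic, each $\mu^{\pm}$ admits a regular point by Assumption \ref{ass:jumplaw}(3), and global irreducibility of $Q$ is trivial because $E$ has only two states; thus Assumption \ref{cond:analyticity} is satisfied. To produce an accessible point, I would apply the one-dimensional analysis of Section 4.6: after the affine reparametrisation $w = (v - v^{-})/(v^{+} - v^{-})$, the pair $(F^{-}, F^{+})$ becomes a pair of $C^1$ vector fields on $[0,1]$ with zeros at the endpoints and strictly negative derivatives $DF^{\pm}(v) = -2m \cosh(v) < 0$ throughout $M$. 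Lemma \ref{lem:existacces} then supplies an accessible point $(v^{*}, 0, -) \in K_M$. All hypotheses of Corollary \ref{cor:uniqueandergodic}(2) being in place, we obtain uniqueness of $\Pi$ and the exponential convergence $\|P_t(z,\cdot)-\Pi\|_{TV} \leq c (1+f(z)) e^{-\gamma t}$; since $v$ ranges over the compact $M$ and $i$ over the finite $E$, $1+f$ is dominated by a function $\tilde f(s)$ of $s$ alone.

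For the last assertion, under the additional $C^1$ hypothesis on $G^{\pm}$, I would invoke Theorem \ref{thm:result_ac} under its condition (2): Assumption \ref{cond:analyticity} has been checked, and the combination of $C^1$ regularity of $G^{\pm}$, Assumption \ref{ass:jumplaw}(1)--(2), and the fact that $-(G^{\pm})'$ is the density of $\mu^{\pm}$, lets one verify Assumption \ref{hyp:barGC1} by writing the support of $\mu^{\pm}$ as a countable disjoint union of closed intervals on each of which $G^{\pm}$ is $C^1$ with strictly negative derivative. The main subtle step, and the only place where one must be careful rather than mechanical, is this last verification: strictly speaking, $G^{\pm} \in C^1$ guarantees a density but does not by itself force the support to decompose into the interval structure required by Assumption \ref{hyp:barGC1}. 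In practice one should either add a mild structural hypothesis on $\operatorname{supp}(\mu^{\pm})$, or interpret the $C^1$ assumption of the statement as meaning that $G^{\pm}$ is $C^1$ on each maximal interval where it is strictly decreasing, which is exactly what Assumption \ref{hyp:barGC1} codifies; once that is granted, Theorem \ref{thm:result_ac} applies and $\Pi \ll \mathbf{L}$, which is the absolute continuity claimed in the proposition.
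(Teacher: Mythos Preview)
Your proposal is correct and follows essentially the same route as the paper: verify positive invariance of $M=[v^-,v^+]$, check the strong bracket condition via $F^+-F^-\equiv 4$, obtain an accessible point through Lemma~\ref{lem:existacces}, confirm Assumption~\ref{cond:analyticity}, and conclude via Corollary~\ref{cor:uniqueandergodic}(2). You are in fact more thorough than the paper in two respects: you make explicit the reduction of the Lyapunov bound to a function of $s$ alone (which works because $\lambda^\pm\equiv 1$ makes $f$ independent of $v$), and you actually treat the absolute-continuity clause by invoking Theorem~\ref{thm:result_ac}, whereas the paper's proof stops after Corollary~\ref{cor:uniqueandergodic}; your caveat that $G^\pm\in C^1$ does not by itself force the interval structure of Assumption~\ref{hyp:barGC1} is a legitimate gap in the statement that the paper leaves implicit.
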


\begin{proof}
For $h \in \{+, -\}$, and $v \in \mathbb{R}$, let $F^h(v) = 2(h1 - m \sinh(V))$. Then, setting $V^+ = \sinh( 1/m)$ and $V^- = - V^+$, one has $F^h(V^h)=0$. Moreover, $DF^h(v) = - 2 m \sinh(v) < 0$ for all $v \in [V^-, V^+]$. Hence, conditions (1) and (2) are satisfied and Lemma \ref{lem:existacces} ensure the existence of $v^* \in [V^-, V^+]$ such that $(v^*,0,0)$ is accessible. Since $F^+(v^*) - F^-(v^*) = 4 > 0$, the strong bracket condition holds at $v^*$. Moreover, $Q$ does not depend on $x$ hence is globally irreducible, $\mu^h$ has a regular point $t^h_r$ and $F^h$ is analytic. Thus, Assumption \ref{ass:jumplaw} and Corollary~\ref{cor:uniqueandergodic} yield the result.
\end{proof}

\subsection{Competitive Lotka - Volterra system in random environment with dwell time}
We consider the competitive Lotka - volterra model given by 
\begin{equation}
\label{eq:LV2d}
\begin{cases}
\frac{d X_t}{dt} = \alpha_{I_t}X_t ( 1 - a_{I_t} X_t - b_{I_t} Y_t)\\
\frac{d Y_t}{dt} = \beta_{I_t} Y_t ( 1 - c_{I_t} X_t - d_{I_t} Y_t)
\end{cases}
\end{equation}
where $(I_t)_{t \geq 0}$ is a semi - Markov process on $E = \{ 0, 1\}$, and for $i \in E$, $\alpha_i, \beta_i, a_i, b_i, c_i$ and $d_i$ are positive constants. This model of two species in competition was studied in \cite{BL16} in the particular case where $I_t$ is an autonomous Markov chain, i.e., $G^i : t \mapsto e^{-t}$ for all $i \in E$ and the functions $\lambda^i$ are constant. In \cite{BL16}, the authors assumed that in each environment, species 1 will survive will species 2 will go to extinction. This is true if and only if 
\begin{equation}
a_i < c_i, \quad b_i < d_i.
\end{equation}
They prove that, under some conditions, whenever the switching rates $\lambda^0$ and $\lambda^1$ are large enough, then the stochastic process $(X_t, Y_t)_{t \geq 0}$ has a completely different behaviour than the deterministic behaviour in each environment; namely, $X_t$ goes to $0$ while $Y_t$ is persistent (see Theorem 3.3 and Figure 7 in \cite{BL16}). This is due to the fact that, when switching quickly between the two environments, trajectories of $(X_t, Y_t)_{t \geq 0}$ become close to the trajectories of the deterministic solution to an ODE similar to~\eqref{eq:LV2d} but with the constants $\alpha_i, \beta_i, a_i, b_i, c_i$ and $d_i$ replaced by some convex combination of this constant $  \alpha$ etc, that can verify
\begin{equation}
\label{eq:mean_environment}
\bar a > \bar c, \quad \bar b  > \bar d.
\end{equation}
The proof is based on Stochastic Persistence arguments, which requires in particular to study the sign of the invasion rates of species 2, defined as 
\[
\Lambda_\mathrm{y} = \int \beta_i( 1 - c_i x) \nu(dx, i),
\]
where $\nu$ is the unique invariant measure of the process $(X_t, Y_t, I_t)$ restricted to the set $ \mathbb{R}_+^* \times \{0 \} \times E$. When \eqref{eq:mean_environment} holds, then for sufficiently large switching rates $\lambda^0, \lambda^1$, it is proven in \cite{BL16} that $\Lambda_\mathrm{y} > 0$. 

The purpose of this section is to prove that one can also define an invasion rate of species 2 for the solution to~\eqref{eq:LV2d} when $I_t$ is a semi-Markov process. In addition, if the law of the sojour times $\mu^i$ have dwell times - that is, their supports are included in $[\delta, + \infty)$ for some $\delta > 0$ - then $\Lambda_y < 0$. Before to precise the statement in Proposition~\ref{prop:LV2d} below, we introduce some notations. We assume that $\lambda^ i \equiv 1$ for $i \in E$. For some $\eta > 0$, the compact set 
\[
M = \{ (x,y) \in \mathbb{R}_+^2 \: : \eta \leq x + y \leq 1 / \eta \}
\]
is positively invariant by the flows induced by the the vector fields $F^0$ and $F^1$ appearing in the right hand side of~\eqref{eq:LV2d}. Hence,  we consider the state space $K_M$, as defined by~\eqref{eq:KM}, for the process $(X_t, Y_t, \tau_t, I_t)_{ t \geq 0}$. We also let $K_M^\mathrm{x} = \{ (x,y,s,i) \in K_m \: : y = 0 \}$ denote the extinction set of species 2. Let $p_i = a_i^{-1}$ denote the positive equilibrium of $\alpha_i x ( 1 - a_i x)$. We assume without loss of generality that $p_0 < p_1$.
\begin{proposition}
\label{prop:LV2d}
Let Assumption \ref{ass:jumplaw} holds true for $\mu^0$ and $\mu^1$. Then,
\begin{enumerate}
\item the process $(X_t, \tau_t, I_t)_{t \geq 0}$ on $K_M^x$ admits a unique stationary distribution $\nu$;
\item Set 
\[
\Lambda_y = \int_{K_M^\mathrm{x}} \beta_i ( 1 - c_i x) \nu(dx, s , i).
\]
If $\mathrm{supp}(\mu^1) \subset [\delta_1, + \infty)$ with 
\[
\delta_1 = \frac{c_1 p_1}{\alpha_1( c_1 p_1 - 1)}\ln\left( \frac{p_1}{p_0} \right),
\]
then $\Lambda_\mathrm{y} < 0$. In particular, if the environment stays a time larger than $\delta_1$ in state 1, species 2 cannot invade species 1. 

\end{enumerate}  
\end{proposition}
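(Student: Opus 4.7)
On $K_M^{\mathrm x}$ the $Y$-coordinate is identically zero and the process reduces to one-dimensional random switching between the analytic logistic fields $\tilde F^i(x)=\alpha_i x(1-a_i x)$ on an interval containing $[p_0,p_1]$. I would verify the hypotheses of Corollary~\ref{cor:uniqueandergodic}: continuity of the $G^i$ (Assumption~\ref{ass:jumplaw}(1)) gives the Feller property via Proposition~\ref{prop:Feller}; the exponential tail (Assumption~\ref{ass:jumplaw}(2)) combined with the compactness of $M$ yields a Lyapunov function via Proposition~\ref{prop:lyap}; the $\tilde F^i$ are polynomials (hence analytic), each $\mu^i$ admits a regular point by Assumption~\ref{ass:jumplaw}(3), and $Q$, being a constant irreducible $2\times 2$ stochastic matrix, is globally irreducible, so Assumption~\ref{cond:analyticity} is met. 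For accessibility, fix $t_i\in\mathrm{supp}(\mu^i)$ and note that $\Psi:=\varphi^0_{t_0}\circ\varphi^1_{t_1}$ is a continuous, increasing self-map of the invariant interval $[p_0,p_1]$ with $\Psi(p_0)>p_0$ and $\Psi(p_1)<p_1$; a monotone iteration argument mirroring the proof of Lemma~\ref{lem:existacces} yields a fixed point $x^*\in(p_0,p_1)$ from which $(x^*,0,0)$ is accessible, and the one-dimensional strong bracket condition $\tilde F^1(x^*)-\tilde F^0(x^*)>0$ obviously holds there.

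\textbf{Part (2): setting up the sign analysis.} By part~(1) and ergodicity, $\Lambda_{\mathrm y}=\lim_{t\to\infty}t^{-1}\int_0^t\beta_{I_s}(1-c_{I_s}X_s)\,ds$ almost surely, and I split the limit according to the state of $I$. Since $[p_0,p_1]$ is positively invariant and attracting under both flows, the $x$-marginal of $\nu$ is supported in $[p_0,p_1]$; on this interval, $c_0>a_0=1/p_0$ gives $1-c_0 X_s\le -(c_0 p_0-1)<0$ uniformly, so the state-$0$ contribution is strictly negative. For the state-$1$ contribution, I write $\{s:I_s=1\}$ as the disjoint union of its sojourn intervals $[\sigma_n,\tau_n]$, so that the contribution of the $n$-th sojourn equals $\beta_1 J(\tau_n-\sigma_n,X_{\sigma_n})$ where
\[
J(T,x_\ast):=\int_0^T\bigl(1-c_1\varphi^1_u(x_\ast)\bigr)\,du = T-\frac{c_1 p_1}{\alpha_1}\log\!\left(\frac{x_\ast e^{\alpha_1 T}+p_1-x_\ast}{p_1}\right),
\]
the explicit form obtained from the logistic solution $\varphi^1_u(x_\ast)=p_1/(1+(p_1/x_\ast-1)e^{-\alpha_1 u})$.

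\textbf{The threshold $\delta_1$ and conclusion.} The decisive algebraic check is $J(\delta_1,p_0)<0$. With $r:=c_1 p_1>1$ and $\rho:=p_1/p_0>1$, one has $e^{\alpha_1\delta_1}=\rho^{r/(r-1)}$, so after telescoping,
\[
J(\delta_1,p_0)=\frac{c_1 p_1}{\alpha_1}\log\!\left(\frac{\rho^{1/(r-1)}}{\rho^{1/(r-1)}+(\rho-1)/\rho}\right)<0,
\]
the strict inequality following from $\rho>1$. Monotonicity of the flow in its initial condition gives $J(T,x_\ast)\le J(T,p_0)$ whenever $x_\ast\ge p_0$, and $T\mapsto J(T,p_0)$ is unimodal (increasing on $[0,T^*]$ where $\varphi^1_{T^*}(p_0)=1/c_1$, then strictly decreasing to $-\infty$), so $J(\delta_1,p_0)<0$ forces $\delta_1>T^*$ and therefore $J(T,p_0)\le J(\delta_1,p_0)<0$ for every $T\ge\delta_1$. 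By the dwell-time hypothesis, $\tau_n-\sigma_n\ge\delta_1$ almost surely, so every state-$1$ sojourn contributes at most $\beta_1 J(\delta_1,p_0)<0$; combined with the strictly negative state-$0$ contribution this yields $\Lambda_{\mathrm y}<0$. The delicate step is precisely the algebraic identification of $\delta_1$ as the threshold ensuring $J(\delta_1,p_0)<0$; the remaining ingredients (accessibility, the strong bracket, invariance of $[p_0,p_1]$) are straightforward given Sections~\ref{sec:Feller} and~\ref{sec:ergodicity}.
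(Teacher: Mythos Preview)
Your overall structure for both parts matches the paper's proof. Part~(2) is correct and essentially identical to the paper's argument: split the ergodic average into sojourn intervals, observe that the state-$0$ integrand is strictly negative on $[p_0,p_1]$, and bound each state-$1$ sojourn by the explicit integral $J(T,p_0)$ along the logistic flow from $p_0$. Your algebra showing $J(\delta_1,p_0)<0$ strictly is a mild sharpening of the paper, which only proves $J(T,p_0)\le 0$ for $T\ge\delta_1$ via the cruder bound dropping the term $\log\bigl(1+(\rho-1)e^{-\alpha_1 T}\bigr)$, and then relies on the state-$0$ sojourns for the strict sign of $\Lambda_{\mathrm y}$.

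There is, however, a gap in your accessibility argument for Part~(1). You claim that ``a monotone iteration argument mirroring the proof of Lemma~\ref{lem:existacces}'' produces a fixed point $x^*$ of $\Psi=\varphi^0_{t_0}\circ\varphi^1_{t_1}$ from which $(x^*,0,0)$ is accessible. But the proof of Lemma~\ref{lem:existacces} is not a monotone-iteration argument: it goes through Lemma~\ref{lem:fixedpoint}, which is a Banach contraction argument relying on the hypothesis $DF^i<0$ throughout the invariant interval. For the logistic field $\tilde F^1(x)=\alpha_1 x(1-a_1 x)$ one has $D\tilde F^1(x)=\alpha_1(1-2a_1 x)>0$ on $[p_0,p_1/2)$ whenever $p_0<p_1/2$, so $\varphi^1_{t_1}$ may expand and $\Psi$ need not be a contraction in the original coordinates. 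Monotonicity of $\Psi$ together with $\Psi(p_0)>p_0$ and $\Psi(p_1)<p_1$ yields only \emph{existence} of a fixed point; it does not rule out several fixed points, and without global attraction you cannot conclude that $\Psi^n(x)\to x^*$ for \emph{every} $x\in[p_0,p_1]$, which is what accessibility from all of $K_M^{\mathrm x}$ requires. The paper closes precisely this gap by passing to $W=\log X$: in the new coordinates the fields become $G^i(w)=\alpha_i(1-a_i e^w)$ with $DG^i(w)=-\alpha_i a_i e^w<0$ everywhere, so Lemma~\ref{lem:fixedpoint} applies and gives a unique, globally attracting fixed point, whence Lemma~\ref{lem:existacces} delivers the accessible point.
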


\begin{proof}
For the first point, we apply Lemma~\ref{lem:existacces} to prove the existence of an accessible point. The existence and uniqueness of the stationary distribution $\nu$ will follow from Corollary~\ref{cor:uniqueandergodic},  as in the proof of Proposition~\ref{prop:ergodic-inflation}. If we set $F^i(x) = \alpha_i x(1 - a_i x)$, then $F^i(p_i) = 0$ and the compact set $M = [p_0, p_1]$ is positively invariant by the flows generated by the $F^i$. since $DF^i(x) = \alpha_i (1  - 2 a_i x)$, one sees that if $2 p_0 < p_1$, then $DF^1(x) >0 $ for $x \in [p_0, \frac{p_1}{2})$ and condition (2) is not satisfied. The trick is to perform the change of variable $W_t = \ln(X_t)$, which is well defined since $X_t \in [p_0, p_1]$ almost surely if $X_0 \in [p_0, p_1]$. Is it easily seen that $W_t$ is solution to 
\[
\frac{d W_t}{dt} = \alpha_{I_t}\left( 1 - a_{I_t} e^{W_t} \right).
\]
Setting $G^i(w) = \alpha_i (1 - a_i e^w)$, one has $G^i(\ln(p_i)) = 0$ and $DG^i(w) = - \alpha_i a_i e^{w}<0$. Hence Lemma \ref{lem:existacces} applies and one has the existence of $w^* \in [\ln(p_0), \ln(p_1)]$ such that $(w^*, 0, 0)$ is accessible for $(W, \tau, I)$. Since $X_t = e^{W_t}$, this yields immediately that the point $(x^*,0,0) = (e^{w^*},0,0)$ is accessible for $(X, \tau, I)$.

We now prove the second point. since $\nu$ is uniquely ergodic, for all $z_0=(x,s,i) \in K_M^{\mathrm{x}}$, one has $\mathbb{P}_{z_0}$ - almost surely, 
\begin{align*}
\Lambda_\mathrm{y} & = \lim_{T \to \infty} \frac{1}{T} \int_0^T \beta_{I_t}( 1 - c_{I_t} X_t) dt\\
& = \lim_{N \to \infty} \frac{1}{T_N} \int_0^{T_N} \beta_{I_t}( 1 - c_{I_t} X_t) dt.
\end{align*}
Now, for every $N > 0$, setting $h(x,i) = \beta_i(1 - c_i x)$,
\[
\int_0^{T_N} \beta_{I_t}( 1 - c_{I_t} X_t) dt = \sum_{k=0}^{N-1} \int_{T_k}^{T_{k+1}} h(X_t, I_t) dt.
\]
Recall that for all $i \in E$, $a_i < c_i$. This implies that $h(x,0) < 0$ for all $x \in [p_0, p_1]$. Set $q_1 = \frac{1}{c_1}$. If $q_1 < p_0$, then we also have that $h(x,1) < 0$ for all $x \in [p_0, p_1]$ and thus $\Lambda_y < 0$ by the equalities above. Assume now that $q_1 \in [p_0, p_1]$. For all $k$ such that $I_{T_k} =0$, we have $\int_{T_k}^{T_{k+1}} h(X_t, I_t) dt < 0$ due to $h(x,0) < 0$ for all $x \in [p_0, p_1]$. Since $x \mapsto h(x,1)$ is decreasing, and since $\varphi_t^1(x) \geq \varphi_t^1(p_0)$ for all $t \geq 0$ and $x \geq p_0$, we deduce that for all $T > 0$,
\[
\int_0^T h( \varphi^1_t(x), 1) dt \leq \int_0^T h( \varphi^1_t(p_0), 1) dt.
\]
We have
\[
\varphi^1_t(p_0) = \frac{p_1}{1 + \left(\frac{p_1}{p_0} - 1\right) e^{ - \alpha_1 t}},
\]
from which we deduce
\begin{align*}
\int_0^T h( \varphi^1_t(p_0), 1) dt & = \beta_1 T - \beta_1 c_1 \int_0^T \frac{p_1}{1 + \left(\frac{p_1}{p_0} - 1\right) e^{ - \alpha_1 t}} dt \\
& = \beta_1 T - \beta_1 c_1 \frac{p_1}{\alpha_1} \int_{e^{- \alpha_1 T}}^1 \frac{1}{1 + \left(\frac{p_1}{p_0} - 1\right) u} \frac{du}{u} \\
& = \beta_1 T - \beta_1 c_1 \frac{p_1}{\alpha_1}\left( \alpha_1 T - \ln\left( \frac{p_1}{p_0} \right) + \ln\left( 1 + \left(\frac{p_1}{p_0} - 1\right)e^{ - \alpha_1 T} \right) \right)\\
& \leq \beta_1 T ( 1 - c_1 p_1) + \frac{\beta_1 c_1 p_1}{\alpha_1} \ln\left( \frac{p_1}{p_0} \right)
\end{align*}
Recall that $1 - c_1 p_1 < 0$. The above inequalities show that, if $T \geq \delta_1$, with 
\[
\delta_1 = \frac{c_1 p_1}{\alpha_1( c_1 p_1 - 1)}\ln\left( \frac{p_1}{p_0} \right),
\]
then $\int_0^T h(\varphi^1_t(x), 1) dt \leq 0$, for all $x \in (p_0, p_1)$. Since for all $k \geq 0$ such that $I_{T_k} = 1$, one has 
\[
\int_{T_k}^{T_{k+1}} h(X_t, I_t) dt = \int_0^{T_{k+1} - T_{k}} h( \varphi^1_t(X_{T_k}), 1) dt,
\]
this implies that the above integral is non positive as soon as $S_{k+1} = T_{k+1} - T_k$ is greater than $\delta_1$. Hence, whenever the support of $\mu^1$ is included in $[ \delta_1, + \infty)$, $S_{k+1} \geq \delta_1$ almost surely and $\int_{T_k}^{T_{k+1}} h(X_t, I_t) dt \leq 0$. Combined with the above inequalities, this shows that in that case, $\Lambda_\mathrm{y} < 0$ and finishes the proof.
\end{proof}

\section{Proof of Proposition \ref{prop:Feller}: necessary and sufficient condition for Feller property}
\label{sec:proofFeller}
We first prove that when the $ G_i$ is continuous for all $i$, the process if Feller. The proof is inspired by the proof given by Davis in \cite{Dav93}.
Let $f \in C_b(K)$, and consider the operator $H$ acting on $C_b(K \times \mathbb{R}+)$ defined, for all $\Psi \in C_b(K \times \mathbb{R}_+)$, $z = (x,i,\tau) \in K$, $t \geq 0$ by
\[
H \Psi(z,t) = f( \phi_t(z) ) {G}_z(t) + \sum_{j \in E} \int_0^t \Psi\left(t-u, (\varphi^i_u(x),j,0)\right) q_{i,j}(\varphi^i_u(x)) d \mu(z). 
\]
By the strong Markov property of $Z$, one can rewrite $H \Psi$ as
\[
H \Psi(z,t) = \EE_{z} \left[ f(Z_t) \1_{t < T_1} + \Psi\left(t-T_1, Z_{T_1}\right)\1_{t \geq T_1} \right],
\]
and it can be proven iteratively that for all $k \geq 1$,
\[
H^k \Psi(z,t) = \EE_{z} \left[ f(Z_t) \1_{t < T_k} + \Psi\left(t-T_k, Z_{T_k}\right)\1_{t \geq T_k} \right].
\]
Now, if $f \in C_b(K)$ and $\Psi \in C_b(K \times \mathbb{R}_+)$, we claim that 
\begin{enumerate}
\item $H \Psi$ (and hence $H^k \Psi$) belongs to $C_b(K \times \mathbb{R}_+)$;
\item  $H^k \Psi$ converges uniformly on compact sets to   $(t,x) \mapsto P_t f(x)$.
\end{enumerate}
These two claims imply immediately that $ (t,x) \mapsto P_t f(x)$ is continuous, thus the Feller property. We prove the first claim. Let $h : \mathbb{R}_+ \times \mathbb{R}_+ \times K \to \mathbb{R}$ be a continuous bounded map, and consider, the map 
\[
\eta : (z,t) \mapsto \int_0^t h(u, t-u, z) d \mu_z(u).
\]
We show that $\eta$ is continuous: let $(z,t)\in K \times \mathbb{R}_+$ be fixed. Then, for all $\varepsilon > 0$, there exists $\delta > 0$ such that, for all $z' \in K$ with $d(z,z') < \delta$ and $t' \in \mathbb{R}_+$ with $|t - t'| < \delta$,  
\[
\sup_{u \in [0,t]} | h(u,t-u,z) - h(s,t'-u,z')| < \varepsilon.
\]
Thus, if one assumes that $t' \geq t$,
\begin{align*}
| \eta(z,t) - \eta(z',t')| &  \leq \int_0^t | h(u, t-u, z) - h(u, t'-u, z') | d\mu_{z'}(u)\\
& \quad  + | \int_0^t h(u, t-u, z) d \mu_z(u) - \int_0^t h(u, t-u, z) d \mu_{z'}(u)|\\
& \quad  + \int_t^{t'} | h(u,t'-u,z')| d \mu_{z'}(u)\\
& \leq \varepsilon + \left| \int_0^t h(u, t-u, z) d \mu_z(u) - \int_0^t h(u, t-u, z) d \mu_{z'}(u)\right|\\
& \quad + \|h\||G_{z'}(t') - G_{z'}(t)|.
\end{align*}
Now, since $G_i$ is continuous for all $i$, $z' \mapsto \mu_{z'}$ is continuous for the topology of weak convergence : indeed, for all $u \geq 0$, the function $z' \mapsto G_{z'}(u)$ is continuous, hence as $z'$ converges to $z$, the cumulative distribution function of $\mu_{z'}$ converges to the cumulative distribution function of $\mu_z$. This implies that $\lim_{z' \to z} \int_0^t h(u, t-u, z) d \mu_{z'}(u) = \int_0^t h(u, t-u, z) d \mu_z(u)$, which in addition to the continuity of $(z',t') \mapsto G_{z'}(t')$ proves the continuity of $\eta$. This proves the first claim by noting that $H \Psi(z,t) = f(\phi_t(z)){G}_z(t) + \eta(z,t)$ for the function $h $ defined on $\mathbb{R}_+ \times \mathbb{R}_+ \times K$ by
\[
h(u,s,z) = \sum_j \Psi\left(s,(\varphi^i_u(x),j,0)\right)q_{i,j}\left( \varphi^i_u(x) \right);
\] 
which is bounded and continuous. 

Now we turn to the proof of the second claim. Let $t \in [0, T]$, for some $T > 0$. Then, 
\begin{align*}
| H^k \Psi(z,t) - P_t f(z)| & \leq  \EE_{z} \left( | \Psi\left(t-T_k, Z_{T_k}\right) - f(Z_t) | \1_{t \geq T_k} \right)\\
& \leq \left( \| \Psi \| + \|f\| \right) \mathbb{P}_z ( t \geq T_k)\\
& \leq \left( \| \Psi \| + \|f\| \right) \mathbb{P}( T \geq \tilde{T}_{k-1}),
\end{align*}
where $\tilde{T}_k$ is defined in Proposition \ref{prop:finitejumps}. This concludes the proof of the direct part.

Now, we prove that if one the $ G_i$ is not continuous, then the process if not Feller. Let $i \in E$ and $T > 0$ such that ${G}_i$ is not continuous in $T$. We denote $ G_i(T^-)$ the left limit of $ G_i(t)$ as $t$ goes to $T$ from the left. By assumption on $T$, we have $ G_i(T^-) -  G_i(T) > 0$.  We consider two cases :
\begin{enumerate}
\item $T < \bar t_i$
\item For all $j \in E$, the only possible discontinuity point of $  G_j$ is $\bar t_j$.
\end{enumerate} 
First assume that $T < \bar t_i$. Fix $x \in \R^d$, and 
let $s^*$ be such that 
\[
\int_{-s^*}^{0} \lambda_i ( \varphi_u^i(x) ) du = T 
\] and, for all $s \leq s^*$, set
\[
t(s) = \inf \{ t \geq 0 \: : \int_{-s}^{t} \lambda_i ( \varphi_u^i(x) ) du = T \}.
\]
Note that since $T < \bar t_i$, the point $z^*=(x,s^*,i)$ belongs to $K$. We now show that, for continuous and bounded $f : K\to \R$, the map $(z,t) \mapsto P_t f(z)$ is in general not continuous at $(z^*,0)$. Let $f : K \to \R$ be continuous and bounded. By definition of $t(s)$, one has, for all $(x,s)$ with $(x,s,i) \in K$, 
\begin{multline*}
P_{t(s)} f(x,s,i) = f( \varphi^i_{t(s)}(x), i, t(s) + s) \frac{  G_i(T)}{  G_i (\int_{-s}^{0} \lambda_i ( \varphi_u^i(x) ) du) } \\+
 \sum_{j \in E} q_{i,j} (\varphi^i_{t(s)}(x)) f(\varphi^i_{t(s)}(x),j,0) \frac{  G_i(T) -   G_i(T-)}{  G_i (\int_{-s}^{0} \lambda_i ( \varphi_u^i(x) ) du) } + \EE_{x,s,i} \left( f(Z_{t(s)} ) \1_{ S_1 < t(s)} \right).
\end{multline*}
Let $(s_n)_{n  \geq 0}$ be a sequence converging to $s^*$ from the left, and let $t_n = t(s_n)$. Then, 
\[
\lim_{n \to \infty} \int_{-s_n}^{0} \lambda_i ( \varphi_u^i(x) ) du)=T,
\] 
hence, by definition of $t(s_n)$ and continuity of the flow, we have that $(x,s_n,i,t_n)$ converges to $(z^*, 0)$  as $n$ goes to infinity.  Moreover, $  G_i (\int_{-s_n}^{0} \lambda_i ( \varphi_u^i(x) ) du)$ converges to $  G_i(T^-)$. Finally, we have that 
\[
\mathbb{P}_{x,s_n,i}(S_1 < t_n) = 1 - \frac{  G_i(T)}{  G_i (\int_{-s_n}^{0} \lambda_i ( \varphi_u^i(x) ) du)} - \frac{  G_i(T^-) -   G_i(T)}{  G_i (\int_{-s_n}^{0} \lambda_i ( \varphi_u^i(x) ) du)},
\]
which converges to $0$ as $n$ goes to infinity. Putting all together, we get that 
\[
\lim_{n \to \infty} P_{t_n} f(x,s_n,i) = 
 f( x, i, s^*) \frac{  G_i(T)}{  G_i (T^-) } +
 \sum_{j \in E} q_{i,j} (x) f(x,j,0) \frac{  G_i(T) -   G_i(T^-)}{  G_i (T^-) },
\]
which, since $  G_i(T) -   G_i(T^-) > 0$  is different from $f(x,s^*,i) = P_0 f(x,s^*,i)$ as soon as $\sum_{j \in E} q_{i,j} (x) f(x,j,0) \neq f(x,i,s^*)$. Hence, except for the functions $f$ satisfying $\sum_{j \in E} q_{i,j} (x) f(x,0,j) = f(x,s^*,i)$ for all $x$, the map $(z,t) \mapsto P_t f(z)$ is not continuous at $(z^*,0)$, hence the process is not Feller.

It remains to deal with the case where $T = \bar t_i$. For sake of simplicity, we do the proof in the special case where $\lambda_i$ is constant equal to $1$, the proof can be easily extend to the non-constant case.

We let  $f : K \to \R$ be a continuous bounded map, such that $f(x,s,j) =  s$ for all $(x,s,j) \in K$ with $s \leq T$. We show that there exists $\beta \in (0,1)$ such that $P_{(1-\beta) T} f$ is not continuous at any point $(x,s,i)$ with $s = \beta T$. On the one hand, starting from a point $(x, \beta T, i)$, the first jump time occurs either stricly before $(1-\beta)T$ with probability $1 -  {G}_i(T^-)/  G_i(\beta T)$ or exactly at $(1 - \beta) T$ with  probability $ {G}_i(T^-)/  G_i(\beta T)$. Thus, using that $f(x,0,j) = 0$ for all $j \in E$, we have
\[
P_{(1-\beta)T} f(x, \beta T, i) = 0 + \EE_{(x,\beta T, i)}\left[ \left( (1 - \beta) T - T_{N_t} \right) \1_{S_1 < (1 - \beta)T} \right].
\]
Hence, 
\[
P_{(1-\beta)T} f(x, \beta T, i) \leq (1 - \beta) T \left( 1 - \frac{ {G}_i(T^-)}{  G_i(\beta T)} \right).
\]
On the other hand, starting from a point $(x, \beta T - \eps, i)$, the first jump time occurs either stricly before $(1-\beta)T$ with probability $1 -  {G}_i(T-\eps)/  G_i(\beta T)$ or stricly after  $(1 - \beta) T$ with  probability $ {G}_i(T- \eps)/  G_i(\beta T)$. Thus, using that $f(\phi_{(1-\beta)T} (x, \beta T - \eps,i) = T - \eps$, we have
\[
P_{(1-\beta)T} f(x, \beta T - \eps, i) = (T - \eps)\frac{ {G}_i(T-\eps)}{  G_i(\beta T - \eps)} + \EE_{(x,\beta T - \eps, i)}\left[ \left( (1 - \beta) T - T_{N_t} \right) \1_{S_1 < (1 - \beta)T} \right].
\]
Hence, 
\[
P_{(1-\beta)T} f(x, \beta T - \eps, i) \geq (T - \eps)\frac{ {G}_i(T-\eps)}{  G_i(\beta T - \eps)}
\]
Now, since $  G_i$ is continuous at point $\beta T$, we get that
\[
\limsup_{ \eps \to 0}  P_{(1-\beta)T} f(x, \beta T - \eps, i) \geq T \frac{ {G}_i(T^-)}{  G_i(\beta T)}.
\]
As $\beta$ goes to $1$, $(1 - \beta) T \left( 1 - \frac{ {G}_i(T^-)}{  G_i(\beta T)} \right)$ goes to 0, whereas $T \frac{ {G}_i(T^-)}{  G_i(\beta T)}$ goes to $T$. Hence, we can choose $\beta \in (0,1)$ such that 
\[
\limsup_{ \eps \to 0}  P_{(1-\beta)T} f(x, \beta T - \eps, i) \geq \frac{T}{2} > (1 - \beta) T \left( 1 - \frac{ {G}_i(T^-)}{  G_i(\beta T)} \right) \geq P_{(1-\beta)T} f(x, \beta T, i),
\]
which proves that $P_{(1-\beta)T} f$ is not continuous at $(x, \beta T, i)$ and thus that the process is not Feller.

\section{Proof of Proposition \ref{prop:lyap}: construction of a Lyapunov function}
\label{sec:proofLyap}
First, let us prove that $f$ is a proper map. Note that we can rewrite $f$ as 
\[
f(x,s,i) = \frac{e^{-\gamma s}}{  G_i\left(  \int_{-s}^0 \lambda_i (\varphi^i_u(x)) du \right) } \mathbb{E}_{\varphi_{-s}^i(x), 0, i}\left[ (e^{\gamma S_1} - 1)  \1_{S_1 \leq s}\right]  + 1 - e^{ - \gamma s},
\]
which proves that $f$ is nonnegative. Now, we claim that 
\[
f(x,s,i) \geq \frac{e^{-\gamma s}}{  G_i\left(  \lambda_{\min} s \right) } \Ee \left[ (e^{\gamma \frac{S_1^i}{\lambda_{\max}}} - 1)  \1_{\frac{S_1^i}{\lambda_{\min}} \leq s}\right] :=g(s)
\]
where, with the notations of Section \ref{sec:setting}, $S_1^i = \psi_i(U_1)$ and 
\[
\psi_i(u) = \inf\{ r \geq 0 \: :  {G}_i(r) \geq u \}.
\]
In other word, $S_1$ is a random variable with law $\mu_i$. Still with the same notations, for $x \in M$, we define $S_1^{x,i} = \psi_{x,i}(U_1)$. Then, it is easily seen that
\[
\frac{S_1^i}{\lambda_{\max}} \leq S_1^{x,i}  \leq \frac{S_1^i}{\lambda_{\min}}
\]
which proves the claim since under $\PP_{\varphi_{-s}^i(x), 0, i}$, $S_1$ has the same law as $S_1^{\varphi_{-s}^i(x), i}$. Now, by definition of $\bar t_i$, assumption on $  G_i$ and definition of $\gamma$, 

\[
\lim_{ s \to \frac{\bar{t}_i}{\lambda_{\min}} } \frac{e^{-\gamma s}}{  G_i\left(  \lambda_{\min} s \right) }  = + \infty,
\]
while, by monotone convergence, 

\[
\lim_{ s \to \frac{\bar{t}_i}{\lambda_{\min}} } \Ee \left[ (e^{\gamma \frac{S_1^i}{\lambda_{\max}}} - 1)  \1_{\frac{S_1^i}{\lambda_{\min}} \leq s}\right] = \Ee \left[ (e^{\gamma \frac{S_1^i}{\lambda_{\max}}} - 1) \right] > 0.
\]
Hence, $\lim_{s \to \frac{\bar{t}_i}{\lambda_{\min}}} g(s) = + \infty$. In particular, if $R \geq 0$, there exists $s_0$ such that $g(s) \geq R +1$ for all $s \geq s_0$. Since for all $(x,s,i) \in K_M$, $f(x,s,i) \geq g(s)$, this implies that $\{f \leq R \}$ is a closed set included in $K_M \cap \left( M \times [0,s_0] \times E \right)$, which is a compact subset of $K_M$, hence $\{f \leq R \}$ is compact in $K_M$ and $f$ is a proper map. 

We next prove the bound on $P_t f$. We define the functions $\tilde f, h: K_M \to \R_+$ by setting $\tilde f= f - 1$ and
\[
h(x,s,i)= \mathbb{E}_{\varphi_{-s}^i(x), 0, i}[ e^{\gamma S_1} \1_{S_1 \leq s}],
\]
so that
\[
\tilde f(x,s,i) = \frac{e^{-\gamma s}}{  G_i\left(  \int_{-s}^0 \lambda_i (\varphi^i_u(x)) du \right) }\left( h(x,s,i) - 1 \right).
\]
We prove that $P_t \tilde f \leq e^{- \gamma t} \tilde f $, then since  $P_t f =  P_t \tilde f + 1$, we will have $P_t f \leq e^{- \gamma t}  f+ 1 - e^{- \gamma t}$, hence the result.
Let $t \geq 0$, then
\[
P_t \tilde f(x,s,i) = \tilde f(\phi_t(x,s,i))   G_{(x,s,i)}(t) + \sum_{n=1}^{+\infty} \EE_{x,s,i} \left( \tilde f(Z_t) \1_{N_t=n} \right)
\]
Note that for all $t \geq 0$,
\[
h(\phi_t(x,s,i))= h(x,s,i) + \mathbb{E}_{\varphi_{-s}^i(x), i, 0}[ e^{\gamma S_1} \1_{s<S_1 \leq t+s}],
\]
and recall that 
\[
  G_{(x,s,i)}(t) = \frac{  G_i \left( \int_{-s}^t \lambda_i( \varphi^i_u(x) ) ds \right)}{  G_i \left( \int_{-s}^0 \lambda_i( \varphi^i_u(x) ) ds \right)}.
\]
Hence,
\begin{multline*}
\tilde f(\phi_t(x,s,i))   G_{(x,s,i)}(t) \\
= e^{- \gamma t} \tilde f(x,s,i)+ e^{-\gamma t} \frac{e^{-\gamma s}}{  G_i\left(  \int_{-s}^0 \lambda_i (\varphi^i_u(x)) du \right)}\mathbb{E}_{\varphi_{-s}^i(x), 0, i}[ e^{\gamma S_1} \1_{s<S_1 \leq t+s}]
\end{multline*}
Let $n \geq 1$, then,
\begin{align*}
\EE_{x,s,i} \left( \tilde f(Z_t) \1_{N_t=n} \right) & = \EE_{x,s,i} \left( \tilde f(\varphi_{t - T_n}^{\bm{I}_n}(\bm{X}_n),\bm{I}_n, t - T_n) \1_{T_n \leq t < T_n + S_{n+1}} \right)\\
& = \EE_{x,s,i} \left( \tilde f(\varphi_{t - T_n}^{\bm{I}_n}(\bm{X}_n),\bm{I}_n, t - T_n) \1_{T_n \leq t }   G_{\bm{X}_n, \bm{I}_n}( t - T_n) \right)\\
& =  \EE_{x,s,i} \left( e^{ - \gamma(t - T_n)}(h(\varphi_{t - T_n}^{\bm{I}_n}(\bm{X}_n),\bm{I}_n, t - T_n) - 1) \1_{T_n \leq t } \right)\\
& = a_n - b_n,
\end{align*}
where 
\[
a_n = \EE_{x,s,i} \left( e^{ - \gamma(t - T_n)} h(\varphi_{t - T_n}^{\bm{I}_n}(\bm{X}_n),\bm{I}_n, t - T_n)  \1_{T_n \leq t } \right)
\]
and 
\[
b_n =  \EE_{x,s,i} \left( e^{ - \gamma(t - T_n)} \1_{T_n \leq t } \right).
\]
Let us prove that for all  $n \geq 1$, $b_{n+1} = a_n$.
We have
\begin{align*}
b_n & =  \EE_{x,s,i} \left( e^{ - \gamma(t - T_{n+1})} \1_{T_{n+1} \leq t } \right)\\
& = \EE_{x,s,i} \left( e^{ - \gamma(t - T_{n})} \1_{T_{n} \leq t } e^{ \gamma S_{n+1}} \1_{S_{n+1} \leq t - T_n} \right)\\
& =  \EE_{x,s,i} \left( e^{ - \gamma(t - T_{n})} \1_{T_{n} \leq t } \EE( e^{ \gamma S_{n+1}} \1_{S_{n+1} \leq t - T_n} | \mathcal{F}_{T_n} ) \right)\\
& = \EE_{x,s,i} \left( e^{ - \gamma(t - T_{n})} \1_{T_{n} \leq t } \EE_{\bm{X}_n,\bm{I}_n,0}( e^{ \gamma S_{1}} \1_{S_{1} \leq t - T_n} ) \right)
\end{align*}
where the last equality comes from the strong Markov property. Now, by definition, 
\[
h(\varphi_{t - T_n}^{\bm{I}_n}(\bm{X}_n),\bm{I}_n, t - T_n)) = \EE_{\bm{X}_n,0,\bm{I}_n}( e^{ \gamma S_{1}} \1_{S_{1} \leq t - T_n} ),
\]
which proves that $a_n = b_{n+1}$. Moreover, $b_n \leq \mathbb{P}_{x,s,i} ( T_n \leq t)$, hence $b_n$ converges to $0$ as $n$ goes to infinity. In particular, this proves that 
\[
 \sum_{n=1}^{+\infty} \EE_{x,s,i} \left( \tilde f(Z_t) \1_{N_t=n} \right) =  \sum_{n=1}^{+\infty}( b_{n+1} - b_n) = - b_1.
\]
Thus, we can conclude if we prove that 
\[
 \EE_{x,s,i} \left( e^{ - \gamma(t - T_{1})} \1_{T_{1} \leq t } \right) = e^{-\gamma t} \frac{e^{-\gamma s}}{  G_i\left(  \int_{-s}^0 \lambda_i (\varphi^i_u(x)) du \right)}\mathbb{E}_{\varphi_{-s}^i(x), 0, i}[ e^{\gamma S_1} \1_{s<S_1 \leq t+s}]
\]
But this is true, since by definition, the law of $S_1$ (or equivalently, $T_1$) under $\mathbb{P}_{x,s,i}$ is the  law of $S_1 - s$ under $\mathbb{P}_{\varphi^i_{-s}(x),0,i}$, conditionnaly on $S_1 > s$. 

\section{Proof of Theorem \ref{thm:Doeblin}: existence of small or petite sets under bracket conditions}    \label{ssec:proof_Doeblin} 

%\subsection{Doeblin points and petite sets}
%
%The following two definitions concern classical notions from the theory of Markov processes. 
%
%\begin{definition}
%We say that $z^* \in K$ is a \emph{Doeblin point} if there exist a neighborhood $U$ of $z^*$, a nonzero Borel measure $\nu$ on $K$, and $t^* > 0$ such that 
%$$
%P_{t^*}(z, \cdot) \geq \nu(\cdot), \quad \forall z \in U. 
%$$
%In this case, the set $U$ is called a \emph{small set} with respect to the Markov kernel $P_{t^*}$. 
%\end{definition} 
%
%\begin{definition} 
%The \emph{resolvent} associated with $(P_t)$ is the Markov kernel $R$ on $K$ defined by 
%$$
%R(z, \cdot) := \int_{\R_+} e^{-t} P_t(z, \cdot) \ dt, \quad z \in K. 
%$$
%We call a set $U \in \Bc(K)$ a \emph{petite set} if there exists a nonzero Borel measure $\nu$ on $K$ such that 
%$$
%R(z, \cdot) \geq \nu(\cdot), \quad \forall z \in U. 
%$$
%\end{definition} 
%
%Our goal in this section is to formulate conditions under which $(P_t)$ admits Doeblin points or petite sets. To this end, we introduce the following definitions:
%
%\begin{definition}
%Let $m \geq 1$, and let $\mu$ be a finite measure on $\R_+^m$. Let $\mu^{\textrm{ac}}$ be the absolutely continuous part of $\mu$ with respect to $l^+_{m}$, the Lebesgue measure on $\mathbb{R}_+^m$, and set $h := d \mu^{\textrm{ac}}/dl^+_m$. We say that a point $\tbf_0 \in \R_+^m$ is \emph{regular} for $\mu$ if $h$ is bounded below by a positive constant on a neighbourhood of $\tbf_0$. 
%\end{definition} 

For $T > 0$ and $m \in \N$, let 
\begin{equation}    \label{eq:def_Delta_T_m} 
D_m^T := \left\{\sbf \in (0,\infty)^m: s_1 + \ldots +s_m < T \right\}. 
\end{equation}

\begin{definition}   \label{def:admissible_reg_sub} 
We say that a point $z=(x,s,i) \in K$ has an \emph{admissible regular submersion} if there exist $m \in \N$ and an admissible control sequence $((s_1, \ldots, s_{m+1}, s_{m+2}), (i_1, \ldots, i_{m+1}, i_{m+2}))$ with respect to $(x,s,i)$ such that the following holds: 
\begin{enumerate}
\item Setting $T := s_1 + \ldots + s_{m+1}$, the map 
\begin{equation}    \label{eq:definition_Psi} 
\Psi: D^T_m \to \R^d, \ (v_1, \ldots, v_m) \mapsto \varphi_{T-(v_1 + \ldots + v_m)}^{i_{m+1}} \circ \Phi_{(v_1, \ldots, v_m)}^{(i_1, \ldots, i_m)}(x)
\end{equation} 
is a submersion at $(s_1, \ldots, s_m)$, i.e., the Jacobian matrix $D\Psi(s_1, \ldots, s_m)$ has full rank;
\item The point $s_1$ is $\mu_z$-regular and for all $k= 2, \ldots, m+2$ the point $s_k$ is $\mu_{x_k}^{i_k}$-regular, where $x_k$ is defined in \eqref{eq:xk}.
\end{enumerate}
\end{definition}

Note that for classical PDMPs with exponential switching times, point 2 is always satisfied. 
The following proposition is one of the main ingredients in the proof of Theorem~\ref{thm:Doeblin}. Its proof is postponed to Section~\ref{ssec:proofsubmersion}.

\begin{proposition} \label{prop:submersion}
Suppose that $z \in K$ has an admissible regular submersion with admissible control sequence $((s_1, \ldots, s_{m+2}), (i_1, \ldots, i_{m+2}))$ and $T := s_1 + \ldots + s_{m+1}$. Suppose also that $\Psi$ is defined as in~\eqref{eq:definition_Psi}. Then there exist $\varepsilon, c > 0$, a neighbourhood $U$ of $z$ in $K$ and a neighbourhood $V$ of $(\varphi^{i_{m+2}}_{\varepsilon} \circ \Psi(s_1, \ldots, s_m),\varepsilon)$ in $K^{i_{m+2}}$ such that, for all $z' \in U$, 
\[
P_{T + \varepsilon} (z', A \times \{i_{m+2}\}) \geq c \ \bm{\lambda}_{d+1}(A \cap V), \quad \forall A \in \Bc(K^{i_{m+2}}).  
\]
In particular, $z$ is a Doeblin point and $U$ is a small set. 
\end{proposition}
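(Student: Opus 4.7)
The plan is to lower bound $P_{T+\varepsilon}(z',\cdot)$ by restricting attention to a specific event on the first $m+1$ switches, and then extracting an absolutely continuous component via the submersion hypothesis. Fix $\varepsilon>0$ small (in particular, $\varepsilon<\min(s_{m+1},s_{m+2})/2$), and let $U\subset K$ be a neighbourhood of $z=(x,s,i)$ to be shrunk later. For $z'=(x',s',i)\in U$, consider the event $\mathcal{E}$ that the process $Z$ starting from $z'$ has exactly $m+1$ switches in $[0,T+\varepsilon]$, that the discrete chain visits the prescribed states $\bm{I}_k=i_{k+1}$ for $k=0,1,\ldots,m+1$, and that the sojourn times $(S_1,\ldots,S_{m+1})$ lie in a small open box around $(s_1,\ldots,s_{m+1})$. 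Writing $\bm{X}_k=\Phi^{(i_1,\ldots,i_k)}_{(S_1,\ldots,S_k)}(x')$ for the positions at successive switches and $u:=T+\varepsilon-T_{m+1}$ for the elapsed time after the $(m+1)$-th switch, the kernels of Section~\ref{sec:PDP} give a joint density for $(S_1,\ldots,S_{m+1})$ on $\mathcal{E}$, equal to the product of the densities of $\mu_{z'}$ at $S_1$ and of $\mu^{i_{k+1}}_{\bm{X}_k}$ at $S_{k+1}$ for $k=1,\ldots,m$, the transition factors $q_{i_k,i_{k+1}}(\bm{X}_k)$ for $k=1,\ldots,m+1$, and the survival factor $G_{\bm{X}_{m+1},i_{m+2}}(u)$.

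The next step is to produce a uniform positive lower bound on this density. The $\mu_z$-regularity of $s_1$ and the $\mu^{i_k}_{x_k}$-regularity of $s_k$ for $k=2,\ldots,m+1$, together with joint continuity of the maps $(x',s',t)\mapsto\int_{-s'}^{t}\lambda^{i}(\varphi^{i}_r(x'))\,dr$, imply that each of these densities is bounded below by a positive constant on a sufficiently small box, uniformly in $z'\in U$. Continuity of $Q$ and admissibility yield a uniform lower bound on $q_{i_k,i_{k+1}}(\bm{X}_k)$. Finally, $s_{m+2}$ being $\mu^{i_{m+2}}_{x_{m+2}}$-regular and $\varepsilon<s_{m+2}$ imply $G_{\bm{X}_{m+1},i_{m+2}}(u)\geq c_0>0$ for $\bm{X}_{m+1}$ near $x_{m+2}$ and $u$ near $\varepsilon$. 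Combining, there is $\alpha>0$ such that, on $\mathcal{E}$, the joint density of $(S_1,\ldots,S_{m+1})$ is at least $\alpha$.

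Performing the change of variables $(v_1,\ldots,v_{m+1})\mapsto(v_1,\ldots,v_m,u)$ with $v_{m+1}=T+\varepsilon-u-(v_1+\ldots+v_m)$ and Jacobian $\pm1$, and observing that on $\mathcal{E}$ one has $X_{T+\varepsilon}=\tilde\Psi_{x',u,\varepsilon}(v_1,\ldots,v_m)$, $\tau_{T+\varepsilon}=u$ and $I_{T+\varepsilon}=i_{m+2}$, where
\begin{equation*}
\tilde\Psi_{x',u,\varepsilon}(v):=\varphi^{i_{m+2}}_{u}\circ\varphi^{i_{m+1}}_{T+\varepsilon-u-(v_1+\ldots+v_m)}\circ\varphi^{i_m}_{v_m}\circ\cdots\circ\varphi^{i_1}_{v_1}(x'),
\end{equation*}
one obtains, for every Borel $A\subset K^{i_{m+2}}$,
\begin{equation*}
P_{T+\varepsilon}(z',A\times\{i_{m+2}\})\geq\alpha\int\mathbbm{1}_{A}\bigl(\tilde\Psi_{x',u,\varepsilon}(v),u\bigr)\,dv\,du,
\end{equation*}
with integration over a small neighbourhood of $(s_1,\ldots,s_m,\varepsilon)$. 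At $(x',u)=(x,\varepsilon)$, $\tilde\Psi_{x,\varepsilon,\varepsilon}=\varphi^{i_{m+2}}_{\varepsilon}\circ\Psi$ is a submersion at $(s_1,\ldots,s_m)$ since $D\varphi^{i_{m+2}}_{\varepsilon}$ is invertible and $D\Psi(s_1,\ldots,s_m)$ has full rank $d$. By smooth dependence and openness of the full-rank condition, $v\mapsto\tilde\Psi_{x',u,\varepsilon}(v)$ remains a submersion at $(s_1,\ldots,s_m)$ for $(x',u)$ near $(x,\varepsilon)$. Applying the submersion theorem to $(v,u)\mapsto(\tilde\Psi_{x',u,\varepsilon}(v),u)$, and integrating out $m-d$ redundant directions when $m>d$, shows that the pushforward of Lebesgue measure on the box dominates a positive multiple of $\bm{\lambda}_{d+1}$ on a neighbourhood $V$ of $(\varphi^{i_{m+2}}_{\varepsilon}\circ\Psi(s_1,\ldots,s_m),\varepsilon)$. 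A final shrinking of $U$ makes all constants uniform in $z'\in U$ and yields the desired bound.

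The main obstacle will be the uniformity of the density lower bound: the $\mu^i$-regularity assumption gives a lower bound on the density of $\mu^i$ at fixed points, and this must be upgraded to a lower bound that holds uniformly along the parametrised family of measures $\mu^{i_{k+1}}_{\bm{X}_k(v)}$ as $v$ varies in the box and $z'$ varies in $U$. Once this uniformity is in place, the change of variables and the application of the submersion theorem are standard local differential-topology calculations.
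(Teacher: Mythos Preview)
Your proof is correct and follows essentially the same approach as the paper: restrict to the event where the first $m+1$ switches follow the prescribed pattern with sojourn times in a small box, use regularity to obtain a uniform lower bound on the resulting joint density (the paper isolates this step as Lemma~\ref{lem:unifregular}), and then apply a submersion/coarea argument (the paper's Lemma~\ref{lem:submersion}) to the map $(v,u)\mapsto(\tilde\Psi_{x',u,\varepsilon}(v),u)$. The paper's presentation is simply a more modular packaging of the same ideas, separating the density computation, the uniformity, and the pushforward into distinct lemmas.
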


As we shall see, the notion of an admissible regular submersion is connected with the strong bracket condition. There is also a variant for the weak bracket condition, which we call admissible regular \emph{weak} submersion.

\begin{definition}
We say that a point $z=(x,s,i) \in K$ has an \emph{admissible regular weak submersion} if there exist $m \in \N$ and an admissible control sequence $((s_1, \ldots, s_{m+1}), (i_1, \ldots, i_{m+1}))$ with respect to $(x,s,i)$ such that the following holds: 
\begin{enumerate}
\item The map 
\begin{equation}    \label{eq:def_weak_Psi} 
\Psi: (0,\infty)^m \to \R^d, \ (v_1, \ldots, v_m) \mapsto \Phi_{(v_1, \ldots, v_m)}^{(i_1, \ldots, i_m)}(x)
\end{equation} 
is a submersion at $(s_1, \ldots, s_m)$, i.e., the Jacobian matrix $D\Psi(s_1, \ldots, s_m)$ has full rank;
\item The point $s_1$ is $\mu_z$-regular and for all $k= 2, \ldots, m+1$ the point $s_k$ is $\mu_{x_k}^{i_k}$-regular.
\end{enumerate}
\end{definition}

\begin{proposition}
\label{prop:weaksubmersion}
Suppose that $z \in K$ has an admissible regular weak submersion with admissible control sequence $((s_1, \ldots, s_{m+1}), (i_1, \ldots, i_{m+1}))$ and let $\Psi$ be defined as in~\eqref{eq:def_weak_Psi}. Then there exist $c > 0$, a neighbourhood $U$ of $z$ in $K$ and a neighbourhood $V$ of $(\Psi(s_1, \ldots, s_m),0)$ in $K^{i_{m+1}}$ such that, for all $z' \in U$,
\[
R(z', A \times \{i_{m+1}\}) \geq c \ \bm{\lambda}_{d+1}(A \cap V), \quad \forall A \in \Bc(K^{i_{m+1}}).  
\]
In particular, $U$ is a petite set for $(P_t)_{t \geq 0}$. 
\end{proposition}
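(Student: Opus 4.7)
The strategy is parallel to that of Proposition~\ref{prop:submersion}, with one crucial modification: since the resolvent $R = \int_0^\infty e^{-t} P_t \, dt$ integrates over time, the elapsed time after the $m$-th switch itself supplies the extra degree of freedom that, in the strong-bracket setting, was provided by the additional state $i_{m+2}$. Concretely, the $d$ parameters controlling $\Psi$ via the submersion yield a Lebesgue density in the $X$-variable, while integration over $t$ in the definition of $R$ yields a density in the $\tau$-variable, giving altogether a $(d+1)$-dimensional Lebesgue lower bound on $K^{i_{m+1}}$.

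First I would localise the computation on the event $\Gamma$ that the first $m$ switches of $I$ follow the prescribed pattern $i_1 \to i_2 \to \cdots \to i_m \to i_{m+1}$, the inter-jump times $(S_1,\ldots,S_m)$ lie in a small cube $N_\delta \subset (0,\infty)^m$ centred at $(s_1,\ldots,s_m)$, and the $(m+1)$-th waiting time $S_{m+1}$ exceeds some $\eta \in (0,s_{m+1})$. Applying the strong Markov property together with (i) the $\mu$-regularity of each $s_k$, (ii) positivity of $q_{i_{k-1} i_k}(x_k)$, and (iii) $(s_{m+1}, \infty) \cap \supp(\mu_{x_{m+1}}^{i_{m+1}}) \neq \emptyset$, all guaranteed by admissibility, I would derive a uniform positive lower bound $c_0$ for the joint density of $(S_1,\ldots,S_m)$ on $N_\delta$ restricted to the ordered-switching event, together with a positive lower bound for $\mathbb{P}_z(S_{m+1} > \eta \mid S_1,\ldots,S_m)$ on $\Gamma$.

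Next I would perform a change of variables. On $\Gamma$ and for $u \in (0,\eta)$ one has $Z_{T_m+u} = (\varphi^{i_{m+1}}_u \circ \Phi^{(i_1,\ldots,i_m)}_{\sbf'}(x),\, u,\, i_{m+1})$ with $\sbf' = (S_1,\ldots,S_m)$. Since $\Psi$ has rank $d$ at $(s_1,\ldots,s_m)$, there exist indices $k_1 < \cdots < k_d$ in $\{1,\ldots,m\}$ such that $\partial_{s_{k_1}}\Psi, \ldots, \partial_{s_{k_d}}\Psi$ are linearly independent at that point. Holding the other $m-d$ coordinates of $\sbf'$ fixed at their target values and integrating them out against the density lower bound $c_0$, I would consider the map
\[
\Xi \colon (s'_{k_1},\ldots,s'_{k_d}, u) \longmapsto \bigl(\varphi^{i_{m+1}}_u(\Psi(\sbf')),\, u\bigr) \in K^{i_{m+1}}.
\]
Expanding its Jacobian along the last row yields $\det D\Xi = \det D\varphi^{i_{m+1}}_u \cdot \det\bigl(\partial_{s_{k_1}}\Psi \mid \cdots \mid \partial_{s_{k_d}}\Psi\bigr)$, which is nonzero at $(s_{k_1},\ldots,s_{k_d},0)$. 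Hence $\Xi$ is a local diffeomorphism onto a neighbourhood $V$ of $(\Psi(s_1,\ldots,s_m), 0)$ in $K^{i_{m+1}}$.

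Combining the pieces, I would bound
\[
R(z, A \times \{i_{m+1}\}) \;\geq\; \int_0^\infty e^{-t} \mathbb{E}_z\bigl[\mathbbm{1}_{Z_t \in A \times \{i_{m+1}\}}\, \mathbbm{1}_\Gamma\, \mathbbm{1}_{T_m < t < T_{m+1}}\bigr]\, dt
\]
from below via Fubini and the density lower bound $c_0$, then apply the change-of-variables formula for $\Xi$ to conclude that $R(z, A \times \{i_{m+1}\}) \geq c\,\bm{\lambda}_{d+1}(A \cap V)$ for some $c > 0$. Continuity of the flows, of the densities of the $\mu_{x_k}^{i_k}$ near the regular points $s_k$, of $Q$, and of $\lambda^i$ then lets the same lower bound hold uniformly for all starting points $z'$ in a sufficiently small neighbourhood $U$ of $z$. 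The main technical obstacle is the change-of-variables bookkeeping --- selecting the $d$ active coordinates from the $m$ jump times, integrating out the remaining $m-d$ against the joint density, and uniformly controlling $\det D\Xi$ on a compact neighbourhood --- together with checking that admissibility of the control sequence (positivity of $q_{i_{k-1} i_k}$ and Lebesgue-regularity of the $s_k$'s) is preserved under small perturbations of $z$.
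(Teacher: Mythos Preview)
Your proposal is correct and follows essentially the same route as the paper's proof. Both arguments exploit the same key idea: the resolvent's time integration supplies the extra degree of freedom (the $\tau$-coordinate) that, in the strong-bracket case, came from the additional flow $\varphi^{i_{m+2}}$. The paper packages this by introducing an independent exponential variable $T\sim\mathrm{Exp}(1)$, forming the joint vector $(\tilde S_1,\ldots,\tilde S_m,T)$ with product law $\nu_{x,s}^S\otimes\xi$, pushing forward by $g(r_1,\ldots,r_m,t)=(r_1,\ldots,r_m,t-\sum r_i)$, and then invoking the abstract Lemma~\ref{lem:submersion}; you instead unpack this directly via Fubini, the substitution $u=t-T_m$, and an explicit block-triangular Jacobian computation for your map $\Xi$. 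The paper's route has the advantage of reusing Lemma~\ref{lem:submersion} verbatim from the proof of Proposition~\ref{prop:submersion}, while your direct computation avoids that abstraction at the cost of slightly more bookkeeping when integrating out the $m-d$ inactive coordinates and establishing uniformity in $z'$ (which the paper handles through the uniform-regularity machinery of Lemma~\ref{lem:unifregular}).
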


We now prove point 2 of Theorem~\ref{thm:Doeblin}. We omit the proof of point 1 as it only requires small changes to the proof of point 2; most importantly, Theorem~4 from~\cite{BH12} and Proposition~\ref{prop:submersion} need to be replaced with Theorem~5 from~\cite{BH12} and Proposition~\ref{prop:weaksubmersion}, respectively. 

By Theorem 4 in \cite{BH12}, which is derived from results in~\cite[Chapter 3]{J97}, the strong bracket condition at a point $x^* \in \R^d$ implies that for all $i,j \in E$, there exist $m > d$ and $\mathbf{i} \in E^{m+1}$ with $i_1 = i$ and $i_{m+1} =j$ such that the following holds: For every $T > 0$ there exists a point $(t_1, \ldots, t_m) \in D^T_m$ such that the map
\[
\Psi: D^T_m \to \R^d, \ (v_1, \ldots, v_m) \mapsto \varphi_{T-(v_1 + \ldots + v_m)}^{i_{m+1}} \circ \Phi_{(v_1, \ldots, v_m)}^{(i_1, \ldots, i_m)}(x^*)
\]
is a submersion at $(t_1, \ldots, t_m)$. Our goal is to prove that under Assumption~\ref{cond:regular_point} (resp. under Assumption~\ref{cond:analyticity}), one can construct from $\Psi$ an admissible regular submersion at $(x^*,0,i)$. Point 2 of Theorem \ref{thm:Doeblin} will then be a direct consequence of Proposition~\ref{prop:submersion}.

Let $i,j \in E$ and let $m$ and $\ibf$ be as above. We first construct an admissible regular submersion under Assumption~\ref{cond:regular_point}. Under this assumption, for every $k \in E$, there are $\mu^k$-regular points arbitrarily close to $0$. This feature allows one to construct the admissible regular submersion in essentially the same way as for PDMPs with exponential switching times, i.e., by choosing $T$ small enough and by adding small jumps: 

By irreducibility of $Q(x^*)$ and continuity of $y \mapsto Q(y)$, there exist $\delta > 0$ and $\tilde{\ibf} \in E^{n+1}$ for some $n \geq m$, such that $(i)$ for all $k =1, \ldots, n$ and $y \in \mathbb{R}^d$ with $\|x^*-y\| < \delta$, $q_{\tilde{i}_k, \tilde{i}_{k+1}}(y) > 0$, and $(ii)$ there is an increasing sequence $r: [\![1, m+1 ]\!] \to [\![1, n+1 ]\!]$ with $r(1) = 1$, $r(m+1)=n+1$ and $\tilde{i}_{r(k)} = i_k$ for all $k \in [\![1, m+1]\!]$. Moreover, by continuity of the flows $\varphi^k$, there exists $\varepsilon_1 > 0$ such that for all $(u_1, \ldots, u_n) \in (0, \varepsilon_1)^n$, one has $\|\Phi^{(\tilde i_1, \ldots, \tilde i_k)}_{(u_1, \ldots, u_k)}(x^*) - x^* \| < \delta$, for all $k \in [\![ 1, n ]\!]$.

%Since the functions $(\lambda_i)_{i \in E}$ are bounded, one has 
%$$
%\lambda_{\max} := \max_{i \in E} \sup_{x \in \R^d} \lambda_i(x) < \infty. 
%$$
Let $T \leq \min\{\frac{\varepsilon_0}{ \lambda_{\max}}; \varepsilon_1\}$, where $\varepsilon_0$ was introduced in Assumption~\ref{cond:regular_point}. The inequality $T \leq \varepsilon_0/ \lambda_{\max}$ implies that for every $t \in (0,T)$ and $(x,k) \in \R^d \times E$, $t$ is $\mu_x^k$-regular and, in particular, $t \in \supp \mu_x^k$. On the closure of $D^T_n$, define 
\begin{equation}   \label{eq:def_tilde_Psi} 
\tilde \Psi: \cl(D^T_n) \to \R^d, \ (v_1, \ldots, v_n) \mapsto \varphi_{T-(v_1 + \ldots + v_n)}^{\tilde i_{n+1}} \circ \Phi_{(v_1, \ldots, v_n)}^{(\tilde i_1, \ldots, \tilde i_n)}(x^*).
\end{equation} 
Note that by construction, there exists a permutation $\sigma$ on $\{1, \ldots, n\}$ such that for 
$$
a: \R^n \to \R^n, \ (v_1, \ldots, v_n) \mapsto (v_{\sigma(1)}, \ldots, v_{\sigma(n)}), 
$$
one has 
$$
\Psi(v_1, \ldots, v_m) = \tilde \Psi(a(v_1, \ldots, v_m, 0, \ldots, 0)), \quad \forall (v_1, \ldots, v_m) \in D^T_m. 
$$
Recall from the beginning of the argument that there exists $(t_1, \ldots, t_m) \in D^T_m$ such that $\Psi$ is a submersion at $(t_1, \ldots, t_m)$. Thus, by continuity of the determinant and of the partial derivatives of $\tilde \Psi$, there exist $t_{m+1}, \ldots, t_n > 0$ such that
$$
a(t_1, \ldots, t_m, t_{m+1}, \ldots, t_n) \in D^T_n
$$
and $\tilde \Psi$ is a submersion at $a(t_1, \ldots, t_m, t_{m+1}, \ldots, t_n)$. By construction, the sequence 
$$
((a(t_1, \ldots, t_m, t_{m+1}, \ldots, t_n), T - \sum_{k=1}^n t_k), (\tilde i_1, \ldots, \tilde i_{n+1}))
$$
is an admissible control sequence with respect to $(x^*, 0, i)$ (see Definition~\ref{def:admissible_cs}). It suffices to add some $(t_{n+2}, \tilde i_{n+2})$ in an admissible regular manner to conclude that $(x^*, 0, i)$ has an admissible regular submersion. 

\medskip

We now prove the existence of an admissible regular submersion at $(x^*, 0, i)$ under Assumption~\ref{cond:analyticity}. The difficulty here is that we cannot add arbitrarily small jumps in an admissible manner. The jumps we need to add may be long, and thus the continuity of the determinant is not sufficient for the submersion property to carry over. This is where analyticity of the vector fields enters the picture: We can transfer the fact that the determinant for a suitable submatrix of the Jacobi matrix $D \Psi$ is not zero at a particular point to the fact that it cannot be identically zero on any given open set. Let us make the argument precise. 
 
Since, under Assumption~\ref{cond:analyticity}, $Q$ is globally irreducible, one can find $\tilde{\ibf} \in E^{n+1}$ for some $n \geq m$ such that $(i)$ for all $k =1, \ldots, n$ and $y \in \mathbb{R}^d$, $q_{\tilde{i}_k, \tilde{i}_{k+1}}(y) > 0$, and $(ii)$ there is an increasing sequence $r: [\![ 1, m+1 ]\!] \to [\![  1, n+1 ]\!]$ with $r(1) = 1$, $r(m+1)=n+1$ and $\tilde{i}_{r(k)} = i_k$ for all $k \in [\![ 1, m+1 ]\!]$. This is a considerably stronger statement than the corresponding one under Assumption~\ref{cond:regular_point}, where positive transition probabilities could only be guaranteed for points $y$ close to $x^*$.

\begin{lemma}
\label{lem:regularmuye}
Assume that for every $e \in E$, there exists a $\mu^e$-regular point $t^r_e > 0$. For $(y, e) \in \R^d \times E$, let
\[
t^r_{y,e} = \inf\{ t \geq 0 \: : \int_0^t \lambda^e(\varphi_u^e(y)) \ du > t^r_e \}.
\]
Set $\tilde x_1 = x^*$, $\tilde t_1 =  t^r_{\tilde x_1, \tilde i_1}$, and for $k =1, \ldots, n$ set $\tilde x_{k+1} = \varphi_{\tilde{t}_k}^{\tilde i_k}(\tilde{x}_k)$ and $\tilde t_{k+1} = t^r_{\tilde{x}_{k+1}, \tilde i_{k+1}}$. Given any sequence $(\hat t_1, \ldots, \hat t_{n+1}) \in \R_+^{n+1}$, define $(\hat x_1, \ldots, \hat x_{n+1})$ recursively by setting 
$\hat x_1 = x^*$ and $\hat x_{k+1} = \varphi^{\tilde i_k}_{\hat t_k} (\hat x_k)$.
Then 
\begin{enumerate}
\item For all $e \in E$ and $y \in \R^d$, $t_{y,e}^r$ is $\mu_y^e$-regular and 
\[
\frac{t^r_e}{\lambda_{\max}} \leq t^r_{y,e} \leq \frac{t^r_e}{\lambda_{\min}}; 
\]
\item There exists $\varepsilon_1 > 0$ such that $\hat{t}_k \in (\tilde t_{k} - \varepsilon_1, \tilde t_{k} + \varepsilon_1)$ for all $k = 1, \ldots, n+1$ implies that $\hat t_k$ is $\mu_{\hat x_k}^{\tilde i_k}$-regular for all $k = 1, \ldots, n+1$.
\end{enumerate}
\end{lemma}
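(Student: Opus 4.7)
The plan is to prove part (1) by computing the density of $\mu_y^e$ via a pushforward argument, and then derive part (2) as a continuity/perturbation consequence of part (1).

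First I would handle part (1). Since $\lambda^e$ is continuous and bounded away from $0$, the map $\Lambda^e_y : t \mapsto \int_0^t \lambda^e(\varphi^e_r(y))\,dr$ is a $C^1$ diffeomorphism from $[0, s^e(y) + \infty)$ onto its image, with derivative $\lambda^e(\varphi^e_t(y)) \in [\lambda_{\min}, \lambda_{\max}]$. Observing that the survival function of $\mu^e_y$ is $G^e \circ \Lambda^e_y$ (up to the normalisation which equals $1$ when $s = 0$), one sees that $\mu^e_y$ is the pushforward of $\mu^e$ by $(\Lambda^e_y)^{-1}$. Writing $\mu^e = \mu^{e,\mathrm{ac}} + \mu^{e,\mathrm{sing}}$ and using that $(\Lambda^e_y)^{-1}$ is a $C^1$ diffeomorphism, the absolutely continuous component of $\mu^e_y$ is the pushforward of $\mu^{e,\mathrm{ac}}$, and a change-of-variables yields the density
\[
h^e_y(t) \;=\; h^e\bigl(\Lambda^e_y(t)\bigr)\,\lambda^e(\varphi^e_t(y)),
\]
where $h^e$ is the density of $\mu^{e,\mathrm{ac}}$. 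By assumption there exist $\delta, c > 0$ such that $h^e \geq c$ on $(t^r_e - \delta, t^r_e + \delta)$; since $\Lambda^e_y(t^r_{y,e}) = t^r_e$ and $\Lambda^e_y$ is continuous, $h^e_y(t) \geq c\,\lambda_{\min}$ in a neighbourhood of $t^r_{y,e}$, which gives $\mu^e_y$-regularity. The inequalities $t^r_e/\lambda_{\max} \leq t^r_{y,e} \leq t^r_e/\lambda_{\min}$ then follow directly from $\lambda_{\min}\,t \leq \Lambda^e_y(t) \leq \lambda_{\max}\,t$ evaluated at $t = t^r_{y,e}$.

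For part (2), I would argue by downward induction or, equivalently, choose $\varepsilon_1$ small enough so that each of the $n+1$ continuity estimates below holds simultaneously. For each $k$, part (1) supplies $\delta_k, c_k > 0$ such that $h^{\tilde i_k} \geq c_k$ on $(t^r_{\tilde i_k} - \delta_k, t^r_{\tilde i_k} + \delta_k)$. Hence, if $y$ and $\hat t$ satisfy $\Lambda^{\tilde i_k}_y(\hat t) \in (t^r_{\tilde i_k} - \delta_k, t^r_{\tilde i_k} + \delta_k)$, then $\hat t$ is $\mu^{\tilde i_k}_y$-regular. The maps $(\hat t_1, \dots, \hat t_{k-1}) \mapsto \hat x_k$ (compositions of flows) and $(y, t) \mapsto \Lambda^{\tilde i_k}_y(t)$ are jointly continuous, and by construction the values at $(\tilde t_1, \dots, \tilde t_{k-1})$ and $(\tilde x_k, \tilde t_k)$ are $\tilde x_k$ and $t^r_{\tilde i_k}$ respectively. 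I would therefore pick $\varepsilon_1 > 0$ so small that, whenever $|\hat t_j - \tilde t_j| < \varepsilon_1$ for all $j \leq k$, one has $\hat x_k$ close enough to $\tilde x_k$ and $\hat t_k$ close enough to $\tilde t_k$ to force $\Lambda^{\tilde i_k}_{\hat x_k}(\hat t_k) \in (t^r_{\tilde i_k} - \delta_k, t^r_{\tilde i_k} + \delta_k)$, for every $k = 1, \dots, n+1$ simultaneously (finitely many conditions).

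The only potential subtlety is checking the measure-theoretic claim that the absolutely continuous component of $\mu^e$ pushes forward to the absolutely continuous component of $\mu^e_y$; this is routine because $(\Lambda^e_y)^{-1}$ is a $C^1$ diffeomorphism, so it preserves Lebesgue-null sets in both directions, and the pushforward of the singular part remains singular. Everything else is just continuity of compositions of flows and of $\Lambda$, which is standard.
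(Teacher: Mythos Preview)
Your proof is correct and follows essentially the same approach as the paper: both arguments compute the density of the absolutely continuous part of $\mu^e_y$ as $h^e_y(t) = \lambda^e(\varphi^e_t(y))\, h^e\bigl(\int_0^t \lambda^e(\varphi^e_u(y))\,du\bigr)$ and then invoke continuity of the flows and of $\Lambda^e_y$ for part (2). Your treatment is slightly more careful in justifying why the pushforward by the $C^1$ diffeomorphism $(\Lambda^e_y)^{-1}$ respects the Lebesgue decomposition, a point the paper leaves implicit.
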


\begin{proof}
The chain of inequalities $t^r_e/\lambda_{\max} \leq t^r_{y,e} \leq t^r_e/\lambda_{\min}$ is an immediate consequence of the boundedness of $(\lambda^e)_{e \in E}$ and the definition of $t^r_{y,e}.$ 
Since $\int_0^{t^r_{y,e}} \lambda^e(\varphi_u^e(y)) \ du = t^r_e$ and since the absolutely continuous component of $\mu_y^e$ has density $h_y^e$ satisfying 
$$
h_y^e(t) = \lambda^e(\varphi^e_t(y)) g^e \biggl(\int_0^t \lambda^e(\varphi^e_u(y)) \ du \biggr), 
$$
with $g^e$ the density of the absolutely continuous component of $\mu^e$, the point $t^r_{y,e}$ is $\mu_y^e$-regular. In particular, for $k = 1, \ldots, n+1$, $\tilde t_k = t^r_{\tilde x_k, \tilde i_k}$ is $\mu_{\tilde x_k}^{\tilde i_k}$-regular. Point 2 then follows from our standing assumption that $(\lambda^e)$ are bounded away from zero and from the continuity of $(\lambda^e)$ and $(\varphi^e)$. 
\end{proof}

Under Assumption~\ref{cond:analyticity}, for every $e \in E$, there exists a $\mu^e$-regular point $t^r_e > 0$. With the notation of Lemma~\ref{lem:regularmuye}, set
\[
T= \frac{\varepsilon_1}{2} +\sum_{k=1}^{n+1}  \tilde t_k.
\]
Let $\tilde \Psi$ be defined as in~\eqref{eq:def_tilde_Psi}. By the same argument as under Assumption~\ref{cond:regular_point}, there exists a point $(t_1, \ldots, t_n) \in D^T_n$ at which $\tilde \Psi$ is a submersion. Without loss of generality, one can assume that 
\[
\det \left( \partial_{v_1} \tilde \Psi, \ldots, \partial_{v_d} \tilde \Psi \right) \big\vert_{(t_1, \ldots, t_n)} \neq 0.
\]
Now we make use of the assumption that $(F^e)$ are analytic. By the Cauchy--Kovalevskaya Theorem, the map $(x,t) \mapsto \varphi_t^e(x)$ is analytic for every $e \in E$. Since the determinant is also analytic, the map $\det( \partial_{v_1} \tilde \Psi, \ldots, \partial_{v_d} \tilde \Psi)$ is analytic and nonzero at the point $(t_1, \ldots, t_n)$. Hence, it is not identically zero on the open set 
\[
\mathcal{U} = \prod_{k=1}^n\left(\tilde t_k - \frac{\varepsilon_1}{2n}, \tilde t_k + \frac{\varepsilon_1}{2n}\right).
\]
Pick some $(\hat t_1, \ldots, \hat t_n)$ in $\mathcal{U}$ where $\det (\partial_{v_1} \tilde \Psi, \ldots, \partial_{v_d} \tilde \Psi)$ is nonzero, i.e., $\tilde \Psi$ is a submersion at $(\hat t_1, \ldots, \hat t_n)$. Note that by definition of $T$ and $\mathcal{U}$, we have indeed $\hat t_1 + \ldots + \hat t_n < T$. Moreover, letting $\hat t_{n+1} = T - (\hat t_1 + \ldots + \hat t_n)$, one has $\hat t_{n+1} \in (\tilde t_{n+1}, \tilde t_{n+1} + \varepsilon_1)$. Then Lemma \ref{lem:regularmuye} ensures that, for all $k = 1, \ldots, n+1$, $\hat t_k$ is a $\mu_{\hat x_k}^{\tilde i_k}$-regular point, where $\hat x_k$ is constructed from $(\hat t_1, \ldots, \hat t_{k-1})$ as explained in the lemma.  
This and the fact that $q_{\tilde i_k, \tilde i_{k+1}}(y) > 0$ for $k = 1, \ldots, n$ and for all $y \in \R^d$ imply that $((\hat t_1, \ldots, \hat t_{n+1}),(\tilde i_1, \ldots, \tilde i_{n+1}))$ is an admissible control sequence with respect to $(x^*,0,i)$. 
It suffices to add some $(\hat t_{n+2}, \tilde i_{n+2})$ in an admissible regular manner to conclude that $(x^*,0,i)$ has an admissible regular submersion. This concludes the proof of Theorem \ref{thm:Doeblin}, point 2, under Assumption~\ref{cond:analyticity}.

\subsection{Proof of Proposition \ref{prop:submersion}}
\label{ssec:proofsubmersion}
 Let $z_0 = (x_0, s_0, i_1) \in K$ be a point having an admissible regular submersion. For the reader's convenience, we recall that in this case there exist $m \in \N$ and an admissible control sequence $((s_1, \ldots, s_{m+2}), (i_1, \ldots, i_{m+2}))$ with respect to $(x_0, s_0, i_1)$ such that the following holds: 
\begin{enumerate}
\item Setting $T = s_1 + \ldots + s_{m+1}$, the map 
$$
\Psi: D^T_m \to \R^d, \ (v_1, \ldots, v_m) \mapsto \varphi_{T-(v_1 + \ldots + v_m)}^{i_{m+1}} \circ \Phi_{(v_1, \ldots, v_m)}^{(i_1, \ldots, i_m)}(x_0)
$$
is a submersion at $(s_1, \ldots, s_m)$;
\item The point $s_1$ is $\mu_{z_0}$-regular and for all $k= 2,\ldots, m+2$ the point $s_k$ is $\mu_{x_k}^{i_k}$-regular, where 
$$
x_k := \Phi_{(s_1, \ldots, s_{k-1})}^{(i_1, \ldots, i_{k-1})}(x_0). 
$$
\end{enumerate}
For $\varepsilon \geq  0$, consider the map
\begin{displaymath}
\begin{array}{lrcl}
\phi^{\varepsilon} : & D_{m+1}^{T+ \varepsilon} \times \R^d & \longrightarrow & \R^d \\
    & (v_1, \ldots, v_m, r,x) & \longmapsto & \varphi^{i_{m+2}}_r \circ \varphi^{i_{m+1}}_{T + \varepsilon - \sum_{i=1}^m v_i - r} \circ \Phi_{(v_1, \ldots, v_m)}^{(i_1, \ldots, i_m)}(x)=: \phi^{\varepsilon}_{(x,r)}(\vbf). 
\end{array}
\end{displaymath}
Note that, with $\varepsilon = 0$, we have $\phi^0_{(x_0,0)} = \Psi$. Hence, by assumption on $\Psi$ and continuity of the determinant, for every $\varepsilon > 0$ small enough, $(D \phi_{(x_0,\varepsilon)}^{\varepsilon})_{\tbf_0}$ has full rank for $\tbf_0 = (s_1, \ldots, s_m)$. 

Let $t = T + \varepsilon$ for such $\varepsilon$. With the notation from Section~\ref{sec:PDP}, define the events 
$$
B = \{\bm{I}_1 = i_2, \ldots, \bm{I}_{m+1} = i_{m+2}\} \quad \text{and} \quad N = \{T_{m+1} \leq t < T_{m+2} \} \cap B.  
$$
For a starting point $z = (x,s,i_1) \in K$, one has on the event $N$ the identity 
$$
X_t = \phi^{\varepsilon} \left(S_1, \ldots, S_m, t - \sum_{i=1}^{m+1} S_i, x \right)
$$
because $T + \varepsilon - \sum_{i=1}^m S_i - (t - \sum_{i=1}^{m+1} S_i) = S_{m+1}$. Define the random variables $\tilde S_1, \ldots, \tilde S_{m+2}$ as 
$$
\tilde S_1 := \psi_z(U_1), \quad \quad \tilde S_k := \psi^{i_k}_{\Phi_{(\tilde S_1, \ldots, \tilde S_{k-1})}^{(i_1, \ldots, i_{k-1})}(x)}(U_k), \quad 2 \leq k \leq m+2, 
$$
and notice that $\tilde S_k = S_k$, $1 \leq k \leq m+2$, on the event $B$. Define also
\[
\tilde R = t - \sum_{i=1}^{m+1} \tilde S_i, \quad \tilde S_{x,s} = (\tilde S_1, \ldots, \tilde S_m, \tilde R) \quad \text{and} \quad \tilde \phi^{\varepsilon}(v_1, \ldots, v_m, r,x) := (\phi^{\varepsilon}(v_1, \ldots, v_m, r,x), r). 
\]
Then, for $A \in \Bc(K^{i_{m+2}})$, 
\begin{equation}      \label{eq:P_t_lb_A_N} 
P_t(z, A \times \{i_{m+2}\}) \geq \mathbb{P}_z(Z_t \in A \times \{i_{m+2}\}, N) 
= \mathbb{P}_z \left(\tilde \phi^{\varepsilon} \left(\tilde S_{x,s}, x \right) \in A, N \right).  
\end{equation}
Fix a positive constant $\delta_1 < \varepsilon/(m+1)$ and define 
$$
\Delta = \left\{(r_1, \ldots, r_{m+1}) \in (0, \infty)^{m+1}: \ \max_{i = 1, \ldots, m+1} \lvert r_i - s_i \rvert < \delta_1 \right\}. 
$$
Since $t = T + \varepsilon = \sum_{i=1}^{m+1} s_i + \varepsilon$, one has $t > \sum_{i=1}^{m+1} \tilde s_i$ whenever $(\tilde s_1, \ldots, \tilde s_{m+1}) \in \Delta$. Hence, with $\mathbf{S} = (\tilde S_1, \ldots, \tilde S_{m+1})$ and keeping in mind that $\tilde S_k = S_k$ on $B$, the right-hand side of~\eqref{eq:P_t_lb_A_N} is greater than or equal to 
\begin{equation}    \label{eq:P_z_4_events}
\mathbb{P}_z \biggl(\tilde \phi^{\varepsilon} \left(\tilde S_{x,s}, x \right) \in A, \mathbf{S} \in \Delta, t < \sum_{i=1}^{m+2} \tilde S_i, B \biggr). 
\end{equation} 
On the event $\mathbf{S} \in \Delta$, if $\tilde S_{m+2} > 2 \varepsilon$ then $t < \sum_{i=1}^{m+2} \tilde S_i$. The expression in~\eqref{eq:P_z_4_events} is therefore greater than or equal to 
\begin{equation}   \label{eq:P_z_4_events_alt} 
\mathbb{P}_z \left( \tilde \phi^{\varepsilon} \left( \tilde S_{x,s}, x \right) \in A, \mathbf{S} \in \Delta, \tilde S_{m+2} > 2 \varepsilon, B \right). 
\end{equation} 
Now, 
\begin{equation}   \label{eq:P_G_identity} 
\mathbb{P}_z\left(\tilde S_{m+2} > 2 \varepsilon \big\vert \tilde \phi^{\varepsilon} \left(\tilde S_{x,s}, x \right) \in A, \mathbf{S} \in \Delta, B \right) = G_{\Phi_{(\tilde S_1, \ldots, \tilde S_{m+1})}^{(i_1, \ldots, i_{m+1})}(x)}^{i_{m+2}}(2 \varepsilon). 
\end{equation} 
Since $G^{i_{m+2}}(0) = 1$ and since $G^{i_{m+2}}$ is right-continuous, there exist $\tilde \varepsilon > 0$ and $\delta_2 > 0$ such that $G^{i_{m+2}}(\tilde \varepsilon) \geq \delta_2$. By continuity of the map $(y, \varepsilon) \mapsto \int_0^{\varepsilon} \lambda^{i_{m+2}}(\varphi_u^{i_{m+2}}(y)) du$, there exists a neighbourhood $V_1$ of $x_{m+2} := \Phi_{(s_1, \ldots,s_{m+1})}^{(i_1, \ldots, i_{m+1})}(x_0)$ such that for $\varepsilon > 0$ small enough and for every $y \in V_1$, $\int_0^{2 \varepsilon} \lambda^{i_{m+2}}(\varphi^{i_{m+2}}_u(y)) du < \tilde \varepsilon$ and thus $G_y^{i_{m+2}}(2 \varepsilon) \geq \delta_2$. The required smallness of $\varepsilon$ does not depend on $\delta_1$, which is important because we chose $\delta_1 < \varepsilon/(m+1)$ earlier in the proof. One can furthermore choose a neighbourhood $V_0$ of $x_0$ and suppose without loss of generality that $\delta_1$ is so small that, for every $x \in V_0$, on the event $\mathbf{S} \in \Delta$, $\Phi_{(\tilde S_1, \ldots, \tilde S_{m+1})}^{(i_1, \ldots, i_{m+1})}(x)$ belongs to $V_1$ and thus 
$$
G_{\Phi_{(\tilde S_1, \ldots, \tilde S_{m+1})}^{(i_1, \ldots, i_{m+1})}(x)}^{i_{m+2}}(2 \varepsilon) \geq \delta_2. 
$$
In light of~\eqref{eq:P_G_identity}, we have proven for $\varepsilon$ as above and for $z = (x,s, i_1) \in K$ with $x \in V_0$ that the expression in~\eqref{eq:P_z_4_events_alt} is greater than or equal to
$$
\delta_2 \mathbb{P}_z \left(\tilde \phi^{\varepsilon} \left( \tilde S_{x,s}, x \right) \in A, \mathbf{S} \in \Delta, B \right). 
$$
Let $\mathcal{F}_U$ be the $\sigma$-field generated by the random variables $(U_i)_{i \geq 1}$. Write 
\begin{equation}      \label{eq:double_exp_cond} 
\mathbb{P}_z \left(\tilde \phi^{\varepsilon} \left(\tilde S_{x,s}, x \right) \in A, \mathbf{S} \in \Delta, B \right) = \mathbb{E}_z \left[ \id \left(\tilde \phi^{\varepsilon}(\tilde S_{x,s}, x) \in A, \mathbf{S} \in \Delta \right) \ \mathbb{E}_z[\id_B \vert \mathcal{F}_U] \right]
\end{equation}   
and observe that  
\[
\mathbb{E}_z[ \id_B | \mathcal{F}_U ] = \prod_{j=1}^{m+1} q_{i_j,i_{j+1}} \left(\Phi_{(\tilde S_1, \ldots, \tilde S_j)}^{(i_1, \ldots, i_j)}(x) \right). 
\]
Since $((s_1, \ldots, s_{m+2}), (i_1, \ldots, i_{m+2}))$ is an admissible control sequence for $z_0 =(x_0, s_0, i_1)$, continuity of the flows and of the maps $q_{j,k}$ implies that there exist a neighbourhood $J_0$ of $x_0$ and a constant $\delta_3 > 0$ such that for every $x \in J_0$ and for every $(\tilde s_1, \ldots, \tilde s_{m+1}) \in \R_+^{m+1}$, the following implication holds:  
\[
(\tilde s_1, \ldots, \tilde s_{m+1}) \in \Delta \quad \Longrightarrow \quad \prod_{j=1}^{m+1} q_{i_j,i_{j+1}} \left(\Phi_{(\tilde s_1, \ldots, \tilde s_j)}^{(i_1, \ldots, i_j)}(x) \right) \geq \delta_3. 
\]
Hence, if $z = (x,s,i_1)$ satisfies $x \in J_0$, the expression in~\eqref{eq:double_exp_cond} is greater than or equal to
\[
\delta_3 \mathbb{P}_z\left( \tilde \phi^{\varepsilon}(\tilde S_{x,s},x) \in A,  \mathbf{S} \in \Delta\right). 
\]
So far, we have shown that for $z=(x,s,i_1)$ with $x \in J_0 \cap V_0$ and for $t = T + \varepsilon$ with $\varepsilon > 0$ as above, 
\begin{equation}
\label{eq:minorPtdelta2delta3}
P_t(z, A \times \{i_{m+2}\} ) \geq \delta_2  \delta_3 \mathbb{P}_z\left( \tilde \phi^{\varepsilon}(\tilde S_{x,s},x) \in A,  \mathbf{S} \in \Delta \right). 
\end{equation}
The law of $\mathbf{S}$ is given by $\nu^S_{x,s}$ defined as
\[
\nu^S_{x,s}(B) = \int_{\R_+} \ldots \int_{\R_+} \1_B(r_1, \ldots, r_{m+1}) \ d\mu_{\Phi^{(i_1, \ldots, i_m)}_{(r_1, \ldots, r_m)}(x)}^{i_{m+1}}(r_{m+1}) \ldots  d \mu_{x,s,i_1}(r_1).
\]
Since $((s_1, \ldots, s_{m+2}), (i_1, \ldots, i_{m+2}))$ is an admissible control sequence for $z_0=(x_0, s_0, i_1)$, $s_0 + s_1$ lies in the support of $\mu_{\varphi^{i_1}_{-s_0}(x_0)}^{i_1}$. From this it follows that $s_1$ lies in the support of $\mu_{z_0}$. In addition, for $k = 2, \ldots, m+1$, $s_k$ lies in the support of $\mu_{x_k}^{i_k}$, where $x_k := \Phi_{(s_1, \ldots, s_{k-1})}^{(i_1, \ldots, i_{k-1})}(x_0)$.

Lemma \ref{lem:supportmuz} in Subsection~\ref{ssec:F_mu_Q_implies_P} and continuity of the flows imply that there exist a neighbourhood $W$ of $(x_0,s_0)$ in $K^{i_1}$ and a constant $c_1$ such that, for every $(x,s) \in W$, 
\begin{equation*}
C(x,s) := \mathbb{P}_{x,s,i_1}\left(\mathbf{S} \in \Delta \right) = \nu^S_{x,s} \left( \prod_{i=1}^{m+1} ( s_i - \delta_1, s_i + \delta_1 )\right) \geq c_1 > 0.
\end{equation*}
For $z = (x,s,i_1)$ with $(x,s) \in W$, let $\tilde T_{x,s}$ be a random variable with law $\mathbb{P}_z(\tilde S_{x,s} \in \cdot \vert \mathbf{S} \in \Delta)$. Then 
\[
\mathbb{P}_z\left( \tilde \phi^{\varepsilon}(\tilde S_{x,s},x) \in A,  \mathbf{S} \in \Delta \right) = \mathbb{P}_z\left( \tilde \phi^{\varepsilon}(\tilde T_{x,s},x) \in A  \right) C(x,s) \geq c_1 \mathbb{P}_z \left(\tilde \phi^{\varepsilon} \left(\tilde T_{x,s}, x \right) \in A \right). 
\]
Together with~\eqref{eq:minorPtdelta2delta3}, one obtains for all $(x,s) \in W$ with $x \in J_0 \cap V_0$ the estimate 
\begin{equation}
\label{eq:minorPtdelta2delta3c1}
P_t(z, A \times \{i_{m+2}\} )  \geq c_1 \delta_2  \delta_3 \PP_z \left( \tilde \phi^{\varepsilon}(\tilde T_{x,s},x) \in A  \right). 
\end{equation}
Now we will use the assumption that $z_0$ admits an admissible regular submersion to conclude. First, we need the following strengthening of the definition of a ($\mu$-)regular point:
\begin{definition}
Let $(M,d)$ a metric space and $O$ an open subset of $M$. Let $(\nu_y)_{y \in O}$ be a family of measures on $\R^m$ for some $m \geq 1$. Let $h_y$ be the density of the absolutely continuous part of $\nu_y$. We say that $\tbf_0 \in \R^m$ is uniformly regular for $(\nu_y)_{y \in O}$ if there exist a positive constant $c$ and a neighbourhood $V$ of $\tbf_0$ such that
\[
\inf_{ \tbf \in V, y \in O} h_y(\tbf) \geq c. 
\]
\end{definition}
The following key lemma is a slight modification of Lemma 6.3 in \cite{BMZIHP}.
\begin{lemma}
\label{lem:submersion}
Let $m, d$ be two positive integers, $m \geq d$. (In what follows, our notation is adapted to the more complicated case $m > d$. The case $m=d$ can be treated by making simple modifications -- in particular, the variable $\vbf$ introduced below disappears.) Let $\mathcal{D}$ be a nonempty open subset of $\R^d \times \R \times \R^{m-d}$ and let 
\[
\phi :  \mathcal{D} \times \R^d \to \R^d, \ (\ubf, r, \vbf, x) \mapsto \phi(\ubf, r, \vbf, x) = \phi_x(\ubf,r, \vbf) = \phi_{(x,r)}(\ubf, \vbf)
\]
be a $C^1$ map. Let $(T_{x,s})_{(x,s) \in \R^d \times \R_+}$ be a family of random variables with values in $\mathcal{D}$. 
Assume that
\begin{enumerate}
\item for some $x_0 \in \R^d$ and $(\ubf_0, r_0, \vbf_0) \in \mathcal{D}$, $(D \phi_{(x_0, r_0)})_{(\ubf_0, \vbf_0)} : \R^m \to \R^d$ has full rank $d$;
\item for some $s_0 \geq 0$ and for some neighbourhood $O$ of $(x_0, s_0)$ in $\R^d \times \R_+$, the point $(\ubf_0,r_0,\vbf_0)$ is uniformly regular for the laws of $(T_{(x, s)})_{(x,s) \in O}$. 
 \end{enumerate} 
Let $\tilde \phi( \ubf, r , \vbf, x) = ( \phi(\ubf, r, \vbf, x) , r) \in \R^d \times \R$. Then, there exist a constant $c > 0$, a neighbourhood $J \times I$ of $(x_0,s_0)$ in $\R^d \times \R_+$ and a neighbourhood $I_1$ of $\tilde \phi(\ubf_0, r_0, \vbf_0, x_0)$ such that
\[
\PP\left( \tilde{\phi}(T_{(x,s)}, x) \in \cdot \right) \geq c \bm{\lambda}_{d+1}( \cdot \cap I_1), \quad \forall (x,s) \in J \times I. 
\]
\end{lemma}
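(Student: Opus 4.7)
The plan is to combine the inverse function theorem with the uniform lower bound on the density of $T_{(x,s)}$ via a change of variables. The key idea is to treat the parameter $x$ as an additional input variable, so that a single application of the inverse function theorem automatically produces estimates that are uniform in $x$, rather than having to apply it separately at every $x$.

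After a permutation of the coordinates of $(\ubf,\vbf) \in \R^m$ if necessary, assumption~(1) allows us to assume that the $d \times d$ matrix $D_\ubf \phi_{(x_0, r_0)}(\ubf_0, \vbf_0)$ is invertible. Consider the augmented map
\[
\Xi : \mathcal{D} \times \R^d \to \R^d \times \R \times \R^{m-d} \times \R^d, \qquad (\ubf, r, \vbf, x) \mapsto \bigl(\phi(\ubf, r, \vbf, x), r, \vbf, x\bigr).
\]
Its Jacobian matrix at $(\ubf_0, r_0, \vbf_0, x_0)$ is block upper triangular with diagonal blocks $D_\ubf \phi_{(x_0, r_0)}(\ubf_0, \vbf_0)$, $1$, $I_{m-d}$ and $I_d$, so its determinant equals $\det D_\ubf \phi_{(x_0, r_0)}(\ubf_0, \vbf_0) \neq 0$. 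By the inverse function theorem, $\Xi$ is a $C^1$ diffeomorphism from some open neighborhood $\tilde U$ of $(\ubf_0, r_0, \vbf_0, x_0)$ onto an open set. Shrinking $\tilde U$, one may assume $\tilde U \subset V \times J_1$, where $V$ is the neighborhood of $(\ubf_0, r_0, \vbf_0)$ furnished by the uniform regularity assumption and $J_1$ is a small neighborhood of $x_0$, and also that $|\det D_\ubf \phi| \leq C$ on $\tilde U$ for some constant $C < \infty$. Since $\Xi(\tilde U)$ is open and contains $(\tilde\phi(\ubf_0, r_0, \vbf_0, x_0), \vbf_0, x_0)$, one can then select open neighborhoods $I_1 \subset \R^d \times \R$ of $\tilde\phi(\ubf_0, r_0, \vbf_0, x_0)$, $W'' \subset \R^{m-d}$ of $\vbf_0$ and $J_0 \subset \R^d$ of $x_0$ such that $I_1 \times W'' \times J_0 \subset \Xi(\tilde U)$.

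Set $J := J_0 \cap J_1$ and pick $I$ so that $O \supset J \times I$. By uniform regularity, for each $(x,s) \in J \times I$ the law of $T_{(x,s)}$ has an absolutely continuous component with density $h_{(x,s)} \geq c_0$ on $V$. Fix such $(x,s)$ and set $U_x := \{(\ubf, r, \vbf) : (\ubf, r, \vbf, x) \in \tilde U\} \subset V$. The $x$-slice $\Xi_x(\ubf, r, \vbf) := (\phi(\ubf, r, \vbf, x), r, \vbf)$ of $\Xi$ is then a $C^1$ diffeomorphism from $U_x$ onto an open set containing $I_1 \times W''$ (this containment follows from $I_1 \times W'' \times J_0 \subset \Xi(\tilde U)$ and the form of $\Xi$), with Jacobian determinant $\det D_\ubf \phi$. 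For any Borel set $A \subset \R^d \times \R$,
\begin{align*}
\PP\bigl(\tilde\phi(T_{(x,s)}, x) \in A\bigr)
&\geq \int_{U_x} \id_A\bigl(\phi(\ubf, r, \vbf, x), r\bigr)\, h_{(x,s)}(\ubf, r, \vbf)\, d\ubf\, dr\, d\vbf \\
&\geq c_0 \int_{\Xi_x(U_x)} \id_A(y, r)\, |\det D_\ubf \phi|^{-1}\, dy\, dr\, d\vbf \\
&\geq \frac{c_0}{C}\, \bm{\lambda}_{m-d}(W'')\, \bm{\lambda}_{d+1}(A \cap I_1),
\end{align*}
where the middle step is the change of variables $(y,r,\vbf) = \Xi_x(\ubf, r, \vbf)$ and the last step uses $\Xi_x(U_x) \supset I_1 \times W''$ together with Fubini. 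This yields the conclusion with $c := c_0\, \bm{\lambda}_{m-d}(W'')/C$.

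The case $m = d$ is handled analogously by omitting the $\vbf$-coordinate throughout, giving $c = c_0/C$. The main subtlety is obtaining uniformity in $x$; by building $x$ into the augmented map $\Xi$ rather than handling it separately, this uniformity becomes automatic and the rest of the argument is a standard push-forward via the change of variables formula.
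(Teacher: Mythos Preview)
Your argument is correct and follows essentially the same route as the paper's proof: both enlarge the map $\phi_x$ by appending $(r,\vbf)$ so that the resulting map $f_x = \Xi_x$ becomes a local diffeomorphism, then bound the density from below, control the Jacobian, and finish with a change of variables together with Fubini. The only noteworthy difference is in how uniformity in $x$ is obtained. The paper invokes an external result (Lemma~6.2 in \cite{BMZIHP}) to produce, for a whole neighbourhood $J$ of $x_0$, open sets $W_x$ mapped diffeomorphically by $f_x$ onto a fixed product $I_1 \times I_2$. You achieve the same conclusion more directly by building $x$ into the augmented map $\Xi$ and applying the inverse function theorem once; the product neighbourhood $I_1 \times W'' \times J_0 \subset \Xi(\tilde U)$ then gives $I_1 \times W'' \subset \Xi_x(U_x)$ for every $x \in J_0$ automatically. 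This is exactly the content of the lemma the paper cites, so your proof is a self-contained version of the same argument rather than a genuinely different one.
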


\begin{proof}
Since $(D \phi_{(x_0, r_0)})_{(\ubf_0, \vbf_0)}$ has full rank $d$, we can assume without loss of generality that the first $d$ columns of $(D \phi_{(x_0, r_0)})_{(\ubf_0, \vbf_0)}$ are linearly independent. In particular, the derivative at $\ubf_0$ of the map $\psi_{x,r,\vbf} : \ubf \mapsto \phi_{(x,r)}( \ubf, \vbf)$ is invertible for $(x,r,\vbf) =(x_0, r_0, \vbf_0)$. We now define the map 
\begin{displaymath}
\begin{array}{lrcl}
f : & \mathcal{D} \times \R^d & \longrightarrow & \R^d \times \R \times \R^{m-d} \\
    & (\ubf,r,\vbf,x) & \longmapsto & (\phi_x(\ubf, r, \vbf), r, \vbf) = f_x(\ubf,r, \vbf).
\end{array}
\end{displaymath}
The Jacobian matrix of $f_{x_0}$ at the point $(\ubf_0,r_0, \vbf_0)$ is invertible: Indeed, we have
\[
(D f_{x})_{\ubf, r, \vbf} = 
\begin{pmatrix}
(D \psi_{x,r,\vbf})_{\ubf} & * & *\\
0 & 1 & 0\\
0 & 0 & I_{m-d}
\end{pmatrix}
\]
and thus $\det (D f_{x})_{\ubf, r, \vbf} = \det (D \psi_{x,r,\vbf})_{\ubf}$, which is nonzero for $x=x_0$, $r=r_0$, $\vbf = \vbf_0$ and $\ubf = \ubf_0$. Hence, by Lemma 6.2 in \cite{BMZIHP}, for any neighbourhood $W \subset \mathcal{D}$ of $(\ubf_0, r_0, \vbf_0)$ there exist a neighbourhood $J \subset \R^d$ of $x_0$, nonempty open sets $I_1 \subset \R^{d+1}$ and $I_2 \subset \R^{m-d}$, and open neighbourhoods $(W_x)_{x \in J}$ of $(\ubf_0, r_0, \vbf_0)$, contained in $W$, such that $f_x$ maps $W_x$ diffeomorphically to $I_1 \times I_2$ for every $x \in J$. 
By continuity of $\det (D f_{x})_{\ubf, r, \vbf}$, there exist $c_1 > 0$, a neighbourhood $J_1$ of $x_0$ and a neighbourhood $W_1$ of $(\ubf_0, r_0, \vbf_0)$ such that 
$$
\lvert \det (D f_x)_{\ubf, r, \vbf} \rvert^{-1} \geq c_1, \quad \forall x \in J_1, \ (\ubf, r, \vbf) \in W_1. 
$$
And since $(\ubf_0, r_0, \vbf_0)$ is  uniformly regular for the laws of $(T_{x, s})_{(x,s) \in O}$, there exist $c_2 > 0$ and a neighbourhood $W_2$ of $(\ubf_0, r_0, \vbf_0)$ such that 
$$
\inf_{(\ubf, r, \vbf) \in W_2, (x,s) \in O} h_{x,s}(\ubf, r, \vbf) \geq c_2, 
$$
where $h_{x,s}$ denotes the density for the absolutely continuous component of the law of $T_{x,s}$ with respect to $\bm{\lambda}_{m+1}$. Let $J_2$ be a neighbourhood of $x_0$ and let $I$ be a neighbourhood of $s_0$ in $\R_+$ such that $J_2 \times I \subset O$. For the neighbourhood $W := W_1 \cap W_2$ of $(\ubf_0, r_0, \vbf_0)$, there exist a neighbourhood $J_3$ of $x_0$, nonempty open sets $I_1 \subset \R^{d+1}$ and $I_2 \subset \R^{m-d}$, and open neighbourhoods $(W_x)_{x \in J_3}$ of $(\ubf_0, r_0, \vbf_0)$, contained in $W$, such that $f_x$ maps $W_x$ diffeomorphically to $I_1 \times I_2$ for every $x \in J_3$. Set $J = J_1 \cap J_2 \cap J_3$. Then, for every $(x,s) \in J \times I$ and for every $(\ubf, r, \vbf) \in W_x$, one has 
\[
h_{x,s}(\ubf,r, \vbf) | \det (D f_{x})_{\ubf, r, \vbf} |^
{-1} \geq c_2 c_1 > 0.
\]
The rest of the proof follows the lines of the proof of Lemma 6.3 in \cite{BMZIHP} and we omit it. 
\end{proof}

Let us return to the proof of Proposition~\ref{prop:submersion}. Recall that for $(x,s) \in W$ and $z = (x,s,i_1)$, $\tilde T_{x,s}$ is a random variable with law $\mathbb{P}_z(\tilde S_{x,s} \in \cdot \vert \mathbf{S} \in \Delta)$. Let us write $\tilde T_{x,s} = (U_{x,s}, V_{x,s}, R_{x,s})$, where $U_{x,s}$, $V_{x,s}$ and $R_{x,s}$ take on values in $\R^d$, $\R^{m-d}$ and $\R$, respectively. To complete the proof of Proposition~\ref{prop:submersion}, it is enough to show that $T_{x,s} := (U_{x,s}, R_{x,s}, V_{x,s})$ and $\phi(\ubf, r, \vbf, x) := \phi^{\varepsilon}(\ubf, \vbf, r, x)$ satisfy the assumptions of Lemma \ref{lem:submersion}. This is done in the next four lemmas, with the main argument in Lemma~\ref{lm:check_lemma_submersion}.

\begin{lemma}
\label{lem:density}

Let $\mu$ a probability measures on $\R_+$, $m \geq 2$, and for all $k =1, \ldots, m-1$, let $(\mu_{(r_1, \ldots, r_k)})_{(r_1, \ldots, r_k) \in \R^k_+}$ a family of probability measures on $\R_+$. We define a measure $\nu$ on $\R_+^m$ by setting, for every $B \in \mathcal{B}(\R_+^m)$,
\begin{equation}
\label{eq:nu}
\nu(B) = \int_{\R_+} \ldots \int_{\R_+} \1_B(r_1, \ldots, r_m) d\mu_{(r_1, \ldots,r_{m-1})}(r_m) \ldots d \mu_{r_1}(r_2) d \mu(r_1) 
\end{equation}
For each  $k =1, \ldots, m-1$ and $(r_1, \ldots, r_k) \in \R^k_+$, we let $h_{(r_1, \ldots, r_k)}$ be the density of $\mu^{\textrm{ac}}_{(r_1, \ldots, r_k)}$ with respect to the Lebesgue measure on $\R_+$ and $H$ be the density of $\nu^{\textrm{ac}}$. Then, 
\[
H(r_1, \ldots, r_m) = h(r_1)h_{r_1}(r_2) \ldots h_{(r_1, \ldots, r_{m-1})}(r_m)
\]
\end{lemma}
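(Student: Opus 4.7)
My plan is to proceed by decomposing each conditional measure into its absolutely continuous and singular parts and expanding the iterated integral accordingly. Write $\mu = \mu^{\mathrm{ac}} + \mu^{\mathrm{s}}$ and, for each $k = 1, \ldots, m-1$ and $(r_1, \ldots, r_k)$, $\mu_{(r_1, \ldots, r_k)} = \mu^{\mathrm{ac}}_{(r_1, \ldots, r_k)} + \mu^{\mathrm{s}}_{(r_1, \ldots, r_k)}$ where the ac parts have densities $h$, $h_{(r_1,\ldots,r_k)}$ (these are jointly measurable in their parameters by the measurable kernel structure). Plugging these decompositions into \eqref{eq:nu}, multilinearity expands $\nu$ as a sum of $2^m$ measures $\nu_{\bm{\epsilon}}$ indexed by $\bm{\epsilon} = (\epsilon_1, \ldots, \epsilon_m) \in \{\mathrm{ac}, \mathrm{s}\}^m$.

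The first step is to identify the all-ac term. By Tonelli,
\[
\nu_{(\mathrm{ac}, \ldots, \mathrm{ac})}(B) = \int_B h(r_1) h_{r_1}(r_2) \cdots h_{(r_1, \ldots, r_{m-1})}(r_m) \, dr_1 \cdots dr_m,
\]
so this term is absolutely continuous with respect to $\bm{\lambda}_m$ and has exactly the claimed product density.

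The second step is to show each of the remaining $2^m - 1$ terms is singular with respect to $\bm{\lambda}_m$. Fix $\bm{\epsilon}$ with $\epsilon_j = \mathrm{s}$ for some index $j$. By definition of the singular part, for every $(r_1, \ldots, r_{j-1})$ there exists a Borel set $N_{(r_1, \ldots, r_{j-1})} \subset \R_+$ with $\bm{\lambda}_1(N_{(r_1, \ldots, r_{j-1})}) = 0$ such that $\mu^{\mathrm{s}}_{(r_1, \ldots, r_{j-1})}$ is concentrated on $N_{(r_1, \ldots, r_{j-1})}$ (this family can be chosen measurably in the parameters). Let
\[
N = \bigl\{(r_1, \ldots, r_m) \in \R_+^m :\, r_j \in N_{(r_1, \ldots, r_{j-1})}\bigr\}.
\]
By Tonelli, $\bm{\lambda}_m(N) = 0$, while by construction $\nu_{\bm{\epsilon}}$ is concentrated on $N$. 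Hence $\nu_{\bm{\epsilon}} \perp \bm{\lambda}_m$.

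Combining the two steps, $\nu_{(\mathrm{ac}, \ldots, \mathrm{ac})}$ is absolutely continuous and $\sum_{\bm{\epsilon} \neq (\mathrm{ac}, \ldots, \mathrm{ac})} \nu_{\bm{\epsilon}}$ is singular (as a finite sum of singular measures), so by uniqueness of the Lebesgue decomposition $\nu^{\mathrm{ac}} = \nu_{(\mathrm{ac}, \ldots, \mathrm{ac})}$, which gives the stated formula for $H$. The main subtlety is the measurable selection of the singular sets $N_{(r_1, \ldots, r_{j-1})}$, which is standard for measurable kernels on a Polish space and can be handled, for instance, by taking $N_{(r_1, \ldots, r_{j-1})} = \{r \in \R_+ : \mathrm{d}\mu^{\mathrm{s}}_{(r_1, \ldots, r_{j-1})}/\mathrm{d}(\bm{\lambda}_1 + \mu_{(r_1, \ldots, r_{j-1})})(r) > 0\}$ and invoking joint measurability of this Radon–Nikodym derivative.
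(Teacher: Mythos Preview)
Your argument is correct and follows essentially the same strategy as the paper: decompose each kernel into its absolutely continuous and singular parts, identify the all-ac term as having the claimed product density, and show every other term is singular by exhibiting a $\bm{\lambda}_m$-null set on which it concentrates. The paper carries this out for $m=2$ (grouping the four pieces into three by not splitting the inner measure when the outer one is already singular) and then invokes induction, whereas you handle general $m$ in one pass over the $2^m$ terms; you are also more explicit about the measurable selection of the singular supports, a point the paper leaves implicit.
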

 
\begin{proof}
We prove the lemma for $m=2$, a straightforward induction leads the result for any $m$. 
Let $B \in \mathcal{B}(\R_+^2)$, then
\[
\nu(B) = \nu_1(B) + \nu_2(B) + \nu_3(B),
\]
where
\[
\nu_1(B) = \int_{B} H(r_1, r_2) d r_2 dr_1, 
\]
and 
\[
\nu_2(B) = \int_{\R_+} \int_{\R_+} \1_B(r_1,r_2) d \mu^s_{r_1}(r_2) h(r_1) d r_1
\]
and
\[
\nu_3(B) = \int_{\R_+} \int_{\R_+} \1_B(r_1,r_2) d \mu_{r_1}(r_2) d \mu^s (r_1).
\]
Thus, it suffices to show that $\nu_2$ and $\nu_3$ are singular and by unicity of the Lebesgue decomposition, we will have $\nu^{ac} = \nu_1$. We start by proving that $\nu_3$ is singular. Since $\mu^s$ is singular, there exists a borel set $F$  such that $\mu^s(F) = 0$ and $\bm{\lambda}_1( \R_+ \setminus F) = 0$. Consider the set $B = F \times \R_+$, then $\nu_3(B) = \mu^s(F) = 0$ and $\bm{\lambda}_2 (\R_+^2 \setminus B) =  0$, hence $\nu_3$ is singular with respect to $\bm{\lambda}_2$. To conclude, we show that $\nu_2$ is singular with respect to $\bm{\lambda}_2$. Since for all $r_1 \in \R_+$, $\mu_{r_1}^S$ is singular, there exists $F_{r_1}$ such that $\mu^s_{r_1}(F_{r_1}) = 0$ and $\bm{\lambda}_1(\R_+ \setminus F_{r_1}) = 0$. Set $ B = \{ (r_1, r_2) \in \R_+^ 2 \: : r_2 \in F_{r_1} \}$. Then, 
\[
\nu_2(B) = \int_{\R_+} \mu^s_{r_1}(F_{r_1}) h(r_1) d r_1 = 0,
\] 
and 
\[
\bm{\lambda}_2 ( \R_+^2 \setminus B) = \int_{\R_+} l_1( \R_+ \setminus F_{r_1}) d r_1 = 0,
\]
hence $\nu_2$ is also singular with respect to $\bm{\lambda}_2$.
\end{proof} 
For $k = 1, \ldots, m+1$, let $\Delta_k = \{ r \in \R_+ \: : | r - s_k| < \delta_1 \}$. Note that $\Delta  = \cap_{k=1}^{m+1} \Delta_k$.

\begin{lemma}
\label{lem:unifregular}
For $(x,s,i_1) \in K$, a sequence  $(i_1, \ldots, i_{m+2}) \in E^{m+1}$, $m \geq 1$ and $(r_1, \ldots, r_{m_+1}) \in \R_+^{m+1}$, let $\nu_{x,s}^{\Delta}$ the measure defined by \eqref{eq:nu} for  $\mu = \mu_{x,s}^{i_1}( \cdot \cap \Delta_1)$ and $\mu_{(r_1, \ldots, r_k)} = \mu_{\Phi^{(i_1, \ldots, i_k)}_{(r_1, \ldots, r_k)}(x)}^{i_{k+1}}( \cdot \cap \Delta_k)$.  Let $\sbf = (s_1, \ldots, s_{m+1}) \in \R_+^{m+1}$ and $z_0 = (x_0,s_0, i_1) \in K$ and set for $k=1, \ldots, m$, $x_{k+1} = \Phi^{(i_1, \ldots, i_k)}_{(s_1, \ldots, s_k)}(x_0)$. Then, if $s_1$ is regular for $\mu_{z_0}$, and for $k = 1, \ldots, m$, $s_{k+1}$ is regular for $\mu_{x_{k+1}}^{ i_{k+1}}$, then  $\sbf$ is uniformly  regular for $(\nu_{x,s}^{\Delta})_{(x,s,i_1) \in V_0}$, where $V_0$ is a neighbourhood of $(x_0, s_0, i_1)$ in $K$.   
\end{lemma}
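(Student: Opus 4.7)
The plan is to apply Lemma~\ref{lem:density} to factor the Lebesgue density of the absolutely continuous part of $\nu_{x,s}^{\Delta}$ as a product of conditional transition densities, and then to lower-bound each factor uniformly in a neighbourhood of $(x_0, s_0, \sbf)$.

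First I would record the explicit form of the density of $\mu_{y,\sigma}^i$, namely
\[
h_{y,\sigma}^i(r) \;=\; \frac{\lambda^i(\varphi^i_r(y))\, g^i\!\bigl(\int_{-\sigma}^{r} \lambda^i(\varphi^i_u(y))\,du\bigr)}{G^i\!\bigl(\int_{-\sigma}^{0} \lambda^i(\varphi^i_u(y))\,du\bigr)},
\]
where $g^i$ denotes the density of the absolutely continuous part of $\mu^i$. Since $\lambda^i$ is bounded above and below by positive constants (Assumption~\ref{standinglambda}) and the denominator is positive and continuous in $(y,\sigma)$ on $K^i$, the regularity of $s_1$ for $\mu_{z_0}=\mu_{x_0,s_0}^{i_1}$ is equivalent to $g^{i_1}$ being bounded below by a positive constant on an open neighbourhood of $t_1^* := \int_{-s_0}^{s_1}\lambda^{i_1}(\varphi^{i_1}_u(x_0))\,du$, obtained as the image of a neighbourhood of $s_1$ by the strictly increasing continuous map $r\mapsto \int_{-s_0}^r \lambda^{i_1}(\varphi^{i_1}_u(x_0))\,du$. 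An analogous statement holds for each $s_{k+1}$: the regularity assumption yields a neighbourhood of $t_{k+1}^* := \int_0^{s_{k+1}}\lambda^{i_{k+1}}(\varphi^{i_{k+1}}_u(x_{k+1}))\,du$ on which $g^{i_{k+1}}$ is bounded below.

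Next, joint continuity of the iterated flow composition $(x, r_1, \ldots, r_k)\mapsto \Phi^{(i_1,\ldots,i_k)}_{(r_1,\ldots,r_k)}(x)$, of $\lambda^i$ and of $G^i$ at the base points (where $G^i$ is positive, hence continuous since right-continuous monotone with a positive value) implies that the map
\[
(x,r_1,\ldots,r_k,r_{k+1}) \;\longmapsto\; h_{\Phi^{(i_1,\ldots,i_k)}_{(r_1,\ldots,r_k)}(x)}^{i_{k+1}}(r_{k+1})
\]
is bounded below by a positive constant on a sufficiently small neighbourhood of $(x_0, s_1,\ldots, s_{k+1})$, and similarly for the base factor $(x,s,r_1)\mapsto h_{x,s}^{i_1}(r_1)$ near $(x_0, s_0, s_1)$. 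Applying Lemma~\ref{lem:density} (which, by inspection of its proof, extends from probability measures to the finite sub-probability measures obtained by restricting each kernel to the corresponding $\Delta$-set), the density of $(\nu_{x,s}^{\Delta})^{\mathrm{ac}}$ factors as
\[
H_{x,s}^{\Delta}(r_1,\ldots,r_{m+1}) \;=\; h_{x,s}^{i_1}(r_1) \prod_{k=1}^{m} h_{\Phi^{(i_1,\ldots,i_k)}_{(r_1,\ldots,r_k)}(x)}^{i_{k+1}}(r_{k+1}) \prod_{\ell} \1_{\Delta_\ell}(r_\ell),
\]
where the indicators all equal $1$ on any neighbourhood of $\sbf$ contained strictly inside $\prod_\ell \Delta_\ell$. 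Choosing a neighbourhood $V_0\subset K$ of $(x_0,s_0,i_1)$ and a neighbourhood $W$ of $\sbf$ contained in $\prod_\ell \Delta_\ell$ and small enough that every factor above is bounded below by a uniform positive constant on $V_0\times W$ yields $\inf_{\rbf\in W,\,(x,s,i_1)\in V_0} H_{x,s}^{\Delta}(\rbf)>0$, which is exactly the claimed uniform regularity.

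The main obstacle is the transfer from the pointwise regularity of $s_{k+1}$ for $\mu_{x_{k+1}}^{i_{k+1}}$ to a joint lower bound on $(y,r)\mapsto h_y^{i_{k+1}}(r)$ near $(x_{k+1},s_{k+1})$, since the Radon--Nikodym density is only defined Lebesgue-a.e.\ and naive ``continuity of $y\mapsto h_y^{i_{k+1}}$'' is meaningless. The cleanest route, which is the one sketched above, is to route the argument through the explicit density formula: the regularity assumption gets converted into an honest open-set lower bound on $g^{i_{k+1}}$ around $t_{k+1}^*$, which is then transported back through $\lambda^{i_{k+1}}\circ\varphi^{i_{k+1}}$ by continuity to give the desired uniform lower bound.
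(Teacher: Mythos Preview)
Your proposal is correct and follows essentially the same route as the paper: write the explicit density of $\mu_{y,\sigma}^i$ in terms of $g^i$, convert each regularity hypothesis into an open-set lower bound on $g^{i_k}$ around the corresponding ``intrinsic time'' $t_k^*$, transport back via continuity of $(x,s,r_1,\ldots,r_k)\mapsto \int \lambda^{i_k}(\varphi^{i_k}_u(\cdot))\,du$ and of the iterated flow, and conclude by the product formula of Lemma~\ref{lem:density}.

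One inessential slip: your parenthetical ``$G^i$ is positive, hence continuous since right-continuous monotone with a positive value'' is false in general (a right-continuous non-increasing function can jump at points where it is positive), and in any case you do not need it. The denominator $G^{i_1}\!\bigl(\int_{-s}^0 \lambda^{i_1}(\varphi^{i_1}_u(x))\,du\bigr)$ is always at most $1$, so to lower-bound $h_{x,s}^{i_1}(r_1)$ it suffices to lower-bound the numerator, exactly as the paper does via its inequality~\eqref{eq:minorh}. For the later factors the denominator is $G^{i_{k+1}}(0)=1$ anyway. With this correction your argument is complete.
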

\begin{proof}
For $(x,s,i_1) \in K$, let $h_{x,s}^{\Delta}$ the density of the absolute continuous part of $\mu$ and $h_{x,r_1, \ldots, r_k}^{\Delta}$ the density of the absolute continous part of $\mu_{(r_1, \ldots, r_k)}$. Then, it is easily checked that 
\begin{equation}
\label{eq:hx}
h_{x,s}^{\Delta}(t) = \frac{\lambda_{i_1}( \varphi_t^{i_1}(x)) g_{i_1}\left( \int_{-s}^{t}\lambda_{i_1}( \varphi_u^{i_1}(x)) du \right) }{  G_{i_1} \left(\int_{-s}^{0}\lambda_{i_1}( \varphi_u^{i_1}(x)) du \right) }\1_{\Delta_1}(t),
\end{equation}
and
\[
h_{x,r_1, \ldots, r_k}^{\Delta}(t) = g_{i_{k+1}}\left(\int_{0}^{t}\lambda_{i_{k+1}}( \varphi_u^{i_{k+1}}\circ \Phi^{(i_1, \ldots, i_k)}_{(r_1, \ldots, r_k)}(x)) du \right)\1_{\Delta_{k+1}}(t),
\]
where for all $i \in E$, $g_i$ denotes the density of the absolute continuous part of $\mu_i$. Note in particular that
\begin{equation}
\label{eq:minorh}
h^{\Delta}_{x,s}(t) \geq \lambda_{i_1}( \varphi_t^{i_1}(x)) g_{i_1}\left( \int_{-s}^{t}\lambda_{i_1}( \varphi_u^{i_1}(x)) du \right)\1_{\Delta_1}(t).
\end{equation}
  Now, since $s_1$ is regular for $\mu_{(x_0, s_0, i_1)}$, there exists a neighbourhood $W_1$ of $s_1$  and a positive constant $c_1$ such that, for all $t \in W_1$, $h_{x_0, s_0, i_1}(t) \geq c_1$. This is equivalent to
\[
\frac{\lambda_{i_1}( \varphi_t^{i_1}(x_0)) g_{i_1}\left( \int_{-s_0}^{t}\lambda_{i_1}( \varphi_u^{i_1}(x_0)) du \right)}{   G_{i_1} \left(\int_{-s_0}^{0}\lambda_{i_1}( \varphi_u^{i_1}(x_0)) du \right)} \geq c_1, 
\]
for all $t \in W_1$. Setting  $c_1' = \frac{c_1}{\lambda_{\max}}    G_{i_1} \left(\int_{-s_0}^{0}\lambda_{i_1}( \varphi_u^{i_1}(x_0)) du \right)$, the above inequality implies that for all $t \in W_1$,
\[
g_{i_1}\left( \int_{-s_0}^{t}\lambda_{i_1}( \varphi_u^{i_1}(x_0)) du \right) \geq c_1'.
\]
Let
\[
D = \{ \int_{-s_0}^{t}\lambda_{i_1}( \varphi_u^{i_1}(x_0)) du, \, t \in W_1 \}.
\]
Then, for all $v \in D$, $g_{i_1}(v) \geq c_1$. By continuity of $(x,s,t) \mapsto \int_{-s}^t \lambda_{i_1}( \varphi_u^{i_1}(x)) du$, and reducing $W_1$ if necessary, there exists a neighbourhood $V_0$ of $(x_0,s_0)$ in  $K_{i_1}$ such that, for all $(x,s,t) \in V_0 \times W_1$, $\int_{-s}^t \lambda_{i_1}( \varphi_u^{i_1}(x)) du$ lies in $D$ and thus  for all $(x,s,t) \in V_0 \times W_1$,
\[
g_{i_1}\left( \int_{-s}^{t}\lambda_{i_1}( \varphi_u^{i_1}(x)) du \right) \geq c_1'.
\]
Without loss of generality, we can assume that $W_1 \subset \Delta_1$. Hence, the above inequality and Equation \eqref{eq:minorh} yield that for all $(x,s,t) \in V_0 \times W_1$, $h_{x,s}^{\Delta}(t) \geq c_1''$, with $c_1'' = \lambda_{\min} c_1'$.
%Since $K$ is open in $\R^d \times \R_+ \times E$, there exists a neighbourhood $V_0$ of $(x_0,s_0, i_1)$ such that $\bar V_0$, the closure of $V_0$, is included in $K$. Set 
%\[
%t^* = \max_{(x,s,i_1) \in \bar V_0} \int_{-s}^{0}\lambda_{i_1}( \varphi_u^{i_1}(x)) du.
%\]
%Note that since $\bar V_0 \subset K$, $t^*   < \bar t_{i_1}$. Therefore, $\bar G_{i_1}( t^*) > 0$ and, for all $(x,s,i_1) \in V_0$ and $t \in W_1$, 
%\begin{align*}
%\lambda_{i_1}( \varphi_t^{i_1}(x_0)) g_{i_1}\left( \int_{-s_0}^{t}\lambda_{i_1}( \varphi_u^{i_1}(x_0)) du \right) & \geq c_1 \bar G_{i_1} \left(\int_{-s_0}^{0}\lambda_{i_1}( \varphi_u^{i_1}(x_0)) du \right)\\
%& \geq c_1 \bar G_{i_1}( t^*) := 2 c_1' > 0
%\end{align*}
%Hence, by continuity of the flow $(u,x) \mapsto \varphi^{i_1}_u(x)$, we may assume without loss of generality that for all $(x,s,i_1) \in V_0$ and $t \in W_1$, $h_{x,s}(t) \geq c_1'$.
 Next, since $s_2$ is a regular point for $\mu^{i_2}_{\varphi^{i_1}_{s_1}(x_0)}$, there exists a neighbourhood $W_2$ of $s_2$ and a positive constant $c_2$ such that, for all $t \in W_2$, $h_{x_0, s_1}(t) \geq c_2$, which by the same argument as above implies that for all $(x,s) \in V_0$ and $(r_1, r_2) \in W_1 \times W_2$, $h^{\Delta}_{x, r_1}(r_2) \geq  c'_2$ for some constant $c_2'$. Keeping on this procedure, we can prove that there exists a neighbourhood $V$ of $\sbf$ such that, for all $(r_1, \ldots, r_{m+1}) \in V$ and all $(x,s) \in V_0$, 
\[
h^{\Delta}_{x,s}(r_1) h_{x,s,r_1}(r_2) \ldots h^{\Delta}_{x,s,r_1, \ldots, r_{m}}(r_{m+1})  \geq c
\]
for some positive constant $c$. This concludes the proof by Lemma \ref{lem:density}. 
\end{proof}
\begin{lemma}
\label{lem:pushforward}
Let $(M,d)$ a metric space and $O$ an open subset of $M$. Let $(\nu_y)_{y \in O}$ a family of measures on $\R^m$ for some $m \geq 1$. Let $B$ an open set of $\R_m$ and $f : B \to \R^m$ a $C^1$ map such that the Jacobian matrix of $f$ is invertible in every point of $B$. Let $\nu_y \circ f^{-1}$ be the pushforward of $\nu_y$ by $f$. Then, if $\tbf \in B$ is uniformly  regular for $(\nu_y)_{y \in O}$,  $f(\tbf)$ is uniformly regular for $(\nu_y \circ f^{-1})_{y \in O}$. 
\end{lemma}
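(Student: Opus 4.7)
The plan is to apply the inverse function theorem to reduce to a local change-of-variables computation. Since $f$ is $C^1$ with invertible Jacobian at $\tbf$, there is an open neighbourhood $U \subset B$ of $\tbf$ on which $f$ restricts to a $C^1$ diffeomorphism onto the open set $f(U) \subset \R^m$. After shrinking $U$ if needed, I would arrange simultaneously that (i) $h_y(t) \geq c$ for every $t \in U$ and $y \in O$ (available by uniform regularity of $(\nu_y)_{y \in O}$ at $\tbf$) and (ii) $\lvert \det Df \rvert \leq M$ on $\overline{U}$ for some finite constant $M > 0$ (available by continuity of $Df$).

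The main computation is then the change-of-variables formula: for every Borel set $A \subset f(U)$,
\[
(\nu_y^{\textrm{ac}} \circ f^{-1})(A) = \int_{f^{-1}(A)} h_y(t) \, dt = \int_A h_y(f^{-1}(s)) \, \lvert \det Df(f^{-1}(s)) \rvert^{-1} \, ds.
\]
Thus the pushforward of the absolutely continuous part of $\nu_y$ is itself absolutely continuous on the open set $f(U)$, with density uniformly bounded below by $c/M$ in $y \in O$.

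The subtle point, which I expect to be the only real obstacle, is to argue that this lower bound controls the density of the absolutely continuous part of $\nu_y \circ f^{-1}$ itself, and not merely of some summand of it. For this it suffices to check that $\nu_y^s \circ f^{-1}$ remains singular with respect to $\bm{\lambda}_m$ on $f(U)$: if $N \subset U$ is a Lebesgue-null set carrying $\nu_y^s$, then $f(N)$ is also Lebesgue-null since $f$ is locally Lipschitz, while $\nu_y^s \circ f^{-1}$ gives full mass to $f(N)$ because $f$ is a bijection on $U$. By uniqueness of the Lebesgue decomposition, $(\nu_y^{\textrm{ac}} \circ f^{-1})\vert_{f(U)}$ is exactly the absolutely continuous part of $(\nu_y \circ f^{-1})\vert_{f(U)}$. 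The uniform lower bound $c/M$ on its density, together with the fact that $f(U)$ is an open neighbourhood of $f(\tbf)$, then gives the required uniform regularity at $f(\tbf)$.
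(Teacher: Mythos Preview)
Your argument is correct and follows the same strategy as the paper's: both bound the density of $\nu_y^{\mathrm{ac}} \circ f^{-1}$ from below via change of variables, then use that $(\nu_y \circ f^{-1})^{\mathrm{ac}} \geq \nu_y^{\mathrm{ac}} \circ f^{-1}$ (the paper states this inequality directly, while you obtain it by checking that $\nu_y^{s} \circ f^{-1}$ remains singular and invoking uniqueness of the Lebesgue decomposition). One minor slip: your displayed change-of-variables identity should be $\geq$ rather than $=$ unless $f$ is globally injective on $B$ (there may be preimages of $A$ outside $U$), but only the lower bound is needed anyway; the paper handles this by citing a global density formula from \cite{BH12} that sums over all preimages and then keeping a single term.
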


\begin{proof}
Set $\mu_y = \nu_y \circ f^{-1}$. Then $\mu_y^{ac} \geq \nu_y^{ac} \circ f^{-1}$ and therefore, it is sufficient to prove that $f(\tbf)$ is uniformly regular for $\nu_y^{ac} \circ f^{-1}$. By Lemma 3 in \cite{BH12}, we have for all $\sbf \in \R^m$,
\[
\frac{d \nu_y^{ac} \circ f^{-1}}{d \lambda_m} ( \sbf) = \sum_{ \sbf' \in B \: : f(\sbf') = \sbf} | \det Df(\sbf') |^{-1} \frac{d \nu_y^{ac}}{d \lambda_m} ( \sbf').
\] 
In particular, for all $\sbf \in B$, 
\[
\frac{d \nu_y^{ac} \circ f^{-1}}{d \lambda_m} ( f(\sbf)) \geq  | \det Df(\sbf) |^{-1} \frac{d \nu_y^{ac}}{d \lambda_m} ( \sbf).
\] 
Now, since $\tbf$ is uniformly regular for $(\nu_y)_{y \in O}$, there exists a neigbhourhood $V$ of $\tbf$ such that, for $\sbf \in V$ and all $y \in O$, 
\[
\frac{d \nu_y^{ac}}{d \lambda_m} ( \sbf) \geq c
\]
for some positive constant $c$. Next, since $f$ is $C^1$, there exists a positive constant $c'$ such that, for all $\sbf \in V$, $| \det Df(\sbf) |^{-1}  \geq c'$. This implies that for all $\sbf \in V$, 
\[
\frac{d \nu_y^{ac} \circ f^{-1}}{d \lambda_m} ( f(\sbf)) \geq c'c,
\]
which concludes the proof. 
\end{proof}
We now conclude the proof of Proposition \ref{prop:submersion}.
\begin{lemma}        \label{lm:check_lemma_submersion}
For $\varepsilon > 0$ small enough, $\tilde{T}_{x,s}$ and $\phi^{\varepsilon}$ satisfy the assumptions of Lemma \ref{lem:submersion} for $x=x_0$, $s=s_0$, $r_0 = \varepsilon$, and $\tbf_0 = (s_1, \ldots, s_m)$.
\end{lemma}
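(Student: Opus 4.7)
The plan is to verify the two hypotheses of Lemma~\ref{lem:submersion} under the identifications $T_{x,s} = (U_{x,s}, R_{x,s}, V_{x,s})$, $\phi(\ubf, r, \vbf, x) = \phi^\varepsilon(\ubf, \vbf, r, x)$, with $\ubf_0 = (s_1, \ldots, s_d)$, $\vbf_0 = (s_{d+1}, \ldots, s_m)$, and $r_0 = \varepsilon$. The required smallness of $\varepsilon$ comes solely from the rank hypothesis; the regularity hypothesis holds for every $\varepsilon > 0$.

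First I would handle the full-rank condition. Since $\phi(\ubf, r, \vbf, x)$ is only a reordering of the arguments of $\phi^\varepsilon$, the rank of $(D\phi_{(x_0, r_0)})_{(\ubf_0, \vbf_0)}$ coincides with the rank of $(D\phi^\varepsilon_{(x_0, \varepsilon)})_{\tbf_0}$ at $\tbf_0 = (s_1, \ldots, s_m)$. At $\varepsilon = 0$ this derivative equals $D\Psi(s_1, \ldots, s_m)$, which has full rank $d$ by the assumption that $z_0$ admits an admissible regular submersion. Selecting a $d \times d$ minor with nonzero determinant and invoking continuity of the determinant jointly in $(\varepsilon, \ubf, \vbf)$ near $(0, \ubf_0, \vbf_0)$ — as already noted earlier in the argument for Proposition~\ref{prop:submersion} — yields the full-rank property for $\varepsilon > 0$ sufficiently small.

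Next I would verify the uniform regularity hypothesis. Because permutation of coordinates does not affect uniform regularity, it suffices to show that the law of $\tilde T_{x,s} = (\tilde S_1, \ldots, \tilde S_m, \tilde R)$ is uniformly regular at $(s_1, \ldots, s_m, \varepsilon)$ for $(x,s)$ in some neighbourhood of $(x_0, s_0)$. Observe that $\tilde T_{x,s} = f(\mathbf{S})$, where
$$
f(r_1, \ldots, r_{m+1}) := (r_1, \ldots, r_m, t - r_1 - \ldots - r_{m+1})
$$
is a smooth diffeomorphism of $\R^{m+1}$ with constant Jacobian $\pm 1$, and that $f(s_1, \ldots, s_{m+1}) = (s_1, \ldots, s_m, \varepsilon)$ since $t = T + \varepsilon$ and $T = s_1 + \ldots + s_{m+1}$. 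Moreover the conditional law of $\tilde T_{x,s}$ is $(\nu^S_{x,s}|_\Delta \circ f^{-1})/C(x,s)$ with $C(x,s) \in [c_1, 1]$ for $(x,s) \in W$.

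Lemma~\ref{lem:unifregular} then applies directly: the assumption that $s_1$ is $\mu_{z_0}$-regular and each $s_k$ is $\mu_{x_k}^{i_k}$-regular for $k = 2, \ldots, m+1$ — which is built into the definition of an admissible regular submersion — gives that $(s_1, \ldots, s_{m+1})$ is uniformly regular for the family $(\nu^S_{x,s}|_\Delta)_{(x,s) \in V_0}$ on some neighbourhood $V_0$ of $(x_0, s_0)$ in $K^{i_1}$. Lemma~\ref{lem:pushforward} applied to $f$ transfers uniform regularity to $f(s_1, \ldots, s_{m+1}) = (s_1, \ldots, s_m, \varepsilon)$ for the pushed-forward family. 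Dividing by the function $C(x,s)$, bounded below by $c_1 > 0$, preserves the property and yields hypothesis (2). The main obstacle here is not analytic — all the real work has been absorbed into Lemmas~\ref{lem:density}, \ref{lem:unifregular}, and~\ref{lem:pushforward} — but rather one of bookkeeping: correctly identifying the conditional law of $\tilde T_{x,s}$ as a pushforward through the affine map $f$ and keeping the coordinate orderings $(U, V, R)$ versus $(U, R, V)$ straight so that Lemma~\ref{lem:submersion} can be invoked verbatim.
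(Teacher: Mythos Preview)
Your proof is correct and follows essentially the same approach as the paper: you verify the rank condition via continuity of the determinant from $\varepsilon=0$ (already established earlier in the proof of Proposition~\ref{prop:submersion}), and for uniform regularity you identify the law of $\tilde T_{x,s}$ as the normalized pushforward of $\nu^{S,\Delta}_{x,s}$ under the affine map $f(r_1,\ldots,r_{m+1})=(r_1,\ldots,r_m,t-\sum r_i)$, then invoke Lemma~\ref{lem:unifregular} followed by Lemma~\ref{lem:pushforward}, exactly as the paper does. The only slight notational slip is writing $\tilde T_{x,s}=f(\mathbf S)$ when you mean $\tilde S_{x,s}=f(\mathbf S)$ (with $\tilde T_{x,s}$ having the conditional law), but you handle this correctly in the next sentence.
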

\begin{proof}
We have already proven that for $\varepsilon > 0$ small enough,  $(D \phi_{(x_0,\varepsilon)}^{\varepsilon})_{\tbf_0}$ has full rank for $\tbf_0 = (s_1, \ldots, s_m)$. We fix such a small $\varepsilon$, and we set $r_0 = \varepsilon$. Let us prove that the point $(\tbf_0, r_0)$ is uniformly regular for $(\tilde{T}_{x,s})_{(x,s) \in O}$ for some neighbourhood $O$ of $(x_0, s_0)$. Let $f : \R^{m+1} \to \R^{m+1}$ be the map defined by $f(r_1, \ldots, r_{m+1}) = ( r_1, \ldots, r_m, t - \sum_{i=1}^{m+1} r_i)$. Then $\tilde S_{x,s} =  f( \mathbf{S})$ and we have
\begin{align*}
\PP( \tilde T_{x,s} \in B) & =  \frac{\Pp_z( \tilde S_{x,s} \in B, \mathbf{S} \in \Delta)}{\Pp_z( \mathbf{S} \in \Delta)} \\ 
& = \frac{1}{C(x,s)} \Pp_z( f(\mathbf{S}) \in B, \mathbf{S} \in \Delta)\\
& = \frac{1}{C(x,s)} \nu_{x,s}^{S, \Delta} \circ f^{-1} (B),
\end{align*}
where
\[
\nu_{x,s}^{S, \Delta}( \cdot) = \nu_{x,s}^{S}( \cdot \cap \Delta).
\]

 Let $\sbf = (s_1, \ldots, s_{m+1})$ Note that $\nu_{x, s}^{\mathbf{S}, \Delta}$ can be written as in Lemma \ref{lem:density}, with $\mu= \mu_{x,s}^{i_1}(\cdot \cap \Delta_1)$ and, for all $k =1, \ldots, m$, $\mu_{(r_1, \ldots, r_k)} = \mu_{\Phi^{(i_1, \ldots, i_k)}_{(r_1, \ldots, r_k)}(x)}^{i_{k+1}}(\cdot \cap \Delta_{k+1})$.  By assumption, $s_1$ is regular for $\mu_{x_0, s_0}^{i_1}$ and $s_k$ is regular for $\mu_{x_{k+1}, i_{k+1}}$. Thus, Lemma \ref{lem:unifregular} implies that $\sbf$ is uniformly regular  for $(\nu^{\mathbf{S},\Delta}_{x,s})_{x,s \in O}$ for some neighbourhood $O$ of $(x_0, s_0)$. Furthermore, by definition of $f$; $f(\sbf) = (\tbf_0, r_0)$ (recall that $t = \sum_{i=1}^{m+1} s_i + \varepsilon$). It is also straightforward that for any $\sbf' \in \Delta$, $|\det Df(\sbf')|^{-1} = 1$. Thus, Lemma \ref{lem:pushforward} implies that   $f(\sbf) = (\tbf_0, r_0)$ is uniformly regular for $(\nu^{S, \Delta}_{x,s} \circ f^{-1})_{(x,s) \in O}$, which implies that $(\tbf_0, r_0)$ is uniformly regular  for $(\tilde{T}_{x,s})_{(x,s) \in O}$.
\end{proof}

\subsection{Proof of Proposition \ref{prop:weaksubmersion}}
The proof is similar to the proof of Theorem \ref{prop:submersion}, except that we change the deterministic terminal time $t$ by some random $T$, distributed according to an exponential law of parameter $1$. 
Let $z_0 = (x_0, s_0, i_1) \in K$ a point having an admissible regular submersion. Recall that there are $m \in \N$ and an admissible control sequence $((s_1, \ldots, s_{m+1}), (i_1, \ldots, i_{m+1}))$ with respect to $(x_0,s_0,i_1)$ such that the following holds: 
\begin{enumerate}
\item the map 
$$
\Psi: \R^m \to \R^d, \ (v_1, \ldots, v_m) \mapsto \Phi_{(v_1, \ldots, v_m )}^{(i_1, \ldots, i_m)}(x)
$$
is a submersion at $(s_1, \ldots, s_m)$, i.e. the Jacobian matrix $D\Psi(s_1, \ldots, s_m)$ has full rank;
\item The point $s_1$ is regular for $\mu_z$ and for all $k= 2 \ldots m+1$, the point $s_k$ is regular for $\mu_{x_k,i_k}$, where $x_k$ is defined in \eqref{eq:xk}.
\end{enumerate}
In order to apply Lemma \ref{lem:submersion}, we introduce the map
$$
\phi: \R^m \times \R \times \R^d \to \R^d, \ (v_1, \ldots, v_m, v_{m+1}, x) \mapsto \Phi_{(v_1, \ldots, v_m, v_{m+1})}^{(i_1, \ldots, i_m, i_{m+1})}(x).
$$
Note that $\phi_{x_0, 0} = \Psi$, hence $(D\phi_{x_0, 0})_{\mathbf{t}_0}$ has full rank for $\mathbf{t}_0 = (s_1, \ldots, s_m)$. 
For $t > 0$, let $\mathbf{E}_t$ the event  that the process has jumped exactly $m$ times before time $t$ and that the sequence of postjump locations $(B_1, \ldots, B_{m})$ is exactly equal to $(i_2, \ldots, i_{m+1})$. Then, on the event $\mathbf{E}_t$, one has
\[
(X_t, \tau_t, I_t) = \left( \varphi^{i_{m+1}}_{t - \sum_{i=1}^{m} S_i} \circ  \Phi_{(S_1, \ldots, S_m)}^{(i_1, \ldots, i_m)}(x), t - \sum_{i=1}^{m} S_i, i_{m+1} \right) 
\]
In particular, for $\tilde \phi ( r_1, \ldots, r_m, r_{m+1},x )  = (\phi(r_1, \ldots, r_m, r_{m+1},x), r_{m+1})$, one has $(X_t, \tau_t) = \tilde \phi( S_1, \ldots, S_m, t -  \sum_{i=1}^{m} S_i,x)$. Hence,
\begin{align*}
R((x,s,i), A \times \{i_{m+1}\} & =
\int_0^{+\infty} \mathbb{P}_{x,s,i} (Z_t \in A \times \{i_{m+1}\}) e^{-t} dt \\
& \geq \int_0^{+\infty} \mathbb{P}_{x,s,i} (Z_t \in A \times \{i_{m+1}\}; \mathbf{E}_t) e^{-t} dt \\
& =\int_0^{+\infty} \mathbb{P}_{x,s,i} ( \tilde \phi( S_1, \ldots, S_m, t -  \sum_{i=1}^{m} S_i,x) \in A;  \mathbf{E}_t) e^{-t} dt
\end{align*}
With the notations and reasoning of the proof of Theorem \ref{prop:submersion}, we can thus prove that there exists $\delta_1, \delta_2, \delta_3, \varepsilon > 0$ such that $m \delta_1 < \varepsilon$ and 
\[
R((x,s,i), A \times \{i_{m+1}\} \geq \delta_2 \delta_3 \int_{t_m + m \delta_1}^{t_m + \varepsilon} \mathbb{P}_{x,s,i} ( \tilde \phi( \tilde S_1, \ldots, \tilde S_m, t -  \sum_{i=1}^{m} S_i,x) \in A;  \max_{i = 1, \ldots, m} | \tilde S_i - s_i| < \delta_1 ) e^{-t} dt
\]
where $t_m = \sum_{i=1}^m s_i$
Now, let $T$ be a random variable, independant of the $(U_i)_{i \geq 1}$ and the $(V_i)_{i \geq 1}$, with an exponential law of parameter $1$. Then, we have 
\begin{multline*}
\int_{t_m + m \delta_1}^{t_m + \varepsilon} \mathbb{P}_{x,s,i} ( \tilde \phi( \tilde S_1, \ldots, \tilde S_m, t -  \sum_{i=1}^{m} S_i,x) \in A;  \max_{i = 1, \ldots, m} | \tilde S_i - s_i| < \delta_1 ) e^{-t} dt = \\  \PP ( \tilde \phi( \tilde S_1, \ldots, \tilde S_m, T -  \sum_{i=1}^{m} S_i,x) \in A;  \max_{i = 1, \ldots, m} | \tilde S_i - s_i| < \delta_1; T \in [t_m + m \delta_1, t_m + \varepsilon]).
\end{multline*}
Let $\mathbf{S} = (\tilde S_1, \ldots, \tilde S_m)$,  $\tilde R_{x,s} = T - \sum_{i=1}^{m} S_i$ and set $\tilde S_{x,s} = (\mathbf{S},\tilde R_{x,s})$. Let $  g : \R^{m} \times \R \to \R^{m+1}$ the map defined by $  g(r_1, \ldots,r_m, t) = ( r_1, \ldots, r_m, t - \sum_{i=1}^{m} r_i)$. Then, $\tilde S_{x,s} =   g( \mathbf{S},T)$. We also introduce the set $\Delta$ defined by
\[
\Delta = \{ (r_1, \ldots, r_{m}) \in \R_+^{m} \: : \max_{i=1 \ldots, m} | r_i - s_i| < \delta_1 \} \times [t_m + m \delta_1, t_m + \varepsilon]
\]
and we let $g$ be the restriction of $  g$ to $\Delta$. 
Now, let $\tilde T_{x,s}=(T_{x,s}, R_{x,s})$ be a random variable with the law of $\tilde S_{x,s}$ contionned on the event that $(\mathbf{S},T) \in \Delta$. Then, for all Borel set $B$, we have

\[
\PP( \tilde T_{x,s} \in B)  = \frac{1}{C(x,s)} (\nu_{x,s}^S \otimes \xi )\circ g^{-1} (B),
\]
where $\nu_{x,s}^S$ is the law of $\mathbf{S}$ (given in the proof of Theorem \ref{prop:submersion}, with $m+1$ instead of $m$), $\xi$ is the law of $T$ and $C(x,s)$ is the probability that $(\mathbf{S},T) \in \Delta$. Now, Lemma \ref{lem:unifregular} implies that $\mathbf{t}_0$ is uniformly regular for $(\nu_{x,s}^S)_{(x,s)\in O}$, for some  neighbourhood $O$ of $(x_0, s_0)$ hence $(\tbf_0, t_m)$ is uniformly regular for $(\nu_{x,s}^S \otimes \xi)_{(x,s)\in O}$ since every point is regular for $\xi$. Moreover, $g(\tbf_0,t_m)= (\tbf_0, 0)$, thus Lemma \ref{lem:pushforward} implies that $(\tbf_0, 0)$ is uniformly regular for $(\tilde T_{x,s})_{(x,s)\in O}$. This concludes the proof of Proposition \ref{prop:weaksubmersion} by Lemma \ref{lem:submersion}.

\section{Proof of Proposition~\ref{prop:ac_preserve}: absolute continuity of the invariant measure}    \label{ssec:proof_ac} 
The proof of Proposition~\ref{prop:ac_preserve} relies on the following lemma. 

\begin{lemma}
\label{lem:psiC1}
Suppose that the assumptions of Proposition~\ref{prop:ac_preserve} hold. Let 
\[
\Xi = \{ {g}_n^i(a_n^i): \:  i \in E, n \in N^i \}
\]
and $\Xi^{\mathrm{c}} = (0,1) \setminus \Xi$.
 Then, for all $i \in E$, the map $(x,u) \mapsto \psi^i_x(u)$ is $C^1$ on $\R^d \times \Xi^{\mathrm{c}}$. Moreover, $\partial_u \psi_x^i(u) < 0$ for every $(x,u) \in \R^d \times \Xi^{\mathrm{c}}$.
\end{lemma}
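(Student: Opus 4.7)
The plan is to reduce $\psi^i_x$ to a composition of two pieces: the flow--independent quantile $\psi^i(u) := \inf\{t \geq 0 : G^i(t) \leq u\}$ and the inverse in $r$ of the cumulative hazard
\[
\Lambda^i(x,r) := \int_0^r \lambda^i(\varphi^i_s(x))\, ds.
\]
Since $G^i_x(r) = G^i(\Lambda^i(x,r))$ and $\Lambda^i(x,\cdot)$ is a strictly increasing bijection of $\R_+$ (its $r$-derivative is $\lambda^i(\varphi^i_r(x)) \geq \lambda_{\min} > 0$), monotonicity of $G^i$ gives
\[
\psi^i_x(u) = (\Lambda^i(x,\cdot))^{-1}(\psi^i(u)).
\]
Smoothness of $F^i$ (Assumption~\ref{hyp:standingF}) together with $\lambda^i \in C^1$ yields $\Lambda^i \in C^1(\R^d \times \R_+)$, and the implicit function theorem applied to $\Lambda^i(x,r) = t$ makes $(x,t) \mapsto (\Lambda^i(x,\cdot))^{-1}(t)$ jointly $C^1$ on $\R^d \times (0,\infty)$.

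Next I would show that $\psi^i$ is $C^1$ with strictly negative derivative on $\Xi^c$. A preliminary observation, to be justified from Assumption~\ref{hyp:barGC1}, is that on each plateau $(b^i_n, a^i_{n+1})$ the survival function is constant, equal to $\mu^i([a^i_{n+1}, \infty))$, while $g^i_n(b^i_n) = G^i(b^i_n) = \mu^i((b^i_n, \infty)) = \mu^i([a^i_{n+1}, \infty))$. When $\mu^i$ places no atom at $a^i_{n+1}$, this common value also equals $g^i_{n+1}(a^i_{n+1}) \in \Xi$; and $g^i_{n^i}(b^i_{n^i}) = 0 \notin (0,1)$ when $N^i$ is finite. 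Consequently, any $u_0 \in \Xi^c$ lies in an open interval $(g^i_n(b^i_n), g^i_n(a^i_n))$ for some $n$, and $\psi^i$ coincides with $(g^i_n)^{-1}$ on a neighbourhood of $u_0$. By the inverse function theorem applied to the $C^1$ map $g^i_n$ on $(a^i_n, b^i_n)$ with strictly negative derivative, $\psi^i$ is $C^1$ near $u_0$ and $(\psi^i)'(u_0) = 1/(g^i_n)'(\psi^i(u_0)) < 0$.

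Composing yields $(x,u) \mapsto \psi^i_x(u) \in C^1(\R^d \times \Xi^c)$. Differentiating the identity $\Lambda^i(x, \psi^i_x(u)) = \psi^i(u)$ in $u$ gives
\[
\partial_u \psi^i_x(u) = \frac{(\psi^i)'(u)}{\lambda^i(\varphi^i_{\psi^i_x(u)}(x))},
\]
which is strictly negative, the numerator being $<0$ by the preceding step and the denominator being at least $\lambda_{\min}$.

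The main obstacle is the second paragraph: one has to show that $\Xi^c$ is rich enough to avoid two pathologies, namely the downward jumps of $\psi^i$ at the upper plateau endpoints of $G^i$, and the flat pieces of $\psi^i$ across any gap in the range of $G^i$ produced by a potential atom of $\mu^i$ at some $a^i_{n+1}$. Handling this carefully amounts to exploiting that $g^i_n$ is prescribed to be the restriction of $G^i$ on $[a^i_n, b^i_n]$ and that the exclusion of the countable set $\{g^i_n(a^i_n)\}$ rules out both pathologies under Assumption~\ref{hyp:barGC1}. Once this is nailed down, the rest of the argument is standard inverse function theorem and chain rule.
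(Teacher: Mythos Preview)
Your approach is correct and essentially the same as the paper's. The factorisation $\psi^i_x(u) = (\Lambda^i(x,\cdot))^{-1}\circ\psi^i(u)$ is exactly the paper's identity $\psi^i_x = \gamma_x^{-1}\circ (g^i_n)^{-1}$ written globally rather than piece by piece, and your implicit function theorem argument for the joint $C^1$ regularity of $(x,t)\mapsto(\Lambda^i(x,\cdot))^{-1}(t)$ is the same as the paper's for $\eta(x,t)=\gamma_x^{-1}(t)$. The only organisational difference is that the paper first transfers Assumption~\ref{hyp:barGC1} to $G^i_x$ (via the rescaled endpoints $a^i_{n,x}=\gamma_x^{-1}(a^i_n)$, etc.) and then inverts, whereas you invert $G^i$ first and then pull back through $\Lambda^i$; the content is identical. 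Your discussion of the ``main obstacle'' (identifying $\psi^i$ with $(g^i_n)^{-1}$ on $(g^i_n(b^i_n),g^i_n(a^i_n))$ and worrying about potential atoms at $a^i_{n+1}$) is in fact more explicit than the paper, which simply asserts $g^i_n(b^i_n)=g^i_{n+1}(a^i_{n+1})$ ``by definition of $a^i_n$ and $b^i_n$''.
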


\bpf
Fix $i \in E$, and for $(x,t) \in \mathbb{R}^d \times [0, +\infty)$, let 
\begin{equation}     \label{eq:def_gamma} 
\gamma(x,t) = \gamma_x(t) = \int_0^t \lambda^i( \varphi^i_u(x)) du.
\end{equation} 
Then, by definition, $  G_x^i =   G^i \circ \gamma_x$. Moreover, since $\lambda^i \geq \lambda_{\min}$, $\gamma_x :(0,+\infty) \to (0,+\infty)$ is invertible. Set $a_{n,x}^i = \gamma_x^{-1}(a_n^i)$, $b_{n,x}^i = \gamma_x^{-1}(b_n^i)$ and $  g_{n,x}^i =   g_n^i \circ \gamma_x$. Then it is easily seen that $  G^i_x$ satisfies Assumption \ref{hyp:barGC1} with $a_{n,x}^i$, $b_{n,x}^i$ and $  g_{n,x}^i$. Hence, on $(a_{n,x}^i, b_{n,x}^i)$, $  G^i_x$ is invertible, with continuously differentiable inverse $(  g^i_{n,x})^{-1} = \gamma_x^{-1} \circ (  g_n^i)^{-1}$. Note that $  g^i_{n,x}(a^i_{n,x}) =   g_n^i(a_n^i)$ and $  g^i_{n,x}(b^i_{n,x}) =   g_n^i(b_n^i)$. Note also that by definition of $a_n^i$ and $b_n^i$, we have $  g_n^i(b_n^i) =   g_{n+1}^i(a_{n+1}^i)$. Therefore, $\psi_x^i$ is equal to $(  g_{n,x}^i)^{-1}$ on $(  g_n^i(b_n^i),   g_n^i(a_n^i))$ and jumps at $  g_n^i(a_n^i)$ from $a_{n,x}^i$ to $b_{n-1,x}^i$ with the convention $b_{0,x}^i = 0$. In particular, $\psi^i_x$ is continuously differentiable on $\Xi^{\mathrm{c}}$, with negative derivative because $  g_n^i$ has negative derivative.

To conclude the proof of the lemma, we show that $(x,u) \mapsto \psi^i_{x}(u)$ is $C^1$ on $\R^d \times \Xi^{\mathrm{c}}$. Given a point $(x_0, u_0) \in \R^d \times \Xi^{\mathrm{c}}$, there exists $n \in N^i$ such that $  g_n^i(b_n^i) < u_0 <   g_n^i(a_n^i)$. Hence we can choose a neighbourhood $V$ of $(x_0, u_0)$ such that for all $(x,u) \in V$, $  g_n^i(b_n^i) < u <   g_n^i(a_n^i)$. Therefore, for all $(x,u) \in V$,
\[
\psi^i_x(u)  =  (  g_{n,x}^i)^{-1}(u) = \gamma_x^{-1} \circ (  g_n^i)^{-1}(u) = \eta(x, (  g_n^i)^{-1}(u)), 
\]
where $\eta: (x,t) \mapsto \gamma_x^{-1}(t)$. Since $(  g_n^i)^{-1}$ is $C^1$, it suffices to show that $\eta$ is $C^1$ on $\R^d \times (0, + \infty)$.  Let $V_1 := \R^d \times (0,\infty)$ and $V_2 := (0, \infty)$, and define $h: V_1 \times V_2 \to \R$ as 
$$
h((x,u),t) := \gamma(x,t) - u. 
$$
Since  $\lambda^i$ is $C^1$ by assumption, $h \in C^1(V_1 \times V_2)$ as is required to invoke the implicit function theorem. Moreover, for all $(x, u) \in V_1$, one has $ \gamma(x, \gamma_{x}^{-1}(u)) = u$. If we fix $(x^*, u^*) \in \R^d \times (0, \infty)$ and set $t^* := \gamma_{x^*}^{-1}(u^*)$, one has $h((x^*, u^*), t^*) = 0$. 
Moreover, 
$$
\partial_t h((x,u), t) =  \lambda_i(\varphi^i_t(x)) > 0. 
$$
The implicit function theorem implies that there exists a $C^1$ map $\theta$ defined on a neighbourhood $V \subset V_1$ of $(x^*, u^*)$ as well as a neighbourhood $W \subset V_1 \times V_2$ of $((x^*, u^*), t^*)$ such that 
$$
\{((x,u),t) \in W: \ h((x,u),t) = 0\} = \{((x,u),t) \in V_1 \times V_2: (x,u) \in V, \theta(x,u) = t\}. 
$$
This means that for all $(x,u) \in V$, $\gamma_x(\theta(x,u)) = u$ and thus 
$$
\eta(x,u) = \gamma^{-1}_x(u)=\gamma^{-1}_x(\gamma_x(\theta(x,u))) = \theta(x,u), \quad \forall (x,u) \in V. 
$$
Thus, $\eta$ is $C^1$ and the lemma is proved.
\epf

\begin{remark}\label{rm:snon0}    \rm 
For $z = (x,s,i) \in K$ and $t \geq 0$, let $\gamma_z(t) := \gamma(\varphi^i_{-s}(x), t+s)$, with $\gamma$ defined as in~\eqref{eq:def_gamma}. The proof of Lemma \ref{lem:psiC1} can easily be adapted to prove that, under the assumptions of Proposition~\ref{prop:ac_preserve}, for all $z \in K$, the map $\psi_z$ is $C^1$ on $\Xi^{\mathrm{c}}_z$, where 
$$
\Xi_z = \{  g_n^i(a^i_n) /   G_i \circ \gamma_z(0): \: n \in N^i\}. 
$$
It suffices to replace $\gamma_x$ and $  g^i_{n,x}$ with $\gamma_z$ and $  g_{n,z} =   g^i_n \circ \gamma_z /   G_i \circ \gamma_z(0)$, respectively.  
\end{remark}

In several places throughout the proof of Proposition~\ref{prop:ac_preserve}, we will use the following fact from measure theory (see, e.g., Proposition 4.4 in~\cite{Davydov}). 

\begin{lemma}      \label{lm:Davydov} 
Let $m \in \N$, let $B \subset \R^m$ be open, and let $f: B \to \R^m$ be a Borel measurable function such that for $\bm{\lambda}_m$-almost every $\tbf \in B$, $f$ is differentiable at $\tbf$ and $\det Df(\tbf) \neq 0$. If $\nu$ is a $\sigma$-finite Borel measure on $\R^m$ such that $\nu \ll \bm{\lambda}_m$, then $\nu f^{-1} \ll \bm{\lambda}_m$, where $\nu f^{-1}$ denotes the pushforward of $\nu$ under $f$. 
\end{lemma}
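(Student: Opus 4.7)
The plan is to prove the stronger Lusin-type statement that $\bm{\lambda}_m(f^{-1}(A)) = 0$ for every Borel set $A \subset \R^m$ with $\bm{\lambda}_m(A) = 0$, where $f^{-1}(A) := \{\tbf \in B : f(\tbf) \in A\}$. The conclusion $\nu f^{-1} \ll \bm{\lambda}_m$ is then immediate from $\nu \ll \bm{\lambda}_m$: if $\bm{\lambda}_m(A) = 0$ then $\bm{\lambda}_m(f^{-1}(A)) = 0$, hence $\nu(f^{-1}(A)) = 0$.

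Let $B_0 := \{\tbf \in B : f \text{ is differentiable at } \tbf \text{ and } \det Df(\tbf) \neq 0\}$. By hypothesis $\bm{\lambda}_m(B \setminus B_0) = 0$, so it suffices to show $\bm{\lambda}_m(f^{-1}(A) \cap B_0) = 0$. The idea is to decompose $B_0$ into a countable union of Borel pieces on each of which $f$ is injective with Lipschitz inverse; since Lipschitz maps preserve Lebesgue null sets, the conclusion on each piece follows at once, and countable additivity closes the argument.

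Concretely, for integers $k, \ell \geq 1$, set
\[
B_{k,\ell} := \bigl\{ \tbf \in B_0 : \|Df(\tbf)^{-1}\| \leq k \text{ and } \|f(\tbf') - f(\tbf) - Df(\tbf)(\tbf' - \tbf)\| \leq \tfrac{1}{2k}\|\tbf' - \tbf\| \text{ for all } \tbf' \in B(\tbf, 1/\ell) \cap B \bigr\}.
\]
Pointwise differentiability at each $\tbf \in B_0$ gives $B_0 = \bigcup_{k,\ell} B_{k,\ell}$. Cover each $B_{k,\ell}$ by countably many balls of radius less than $1/(2\ell)$ to obtain pieces $E := B_{k,\ell} \cap B(\tbf_0, 1/(2\ell))$ of diameter less than $1/\ell$. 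For $\tbf, \tbf' \in E$, applying the defining bound at $\tbf$ and combining with $\|Df(\tbf) v\| \geq \tfrac{1}{k}\|v\|$ (a consequence of $\|Df(\tbf)^{-1}\| \leq k$) yields
\[
\|f(\tbf') - f(\tbf)\| \geq \|Df(\tbf)(\tbf' - \tbf)\| - \tfrac{1}{2k}\|\tbf' - \tbf\| \geq \tfrac{1}{2k}\|\tbf' - \tbf\|.
\]
Hence $f|_E$ is injective, and its inverse on $f(E)$ is Lipschitz with constant $2k$. Writing $f^{-1}(A) \cap E$ as the image of the null set $A \cap f(E)$ under this Lipschitz inverse shows it has zero Lebesgue measure, and taking countable unions over $k, \ell$ and the ball cover finishes the proof.

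The main obstacle is the Borel measurability of the sets $B_{k,\ell}$, which underlies both the countable decomposition and the availability of Lebesgue measure on them. This rests on the classical facts that for a Borel measurable $f$, the set of points of differentiability is Borel and the derivative, where it exists, is Borel measurable. The universal quantifier over $\tbf' \in B(\tbf, 1/\ell) \cap B$ in the definition of $B_{k,\ell}$ would be straightforward to handle if $f$ were continuous (one could restrict to a countable dense subset of $\tbf'$), but since $f$ is only Borel measurable, a measurable-selection or transfinite Baire-hierarchy argument is needed at this step. Given the standard nature of this measure-theoretic fact, citing~\cite{Davydov} for these technicalities is the most efficient route.
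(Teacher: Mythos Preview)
The paper does not prove this lemma at all: it is stated as a known fact from measure theory, with the reference ``see, e.g., Proposition 4.4 in~\cite{Davydov}'' given immediately before the statement. So there is no proof in the paper to compare yours against.

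Your argument is correct, and in fact slightly cleaner than you give it credit for. The decomposition into sets $B_{k,\ell}$ and the bi-Lipschitz estimate on small pieces $E$ are exactly the right idea, and the inequality $\|f(\tbf') - f(\tbf)\| \geq \tfrac{1}{2k}\|\tbf' - \tbf\|$ is derived correctly. Your stated worry about Borel measurability of $B_{k,\ell}$ is, however, unnecessary: the argument goes through at the level of Lebesgue outer measure without any measurability of the pieces. Indeed, $f^{-1}(A) \cap B_0$ is covered by the countable family of sets $f^{-1}(A) \cap E$ (over all $k,\ell$ and all balls in the cover), and each such set is the image, under a $2k$-Lipschitz map defined on $f(E)$, of the set $A \cap f(E)$, which has outer measure zero since $A$ is null. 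Lipschitz maps send sets of zero outer measure to sets of zero outer measure, so each $f^{-1}(A) \cap E$ has zero outer measure, and countable subadditivity of outer measure finishes the job. No measurable-selection or Baire-hierarchy argument is needed.
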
 

Let $\mu$ be a finite measure on $(K, \Bc(K))$ such that $\mu \ll \bm{L}$ and let $t > 0$. Let $A \in \Bc(K)$ such that $\bm{L}(A) = 0$. One has  
\begin{align} \label{eq:mu_P_t_expansion} 
\mu P_t(A) =& \int_K P_t(z,A) \ \mu(dz) = \sum_{m=1}^{\infty} \int_K \Pp_z(Z_t \in A, T_{m-1} \leq t < T_m) \ \mu(dz) \\
=& \int_K \id_A(\varphi_t^i(x), s+t, i) \Pp_z(t < T_1) \ \mu(dz) + \sum_{m \geq 2} \int_K \Pp_z(Z_t \in A, T_{m-1} \leq t < T_m) \ \mu(dz). \notag 
\end{align}
In order to show $\mu P_t(A) = 0$ and thus $\mu P_t \ll \bm{L}$, it is then enough to prove the following statements: 
\begin{enumerate}[{\bf (i)}] 
\item $$ \int_K \id_A(\varphi_t^i(x), s+t, i) \Pp_z(t < T_1) \ \mu(dz) = 0; $$
\item $$ \int_K \Pp_z(Z_t \in A, T_{m-1} \leq t < T_m) \ \mu(dz) = 0, \quad \forall m \geq 2. $$
\end{enumerate} 
Let us first show claim (i), which requires neither Assumption~\ref{hyp:barGC1} nor differentiability of $(\lambda_i)_{i \in E}$. Since $\Pp_z(t < T_1) \leq 1$ for all $z$, the integral on the left is dominated by 
$$
 \int_K \id_A(\varphi_t^i(x), s+t, i) \ \mu(dz) = \mu(\{(x,s,i)  \in K: \ (\varphi_t^i(x), s+t, i) \in  A\}).
$$  
Since $\mu \ll \bm{L}$, the expression on the right-hand side vanishes if
\begin{equation}   \label{eq:psi_null} 
\bm{L}(\{(x,s,i) \in K: \ (\varphi_t^i(x), s+t, i) \in A\}) = 0. 
\end{equation} 
There is no loss of generality in assuming that $A = \tilde A \times \{j\}$ for some $j \in E$ and $\tilde A \in \Bc(K_j)$. Then the left-hand side of~\eqref{eq:psi_null} is dominated by $\bm{\lambda}_{d+1} f^{-1}(\tilde A)$, where $\bm{\lambda}_{d+1} f^{-1}$ denotes the pushforward of $\bm{\lambda}_{d+1}$ under the map
$$
f: \R^d \times \R \to \R^d \times \R, \ (x,s) \mapsto (\varphi_t^j(x), s+t). 
$$
One has 
$$
D f(x,s) = \begin{pmatrix}
                 D \varphi_t^j(x) & 0 \\
                 0 & 1 
                 \end{pmatrix}, 
$$
so 
$$
\det D f(x,s) = \det D \varphi_t^j(x) \neq 0 
$$
as $\varphi_t^j$ is a diffeomorphism. By Lemma~\ref{lm:Davydov}, $\bm{\lambda}_{d+1} f^{-1} \ll \bm{\lambda}_{d+1}$, which implies $\bm{\lambda}_{d+1} f^{-1}(\tilde A) = 0$. This completes the proof of~\eqref{eq:psi_null} and thus of claim (i). 

\medskip

Now we show claim (ii). Fix $m \geq 2$. The integral on the left-hand side of (ii) can be written as  
$$
\sum_{(i_2, \ldots, i_m) \in E^{m-1}} \int_K \Pp_z(Z_t \in A, T_{m-1} \leq t < T_m, B(i_2, \ldots, i_m)) \ \mu(dz), 
$$
where 
$$
B(i_2, \ldots, i_m) := \{B_1 = i_2, \ldots, B_{m-1} = i_m\}. 
$$
It is therefore enough to show 
\begin{equation}     \label{eq:m_gto_i_exp} 
\int_K \Pp_z(Z_t \in A, T_{m-1} \leq t < T_m, B(i_2, \ldots, i_m)) \ \mu(dz) = 0
\end{equation} 
for $(i_2, \ldots, i_m) \in E^{m-1}$ fixed. 

For $z = (x,s,i) \in K$, we define the random variables $\tilde S_1^z, \ldots, \tilde S^z_{m-1}$ as in the proof of Proposition~\ref{prop:acces}, the only difference being that we make their dependence on $z$ now explicit in the notation: 
$$
\tilde S_1^z := \psi_z(U_1), \quad \tilde S_k^z := \psi^{i_k}_{\Phi_{(\tilde S_1^z, \ldots, \tilde S_{k-1}^z)}^{(i, i_2, \ldots, i_{k-1})}(x)}(U_k), \quad 2 \leq k \leq m-1. 
$$
Let $\nu_z$ denote the law of $(\tilde S_1^z, \ldots, \tilde S_{m-1}^z)$. 

\begin{lemma}       \label{lm:ac_nu_z} 
Under the assumptions of Proposition~\ref{prop:ac_preserve}, one has $\nu_z \ll \bm{\lambda}_{m-1}$ for every $z \in K$. 
\end{lemma}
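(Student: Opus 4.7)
The strategy is to realise $\nu_z$ as the pushforward of Lebesgue measure on the cube $(0,1)^{m-1}$ by an explicit, Borel-measurable map that is $\bm{\lambda}_{m-1}$-almost everywhere differentiable with nonzero Jacobian, and then to invoke Lemma~\ref{lm:Davydov}. Define $F_z : (0,1)^{m-1} \to \R^{m-1}_+$ by $F_z(u_1, \ldots, u_{m-1}) = (\tilde s_1, \ldots, \tilde s_{m-1})$, where $\tilde s_1 := \psi_z(u_1)$ and, recursively for $2 \le k \le m-1$,
$$
\tilde s_k := \psi^{i_k}_{x_k}(u_k), \qquad x_k := \Phi^{(i, i_2, \ldots, i_{k-1})}_{(\tilde s_1, \ldots, \tilde s_{k-1})}(x).
$$
Since $U_1, \ldots, U_{m-1}$ are i.i.d.\ uniform on $(0,1]$, the law $\nu_z$ is precisely the image of $\bm{\lambda}_{m-1}\vert_{(0,1)^{m-1}}$ under $F_z$; and $F_z$ is Borel measurable as the composition of the Borel maps $\psi^{\cdot}_{\cdot}$ with continuous compositions of flows.

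Next I would identify a $\bm{\lambda}_{m-1}$-null set $N \subset (0,1)^{m-1}$ off which $F_z$ is $C^1$ with invertible differential. By Remark~\ref{rm:snon0}, for every $z' \in K$, $\psi_{z'}$ is $C^1$ on $(0,1) \setminus \Xi_{z'}$ with strictly negative derivative, and $\Xi_{z'}$ is countable. Moreover, evaluating the formula in Remark~\ref{rm:snon0} at $z' = (y, 0, j)$ gives $\Xi_{(y, 0, j)} = \{  g^{j}_n(a^{j}_n) : n \in N^{j}\}$, a countable subset of $(0,1)$ that is \emph{independent} of $y$. Setting
$$
N := \{u \in (0,1)^{m-1} : u_1 \in \Xi_z\} \cup \bigcup_{k=2}^{m-1}\{u \in (0,1)^{m-1} : u_k \in \Xi_{(x_k, 0, i_k)}\},
$$
Fubini's theorem then yields $\bm{\lambda}_{m-1}(N) = 0$. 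Off $N$, a straightforward induction based on Lemma~\ref{lem:psiC1}, Remark~\ref{rm:snon0} and smoothness of the flows shows that each $\tilde s_k$ is $C^1$ as a function of $(u_1, \ldots, u_k)$ and does not depend on $u_j$ for $j > k$. Consequently $DF_z$ is lower triangular, with diagonal entries
$$
\partial_{u_1} \tilde s_1 = \psi_z'(u_1) < 0, \qquad \partial_{u_k} \tilde s_k = \partial_u \psi^{i_k}_{x_k}(u_k) < 0 \quad (2 \le k \le m-1),
$$
so $\det DF_z \neq 0$ on $(0,1)^{m-1} \setminus N$.

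To conclude, it suffices to apply Lemma~\ref{lm:Davydov} with $B = (0,1)^{m-1}$ and $\nu = \bm{\lambda}_{m-1}\vert_B$: one gets $\nu_z = \nu \circ F_z^{-1} \ll \bm{\lambda}_{m-1}$, which is the desired conclusion. The mildly delicate point in the plan is the Fubini step, which relies crucially on the simplification of $\Xi_{(\cdot, 0, \cdot)}$ observed above (so that the $k$-th exceptional slice is a fixed countable set rather than a set depending measurably but possibly discontinuously on the earlier coordinates); everything else is a direct combination of Lemma~\ref{lem:psiC1}, the triangular Jacobian structure, and Lemma~\ref{lm:Davydov}.
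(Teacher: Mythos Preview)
Your proof is correct and follows essentially the same route as the paper: write $\nu_z$ as the pushforward of $\bm{\lambda}_{m-1}$ on $(0,1)^{m-1}$ by the map $(u_1,\ldots,u_{m-1})\mapsto(\tilde s_1,\ldots,\tilde s_{m-1})$, observe that the exceptional set $\Xi_{(y,0,j)}=\{g^{j}_n(a^{j}_n):n\in N^{j}\}$ is countable and independent of $y$ (this is exactly the set the paper denotes $\Xi$ in Lemma~\ref{lem:psiC1}), use Lemma~\ref{lem:psiC1} and Remark~\ref{rm:snon0} to get $C^1$-regularity off a countable product set, exploit the lower-triangular structure of the Jacobian with strictly negative diagonal entries, and conclude via Lemma~\ref{lm:Davydov}. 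The ``mildly delicate Fubini step'' you flag is precisely the observation the paper also relies on.
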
 

We postpone the proof of Lemma~\ref{lm:ac_nu_z} to the end of this section. Assuming as in the proof of claim (i) that $A = \tilde A \times \{j\}$ for $j \in E$ and $\tilde A \in \Bc(K_j)$, we have 
\begin{equation}       \label{eq:P_J_M_B}
\Pp_z(Z_t \in A, T_{m-1} \leq t < T_m, B(i_2, \ldots, i_m)) \leq \nu_z(J_{x,i}), 
\end{equation}
where 
$$
J_{x,i} := \{\sbf \in \R_+^{m-1}: \ (\varphi^j_{t-(s_1 + \ldots + s_{m-1})}(\Phi_{(s_1, \ldots, s_{m-1})}^{(i, i_2, \ldots, i_{m-1})}(x)), t - (s_1 + \ldots + s_{m-1})) \in \tilde A\}. 
$$
We will now show that $\nu_z(J_{x,i}) = 0$ for $\bm{L}$-almost every $z \in K$. Since $\mu \ll \bm{L}$, this implies~\eqref{eq:m_gto_i_exp} and thus completes the proof of Proposition~\ref{prop:ac_preserve}. In general, it is not true, however, that $\nu_z(J_{x,i}) = 0$ for {\bf every} $z \in K$, as can be seen by considering the constant vector fields $F_1 \equiv (1,0)^{\top}$ and $F_2 \equiv (0,1)^{\top}$ on $\R^2$. If $\tilde A = \{((x_1, x_2), s) \in K_j: x_1 + x_2 = t\}$, one has $\bm{L}(\tilde A \times \{j\}) = 0$ while $\nu_{{\bf 0}, s,i}(J_{{\bf 0}, i})$ may be nonzero because every trajectory of $X$ starting at the origin ${\bf 0} = (0,0)$ ends up on the line $x_1 + x_2 = t$ at time $t$. 

\medskip 

For $i \in E$ fixed, we first show that $\bm{\lambda}_{m-1}(J_{x,i}) = 0$ for $\bm{\lambda}_d$-almost every $x \in \R^d$. 
For $t > 0$ and an integer $n \geq 1$, let $D^t_n$ be defined as in~\eqref{eq:def_Delta_T_m} and let $\cl(D^t_n)$ denote its closure. One has 
\begin{equation}    \label{eq:cases_lambda} 
\int_{\R^d} \bm{\lambda}_{m-1}(J_{x,i}) \ \bm{\lambda}_d(dx)  = \begin{cases}
                             (\bm{\lambda}_d \otimes \bm{\lambda}_{m-1}) g^{-1}(\tilde A), & \quad m = 2, \\
                             (\bm{\lambda}_d \otimes \bm{\lambda}_{m-1}) g^{-1}(\tilde A \times \R^{m-2}_+), & \quad m > 2,  
                             \end{cases} 
\end{equation}
where 
\begin{align*}
g:  \R^d \times \cl(\Delta^t_{m-1}) \to& \R^d \times \R_+ \times \R_+^{m-2},  \\
(x,\sbf) \mapsto& (\varphi^j_{t-(s_1 + \ldots + s_{m-1})}(\Phi_{(s_1, \ldots, s_{m-1})}^{(i, i_2, \ldots, i_{m-1})}(x)), t-(s_1 + \ldots + s_{m-1}), s_2, \ldots, s_{m-1}). 
\end{align*}
In the case $m=2$, the components $s_2, \ldots, s_{m-1}$ are not present. Since $\bm{\lambda}_{d+1}(\tilde A) = 0$ and since 
$$
\det Dg(x,\sbf) = -\det D (\varphi^j_{t-(s_1 + \ldots + s_{m-1})} \circ \Phi_{(s_1, \ldots, s_{m-1})}^{(i, i_2, \ldots, i_{m-1})})(x) \neq 0
$$
by the diffeomorphism property of the flows, Lemma~\ref{lm:Davydov} implies that the right-hand side of~\eqref{eq:cases_lambda} is zero. Thus, 
$$
\int_{\R^d} \bm{\lambda}_{m-1}(J_{x,i}) \ \bm{\lambda}_d(dx) = 0 
$$
and $\bm{\lambda}_{m-1}(J_{x,i}) = 0$ for $\bm{\lambda}_d$-almost every $x \in \R^d$. 
Granting Lemma~\ref{lm:ac_nu_z}, we have $\nu_z \ll \bm{\lambda}_{m-1}$ for every $z \in K$, so $\nu_z(J_{x,i}) = 0$ for $\bm{L}$-almost every $z \in K$. 
\epf 

\bigskip 

To conclude, we give a proof of Lemma~\ref{lm:ac_nu_z}. 

\bigskip 

\bpf[Proof of Lemma~\ref{lm:ac_nu_z}] Fix $z = (x,s,i) \in K$ and define the ($z$-dependent) functions $\tilde \sigma_1, \ldots, \tilde \sigma_{m-1}$ as in the proof of Proposition~\ref{prop:acces}. Since  
$$
\tilde S^z_k = \tilde \sigma_k(U_1, \ldots, U_k), \quad 1 \leq k \leq m-1, 
$$
one has $\nu_z = \bm{\lambda}_{m-1} H^{-1}$, where 
\begin{equation}      \label{eq:def_h} 
H: (0, 1]^{m-1} \to \R^{m-1}, \ (u_1, \ldots, u_{m-1}) \mapsto (\tilde \sigma_1(u_1), \tilde \sigma_2(u_1, u_2), \ldots, \tilde \sigma_{m-1}(u_1, \ldots, u_{m-1})). 
\end{equation} 
We claim that $H$ is differentiable on  $\Xi^{\mathrm{c}}_z \times (\Xi^{\mathrm{c}})^{m-2}$ and that for all $(u_1, \ldots, u_{m-1}) \in \Xi^{\mathrm{c}}_z \times (\Xi^{\mathrm{c}})^{m-2}$, $\det D H(u_1, \ldots, u_{m-1})$ is nonzero, which yields by Lemma~\ref{lm:Davydov} that $\bm{\lambda}_{m-1} H^{-1} \ll \bm{\lambda}_{m-1}$ (note that $\Xi_z$ and $\Xi$ are countable, so $H$ is indeed  differentiable $\bm{\lambda}_{m-1}$-almost everywhere). Hence, Lemma \ref{lm:ac_nu_z} will be proven if we establish the claim. In order to show that $H$ is differentiable on $\Xi^{\mathrm{c}}_z \times (\Xi^{\mathrm{c}})^{m-2}$, we need to prove differentiability of $\tilde \sigma_k$ for $1 \leq k \leq m-1$ on $\Xi^{\mathrm{c}}_z \times (\Xi^{\mathrm{c}})^{k-1}$, which can be done by induction. In the base case $k=1$, one has $\tilde \sigma_1(u_1) = \psi_z(u_1)$, which is $C^1$ on $\Xi^{\mathrm{c}}_z$ (see Remark~\ref{rm:snon0}).

In the induction step, we have for $k > 1$ 
$$
\tilde \sigma_k(u_1, \ldots, u_k) = \beta(\alpha_k(u_1, \ldots, u_k)), 
$$
where  
$$
\alpha_k: (0,1)^k \to \R^d \times (0,1), \ (u_1, \ldots, u_k) \mapsto \left(\Phi^{(i,\ldots, i_{k-1})}_{(\tilde \sigma_1(u_1), \ldots, \tilde \sigma_{k-1}(u_1, \ldots, u_{k-1}))}(x), u_k \right)
$$
and 
$$
\beta: \R^d \times (0,1) \to \R, \ (x,u) \mapsto \psi^{i_k}_x(u). 
$$
By induction hypothesis, for $1 \leq j \leq k-1$, $\tilde \sigma_j$ is differentiable on $\Xi^{\mathrm{c}}_z \times (\Xi^{\mathrm{c}})^{j-1}$. Together with smoothness of $(F^i)_{i \in E}$, this implies that $\alpha_k$ is differentiable on $\Xi^{\mathrm{c}}_z \times (\Xi^{\mathrm{c}})^{k-1}$. Note that $\alpha_k$ maps $\Xi^{\mathrm{c}}_z \times (\Xi^{\mathrm{c}})^{k-1}$ to $\R^d \times \Xi^{\mathrm{c}}$. Moreover, $\beta$ is $C^1$ on $\R^d \times \Xi^{\mathrm{c}}$ by Lemma \ref{lem:psiC1}. Hence, $\tilde \sigma_k$ is differentiable on $\Xi^{\mathrm{c}}_z \times (\Xi^{\mathrm{c}})^{k-1}$. 

For all $(u_1, \ldots, u_{m-1}) \in \Xi^{\mathrm{c}}_z \times (\Xi^{\mathrm{c}})^{m-2}$, the Jacobian matrix $D H(u_1, \ldots, u_{m-1})$ is triangular, so 
\begin{equation}    \label{eq:det_D_h} 
\det D H(u_1, \ldots, u_{m-1}) = \prod_{k=1}^{m-1} \partial_{u_k} \tilde \sigma_k(u_1, \ldots, u_k) = \psi_z'(u_1) \prod_{k=2}^{m-1} \left(\psi_{x_k}^{i_k} \right)'(u_k),  
\end{equation} 
where 
$$
x_k := \Phi^{(i, \ldots, i_{k-1})}_{(\tilde \sigma_1(u_1), \ldots, \tilde \sigma_{k-1}(u_1, \ldots, u_{k-1}))}(x), \quad 2 \leq k \leq m-1. 
$$
By Lemma~\ref{lem:psiC1} and Remark~\ref{rm:snon0}, all the terms in the product are negative, hence the right-hand side of \eqref{eq:det_D_h} is nonzero and the claim is proved. 
\epf

\section{Proofs Propositions \ref{prop:Fiaccessible} and \ref{prop:acces}: characterisation of accessible points}
\label{sec:proofaccess}
We start by the proof of Proposition~\ref{prop:acces}, because we use this result to prove Proposition~\ref{prop:Fiaccessible}. More precisely, we prove that when $0$ is in the support of the $\mu^i$, then if $x^*$ is $\{F^i\}$ - accessible from $x$, then for all $i^* \in E$, there is $s^* > 0$ such that $(x^*, s^*, i^*)$ is $\{F^i, \mu^i, Q\}$ - accessible from $z = (x,0,i)$ for all $i \in E$.
\subsection{Proof of Proposition \ref{prop:acces}}
\subsubsection{$(P_t)$ - accessible points are $\{F^i, \mu^i, Q\}$-accessible }
Suppose first that $z^*$ is $(P_t)$-accessible from $z = (x,s,i)$. To show that $z^*$ is $\{F^i, \mu^i, Q\}$-accessible from $z$, we need to verify that every neighborhood of $z^*$ in $K$ contains a point from $\gamma^+(z)$. Let $U$ be a neighborhood of $z^*$ in $K$. Since $z^*$ is $(P_t)$-accessible from $z$, there is $t > 0$ such that 
$$
0 < P_t(z,U) = \PP_z(Z_t \in U) = \sum_{m=1}^{\infty} \PP_z(Z_t \in U, T_{m-1} \leq t < T_m).   
$$
Hence, there is $m \geq 1$ such that $\PP_z(Z_t \in U, T_{m-1} \leq t < T_m) > 0$. Assume first that 
$$
0 < \PP_z(Z_t \in U, t < T_1) = \id_U(\varphi_t^i(x), s+t, i) \PP_z(t < T_1), 
$$
so in particular 
$$
\hat z := (\varphi_t^i(x), s+t, i) \in U. 
$$
We choose $(t,i)$ as the control sequence. To see that $(t,i)$ is admissible with respect to $z$, we only need to verify that the intersection of $(s+t, \infty)$ and the support of $\mu_{\varphi_{-s}^i(x),0}^i$ is nonempty. To this end, it is enough to show that 
$$
\mu_{\varphi_{-s}^i(x), 0}^i((s+t, \infty)) > 0. 
$$
We have 
\begin{align*}
0 <& \PP_z(T_1 > t)  = \PP(\psi_z(U_1) > t) \\
=& \PP(\inf\{r \geq 0:  G_z(r) \leq U_1\} > t) = \PP (G_z(t) > U_1) \\
=& \PP \biggl( \frac{ G^i(\int_{-s}^t \lambda^i(\varphi_r^i(x)) \ dr)}{ G^i(\int_{-s}^0 \lambda^i(\varphi_r^i(x)) \ dr)} > U_1 \biggr) 
= \frac{ G^i(\int_{-s}^t \lambda^i(\varphi_r^i(x)) \ dr)}{ G^i(\int_{-s}^0 \lambda^i(\varphi_r^i(x)) \ dr)}. 
\end{align*}
As a result, 
\begin{align*}
0 <&  G^i \biggl(\int_{-s}^t \lambda^i(\varphi_r^i(x)) \ dr \biggr) \\
=&  G^i \biggl(\int_0^{s+t} \lambda^i(\varphi_u^i(\varphi_{-s}^i(x))) \ du \biggr) = G_{\varphi_{-s}^i(x),0}^i(s+t) = \mu_{\varphi_{-s}^i(x),0}^i((s+t, \infty)). 
\end{align*}
The point $\hat z$ is then also an element of $\gamma^+(z)$.  

Now suppose that $\PP_z(Z_t \in U, t < T_1) = 0$. Then there is $m \geq 2$ such that 
$$
\PP_z(Z_t \in U, T_{m-1} \leq t < T_m) > 0. 
$$
Expanding the expression on the left further, we see that there is $(i_2, \ldots, i_m) \in E^{m-1}$ such that 
\begin{equation}   \label{eq:positive_prob} 
\PP_z(Z_t \in U, T_{m-1} \leq t < T_m, B) > 0, 
\end{equation} 
where 
$$
B := \{\mathbf{I}_1 = i_2, \ldots, \mathbf{I}_{m-1} = i_m\}. 
$$ 
It will also be convenient to denote the index $i$ from the starting point $z$ by $i_1$ from now on. We shall iteratively construct a sequence $(s_1, \ldots, s_m) \in (0,\infty)^m$ and derive a control sequence from it that is admissible with respect to $z$. We iteratively define the random variables 
$$
\tilde S_1 := \psi_z(U_1) 
$$
and for $2 \leq k \leq m$
$$
\tilde S_k := \psi^{i_k}_{\Phi_{(\tilde S_1, \ldots, \tilde S_{k-1})}^{(i_1, \ldots, i_{k-1})}(x), 0}(U_k). 
$$
We will show by induction that, conditional on the event $B$, $\tilde S_k = S_k$ for $1 \leq k \leq m$. In the proof, we will use the fact that, conditional on $B$, 
$$
\mathbf{X}_k = \Phi_{(S_1, \ldots, S_k)}^{(i_1, \ldots, i_k)}(x), \quad 1 \leq k \leq m.  
$$
By definition, $\tilde S_1 = S_1$. In the induction step, suppose that there is $k \in \{1, \ldots, m-1\}$ such that, conditional on the event $B$, $\tilde S_j = S_j$ for $1 \leq j \leq k$. Then  
\begin{align*}
S_{k+1} =& \psi_{\mathbf{Z}_k}(U_{k+1}) = \psi_{\mathbf{X}_k,0}^ {\mathbf{I}_k}(U_{k+1}) \\
=& \psi^{i_{k+1}}_{\Phi_{(S_1, \ldots, S_k)}^{(i_1, \ldots, i_k)}(x), 0}(U_{k+1}) 
= \psi^{i_{k+1}}_{\Phi_{(\tilde S_1, \ldots, \tilde S_k)}^{(i_1, \ldots, i_k)}(x), 0}(U_{k+1}) 
= \tilde S_{k+1}. 
\end{align*}
As a result, 
\begin{equation}   \label{eq:equal_J_M_B_set} 
\{Z_0 = z, Z_t \in U, T_{m-1} \leq t < T_m, B\} = J \cap M \cap B, 
\end{equation} 
where 
\begin{align*}
J :=& \{(\varphi^{i_m}_{t - (\tilde S_1 + \ldots + \tilde S_{m-1})}(\Phi_{(\tilde S_1, \ldots, \tilde S_{m-1})}^{(i_1, \ldots, i_{m-1})}(x)), t - (\tilde S_1 + \ldots + \tilde S_{m-1}), i_m) \in U\}, \\
M :=& \{\tilde S_1 + \ldots + \tilde S_{m-1} \leq t < \tilde S_1 + \ldots + \tilde S_m\}. 
\end{align*}
In addition, using the independence of $(U_k)_{k \geq 1}$ and $(V_k)_{k \geq 1}$ and the fact that $(\tilde S_1, \ldots, \tilde S_m)$ is measurable with respect to the $\mathcal{F}(U)$, we obtain, conditioning on $\mathcal{F}(U)$, that 
%Together with~\eqref{eq:positive_prob},~\eqref{eq:equal_J_M_B_set}, and $J, M \in \Uc$, this shows that \textcolor{blue}{on peut éventuellement omettre les détails de cette preuve, en disant juste que l'on utilise l'indépendance de $U_k$ et des $V_k$, ainsi que le fait que les $\tilde{S}_k$ sont mesurables par rapport à $\mathcal{U} $ - que j'avais d'ailleurs proposé de noter $\mathcal{F}_m(U)$ dans l'introduction}
\begin{equation}   \label{eq:prob_J_M_B} 
0 < \PP(J,M,B) = \E[\id_{J,M} \prod_{j=1}^{m-1} q_{i_j, i_{j+1}}(\Phi_{(\tilde S_1, \ldots, \tilde S_j)}^{(i_1, \ldots, i_j)}(x))] = \E[F(\tilde S_1, \ldots, \tilde S_m)], 
\end{equation} 
where 
$$
F: [0,\infty)^m \to \R, \ (s_1, \ldots, s_m) \mapsto \id_{J^{\sigma}, M^{\sigma}}(s_1, \ldots, s_m) \prod_{j=1}^{m-1} q_{i_j, i_{j+1}}(\Phi_{(s_1, \ldots, s_j)}^{(i_1, \ldots, i_j)}(x))
$$
and  
\begin{align*}
J^{\sigma} :=& \{\sbf \in [0,\infty)^m: \ (\varphi^{i_m}_{t - (s_1 + \ldots + s_{m-1})}(\Phi_{(s_1, \ldots, s_{m-1})}^{(i_1, \ldots, i_{m-1})}(x)), t - (s_1 + \ldots + s_{m-1}), i_m) \in U\}, \\
M^{\sigma} :=& \{\sbf \in [0,\infty)^m: \ s_1 + \ldots + s_{m-1} \leq t < s_1 + \ldots + s_m\}. 
\end{align*}
By construction of $(\tilde S_1, \ldots, \tilde S_m)$ and independence of the $U_k, k=1,...,m,$ we obtain, conditioning successively on $U_1, U_2, \ldots, U_m$ that
\[
\E[F(\tilde S_1, \ldots, \tilde S_m)] = \int \cdots \int F(s_1, \ldots, s_m) d \mu_{\Phi^{(i_1, \ldots i_{m-1})}_{(s_1, \ldots, s_{m-1})}(x), 0}^{i_m} (s_m) \ldots d \mu_{\Phi^{i_1}_{s_1}(x), 0}^{i_2}(s_2) d\mu_z(s_1). 
\]
Hence, since $\E[F(\tilde S_1, \ldots, \tilde S_m)] > 0$, one can iteratively choose $s_1, s_2, \ldots, s_m$ such that $s_1 \in \supp(\mu_z)$, for $k=2, \ldots, m$, $s_k \in \supp(\mu_{\Phi^{(i_1, \ldots i_{k-1})}_{(s_1, \ldots, s_{k-1})}(x), 0}^{i_k})$ and $F(s_1, \ldots, s_m) > 0$.
This implies in particular that $(s_1, \ldots, s_m) \in J^{\sigma} \cap M^{\sigma}$, so $s_1 + \ldots + s_{m-1} \leq t$. We distinguish between two cases. If $s_1 + \ldots + s_{m-1} < t$, then 
$$
((s_1, \ldots, s_{m-1}, t - (s_1 + \ldots + s_{m-1})), (i_1, \ldots, i_m))
$$
is a control sequence. It is also admissible with respect to $z$: Since $s_1 \in \supp(\mu_z)$, we have $s + s_1 \in \supp(\mu_{\varphi^i_{-s}(x), 0}^i)$. Condition (b) follows from 
$$
q_{i_j, i_{j+1}}(\Phi_{(s_1, \ldots, s_j)}^{(i_1, \ldots, i_j)}(x)) > 0, \quad 1 \leq j \leq m-1,  
$$
and the fact that $s_k \in \supp(\mu^{i_k}_{\Phi_{(s_1, \ldots, s_{k-1})}^{(i_1, \ldots, i_{k-1})}(x), 0})$ for $2 \leq k \leq m-1$. As far as Condition (c) is concerned, notice that $t - (s_1 + \ldots + s_{m-1})  < s_m$ because $(s_1, \ldots, s_m) \in M^{\sigma}$. Then $s_m$ lies in the intersection of $(t - (s_1 + \ldots + s_{m-1}), \infty)$ and the support of $\mu^{i_m}_{\Phi_{(s_1, \ldots, s_{m-1})}^{(i_1, \ldots, i_{m-1})}(x), 0}$. The point 
$$
\hat z := (\Phi_{(s_1, \ldots, s_{m-1}, t - (s_1 + \ldots + s_{m-1}))}^{(i_1, \ldots, i_m)}(x), t - (s_1 + \ldots + s_{m-1}), i_m) 
$$
is then an element of $\gamma^+(z)$. Since $(s_1, \ldots, s_m) \in J^{\sigma}$, we also have $\hat z \in U$. 

Now assume that $s_1 + \ldots + s_{m-1} = t$. Fix $\eps \in (0, s_m)$. Then 
$$
((s_1, \ldots, s_{m-1}, \eps), (i_1, \ldots, i_m)) 
$$
is an admissible control sequence with respect to $z$. Let 
$$
\hat z := (\Phi_{(s_1, \ldots, s_{m-1}, \eps)}^{(i_1, \ldots, i_m)}(x), \eps, i_m) \in \gamma^+(z).  
$$
Since $U$ is open and since $(s_1, \ldots, s_m) \in J^{\sigma}$, continuity of the flows implies $\hat z \in U$ for $\eps$ sufficiently small.

\subsubsection{$\{F^i, \mu^i, Q\}$-accessible points are $(P_t)$-accessible }     \label{ssec:F_mu_Q_implies_P}
We start with a lemma.
\begin{lemma}
\label{lem:supportmuz}
Let $z_0 = (x_0,s_0,i) \in K$ and $s_1$ such that $s_1$ lies in the support of $\mu_{z_0}$. Then, for all $\delta > 0$, there exists a neighbourhood $V$ of $(x_0,s_0)$ in $K^i$ and a constant $c > 0$ such that, for all $(x,s) \in V$, $\mu_{(x,s,i)}((s_1 - \delta, s_1 + \delta])\geq c$.
\end{lemma}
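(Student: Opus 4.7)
The plan is to reduce the statement to one about $\mu^i$ itself via the time change
\[
\gamma_{x,s}(t) := \int_{-s}^{t} \lambda^i(\varphi^i_r(x))\,dr.
\]
Directly from the definition of $G^i_{x,s}$, one obtains, for every $(x,s,i) \in K$,
\[
\mu_{(x,s,i)}\big((s_1-\delta, s_1+\delta]\big) = \frac{\mu^i\big((\gamma_{x,s}(s_1-\delta),\, \gamma_{x,s}(s_1+\delta)]\big)}{G^i(\gamma_{x,s}(0))} \geq \mu^i\big((\gamma_{x,s}(s_1-\delta),\, \gamma_{x,s}(s_1+\delta)]\big),
\]
since the denominator is at most $1$. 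It therefore suffices to produce an open set $W \subset \mathbb{R}_+$ and a neighbourhood $V$ of $(x_0, s_0)$ in $K^i$ such that $\mu^i(W) > 0$ and $W \subset (\gamma_{x,s}(s_1-\delta),\, \gamma_{x,s}(s_1+\delta)]$ for every $(x,s) \in V$.

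Because $\lambda^i \geq \lambda_{\min} > 0$, the map $t \mapsto \gamma_{z_0}(t)$ is a strictly increasing homeomorphism, and the identity above shows that the pushforward of $\mu_{z_0}$ under $\gamma_{z_0}$ is a positive multiple of the restriction of $\mu^i$ to $[\gamma_{z_0}(0), \bar{t}^i)$. Hence the hypothesis $s_1 \in \operatorname{supp}(\mu_{z_0})$ transfers cleanly to $t^\sharp := \gamma_{z_0}(s_1) \in \operatorname{supp}(\mu^i)$. Fix $\eta := \lambda_{\min}\delta/4$ and set $W := (t^\sharp - \eta,\, t^\sharp + \eta)$; the support property then gives $c := \mu^i(W) > 0$.

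It remains to check the inclusion. The map $(x,s) \mapsto \gamma_{x,s}(s_1)$ is jointly continuous on $K^i$, so we can choose an open $V \subset K^i$ containing $(x_0, s_0)$ on which $|\gamma_{x,s}(s_1) - t^\sharp| < \lambda_{\min}\delta/4$. Combined with the deterministic bounds $\gamma_{x,s}(s_1+\delta) - \gamma_{x,s}(s_1) \geq \lambda_{\min}\delta$ and $\gamma_{x,s}(s_1) - \gamma_{x,s}(s_1-\delta) \geq \lambda_{\min}\delta$ (both from $\lambda^i \geq \lambda_{\min}$), this gives
\[
\gamma_{x,s}(s_1-\delta) \leq t^\sharp - \tfrac{3\lambda_{\min}\delta}{4} < t^\sharp - \eta, \qquad \gamma_{x,s}(s_1+\delta) \geq t^\sharp + \tfrac{3\lambda_{\min}\delta}{4} > t^\sharp + \eta,
\]
and hence $W \subset (\gamma_{x,s}(s_1-\delta),\, \gamma_{x,s}(s_1+\delta)]$ throughout $V$. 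Putting everything together yields $\mu_{(x,s,i)}\big((s_1-\delta, s_1+\delta]\big) \geq c$ on $V$, as required.

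I do not foresee any serious obstacle. The only minor point of care is the boundary case in which $t^\sharp$ lies near the right edge $\bar{t}^i$ of the support of $\mu^i$; but since $t^\sharp \in \operatorname{supp}(\mu^i)$, $\eta$ can always be chosen small enough that $W$ still carries positive $\mu^i$-mass, so no modification of the argument is needed.
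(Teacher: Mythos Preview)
Your proof is correct and follows essentially the same approach as the paper: both reduce the question to one about $\mu^i$ via the time change $a(x,s,t)=\int_{-s}^{t}\lambda^i(\varphi^i_r(x))\,dr$, observe that $s_1\in\operatorname{supp}(\mu_{z_0})$ is equivalent to $a(x_0,s_0,s_1)\in\operatorname{supp}(\mu^i)$, and then use continuity of $a$ together with strict positivity of $\lambda^i$ to trap a fixed positive-mass interval for $\mu^i$ inside $(a(x,s,s_1-\delta),\,a(x,s,s_1+\delta)]$ uniformly over a neighbourhood of $(x_0,s_0)$. The only cosmetic difference is that the paper uses continuity of $a$ at the two endpoints $s_1\pm\delta$, whereas you use continuity at the midpoint $s_1$ combined with the uniform bound $\lambda^i\ge\lambda_{\min}$; both work equally well.
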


\begin{proof}
First, we prove that $s_1$ is in the support of $\mu_{z_0}$ if and only if $a(x_0,s_0,s_1)$ is in the support of $\mu^i$, where, for all $(x,s,r) \in \R^d \times \R_+ \times \R_+$, we define
\[
a(x,s,r) = \int_{-s}^{r} \lambda^i( \varphi^i_u(x) ) du.
\]
Indeed, we have that $s_1$ is in the support of $\mu_{z_0}$ if and only if, for every $\delta_1, \delta_2 > 0$, $\mu_{z_0}((s_1 - \delta_1, s_1 + \delta_2]) > 0$. By definition of $\mu_{z_0}$, this means that 
\[
  G^i \left( \int_{-s_0}^{s_1-\delta_1} \lambda^i( \varphi^i_u(x_0) ) du \right) -   G^i \left( \int_{-s_0}^{s_1+\delta_2} \lambda^i( \varphi^i_u(x_0) ) du \right) > 0,
\]
hence
\[
\mu^i\left( \left(a(x_0,s_0,s_1) - \int_{s_1-\delta_1}^{s_1} \lambda^i( \varphi^i_u(x_0) ) du, a(x_0,s_0,s_1) + \int_{s_1}^{s_1+\delta_2} \lambda^i( \varphi^i_u(x_0) ) du\right] \right) > 0.
\]
Since $\lambda^i$ is bounded away from $0$, this is equivalent to
\[
\mu^i\left( \left(a(x_0,s_0,s_1) - \varepsilon_1, a(x_0,s_0,s_1) + \varepsilon_2 \right] \right) > 0
\]
for all $\varepsilon_1, \varepsilon_2 > 0$, which means that $a(x_0,s_0,s_1)$ is in the support of $\mu^i$. Now, let $\delta > 0$ and let $ \varepsilon = \min \{ \int_{s_1 - \delta}^{s_1} \lambda^i ( \varphi^i_u(x_0) ) du; \int^{s_1 + \delta}_{s_1} \lambda^i ( \varphi^i_u(x_0) ) du \} > 0$. We have
$$
a(x_0, s_0, s_1) - a(x,s,s_1 - \delta)
= \int_{s_1 - \delta}^{s_1} \lambda^i ( \varphi^i_u(x_0) ) du + a(x_0, s_0, s_1 - \delta) - a(x,s,s_1 - \delta). 
$$
By continuity of $a$, it is thus possible to find a neighbourhood $V$ of $(x_0, s_0)$ such that, for all $(x,s) \in V$, one has $a(x_0, s_0, s_1) - a(x,s,s_1 - \delta) \geq \varepsilon/2$. Similarly, we can assume that for all $(x,s) \in V$, $a(x,s,s_1 + \delta)  - a(x_0, s_0, s_1)\geq \varepsilon/2$. Thus, for all $(x,s) \in V$, we have
\begin{align*}
\mu_{(x,s,i)} ( ( s_1 - \delta, s_1 + \delta]) & = \frac{\mu^i( ( a(x,s,s_1-\delta), a(x,s, s_1 + \delta)])}{\mu^i( (a(x,s,0), +\infty) )}\\
& \geq \mu^i( ( a(x_0,s_0,s_1)- \tfrac{\varepsilon}{2}, a(x_0,s_0,s_1)+ \tfrac{\varepsilon}{2}]) = c > 0,
\end{align*}
where $c$ is positive since $a(x_0,s_0,s_1)$ is in the support of $\mu^i$. 
\end{proof} 

Now we prove that if $z^*$ is $\{F^i, \mu^i, Q\}$-accessible, then it  is $(P_t)$-accessible. We start by showing that if $z^* \in \gamma^+(z)$ for zome $z \in K$, then for every neighbourhood $U$ of $z^*$, there exist a neighbourhood $V$ of $z$ and $t > 0$  such that, for all $z' \in V$, $P_t(z', U) > 0$. We procede by induction. For all $n \geq 0$, we consider the proposition \textbf{P($n$)} : 

\medskip 

\noindent For all  $z = (x,s,i) \in K$, for every admissible control sequence $(\sbf, \ibf)=(s_1, \ldots, s_n, i_1, \ldots, i_n)$ with respect to $z$ and for every neighbourhood $U$ of $z^* := (\Phi_{\sbf}^{\ibf}(x), s^*, i_n)$ (with $s^* = s + s_1$ if $n =1$ and $s^* = s_n$ if $n > 1$), there exist a neighbourhood $V$ of $z$ and $t > 0$ such that, for all $z' \in V$, $P_t(z', U) > 0$. 

\medskip 

We prove \textbf{P($1$)}. Let $z=(x,s,i) \in K$ and $(s_1, i_1)$ an admissible control sequence with respect to $z$. That is, $i_1 = i$ and the intersection of $(s+s_1, \infty)$ with the support of $\mu_{\varphi^i_{-s}(x),0}^i$ is nonempty. This implies that $  G_z(s_1) > 0$. Let $U$ be a neighbourhood of $z^*=\phi_{s_1}(z)=(x^*, s^*, i^*)$, with $x^* = \varphi^i_{s_1}(x)$, $s^* = s + s_1$ and $i^* = i_1$. Then, for all $z' \in K$, 
\begin{align*}
\mathbb{P}_{z'}( Z_{s_1} \in U) & \geq \mathbb{P}_{z'}\left( Z_{s_1} \in U, T_1 > s_1 \right)\\
& = \1_{\phi_{s_1}(z') \in U}   G_{z'}(s_1).
\end{align*}
By continuity of $\phi$, since $\phi_{s_1}(z)=z^* \in U$, there exists a neighbourhood $V_1$ of $z$ such that $\1_{\phi_{s_1}(z') \in U} = 1$ for all $z' \in V$. Furthermore, we have $  G_z(s_1) > 0$, which is equivalent to 
\[
\int_{-s}^{s_1} \lambda^i \left( \varphi_u^i(x) \right) du < {\bar t}^i. 
\]
By continuity of the above left quantity, there exists a neighbourhood $ V_2$ of $z$ such that, for all $z' \in V_2$, $G_{z'}(s_1) > 0$. Taking $V = V_1 \cap V_2$ and $t=s_1$ concludes the proof of \textbf{P($1$)}.

Now, assume that \textbf{P($n$)} is true for some $n \geq 1$. We prove that \textbf{P($n+1$)} is also true. Let $z = (x,s,i) \in K$ and  $(\sbf, \ibf)=(s_1, \ldots, s_n, s_{n+1}, i_1, \ldots, i_n, i_{n+1})$  an admissible control sequence with respect to $z$, of length $n+1$. It is easily seen from the definition that $(\sbf_-, \ibf_-) = (s_2, \ldots s_n, s_{n+1}, i_2, \ldots i_n, i_{n+1})$ is an admissible control sequence with respect to $z_1 = (\varphi_{s_1}^{i_1}(x), 0, i_2)$, of length $n$. Note that the point $z^*=(x^*, s^*, i^*)$ defined by 
$$
x^* = \Phi_{\sbf}^{\ibf}(x), \quad s^* =s_{n+1}, \quad i^* = i_{n+1}
$$
is reachable from $z$ with the control sequence $(\sbf, \ibf)$ and from $z_1$ with the control sequence $(\sbf_-, \ibf_-)$. Let $U$ be a neighbourhood of $z^*$. Since $(\sbf_-, \ibf_-)$ is an admissible control sequence of length $n$ with respect to $z_1$, \textbf{P($n$)}  implies that there exist a neighbourhood $V$ of $z_1$ and a time $t_1 > 0$ such that 
$P_{t_1}(z',U) > 0$, for all $z' \in V$. Now we claim that there exist a neighbourhood $W$ of $z$ and a time $t_0$ such that, for all $z'' \in W$, $P_{t_0}(z'',V) > 0$. This will imply \textbf{P($n+1$)} since for all $z'' \in W$,
\begin{align*}
P_{t_0 + t_1}(z'', U) & = \int_K P_{t_1}(z', U) P_{t_0}(z'', dz')\\
& \geq \int_V P_{t_1}(z', U) P_{t_0}(z'', dz') > 0.
\end{align*}
We prove the claim. For all $z' = (x', s', i') \in K$ with $i' = i$ and for all $t \geq 0$, we have
\begin{align*}
\mathbb{P}_{z'}( Z_t \in V) & \geq \mathbb{P}_{z'} (Z_t \in V, T_1 \leq t < T_2 )\\
& = \mathbb{E}_{z'} \left[ \1_{T_1 \leq t} \mathbb{P}_{z'}( Z_t \in V, t < T_2 | \mathcal{F}_{T_1}) \right]\\
& = \sum_{j \in E} \int_0^t q_{i,j}( \varphi_u^{i}(x') )   G_{\varphi^{i}_u(x'), 0}^j ( t - u ) \1_{\phi_{t-u}( \varphi_u^{i}(x'), 0, j) \in V} \mu_{z'}(du).\\
& \geq \int_0^t q_{i,i_2}( \varphi_u^{i}(x') )   G_{\varphi^{i}_u(x'), 0}^{i_2} ( t - u ) \1_{\phi_{t-u}( \varphi_u^{i}(x'), 0, i_2) \in V} \mu_{z'}(du).
\end{align*}
Since $(\sbf, \ibf)$ is an admissible control sequence with respect to $z$, we know that $s_1$ is in the support of $\mu_z$. Furthermore, we have $q_{i, i_2}( \varphi^i_{s_1}(x) ) > 0$. By continuity of the flow and of the map $q_{i, i_2}$, this implies that for $x'$ close to $x$ and $u$ close to $s_1$, $q_{i, i_2}( \varphi^i_{u}(x') ) > 0$. In addition, since $V$ is a neighbourhood of $z^1 = (\varphi_{s_1}^{i_1}(x), 0, i_2)$, we have that $\phi_{t-u}( \varphi_u^i(x'), 0, i_2) \in V$ for $u$ close to $s_1$, $t - u$ close to $0$ and $x'$ close to $x$. Finally, one can choose $\varepsilon > 0$ such that 
\[
\int_0^{\varepsilon} \lambda^{i_2} \left( \varphi^{i_2}_r  \circ \varphi^i_{s_1}(x) \right) dr < {\bar t}^i,
\]
that is $  G_{\varphi^i_u(x'), 0}^{i_2}(t-u) > 0$ 
for $u$ close to $s_1$, $t-u$ close to $0$ and $x'$ close to $x$. Thus, in conjunction with Lemma~\ref{lem:supportmuz}, it is possible to choose $\varepsilon > 0$ and a neighbourhood $W$ of $z$ such that, for all $z' \in W$,
\begin{align*}
\mathbb{P}_{z'}( Z_{s_1 + \frac{\varepsilon}{2}} \in V) 
& \geq \int_0^{s_1 + \frac{\varepsilon}{2}} q_{i,i_2}( \varphi_u^i(x') )   G_{\varphi^i_u(x'), 0}^{i_2} ( t - u ) \1_{\phi_{t-u}( \varphi_u^i(x'), 0, i_2) \in V} \mu_{z'}(du)\\
& \geq 
 \int_{s_1 - \frac{\varepsilon}{2}}^{s_1 + \frac{\varepsilon}{2}} q_{i,i_2}( \varphi_u^i(x') )   G_{\varphi^i_u(x'), 0}^{i_2} ( t - u ) \1_{\phi_{t-u}( \varphi_u^i(x'), 0, i_2) \in V} \mu_{z'}(du) \\
 & > 0.
\end{align*}
This concludes the proof of the claim. In particular, we have shown that every point in $\gamma^+(z)$ is $(P_t)$-accessible from $z$. 

\medskip 

Now let $z^*$ be a point in the relative closure of $\gamma^+(z)$ in $K$ and let $U$ be a neighbourhood of $z^*$. Then, there exists $z_1 \in \gamma^+(z) \cap U$. Since $z_1$ is $(P_t)$-accessible from $z$, there exists $t > 0$ such that $P_t(z,U) > 0$. This proves that $z^*$ is $(P_t)$-accessible from $z$. As this argument holds for any $z \in K$, we have proved that $\{F^i, \mu^i, Q\}$-accessibility implies $(P_t)$-accessibility.

\subsection{Proof of Proposition \ref{prop:Fiaccessible}}

We start by two lemmas. For a measure $\mu$ on $\R_+$, we denote by $\mathrm{supp} (\mu)$ its topological support. 

\begin{lemma}
\label{lem:0insupport}
Let $i \in E$ and assume that $0$ is in the support of $\mu_i$. Then, for all $x \in \R^d$ and all $\varepsilon > 0$, there exists $v \in (0, \varepsilon]$ such that $v \in \mathrm{supp}(\mu_{x,i})$.
\end{lemma}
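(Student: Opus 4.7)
The plan is to reduce the statement to the known fact that $0 \in \supp(\mu^i)$ combined with $\mu^i(\{0\}) = 0$, by exhibiting $\mu_{x,i} = \mu_{x,0}^i$ as the pushforward of $\mu^i$ under a suitable homeomorphism of $\R_+$.

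First I would introduce $\gamma_x(t) := \int_0^t \lambda^i(\varphi^i_r(x))\, dr$. By Assumption~\ref{standinglambda}, $\gamma_x$ is $C^1$ with derivative bounded below by $\lambda_{\min} > 0$, hence a strictly increasing homeomorphism of $[0,\infty)$ onto itself with $\gamma_x(0) = 0$. Using the fact that $G^i(0) = 1$ (since $\mu^i(\{0\}) = 0$), the definition of $G_{x,0}^i$ in Section~\ref{ssec:assumptions} reduces to $G^i_{x,0}(t) = G^i(\gamma_x(t))$. Consequently, if $T$ has law $\mu^i$, then $\gamma_x^{-1}(T)$ has survival function $G^i \circ \gamma_x = G_{x,0}^i$, so $\mu_{x,i}$ is exactly the pushforward of $\mu^i$ by $\gamma_x^{-1}$.

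Next I would observe that for any homeomorphism $f$ and any Borel measure $\nu$, $\supp(\nu \circ f^{-1}) = f(\supp(\nu))$, because neighbourhoods of $f(u)$ are precisely the images under $f$ of neighbourhoods of $u$. Applied here this gives the key identity
\[
\supp(\mu_{x,i}) = \gamma_x^{-1}(\supp(\mu^i)).
\]
Given $\varepsilon > 0$, set $\eta := \gamma_x(\varepsilon) > 0$. The statement to prove then reduces to showing that $(0,\eta] \cap \supp(\mu^i) \neq \emptyset$, since any $u$ in this intersection yields $v := \gamma_x^{-1}(u) \in (0,\varepsilon] \cap \supp(\mu_{x,i})$.

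Finally, I would establish $(0,\eta] \cap \supp(\mu^i) \neq \emptyset$ by a direct argument. Since $0 \in \supp(\mu^i)$, the open neighbourhood $[0,\eta)$ of $0$ in $\R_+$ satisfies $\mu^i([0,\eta)) > 0$; combined with $\mu^i(\{0\}) = 0$ (Assumption~\ref{standingmu}), we obtain $\mu^i((0,\eta)) > 0$. If we had $(0,\eta] \cap \supp(\mu^i) = \emptyset$, then $(0,\eta)$ would be contained in the open set $\R_+ \setminus \supp(\mu^i)$, which has $\mu^i$-measure zero by the standard characterization of the support, contradicting $\mu^i((0,\eta)) > 0$. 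This completes the proof; no real obstacle arises, the argument is essentially a topological change of variables.
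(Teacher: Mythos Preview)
Your proof is correct and follows essentially the same approach as the paper: both rely on the equivalence $v \in \supp(\mu_{x,i}) \Leftrightarrow \gamma_x(v) \in \supp(\mu^i)$ (which the paper cites from the proof of Lemma~\ref{lem:supportmuz} and which you derive cleanly via the pushforward-by-homeomorphism description of $\mu_{x,i}$), together with the observation that $\mu^i((0,\eta)) > 0$ forces $(0,\eta]$ to meet $\supp(\mu^i)$. The only difference is presentational: your pushforward formulation is slightly more self-contained, while the paper leans on an earlier lemma for the support characterization.
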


\begin{proof}
First, let us prove that for all $\varepsilon > 0$, there is $t \in (0, \varepsilon]$ such that $t \in \mathrm{supp}(\mu_{i})$. Since $0$ belongs to the support of $\mu_i$ and $\mu_i(\{0\}) = 0$, we have $\mu_i( (0, \varepsilon) ) > 0$. Now, recall that the support of $\mu_i$ is the complement of the union of the open sets with measure $0$. Hence,  since $(0, \varepsilon)$ is an open set with positive measure, the intersection of $(0,\varepsilon)$ and the support of $\mu_i$ has to be nonempty. Now, let $\varepsilon > 0$ and $x \in \R^d$. Recall (see proof of Lemma \ref{lem:supportmuz}) that $v \in \mathrm{supp}(\mu_{x,i})$ if and only if $\int_0^v \lambda_i(\varphi_u^i(x) ) du$ is in the support of $\mu_i$. Now, we know that there exists $t \in \mathrm{supp}(\mu_i)$ such that $t \leq \int_0^{\varepsilon} \lambda_i(\varphi_u^i(x) ) du$. Furthermore, there exists $v \leq \varepsilon$ such that $t = \int_0^{v} \lambda_i(\varphi_u^i(x) ) du$. In particular, $v$ belongs to the support of $\mu_{x,i}$, which concludes the proof. 
\end{proof}
The secound lemma controls the distance between two composite flows following the same sequence of states but with different holding times. For a sequence $\ubf =(u_1, \ldots, u_m) \in \R^m$, we denote $\|u\| = \sum_{i=1}^m |u_i|$ its $L^1$-norm.
\begin{lemma}
\label{lem:distancecomposite}
Let $M$ be a compact subspace of $\R^d$ and $T > 0$. Then, there exists positive constants $C_M$ and $L_M$ such that, for all $x, y \in M$, for all $m \geq 1$,  $\ibf \in E^m$,  $\ubf, \vbf \in \R_+^m$ such that $\| u \|, \|v \| \leq T$, we have
\[
\| \Phi_{\ubf}^{\ibf}(x) - \Phi_{\vbf}^{\ibf}(y) \| \leq C_M e^{L_M T} \left( \| u - v \| + \|x-y\| \right).
\]
\end{lemma}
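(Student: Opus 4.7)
The plan is a Gronwall-type induction on the number of switches, after localizing to a compact set on which each $F^i$ is uniformly Lipschitz and bounded.

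First, I would enlarge $M$ to a compact (and, without loss of generality, convex) set $\hat M \subset \R^d$ that contains every intermediate point of every switched trajectory starting in $M$ with total flow time at most $T$. More precisely, $\hat M$ should contain the reach set
\[
\mathcal{R}_T(M) := \{\Phi_{\ubf}^{\ibf}(z) : z \in M,\ k \in \N,\ \ibf \in E^k,\ \ubf \in \R_+^k,\ \|\ubf\| \leq T\}.
\]
Relative compactness of $\mathcal{R}_T(M)$ follows from Assumption~\ref{hyp:standingF} together with joint continuity of $(t,x) \mapsto \varphi^i_t(x)$: for any compact $K$ and $s \leq T$ the image $\varphi^i([0,s] \times K)$ is compact, and one can iterate this while exploiting that the constraint $\|\ubf\| \leq T$ forces the total (not just individual) flow time to stay bounded across the whole switching sequence. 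This localization is the only subtle point in the argument, and I expect it to be the main obstacle.

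Given such a convex compact $\hat M$, set
\[
C_M := \max_{i \in E} \sup_{x \in \hat M} \|F^i(x)\|, \qquad L_M := \max_{i \in E} \sup_{x \in \hat M} \|DF^i(x)\|;
\]
both are finite by smoothness of $F^i$ and compactness of $\hat M$. Standard Gronwall arguments for single-flow comparison then give, for all $a,b \in \hat M$, $i \in E$, $u,v \in [0,T]$ such that the $\varphi^i$-trajectories $\{\varphi^i_r(a),\varphi^i_r(b)\}_{r \leq \max(u,v)}$ remain in $\hat M$,
\[
\|\varphi^i_u(a) - \varphi^i_u(b)\| \leq e^{L_M u}\|a-b\|, \qquad \|\varphi^i_u(a) - \varphi^i_v(a)\| \leq C_M\,|u-v|.
\]

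With $X_k := \Phi_{(u_1,\ldots,u_k)}^{(i_1,\ldots,i_k)}(x)$, $Y_k := \Phi_{(v_1,\ldots,v_k)}^{(i_1,\ldots,i_k)}(y)$ and $S_k := u_1+\cdots+u_k$, I would prove by induction on $k$ the telescoped bound
\[
\|X_k - Y_k\| \leq e^{L_M S_k}\|x-y\| + C_M \sum_{j=1}^{k} e^{L_M(S_k - S_j)}\,|u_j - v_j|.
\]
The inductive step follows from the decomposition
\[
X_{k+1} - Y_{k+1} = \bigl[\varphi^{i_{k+1}}_{u_{k+1}}(X_k) - \varphi^{i_{k+1}}_{u_{k+1}}(Y_k)\bigr] + \bigl[\varphi^{i_{k+1}}_{u_{k+1}}(Y_k) - \varphi^{i_{k+1}}_{v_{k+1}}(Y_k)\bigr]
\]
combined with the two single-flow estimates. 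Using $S_k \leq T$ and $S_k - S_j \leq T$ to bound every exponential by $e^{L_M T}$ and taking $k=m$ yields
\[
\|X_m - Y_m\| \leq e^{L_M T}\|x-y\| + C_M e^{L_M T}\,\|\ubf-\vbf\|,
\]
which is precisely the lemma after absorbing $\max(1, C_M)$ into the final constant.
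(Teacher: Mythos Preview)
Your proof is correct and follows essentially the same route as the paper: induction on the number of flow segments, using the single-flow Gronwall estimates $\|\varphi^i_u(a)-\varphi^i_u(b)\|\le e^{L_M u}\|a-b\|$ and $\|\varphi^i_u(a)-\varphi^i_v(a)\|\le C_M|u-v|$, and the same telescoping decomposition of $X_{k+1}-Y_{k+1}$ to obtain the sharpened bound $\|X_k-Y_k\|\le e^{L_M S_k}\|x-y\|+C_M\sum_{j=1}^k e^{L_M(S_k-S_j)}|u_j-v_j|$. Your treatment of the localization step is in fact more careful than the paper's: the paper takes as its compact set only the union of the \emph{single}-flow images $\bigcup_i \varphi^i(M\times[0,T])$, whereas applying the single-flow estimates at the intermediate points $X_k,Y_k$ really requires a set that is closed under the full switched dynamics up to total time $T$, which is precisely your reach set $\mathcal{R}_T(M)$.
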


\begin{proof}
We prove by induction on the length $m$ of the sequence $\ibf$ the following stronger  result. For all $x, y \in M$, for all $m \geq 1$,  $\ibf \in E^m$,  $\ubf, \vbf \in \R_+^m$ such that $\| u \|, \|v \| \leq T$, we have
\[
\| \Phi_{\ubf}^{\ibf}(x) - \Phi_{\vbf}^{\ibf}(y) \| \leq e^{L_M \|\ubf\|} \|x- y \| + C_M\left( \sum_{i=1}^{m} |u_i - v_i| e^{\sum_{j=i+1}^m u_j} \right),
\]
where $\sum_{j=m+1}^m u_j = 0$. 
 For $m=1$, we have to prove that for all $x,y, \in M$ and $u,v \in [0, T]$,
\[
\| \varphi^i_u(x) - \varphi^i_v(x) \| \leq e^{L_M u}\|x-y\|  + C_M | u - v |,  
\]
for some $C_M$ and $L_M$. Since $M \times [0, T]$ is compact and $(x,t) \mapsto \varphi^i_t(x)$ is continuous, the set $\varphi^i ( M \times [0, T])$ is compact for all $i$. We let $M_T$ be the union of the $\varphi^i ( M \times [0, T])$, for $i \in E$. This is a compact subset of $\R^d$. Since the $F^i$ are smooth, there exists constants $C_M$ and $L_M$ such that, for all $x \in M_T$, $\|F^i(x)\| \leq C_M$ and $\| DF^i(x) \| \leq L_M$ for all $i \in E$. In particular, since 
\[
\varphi^i_t(x) = x + \int_0^t F^i ( \varphi_u^i(x) ) du,
\]
one has 
\[
\| \varphi^i_u(x) - \varphi^i_v(x)\| \leq C_M | u - v |
\]
and
\[
\| \varphi^i_u(x) - \varphi^i_u(y) \| \leq \| x - y \| + L_M \int_0^u \| \varphi^i_s(x) - \varphi^i_s(y) \| ds,
\]
which,  by Gronwall Lemma and the triangular inequality, implies the result for $m=1$.

Now assume the result is true for some $m \geq 1$. Let $\ibf = (i_1, \ldots, i_m, i_{m+1}) \in E^{m+1}$, $\ubf = (u_1, \ldots, u_m, u_{m+1}), \vbf = ( v_1, \ldots, v_m, v_{m+1}) \in \R_+^{m+1}$ with $\| u \|, \|v\| \leq T$. Define $\ibf_-$, $\ubf_-$, $\vbf_-$ from $\ibf, \ubf, \vbf$ by withdrawing the last component. Then
\begin{align*}
\| \Phi_{\ubf}^{\ibf}(x) - \Phi_{\vbf}^{\ibf}(y) \| & = \| \varphi^{i_{m+1}}_{u_{m+1}} \circ \Phi_{\ubf_-}^{\ibf_-}(x) -\varphi^{i_{m+1}}_{v_{m+1}} \circ \Phi_{\vbf_-}^{\ibf_-}(y) \| \\
& \leq e^{K_M u_{m+1}} \|  \Phi_{\ubf_-}^{\ibf_-}(x) - \Phi_{\vbf_-}^{\ibf_-}(y) \| + C_M |u_{m+1} - v_{m+1}|
\end{align*}
The induction hypothesis implies that 
\[
\|  \Phi_{\ubf_-}^{\ibf_-}(x) - \Phi_{\vbf_-}^{\ibf_-}(y) \| \leq  e^{L_M \|\ubf_-\|} \|x- y \| + C_M\left( \sum_{i=1}^{m} |u_i - v_i| e^{\sum_{j=i+1}^m u_j} \right),
\]
hence
\[
e^{K_M u_{m+1}} \|  \Phi_{\ubf_-}^{\ibf_-}(x) - \Phi_{\vbf_-}^{\ibf_-}(y) \| + C_M |u_{m+1} - v_{m+1}| \leq e^{L_M \|\ubf\|} \|x- y \| + C_M\left( \sum_{i=1}^{m+1} |u_i - v_i| e^{\sum_{j=i+1}^{m+1} u_j} \right),
\]
which concludes the proof.
\end{proof}

We now pass to the proof of Proposition \ref{prop:Fiaccessible}. First, note that by definition of $\{F^i, \mu^i, Q\}$ - accessible, if $(x^*, i^*, s^*)$ is $\{F^i, \mu^i, Q\}$ - accessible from $z=(x,s,i)$, then $x^*$ is $\{F^i\}$ - accessible from $x$. Hence, to prove Proposition \ref{prop:Fiaccessible}, it is sufficient to prove that for all $z = (x,s,i) \in K$, all  $x^*$ that are attainable from $x$, and all $i^* \in E$, there exists $s^* > 0$ such that $z^*=(x^*, i^*, s^*)$ lies in the relative closure of $\gamma^+(z)$ in $K$. Let $(\ibf, \sbf)=(i_1, \ldots, i_m; s_1, \ldots, s_m)$ a control sequence such that $x^* = \Phi^{\ibf}_{\vbf}(x)$. We first prove the result in the case where $z=(x, i_1, 0)$. The idea is the following. Since the process can jumps from one state to the other at arbitrary small times, we can add transitions in the sequence $\ibf$ without mooving too far away the resulting point from $x^*$. We will provide the algorithm to do so, but first we need a bit of notations. Let $i, j \in E$ and $y \in \R^d$. Since the matrix $Q(y)$ is irreducible, there exists $i = i_1, \ldots, i_n =j$ such that $q_{i_l, i_{l+1}}(y) > 0$ for $l= 1, \ldots, n-1$. Furthermore, $n$ can be chosen less or equal to $N+1$, where $N$ is the cardinality of $E$. For a control  sequence $(\jbf, \ubf)$ of size $n$ and $y \in \R^d$, we set $y_1 = y$, and $y_{k+1} = \varphi^{j_{k+1}}_{u_{k+1}}(y_k)$ for $k=1 \ldots, n$. Note that $y_{n+1} = \Phi^{\jbf}_{\ubf}(y)$. Finally, for $i, j \in E$, set $j_0 = i$,  $j_{n+1} = j$ and $p_{i,j}(y,\jbf, \ubf) = \prod_{k=0}^{n} q_{j_k, j_{k+1}}(y_{k+1})$. We introduce the following set 
\[
\mathbb{T}_{i,j}(y) = \{ (\jbf, \ubf) \in E^n \times (0, +\infty)^n,   0 \leq n \leq N \: :    p_{i,j}(y,\jbf, \ubf) > 0; \: u_k \in \mathrm{supp}(\mu_{y_k,j_k}) \}.
\]
Since for $\ubf = \mathbf{0}$, $y_k = y$ for all $k$, there exists $\jbf$ such that $ p(y,\jbf, \mathbf{0}) > 0$. Furthermore, by continuity of  $y \mapsto Q(y)$,  for all $\varepsilon > 0$ sufficiently small, if  $ p(y,\jbf, \mathbf{0}) > 0$, then by Lemma \ref{lem:0insupport}, one can fin $\ubf$ such that $\|\ubf\| \leq \varepsilon$ and $(\jbf, \ubf) \in \mathbb{T}_{i,j}(y)$.

Let $x \in \R^d$ and  $(\ibf, \sbf) \in E^m \times (0, +\infty)^m$. Let $T = 2 (s_1 + \ldots + s_m)$ and $M$ the closed ball of center $x$ and radius $1$. By Lemma \ref{lem:distancecomposite}, there exists $C, L > 0$ such that, for all $y, y' \in M$, for all $n \geq 1$,  $\jbf \in E^n$,  $\ubf, \vbf \in \R_+^n$ such that $\| u \|, \|v \| \leq T$, we have
\[
\| \Phi_{\ubf}^{\ibf}(y) - \Phi_{\vbf}^{\ibf}(y') \| \leq C e^{L T} \left( \| u - v \| + \|y-y'\| \right).
\]
In particular, 
\[
\| \Phi_{\ubf}^{\ibf}(x) - \Phi_{\vbf}^{\ibf}(x) \| \leq C e^{L T}  \| u - v \| 
\]
Let $\varepsilon > 0$, we set 
\[
h_{\sbf}( \varepsilon) = \frac{\varepsilon}{(m+1) C e^{LT}}.
\]
Without loss of generality, we may assume that $h_{\sbf}( \varepsilon) \leq s_k$, for $k = 1 \ldots, m$. Finally, for $j \in E$, let $S(j,\sbf, \varepsilon) > 0$ be a point in the support of $\mu_j$ such that $S(j,\sbf, \varepsilon) \leq \lambda_{\min} h_{\sbf}( \varepsilon)$ (such a point exists by Lemma \ref{lem:0insupport}). Let $y \in \R^d$, and $S > 0$, and set 
\[
t(y,j,S) = \inf \{ t \geq 0 \: : \int_0^t \lambda_j( \varphi_u^j(y) ) du \geq S \};
\]
then $t(y,j,S) \in [ \frac{S}{\lambda_{\max}}; \frac{S}{\lambda_{\min}} ]$. We denote $t_{\sbf, \varepsilon}(y,j) = t(y,j, S(j,\sbf, \varepsilon))$. By construction of $S(j,\sbf, \varepsilon)$, we have $t_{\sbf, \varepsilon}(y,j) \in  [ \frac{ S(j,\sbf, \varepsilon)}{\lambda_{\max}}; h_{\sbf}( \varepsilon)]$ and $t_{\sbf, \varepsilon}(y,j) $ belongs to the support of $\mu_{y,j}$. With all these notations, we can describe Algorithm \ref{algo:1} and proves the following proposition, which will imply Proposition \ref{prop:Fiaccessible}.
\begin{algorithm}
\label{algo:1}
\caption{Construction of a reachable point arbitrary close to an attainable point}
\begin{algorithmic}
\REQUIRE $x \in \R^d$, $m \geq 1$, $(\ibf, \sbf) \in E^m \times (0, +\infty)^m$, $\varepsilon > 0$
\STATE $k = 1$, $y = x$, $\jbf = \{\}$, $\vbf = \{\}$
\FOR{$l=1, \ldots, m$} 
\STATE $\tilde{t}=0$, 
\WHILE{$\tilde{t} < s_l -  h_{\sbf}(\varepsilon)$ } 
\STATE  $t \leftarrow t_{\sbf, \varepsilon}(y,i_l)$
\STATE $\vbf \leftarrow (\vbf, t)$, $\jbf \leftarrow (\jbf, i_l)$, $y \leftarrow \varphi^{i_l}_t(y)$
\IF{  $ t + \tilde t \leq s_l - h_{\sbf}( \varepsilon)$}
\STATE pick $(\jbf_1, \ubf) \in \mathbb{T}_{i_l,i_l}(y)$ with $\|\ubf\| \leq h_{\sbf}(\varepsilon) 2^{-k}$
\STATE $\vbf \leftarrow (\vbf, \ubf)$, $\jbf \leftarrow (\jbf, \jbf_1)$, $y \leftarrow \Phi^{\jbf_1}_{\ubf}(y)$
\ENDIF
\STATE $k \leftarrow k+1$, $\tilde t \leftarrow \tilde t + t$
\ENDWHILE
\IF{$l < m -1$}
\STATE pick $(\jbf_1, \ubf) \in \mathbb{T}_{i_l,i_{l+1}}(y)$ with $\|\ubf\| \leq h_{\sbf}(\varepsilon) 2^{-k}$
\STATE  $\vbf \leftarrow (\vbf,  \ubf)$, $\jbf \leftarrow (\jbf,  \jbf_1)$, $y \leftarrow \Phi^{\jbf_1}_{\ubf}(y)$
\STATE $k \leftarrow k+1$
\ENDIF
\ENDFOR
\end{algorithmic}
\end{algorithm}
\begin{proposition}
\label{prop:algo}
For any $x \in \R^d$,  $(\ibf, \sbf) \in E^m \times (0, +\infty)^m$, $\varepsilon > 0$, Algorithm \ref{algo:1} provides an admissible control sequence $(\jbf, \vbf)$  with $j_1 = i_1$ and the last element of $\jbf$ being $i_m$ such that
\[
\| \Phi^{\jbf}_{\vbf}(x) - \Phi^{\ibf}_{\sbf}(x) \| \leq \varepsilon.
\]
\end{proposition}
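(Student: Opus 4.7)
The plan is to check, by direct inspection of Algorithm~\ref{algo:1}, that the output $(\jbf,\vbf)$ is admissible with $j_1=i_1$ and last label $i_m$, and then to bound $\|\Phi^{\jbf}_{\vbf}(x) - \Phi^{\ibf}_{\sbf}(x)\|$ by splitting the error into a ``cycle'' contribution and a ``time mismatch'' contribution, each controlled by Lemma~\ref{lem:distancecomposite}.

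Termination of the algorithm is immediate: in each WHILE iteration $\tilde t$ is incremented by $t_{\sbf,\varepsilon}(y,i_l) \geq S(i_l,\sbf,\varepsilon)/\lambda_{\max} > 0$. For admissibility, the hold times of the form $t_{\sbf,\varepsilon}(y,i_l)$ lie in $\mathrm{supp}(\mu_{y,i_l})$ by construction, and the hold times appearing in the bridging cycles belong to the required supports by definition of $\mathbb{T}_{i_l,i_l}(y)$ and $\mathbb{T}_{i_l,i_{l+1}}(y)$. Positivity of consecutive transition probabilities then follows: between two successive $i_l$-flows inside a single WHILE loop a $\mathbb{T}_{i_l,i_l}$-cycle has been inserted (this is the role of the inner IF), and between the last $i_l$-flow and the first $i_{l+1}$-flow a $\mathbb{T}_{i_l,i_{l+1}}$-cycle has been inserted (the outer IF, which must be read as ``$l<m$'', since otherwise no transition from $i_{m-1}$ to $i_m$ would be bridged). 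The boundary labels $j_1=i_1$ and last-$\jbf=i_m$ are immediate, since these are the very first and last entries appended inside the WHILE loops for $l=1$ and $l=m$ respectively.

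For the distance estimate, let $\tilde s_l$ denote the total time spent following $F^{i_l}$ during the WHILE loop for $l$. Since the WHILE exits only once $\tilde t \geq s_l - h_{\sbf}(\varepsilon)$ and each increment is at most $h_{\sbf}(\varepsilon)$, we have $|\tilde s_l - s_l|\leq h_{\sbf}(\varepsilon)$. Let $\hat\vbf$ be obtained from $\vbf$ by setting every cycle hold time to $0$. Because $\varphi^j_0=\mathrm{Id}$, the cycles collapse, and the remaining consecutive $F^{i_l}$-segments merge into a single segment of length $\tilde s_l$, so that $\Phi^{\jbf}_{\hat\vbf}(x) = \Phi^{\ibf}_{\tilde\sbf}(x)$ with $\tilde\sbf=(\tilde s_1,\dots,\tilde s_m)$. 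The total inserted cycle length is bounded by $\sum_{k\geq 1} h_{\sbf}(\varepsilon)\,2^{-k}\leq h_{\sbf}(\varepsilon)$, and one readily checks that $\|\vbf\|,\|\hat\vbf\|,\|\tilde\sbf\|\leq T$ for $\varepsilon$ sufficiently small. Applying Lemma~\ref{lem:distancecomposite} twice (to $\vbf$ vs $\hat\vbf$, and to $\tilde\sbf$ vs $\sbf$) yields
\[
\|\Phi^{\jbf}_{\vbf}(x) - \Phi^{\ibf}_{\tilde\sbf}(x)\| \leq C e^{LT} h_{\sbf}(\varepsilon), \qquad \|\Phi^{\ibf}_{\tilde\sbf}(x) - \Phi^{\ibf}_{\sbf}(x)\| \leq C e^{LT}\, m\, h_{\sbf}(\varepsilon).
\]
The triangle inequality together with the definition $h_{\sbf}(\varepsilon) = \varepsilon/((m+1)C e^{LT})$ then gives $\|\Phi^{\jbf}_{\vbf}(x) - \Phi^{\ibf}_{\sbf}(x)\|\leq \varepsilon$.

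The main obstacle is the bookkeeping in the admissibility step: at every newly inserted label in $\jbf$ one has to check that the corresponding entry of $Q$, evaluated at the correct intermediate point along the composed flow, is strictly positive, and that the matching hold time lies in the correct state-dependent support. This is exactly what the definition of $\mathbb{T}_{i,j}(y)$ and the choice of cycles with total length bounded by the geometrically decaying sequence $h_{\sbf}(\varepsilon)2^{-k}$ are designed to guarantee. Once the intermediate trajectory $\Phi^{\ibf}_{\tilde\sbf}(x)$ is introduced, the distance estimate reduces to a mechanical application of Lemma~\ref{lem:distancecomposite}.
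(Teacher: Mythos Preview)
Your proof is correct and follows essentially the same approach as the paper. The only cosmetic difference is that the paper builds a single comparison vector $\hat{\mathbf{u}}$ (zeroing the cycle times \emph{and} replacing each final $i_l$-hold by $s_l-(t_l^1+\cdots+t_l^{n_l-1})$) so that $\Phi^{\jbf}_{\hat{\mathbf{u}}}(x)=\Phi^{\ibf}_{\sbf}(x)$ exactly and then applies Lemma~\ref{lem:distancecomposite} once with $\|\vbf-\hat{\mathbf{u}}\|\leq (m+1)h_{\sbf}(\varepsilon)$, whereas you split the error into a cycle part and a time-mismatch part and invoke the lemma twice; the arithmetic and the final bound are identical.
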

\begin{proof}
First, we notice that the algorithm eventually ends in a finite number of steps. Indeed, since $t_{\sbf, \varepsilon}(y,i_l) \geq  \frac{ S(j,\sbf, \varepsilon)}{\lambda_{\max}}$, after $n$ steps in a loop while, we have $\tilde t \geq n \frac{ S(j,\sbf, \varepsilon)}{\lambda_{\max}}$, hence the loop has at most $ \max_{l}(s_l) \frac{ \lambda_{\max}}{ S(j,\sbf, \varepsilon)}$ steps. Next, by construction of the algorithm, the resulting control sequence $(\jbf, \vbf)$ is admissible. Finally, let $n_1, \ldots, n_m$ denote the number of steps in the loop WHILE corresponding to $l=1, \ldots m$ respectively. For $l = 1, \ldots, m$, during these $n_k$ steps, the algorithm furnishes a control sequence $(\jbf_l, \vbf_l)$ such that $\jbf_l = (i_l, \jbf_l^1, i_l, \ldots, \jbf^{n_l - 1}_l, i_l)$ and $\vbf_l =( t_l^1, \vbf_l^1, \ldots, \vbf_l^{n_l - 1}, t_l^{n_l})$ (where $\vbf_l^{n_l-1}$ is empty if $n_l=1$). For $l = 1, \ldots, m - 1$, the final step of the loop FOR gives a control sequence that we denote by $(\jbf^{n_l}_l, \vbf^{n_l}_l)$. Thus, we can write
\[
\jbf = ( i_1, \jbf_1^1, i_1, \ldots, \jbf^{n_1-1}_l, i_1, \jbf^{n_1}_l, i_2, \ldots , \jbf^{n_m-1}_m, i_m)
\]
and 
\[
\vbf = ( t_1^1, \vbf_1^1, \ldots, \vbf_1^{n_1 - 1}, t_1^{n_1}, \vbf_1^{n_1}, t_2^1, \ldots, \vbf_m^{n_m - 1}, t_m^{n_m})
\]
Let 
\[
\mathbf{\hat{u}} = ( t_1^1, \mathbf{0}, \ldots, t_1^{n_1 - 1}, \mathbf{0}, s_1 - (t_1^1 + \ldots, t_1^{n_1 - 1}), \mathbf{0}, t_2^1, \ldots, \mathbf{0}, s_m - (t_m^1 + \ldots, t_m^{n_m - 1}))
\]
Then, by the flow property, we have $\Phi_{\mathbf{\hat{u}}}^{\jbf}(x) = \Phi_{\ubf}^{\ibf}(x)$. Furthermore, $\mathbf{\hat{u}}$ and $\vbf$ have the same length, and $\| \mathbf{\hat{u}}\| = s_1 + \ldots + s_m \leq T$, where we recall that $T = 2(s_1 + \ldots + \ldots s_m)$. Furthermore, 
\[
\| \mathbf{\hat{u}} - \vbf \| = \sum_{l = 1}^{m-1} \sum_{k = 1}^{n_l} \| \vbf_l^k \| + \sum_{k = 1}^{n_m - 1 } \| \vbf_m^k \| + \sum_{l = 1}^m \left( s_k - (t_k^1 + \ldots + t_k^{n_k}) \right).
\]
By construction, for all $l= 1, \ldots, m$, $\|\vbf_l^k\| \leq 2^{-(n_1 + \ldots + n_{l-1}) + k}  h_{\sbf}(\varepsilon)$, where $n_0 = 0$, and $s_k - (t_k^1 + \ldots + t_k^{n_k}) \leq h_{\sbf}(\varepsilon)$,  hence
\[
\| \mathbf{\hat{u}} - \vbf \| \leq \left( \sum_{k=1}^{n_1 + \ldots + n_m - 1} 2^{-k} + m \right)  h_{\sbf}(\varepsilon) \leq (m+1) h_{\sbf}(\varepsilon).
\]
For the same reason, $\|  \vbf \| \leq h_{\sbf}(\varepsilon) + s_1 + \ldots + s_m \leq T$. Thus, Lemma \ref{lem:distancecomposite} and the choice of $h_{\sbf}(\varepsilon)$ implies that 
\[
\| \Phi^{\jbf}_{\vbf}(x) - \Phi^{\ibf}_{\ubf}(x) \| \leq \varepsilon,
\]
which concludes the proof. 
\end{proof}
Proposition \ref{prop:algo} proves that if a point $x^*$ which is attainable from $x$ with a control sequence $(\ibf, \sbf)$ with $\ibf = (i_1, \ldots, i_m)$.  then there exists $s^*$ such that $(x^*, i_m, s^*)$ is $\{F^i, \mu^i, Q\}$ - accessible from $(x, i_1, 0)$. Now, by slightly modifiying the algorithm, we can prove that for all $j_1$ and $i^*$, there exists $s^*$ such that $(x^*, i^*, s^*)$ is $\{F^i, \mu^i, Q\}$ - accessible from $(x, j_1, 0)$. Indeed, if $j_1 \neq i_1$, it suffices to add arbitrary short jumps of the process leading the discrete components from $j_1$ to $i_1$ and then to start the algorithm. In the same way, if $i^* \neq i_m$, it suffices to add arbitrary short jumps to lead the discrete components from $i^*$ to $i_m$. 

\section*{Acknowledgments}
The authors would like to thank Michel Bena\"im for many stimulating discussions about PDMPs and for his consistent support throughout the years. TH gratefully acknowledges support from the Einstein Foundation Berlin through grant IPF-2021-651 (2022--2024) and, until 2020, from the Swiss National Science Foundation through grant $200021-175728/1$. 

\bibliographystyle{alpha}

\bibliography{semimarkov}

\end{document}